\numberwithin{equation}{section}
\theoremstyle{plain}
\newtheorem{theorem}{Theorem}[subsection]
\newtheorem{lemma}[theorem]{Lemma}
\newtheorem{proposition}[theorem]{Proposition}
\newtheorem{corollary}[theorem]{Corollary}
\newtheorem{proposition/definition}[theorem]{Proposition/Definition}
\newtheorem{theorem/definition}[theorem]{Theorem/Definition}
\newtheorem*{bigtheorem*}{Main Theorem}
\theoremstyle{definition}
\newtheorem{definition}[theorem]{Definition}
\newtheorem{example}[theorem]{Example}
\newtheorem{example/definition}[theorem]{Example/Definition}
\newtheorem{assumption}[theorem]{Assumption}
\newtheorem{notation}[theorem]{Notation}
\theoremstyle{remark}
\newtheorem{remark}[theorem]{Remark}
\DeclareMathOperator{\augnumber}{aug}
\DeclareMathOperator{\cone}{Cone}
\DeclareMathOperator{\diag}{Diag}
\DeclareMathOperator{\End}{End}
\DeclareMathOperator{\GNR}{GNR}
\let\hom\relax\DeclareMathOperator{\hom}{Hom}
\DeclareMathOperator{\im}{Image}
\DeclareMathOperator{\MC}{MC}
\DeclareMathOperator{\NR}{NR}
\DeclareMathOperator{\Stab}{Stab}
\DeclareMathOperator{\identity}{Id}
\DeclareMathOperator{\Res}{Res^\mathsf{Ng}}
\DeclareMathOperator{\sgn}{sgn}
\DeclareMathOperator{\val}{val}
\newcommand{\CC}{\mathbb{C}}
\newcommand{\FF}{\mathbb{F}}
\newcommand{\KK}{\mathbb{K}}
\newcommand{\NN}{\mathbb{N}}
\newcommand{\RR}{\mathbb{R}}
\newcommand{\ZZ}{\mathbb{Z}}
\newcommand{\bfR}{\mathbf{R}}
\newcommand{\bfU}{\mathbf{U}}
\newcommand{\bff}{\mathbf{f}}
\newcommand{\bfi}{\mathbf{i}}
\newcommand{\bfx}{\mathbf{x}}
\newcommand{\bfinfty}{\boldsymbol{\infty}}
\newcommand{\bfgraf}{\mathbf{\graf}}
\newcommand{\bfzero}{\mathbf{0}}
\newcommand{\bfmu}{{\bm{\mu}}}
\newcommand{\bftau}{{\bm{\tau}}}
\newcommand{\bfpi}{{\bm{\pi}}}
\newcommand{\cA}{\mathcal{A}}
\newcommand{\cC}{\mathcal{C}}
\newcommand{\cE}{\mathcal{E}}
\newcommand{\cP}{\mathcal{P}}
\newcommand{\cT}{\mathcal{T}}
\newcommand{\cV}{\mathcal{V}}
\newcommand{\scrT}{\mathscr{T}}
\newcommand{\sfA}{\mathsf{A}}
\newcommand{\sfG}{\mathsf{G}}
\newcommand{\sfI}{\mathsf{I}}
\newcommand{\sfT}{\mathsf{T}}
\DeclareSymbolFont{sfletters}{OT1}{cmss}{m}{n}
\DeclareMathSymbol{\sTheta}{\mathord}{sfletters}{"02}
\newcommand{\sfc}{\mathsf{c}}
\newcommand{\CE}{\mathsf{CE}}
\newcommand{\std}{\mathsf{std}}
\newcommand{\front}{\mathsf{fr}}
\newcommand{\lag}{\mathsf{Lag}}
\newcommand{\ttB}{\mathtt{B}}
\newcommand{\ttE}{\mathtt{E}}
\newcommand{\ttV}{\mathtt{V}}
\newcommand{\ttv}{\mathtt{v}}
\newcommand{\fd}{\mathfrak{d}}
\renewcommand{\bar}{\overline}
\renewcommand{\emptyset}{\varnothing}
\renewcommand{\hat}{\widehat}
\renewcommand{\tilde}{\widetilde}
\newcommand{\Zmod}[1]{\ZZ/#1\ZZ}
\newcommand{\graf}{\Gamma}
\newcommand{\differential}{\partial}
\newcommand{\boundary}{\partial}
\newcommand{\field}{\KK}
\newcommand{\ring}{\mathbb{Z}}
\newcommand{\Left}{\mathsf{L}}
\newcommand{\Right}{\mathsf{R}}
\newcommand{\grading}{\ZZ}
\newcommand{\vg}{\xi}
\newcommand{\modulispace}{\mathcal{M}}
\newcommand{\homotopic}{\simeq}
\newcommand{\isomorphic}{\cong}
\newcommand{\relation}{\sim}
\newcommand{\RM}[1]{\left(\mathrm{#1}\right)}
\newcommand{\dgafont}{\mathcal}
\newcommand{\algfont}{\mathsf}
\newcommand{\categoryfont}{\mathscr}
\newcommand{\alg}{\algfont{A}}
\newcommand{\dga}{\dgafont{A}}
\newcommand{\aug}{\mathsf{Aug}}
\newcommand{\ruling}{\rho}
\newcommand{\LT}{\categoryfont{LG}}
\newcommand{\BLT}{\categoryfont{BLG}}
\newcommand{\Sh}{\categoryfont{S}\mathsf{h}}
\newcommand{\valency}{n}
\newcommand*{\DecorationScale}{0.3}
\DeclareRobustCommand*{\crossing}{{
\vcenter{\hbox{\begin{tikzpicture}[scale=\DecorationScale]
\draw[thick,-](-1,1)--(1,-1);
\draw[thick,-](-1,-1)--(1,1);
\end{tikzpicture}}}
}}
\DeclareRobustCommand*{\crossingpositive}{{
\vcenter{\hbox{\begin{tikzpicture}[scale=\DecorationScale]
\draw[thick,-](-1,1)--(1,-1);
\draw[thick,-](-1,-1)--(-0.3,-0.3);
\draw[thick,-](0.3,0.3)--(1,1);
\end{tikzpicture}}}
}}
\DeclareRobustCommand*{\crossingnegative}{{
\vcenter{\hbox{\begin{tikzpicture}[scale=\DecorationScale]
\draw[thick,-](-1,-1)--(1,1);
\draw[thick,-](-1,1)--(-0.3,0.3);
\draw[thick,-](0.3,-0.3)--(1,-1);
\end{tikzpicture}}}
}}
\DeclareRobustCommand*{\crossinghorizontal}{{
\vcenter{\hbox{\begin{tikzpicture}[scale=\DecorationScale]
\draw[thick,-](-1,1) arc (225:315:1.414);
\draw[thick,-](-1,-1) arc (-225:-315:1.414);
\end{tikzpicture}}}
}}
\DeclareRobustCommand*{\rightkink}{{
\vcenter{\hbox{\begin{tikzpicture}[scale=\DecorationScale]
\draw[thick,-](-1,1) -- (0.3, -0.3) arc (-135: 135: 0.4242);
\draw[thick,-](-0.3,-0.3)--(-1,-1);
\end{tikzpicture}}}
}}
\DeclareRobustCommand*{\leftarc}{{
\vcenter{\hbox{\begin{tikzpicture}[scale=\DecorationScale]
\draw[thick,-](0,1) arc ( 90: 275: 1);
\end{tikzpicture}}}
}}
\DeclareRobustCommand*{\Lcusp}{{
\vcenter{\hbox{\begin{tikzpicture}[scale=\DecorationScale]
\draw[thick,-] (1,1) arc (-45:-90:2.414) arc (90:45:2.414);
\end{tikzpicture}}}
}}
\DeclareRobustCommand*{\Rcusp}{{
\vcenter{\hbox{\begin{tikzpicture}[scale=\DecorationScale]
\draw[thick,-] (-1,1) arc (-135:-90:2.414) arc (90:135:2.414);
\end{tikzpicture}}}
}}
\title{Augmentations and ruling polynomials for Legendrian graphs}
\author[B. H. An]{Byung Hee An}
\email{anbyhee@ibs.re.kr}
\address{Center for Geometry and Physics, Institute for Basic Science (IBS), Pohang 37673, Korea}
\author[Y. Bae]{Youngjin Bae}
\email{ybae@kurims.kyoto-u.ac.jp}
\address{Research Institute for Mathematical Sciences, Kyoto University, Kyoto 606-8317, Japan}
\author[T. Su]{Tao Su}
\email{taosu@dma.ens.fr}
\address{Department of Mathematics, \'{E}cole Normale Sup\'{e}rieure, Paris, France}
\keywords{Legendrian graphs, Chekanov-Eliashberg DGA, augmentation variety, ruling polynomial}
\subjclass[2010]{Primary: 57R17; Secondary: 57M15, 05C31}
\begin{document}

\begin{abstract}
In this article, associated to a (bordered) Legendrian graph, we study and show the equivalence between two Legendrian isotopy invariants: augmentation number via point-counting over a finite field, for the augmentation variety of the associated Chekanov-Eliashberg differential graded algebra, and ruling polynomial via combinatorics of the decompositions of the associated front projection.   
\end{abstract}

\maketitle

\tableofcontents

\section{Introduction}

The theory of Legendrian knots has been extremely fruitful. 
Classical Legendrian isotopy invariants include rotation numbers and Thurston-Bennequin numbers \cite{Gei2008}.
However, the much more powerful invariants are Chekanov-Eliashberg differential graded algebras (CE DGAs), which distinguish a pair of Legendrian knots with the same classical invariants \cite{Chekanov2002}.
The CE DGAs admit higher dimensional generalizations, known as Legendrian contact homology differential graded algebras (LCH DGAs), associated to any pairs of a Legendrian submanifold contained in a contact manifold. The LCH DGAs fit into the more general framework of symplectic field theory \cite{EGH2000}, and have a Morse-Floer-Fukaya theoretic definition in terms of counting of holomorphic disks. There is an advantage in the case of Legendrian knots contained in the standard contact three-space $\RR_\std^3$. That is, the LCH DGAs in this case also admit a combinatorial description \cite{Chekanov2002,ENS2002}, allowing concrete computations. The LCH DGAs are Legendrian isotopy invariants, up to homotopy equivalence.

On the other hand, there are natural singular generalizations of Legendrian knots, i.e. singular Legendrian 1-dimensional submanifolds in contact 3-manifolds, called \emph{Legendrian graphs} \cite{OP2012}. The motivation for studying these objects is quite natural. For example, they have already appeared in the proof of the Giroux correspondence \cite{Gir2002,Etn2006}, a bijection between the set of oriented contact structures up to isotopy and the set of open book decompositions up to positive stabilization, on compact oriented 3-manifolds. Also, Legendrian graphs are used in  \cite{EF2009} to show that the classical invariants are complete invariants for topologically trivial Legendrian knots.
Recently, they also appear in the construction and study of Weinstein manifolds whose skeleton are singular, especially the arboreal type, see \cite{Nad2017}.
 
In this article, we study the Legendrian isotopy invariants of Legendrian graphs, or more generally, \emph{bordered Legendrian graphs}. A bordered Legendrian graph $\cT=(T_\Left\rightarrow T\leftarrow T_\Right)$ with a Maslov potential $\bfmu=(\mu_\Left, \mu, \mu_\Right)$ is a singular Legendrian 1-submanifold $T$ in $J^1\bfU\isomorphic \bfU\times\RR_{yz}^2$, which is `transverse' to the left and right boundaries $\boundary(J^1 \bfU)$ along the borders $T_\Left$ and $T_\Right$, respectively.
Here, $\bfU\subset\RR_x$ is a closed interval. 

Similar to Legendrian knots, there are Legendrian isotopy invariants, the LCH DGAs 
\[
A^\CE(\cT,\mu)=\left(A^\CE(T_\Left,\mu_\Left)\leftarrow A^\CE(T,\mu)\rightarrow A^\CE(T_\Right,\mu_\Right)\right)
\]
for bordered Legendrian graphs $(\cT,\bfmu)$ defined combinatorially as in \cite{AB2018}. 
A standard way to extract numerical invariants from the LCH DGAs $A^\CE(\cT,\bfmu)$ is via counting the ``functor-of-points'' for $A^\CE(\cT,\bfmu)$ over a finite field $\FF_q$.
More specifically, for any base field $\field$, we consider the set of augmentations, i.e. DGA maps, $\epsilon:A^\CE(T,\mu)\to \field$ onto $\field$ with zero differential, whose restrictions $\epsilon_*:A^\CE(T_*,{\mu_*})\to \field$ for $*=\Left$ and $\Right$ are specified by boundary conditions. This defines an algebraic variety, called \emph{augmentation variety} (with boundary conditions), whose normalized point-counting over $\FF_q$ defines a numerical Legendrian isotopy invariant, called the \emph{augmentation number} and denoted by $\augnumber(\cT,\bfmu;\ruling_\Left,\ruling_\Right;\FF_q)$.

Our main result in this article solves the question of counting the augmentation number. More precisely, we generalize the results in \cite{HR2015,Su2017} to show that the augmentation numbers are computed by ruling polynomials of $(\cT,\bfmu)$, defined via the combinatorics of decompositions of the front projection $\pi_{xz}(\cT)$ as follows:

\begin{bigtheorem*}[Theorem~\ref{theorem:augmentation number is a ruling polynomial}, Corollary~\ref{cor:gluing property of ruling polynomials}]\label{thm:count augmentations}
For a bordered Legendrian graph $\cT$ with a Maslov potential $\bfmu$, let $\ruling_\Left\in\NR(T_\Left,\mu_\Left)$ and $\ruling_\Right\in\NR(T_\Right,\mu_\Right)$ be two boundary conditions,i.e., normal rulings. Then the following two Legendrian isotopy invariants coincide:
\begin{equation*}
\augnumber(\cT,\bfmu;\ruling_\Left,\ruling_\Right;\FF_q)=q^{-\frac{d+\hat{B}}{2}}z^{\hat{B}}\langle \ruling_\Left|R(\cT,\bfmu;q,z)|\ruling_\Right\rangle.
\end{equation*}
Here, $\langle \ruling_\Left|R(\cT,\bfmu;q,z)|\ruling_\Right\rangle\in\ZZ[q^{\pm\frac{1}{2}},z^{\pm 1}]$ is the two-variable ruling polynomial for $(\cT,\bfmu)$ with boundary conditions $(\ruling_\Left,\ruling_\Right)$, $d\coloneqq\max\deg_z \langle \ruling_\Left|R(\cT,\bfmu;z^2,z)|\ruling_\Right\rangle$. In the formula, we take $z=q^{\frac{1}{2}}-q^{-\frac{1}{2}}$, and 
\[
\hat{B}\coloneqq B+\sum_{v\in V(\cT)}\frac{\val(v)}{2}
\]
counts the number of ``generalized'' base points in $T$.

Moreover, the ruling polynomial satisfies the \emph{gluing property}, that is, for the concatenation 
$(\cT,\bfmu)=(\cT^1,\bfmu^1)\cdot(\cT^2,\bfmu^2)$
of two bordered Legendrian graphs with Maslov potentials with $(T^1_\Right,\mu^1_\Right)=(T^2_\Left,\mu^2_\Left)$, we have
\[
\langle \ruling_\Left|R(\cT,\bfmu;q,z)|\ruling_\Right\rangle=\sum_{\ruling\in\NR(T^1_\Right, \mu^1_\Right)}\langle \ruling_\Left|R(\cT^1,\bfmu^1;q,z)|\ruling\rangle\cdot \langle \ruling|R(\cT^2,\bfmu^2;q,z)|\ruling_\Right\rangle.
\]
\end{bigtheorem*}

By the second statement above, we tend to view the augmentation number $\augnumber(\cT,\bfmu;\ruling_\Left,\ruling_\Right;\FF_q)$ or the ruling polynomial $\langle \ruling_\Left|R(\cT,\bfmu;q,z)|\ruling_\Right\rangle$ as the Legendrian analogue of Jones polynomials \cite{Jon1985} in topological knot theory, which is well-known to fit into a 3D TQFT. 
Hence, we may consider them interesting problems to build a ``contact version of 3D TQFT'' for Legendrian knots and graphs in contact 3-manifolds.

Another purpose of this article is to set up a foundation for our next article \cite{ABS2019aug_sheaf}, in which we show the slogan``augmentations are sheaves'' for Legendrian graphs that generalizes the case of Legendrian knots \cite{NRSSZ2015}. 
More specifically, for each closed interval $\bfU\subset\RR_x$ and $M=\bfU\times\RR_z$, we first identify $J^1\bfU$ with the contact submanifold $T^{\infty,-}M$ of the co-sphere bundle $T^\infty M$ which consists of covectors which are negative in the $z$-directions. 
Now for a bordered Legendrian graph $\cT$ in $J^1\bfU$ with a Maslov potential $\bfmu$, we define $\Sh_{\cT,\bfmu}(M;\field)$ to be the DG category of constructible sheaves on $M$ with micro-support at infinity contained in $\cT$, and define $\cC_1(\cT,\bfmu;\field)$ to be the full subcategory of $\Sh_{\cT,\bfmu}(M;\field)$ whose objects are microlocal rank $1$ sheaves with acyclic stalks for large enough $|z|$.

\begin{theorem}\cite{ABS2019aug_sheaf}\label{thm:augs are sheaves}
There is a well-defined diagram of unital $A_{\infty}$-categories:
\begin{equation*}
\aug_+(\cT,\bfmu;\field)\coloneqq(\aug_+(T_\Left,\mu_\Left;\field)\leftarrow\aug_+(T,\mu;\field)\rightarrow\aug_+(T_\Right,\mu_\Right;\field)),
\end{equation*}
where the objects of $\aug_+(\cT,\bfmu;\field)$ are the (acyclic) augmentations of the LCH DGA $A^\CE(T,\mu)$. The diagram is invariant under Legendrian isotopy up to $A_{\infty}$-equivalence. Moreover, there is an $A_{\infty}$-equivalence between the two diagrams of unital $A_{\infty}$-categories:
\begin{equation*}
\aug_+(\cT,\bfmu;\field)\stackrel{\homotopic}\longrightarrow\cC_1(\cT,\bfmu;\field).
\end{equation*}
\end{theorem}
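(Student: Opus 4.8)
The plan is to adapt \cite{NRSSZ2015} from Legendrian knots to bordered Legendrian graphs, with the combinatorial LCH DGA $A^\CE(T,\mu)$ of \cite{AB2018} replacing the Chekanov--Eliashberg DGA, and with two new features: one must construct local models at the vertices of $\cT$, and every construction must be made compatible with the border restriction maps. For a pair of augmentations $\epsilon_1,\epsilon_2$ of $A^\CE(T,\mu)$ I would define the morphism complex by taking the $2$-copy $T^{(2)}$ of $T$ (two parallel push-offs, perturbed so that the front is a small $C^0$-perturbation), passing to the sub-DGA generated by the ``mixed'' generators $\hom(1,2)$, and twisting its differential by $(\epsilon_1,\epsilon_2)$; the higher operations $m_k$ arise from the $k$-copy $T^{(k)}$ together with the maps relating successive copies, organized into a consistent sequence of DGAs as in \cite{NRSSZ2015}. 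Base points, and the ``generalized base points'' at the vertices responsible for the $\val(v)/2$ contribution to $\widehat B$, enter by adjoining the corresponding loop variables, exactly as for the base-point version of the LCH DGA in \cite{AB2018}. Since $A^\CE(T_\Left,\mu_\Left)$ and $A^\CE(T_\Right,\mu_\Right)$ sit in $A^\CE(T,\mu)$ as subalgebras, restriction of augmentations yields strict $A_{\infty}$-functors onto $\aug_+(T_\Left,\mu_\Left;\field)$ and $\aug_+(T_\Right,\mu_\Right;\field)$, producing the diagram; cohomological unitality follows by exhibiting explicit unit cocycles supported on the lowest mixed Reeb chords, as in the knot case.

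\textbf{Step 2: invariance.} Next I would invoke a Reidemeister-type theorem for fronts of bordered Legendrian graphs --- every Legendrian isotopy factors through finitely many elementary front moves, including those that reorganize edges at, or pass strands past, a vertex. Each such move changes $A^\CE$ by a stable tame isomorphism compatible with the copies, and the standard homotopy-transfer and Maurer--Cartan formalism upgrades this to an $A_{\infty}$-equivalence of the $\aug_+$ categories that intertwines the border restriction functors; hence the entire diagram is a Legendrian isotopy invariant up to $A_{\infty}$-equivalence.

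\textbf{Step 3: the comparison equivalence.} To build $\aug_+(\cT,\bfmu;\field)\to\cC_1(\cT,\bfmu;\field)$ I would argue locally-to-globally over a decomposition of $\bfU$ into small subintervals, each carrying at most one elementary piece of the front $\pi_{xz}(\cT)$ --- a trivial family of strands, a cusp, a crossing, or a vertex --- using a cellular, combinatorial model for $\cC_1$ that extends the one used for Legendrian knots by a cell attached to each vertex. Over each piece there is an explicit dictionary: an augmentation prescribes, strand by strand, the ``ruling-like'' gluing data that assembles a constructible sheaf on $M$ with micro-support at infinity in $\cT$, microlocal rank $1$ and acyclic for $|z|\gg 0$; conversely, microlocal rank $1$ forces such a sheaf to be determined by that data. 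Both $\aug_+$ and $\cC_1$ satisfy a sheaf and gluing property over such decompositions --- the categorical shadow of the gluing property of ruling polynomials in the Main Theorem --- so it suffices to produce the functor on elementary pieces and check there that it is an $A_{\infty}$-equivalence: essential surjectivity reduces to existence of the local models, and full faithfulness to a finite comparison of $\hom$-complexes on both sides in each chart, carried out compatibly with the border restrictions.

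\textbf{The main obstacle.} I expect the crux to be the vertex chart. Algebraically, the combinatorial differential of $A^\CE(T,\mu)$ near a valence-$n$ vertex couples the vertex generators to the neighboring crossings and cusps; sheaf-theoretically, the micro-support condition along a $1$-dimensional singular Legendrian imposes a delicate compatibility among the stalks of a microlocal rank $1$ sheaf on the $n$ sheets meeting at the vertex. Reconciling these --- matching all higher $A_{\infty}$-products in the vertex chart, and doing so compatibly with the border structure when a vertex approaches $\boundary(J^1\bfU)$ --- is the technical heart; the remainder is a lengthy but essentially routine transcription of the knot case \cite{NRSSZ2015}.
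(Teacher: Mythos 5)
First, a point of order: the statement you are proving is not proved in this paper at all. Theorem~\ref{thm:augs are sheaves} is quoted verbatim from the companion paper \cite{ABS2019aug_sheaf}; the present article only develops the DGA, augmentation-variety and ruling-polynomial machinery that the companion paper builds on. So there is no in-paper proof to compare your argument against, and any assessment has to be of your outline on its own terms.

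On those terms, what you have written is a plausible roadmap, not a proof. You correctly identify the architecture one would expect (the $m$-copy construction and consistent sequences of DGAs from \cite{NRSSZ2015}, restriction to the borders giving the diagram, invariance via stable tame isomorphism, and a local-to-global comparison with a combinatorial model of $\cC_1$), and you correctly locate the hard point at the vertex chart. But every substantive step is deferred with ``I would define/argue/invoke.'' Concretely: (i) it is not automatic that the $m$-copy of a Legendrian \emph{graph} is again a bordered Legendrian graph whose $A^\CE$ satisfies the consistent-sequence axioms --- the Reeb push-offs interact with the internal DGAs $I_v$ at each vertex, and the mixed vertex generators and their gradings (the $n(v,a,i)$ corrections of Example/Definition~\ref{example:internal DGA}) have to be built into the $\hom$-complexes before one can even state the $A_\infty$ relations; (ii) the claim that the $m_k$ satisfy the $A_\infty$ relations and that units exist ``as in the knot case'' is exactly the content requiring proof, since the new polygons hitting vertices contribute to the compositions; (iii) the sheaf side $\cC_1$ near a vertex, and the descent property you appeal to for both sides, are asserted rather than established; and (iv) you never explain where acyclicity of augmentations enters, even though the theorem restricts to acyclic augmentations and the target consists of sheaves with acyclic stalks for $|z|\gg0$ --- Corollary~\ref{cor:acyclic} of this paper is presumably an ingredient there. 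As a research plan this is sound and consistent with what the companion paper does; as a proof it has no content beyond the statement that the knot-case argument should generalize.
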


Here, $\cC_1(\cT,\bfmu;\field)\coloneqq(\cC_1(T_\Left,\mu_\Left;\field)\leftarrow\cC_1(T,\mu;\field)\rightarrow\cC_1(T_\Right,\mu_\Right;\field))$ is a diagram of DG-categories of constructible sheaves, with each category defined as above. 
In particular, when $T=\Lambda\subset T^{\infty,-}M$ is a Legendrian graph, we get an $A_{\infty}$-equivalence $\aug_+(\Lambda,\mu;\field)\stackrel{\homotopic}\longrightarrow\cC_1(\Lambda,\mu;\field)$. Namely, the slogan ``augmentations are sheaves'' holds in the singular case. 

As a consequence of Theorem~\ref{thm:augs are sheaves}, up to a normalization, our main theorem 
also gives the point-counting of the moduli space $\modulispace_1(\cT,\bfmu;\field)$ of sheaves in $\cC_1(\cT,\bfmu;\field)$ (with boundary conditions) over a finite field $\field=\FF_q$. By a theorem of N.~Katz \cite[Appendix]{HRV2008}, this is also the same as giving the $E$-polynomial of $\modulispace_1(\cT,\bfmu;\CC)$. 
Notice that $\modulispace_1(\cT,\bfmu;\field)$ is an analogue of the wild character varieties over a punctured Riemann surface, the latter is well-known as the Betti moduli space in non-abelian Hodge theory, the study of which remains a central subject of current research (for example, see \cite{HRV2008}). Thus, it would be interesting to explore the Hodge theory of $\modulispace_1(\cT,\bfmu;\field)$ as well. See also Remark \ref{rem:Hodge str} for more information.

\addtocontents{toc}{\protect\setcounter{tocdepth}{1}}
\subsection*{Organization}
The article is organized as follows. In Section \ref{section:preliminaries}, we give the basic backgrounds on bordered Legendrian graphs $\cT=(T_\Left\rightarrow T\leftarrow T_\Right)$ with a Maslov potential $\bfmu=(\mu_\Left,\mu,\mu_\Right)$. 
In Section \ref{section:bordered graphs}, we introduce bordered LCH DGAs $A^\CE(\cT,\bfmu)=(A^\CE(T_\Left,\mu_\Left)\rightarrow A^\CE(T,\mu)\leftarrow A^\CE(T_\Right,\mu_\Right))$. 
The key result in this section is to show the invariance of the diagrams of LCH DGAs for bordered Legendrian graphs, up to (generalized) stabilizations.

In Section \ref{section:augmentations}, we introduce augmentations and augmentation varieties (with boundary conditions) $\aug(\cT,\bfmu;\epsilon_\Left,\ruling_\Right;\field)$ for the LCH DGAs $A^\CE(\cT,\bfmu)$. The normalized point-counting over a finite field $\FF_q$ of $\aug(\cT,\bfmu;\epsilon_L,\ruling_R;\FF_q)$ then defines the augmentation number $\augnumber(\cT,\bfmu;\ruling_\Left,\ruling_\Right;\FF_q)$. The key ingredient in this section is to show the ``invariance'' of the augmentation varieties $\aug(\cT,\bfmu;\epsilon_\Left,\ruling_\Right;\field)$, which implies the invariance of augmentation numbers $\augnumber(\cT,\bfmu;\ruling_\Left,\ruling_\Right;\FF_q)$. Unlike the Legendrian knot case, this is not a trivial task, due to the involvement of generalized stabilization of LCH DGAs for (bordered) Legendrian graphs. 

In Section \ref{section:rulings}, we introduce and show the invariance of normal rulings $\bfR(\cT,\bfmu;\ruling_\Left,\ruling_\Right)$ and ruling polynomials (with boundary conditions) $\langle \ruling_\Left|R(\cT,\bfmu;q,z)|\ruling_\Right\rangle$ for bordered Legendrian graphs $(\cT,\bfmu)$ with Maslov potentials. The gluing property of ruling polynomials will follow directly from the definition. 
We then show our main result that augmentation numbers are ruling polynomials up to a normalization. The proof is via a tangle approach as in \cite{Su2017}.

\addtocontents{toc}{\protect\setcounter{tocdepth}{1}}
\subsection*{Acknowledgements}
We would like to thank RIMS in Japan, IBS-CGP in South Korea, and ENS Paris - CNRS in France for supporting the visits, where much of this project was developed. 
The first author is supported by IBS-R003-D1.
The second author is supported by Japan Society for the Promotion of Science (JSPS) International Research Fellowship Program, and he thanks Research Institute for Mathematical Sciences, Kyoto University for its warm hospitality.
The third author is supported by ANR-15-CE40-0007. He would like to thank St\'{e}phane Guillermou for the invitation to visit CRM, Montreal, where part of this project was improved. In addition, he is grateful to Vivek Shende, David Nadler, and Lehnard Ng for the help in his early career.

\addtocontents{toc}{\protect\setcounter{tocdepth}{2}}

\section{Preliminaries}
\label{section:preliminaries}

\subsection{Bordered Legendrian graphs}
A graph is a finite regular one dimensional CW complex, whose 0-cells and closed 1-cells are called vertices and edges.
For each vertex $\ttv$, a \emph{half-edge} at $\ttv$ is a small enough restriction of an edge adjacent to $\ttv$.
Then as usual, the \emph{valency} of $\ttv$ is the number of half-edges at $v$ and denoted by $\val(\ttv)$.

A (\emph{based}) \emph{bordered graph} $\graf=(\ttV,\ttV_\Left,\ttV_\Right,\ttB,\ttE)$ of type $(n_\Left,n_\Right)$ consists of the following data:
\begin{itemize}
\item the pair $(\ttV\amalg\ttV_\Left\amalg\ttV_\Right\amalg\ttB,\ttE)$ defines a graph $|\graf|$,
\item each $b\in\ttB$ of $|\graf|$ is bivalent, and
\item two disjoint subsets $\ttV_\Left$ and $\ttV_\Right$ consist of $n_\Left$ and $n_\Right$ univalent vertices of $|\graf|$.
\end{itemize}

Elements in $\ttV,\ttV_\Left,\ttV_\Right,\ttB$ and $\ttE$ will be called \emph{vertices}, \emph{left borders}, \emph{right borders}, \emph{base points} and \emph{edges}, respectively.
The \emph{interior} $\mathring{\graf}$ of $\graf$ is define to be the complement of $\ttV_\Left\amalg\ttV_\Right$.

\begin{notation}
In order to emphasize the border structures, we will denote the bordered graph $\graf$ as
\[
\bfgraf=(\graf_\Left\to \graf \leftarrow \graf_\Right),
\]
where $\graf_\Left$ and $\graf_\Right$ are defined by $\ttV_\Left$ and $\ttV_\Right$ by, respectively, and both arrows are inclusions.

From now on, we mean by a \emph{graph} a bordered graph with empty borders $(\emptyset\to \graf\leftarrow\emptyset)$, which will be denoted simply by $\graf$.
\end{notation}

For a closed interval $U=[x_\Left,x_\Right]\subset \RR_x$, let the \emph{bordered one-jet space} $J^1\bfU=(J^1U_\Left\to J^1U\leftarrow J^1U_\Right)$ be the one-jet space $J^1 U\coloneqq(U\times\RR_{yz}, dz-ydx)\subset J^1\RR_x= (\RR^3_{xyz}, dz-ydx)$ together with two contact submanifolds
\[
J^1 U_\Left\coloneqq \left(\{x_\Left\}\times\RR_z,dz\right)\quad\text{ and }\quad
J^1 U_\Right\coloneqq \left(\{x_\Right\}\times\RR_z,dz\right).
\]

\begin{definition}[bordered Legendrian graphs]\label{definition:bordered Legendrian graph}
A \emph{bordered Legendrian graph} $\scrT=(\sfT_\Left\to\sfT\leftarrow \sfT_\Right)$ of a bordered graph $\bfgraf$ of type $(n_\Left,n_\Right)$ in $J^1\bfU$ is defined as an embedding $\sfT:\graf\to J^1U$ such that 
\begin{enumerate}
\item $\sfT$ is transverse to the boundary $\boundary J^1U=\boundary U\times\RR_{yz}$ and the restrictions on the interior $\mathring{\graf}$ and both borders $\graf_\Left$ and $\graf_\Right$ are contained in $J^1\mathring{U}$, $J^1U_\Left$ and $J^1U_\Right$, respectively.
\begin{align*}
\sfT&\pitchfork \boundary J^1U,&
\sfT_\Left&\coloneqq\sfT(\graf_\Left)\subset J^1U_\Left,&
\sfT_\Right&\coloneqq\sfT(\graf_\Right)\subset J^1U_\Right,&
\mathring{\sfT}&\coloneqq\sfT(\mathring{\graf})\subset J^1\mathring{U}.
\end{align*}
\item $\sfT$ on edges are smooth Legendrian with boundary and pairwise non-tangent to each other at all vertices and base points, in particular, two edges adjacent to each base point form a smooth arc.
\end{enumerate}

By labeling borders in $\sfT_\Left$ and $\sfT_\Right$ in top-to-bottom ways with respect to $z$-coordinates, we identify the left and right border $\sfT_\Left$ and $\sfT_\Right$ with the set $[n_\Left]=\{1,\dots,n_\Left\}$ and $[n_\Right]=\{1,\dots,n_\Right\}$.
\end{definition}

\subsection{Front and Lagrangian projections}\label{section:projections}
There are two projections for $J^1\RR_x\isomorphic \RR^3_{xyz}$, called the \emph{front} and \emph{Lagrangian} projections $\pi_{\front}:\RR^3_{xyz}\to\RR^2_{xz}$ and $\pi_{\lag}:\RR^3_{xyz}\to\RR^2_{xy}$, respectively.

\begin{definition}[Regular projections]\label{definition:regular projections}
For a bordered Legendrian graph $\scrT=(\sfT_\Left\to\sfT\leftarrow\sfT_\Right)$, the \emph{front} and \emph{Lagrangian projections} $\cT=(T_\Left\to T\leftarrow T_\Right)\coloneqq \pi_\front(\scrT)$ and $\cT_\lag=(T_{\lag,\Left}\to T_\lag\leftarrow T_{\lag,\Right})\coloneqq \pi_\lag(\scrT)$ are said to be \emph{regular} if in their interiors,
\begin{enumerate}
\item there are only finitely many transverse double points, called \emph{crossings}, and
\item no vertices, basepoints or \emph{$x$-extreme points} are crossings,
\item each edge containing a $x$-maximal point must involve at least one vertex or a base point,
\end{enumerate}
where a point in the interior $\mathring{\sfT}$ is said to be \emph{$x$-maximal} or \emph{$x$-minimal} if it is maximal or minimal with respect to the $x$-coordinate, and \emph{$x$-extreme} if it is either $x$-maximal or $x$-minimal.\footnotemark

A bordered Legendrian graph of type $(0,0)$ is called a \emph{Legendrian graph} and we denote the sets of regular front and Lagrangian projections of non-bordered and bordered Legendrian graphs by
$\LT_\front, \LT_\lag$ and $\BLT_\front, \BLT_\lag$, respectively.
\end{definition}
\footnotetext{In the front projection, an $x$-extreme point is either a cusp or a vertex of type $(0,n)$ or $(n,0)$.}

\begin{remark}
Due to the Legendrian condition, there are no vertical tangencies and no non-transverse double points in the front projection.
Instead, it contains \emph{cusps}, which is obviously, $x$-extreme.
\end{remark}

\begin{notation}
The front and Lagrangian projection of $\sfT=\sfT(\graf)$ with $\graf=(\ttV,\ttV_\Left, \ttV_\Right, \ttB,\ttE)$ will be denoted by $T=(V,V_\Left, V_\Right, B, E)$ and $T_\lag=(V_\lag,V_{\lag,\Left},V_{\lag,\Right},B_\lag,E_\lag)$, respectively.
\end{notation}

For examples of regular and non-regular projection, see Figure~\ref{figure:non-regular projections}.
To avoid any confusion, we denote vertices and basepoints by small dots and bars, respectively.

\begin{figure}[ht]
\subfigure[Non-regular local front projections]{\makebox[0.45\textwidth]{$
\begin{tikzpicture}[baseline=-.5ex,scale=0.5]
\begin{scope}
\draw[thick] (-1,0)--(1,0);
\draw[thick] (-.5,-1)--(.5,1);
\draw[white,fill=white] (0,0) circle (0.2);
\draw[thick] (-.5,1)--(.5,-1);
\end{scope}
\begin{scope}[xshift=2.5cm]
\draw[thick] (-.5,-1)--(.5,1);
\draw[white,fill=white] (0,0) circle (0.2);
\draw[thick] (-1,0.5) to[out=-45,in=180] (0,0) to[out=180,in=45] (-1,-0.5);
\draw[thick] (1,0.3) to[out=-150,in=0] (0,0) to[out=0,in=150] (1,-0.3);
\draw[thick] (-1,0.25) to[out=-22.5,in=180] (0,0);
\draw[fill] (0,0) circle (0.1);
\end{scope}
\begin{scope}[xshift=5cm]
\draw[thick] (-.5,-1)--(.5,1);
\draw[white,line width=6] (-1,0) -- (1,0);
\draw[thick] (-1,0)--(0,0) node{$|$} -- (1,0);
\end{scope}
\begin{scope}[xshift=7.5cm]
\draw[thick] (-.5,-1)--(.5,1);
\draw[white,fill=white] (0,0) circle (0.2);
\draw[thick] (-1,0.5) to[out=-45,in=180] (0,0) to[out=180,in=45] (-1,-0.5);
\end{scope}
\end{tikzpicture}
$}}
\subfigure[Non-regular local Lagrangian projections]{\makebox[0.5\textwidth]{$
\begin{tikzpicture}[baseline=-.5ex,scale=0.5]
\begin{scope}
\draw[thick] (-1,0)--(1,0);
\draw[thick] (-.5,-1)--(.5,1);
\draw[white,fill=white] (0,0) circle (0.2);
\draw[thick] (-.5,1)--(.5,-1);
\end{scope}
\begin{scope}[xshift=2.5cm]
\draw[thick] (-1,-0.5) to[out=45,in=180] (0,0) to[out=0,in=135] (1,-0.5);
\draw[white,line width=5] (-1,0.5) to[out=-45,in=180] (0,0) to[out=0,in=-135] (1,0.5);
\draw[thick] (-1,0.5) to[out=-45,in=180] (0,0) to[out=0,in=-135] (1,0.5);
\end{scope}
\begin{scope}[xshift=5cm]
\draw[thick] (-0.5,-1) -- (0.5,1);
\draw[white,fill=white] (0,0) circle (0.2);
\draw[thick] (-1,0.5) -- (0,0) (-1,0.25) -- (0,0) (-1,-0.5) -- (0,0) (1,0.3) -- (0,0) (1,-0.3) -- (0,0);
\draw[fill] (0,0) circle (0.1);
\end{scope}
\begin{scope}[xshift=7.5cm]
\draw[thick] (-.5,-1)--(.5,1);
\draw[white,line width=6] (-1,0) -- (1,0);
\draw[thick] (-1,0)--(0,0) node{$|$} -- (1,0);
\end{scope}
\begin{scope}[xshift=10cm]
\draw[thick] (-1,0) -- (1,0);
\draw[white,line width=5] (-1,-0.5) to[out=0,in=-90] (0,0) to[out=90,in=0] (-1,0.5);
\draw[thick] (-1,-0.5) to[out=0,in=-90] (0,0) to[out=90,in=0] (-1,0.5);
\end{scope}
\end{tikzpicture}
$}}
\subfigure[Regular front projections\label{figure:regular projections}]{\makebox[0.55\textwidth]{$
\begin{tikzpicture}[baseline=-.5ex]
\begin{scope}
\draw[thick] (-0.75,0.75) -- (0,0.75);
\draw[thick] (-0.75,0.25) -- (0,0.25);
\draw[thick](-0.25,-0.5) to[out=180,in=0] node[midway,sloped] {$|$} (-0.75,-0.25);
\draw[thick](-0.25,-0.5) to[out=180,in=0] (-0.75,-0.75);
\draw[red](-0.75,-1)--(-0.75,1);
\draw[red](0,-1)--(0,1);
\end{scope}
\end{tikzpicture}\quad
\cT_n\coloneqq
\begin{tikzpicture}[baseline=-.5ex]
\foreach \i in {-1.5,-0.5,0.5,1.5} {
\draw[thick] (-0.75, {\i*0.5}) -- +(0.75,0);
}
\draw[red](-0.75,-1)--(-0.75,1) (0,-1)--(0,1);
\draw (0,0.75) node[right] {$1$};
\draw (0,0) node[right] {$\vdots$};
\draw (0,-0.75) node[right] {$n$};
\end{tikzpicture}\quad
\bfzero_n\coloneqq
\begin{tikzpicture}[baseline=-.5ex]
\draw[fill](0,0) circle (0.05) node[above] {$0$};
\foreach \i in {0.75, 0.25, -0.25, -0.75} {
\draw[thick, rounded corners](0,0) to[out=0,in=180] (0.75,\i);
}
\draw[red](0.75,-1)--(0.75,1);
\draw (0.75,0.8) node[right] {$1$};
\draw (0.75,0) node[right] {$\vdots$};
\draw (0.75,-0.8) node[right] {$n$};
\end{tikzpicture}\quad
\bfinfty_n\coloneqq
\begin{tikzpicture}[baseline=-.5ex]
\draw[fill](0,0) circle (0.05) node[above] {$\infty$};
\foreach \i in {0.75, 0.25, -0.25, -0.75} {
\draw[thick, rounded corners](0,0) to[out=180,in=0] (-0.75,\i);
}
\draw[red](-0.75,-1)--(-0.75,1);
\draw (-0.75,0.8) node[left] {$1$};
\draw (-0.75,0) node[left] {$\vdots$};
\draw (-0.75,-0.8) node[left] {$n$};
\end{tikzpicture}
$}}
\subfigure[Non-regular front projections]{\makebox[0.4\textwidth]{$
\begin{tikzpicture}[baseline=-.5ex]
\begin{scope}
\draw[thick] (-0.75,0.6) -- (0,-0.6);
\draw[thick] (-0.75,0) -- (0,0);
\draw[thick] (-0.75,-0.6) -- (0,0.6);
\draw[red](-0.75,-1)--(-0.75,1);
\draw[red](0,-1)--(0,1);
\end{scope}
\begin{scope}[xshift=1.5cm]
\draw[thick] (-0.75,0.75) -- (0,0.75);
\draw[thick] (-0.75,0.25) -- (0,0.25);
\draw[thick, rounded corners](-0.25,-0.5) to[out=180,in=0] (-0.75,-0.25);
\draw[thick, rounded corners](-0.25,-0.5) to[out=180,in=0] (-0.75,-0.75);
\draw[red](-0.75,-1)--(-0.75,1);
\draw[red](0,-1)--(0,1);
\end{scope}
\end{tikzpicture}
$}}
\caption{Regular and non-regular projections of bordered Legendrian graphs}
\label{figure:non-regular projections}
\end{figure}
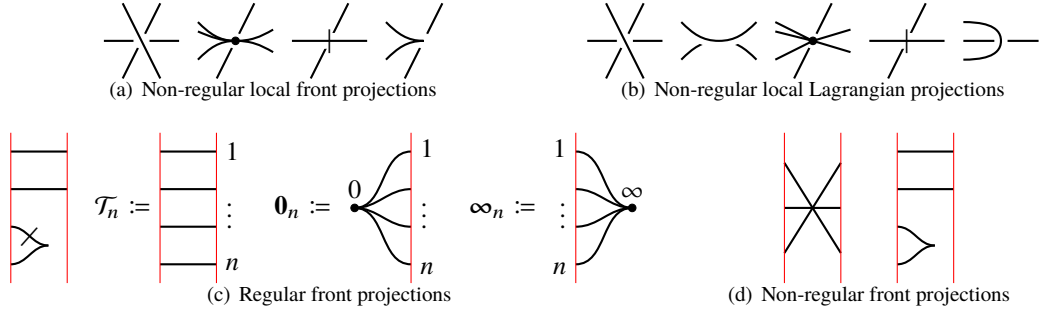

\begin{definition}[Types and orientations]\label{definition:types of vertices}
For a vertex $v$ or a base point $b$ of a bordered Legendrian graph, we say that it is of \emph{type $(\ell,r)$} if there are $\ell$ and $r$ half-edges adjacent to $v$ or $b$ on the left and right, respectively. 
We label the set $H_v\coloneqq{\{h_{v,1},\dots,h_{v,m}\}}$ of (small enough) half-edges in front and Lagrangian projections as follows:
\[
\begin{tikzpicture}[baseline=-0.5ex,scale=0.8]
\begin{scope}[xshift=-5cm]
\foreach \i in {4,...,8} {
	\draw[thick] (-1,{(6-\i)/3}) to[out=0,in=180] (0,0);
}
\draw (-1,0.67) node[left] {$h_{v,1}$};
\draw (-1,0.33) node[left] {$h_{v,2}$};
\draw (-1,-0.167) node[left] {$\vdots$};
\draw (-1,-0.67) node[left] {$h_{v,\ell}$};
\foreach \i in {1,2,3} {
	\draw[thick] (1,{(2-\i)/1.5}) to[out=180,in=0] (0,0);
}
\draw (1,0.67) node[right] {$h_{v,\ell+1}$};
\draw (1,0) node[right] {$\vdots$};
\draw (1,-0.67) node[right] {$h_{v,\ell+r}$};
\draw[fill] (0,0) circle (2pt) node[above] {$v$};
\end{scope}
\begin{scope}
\foreach \i in {4,...,8} {
	\draw[thick] (-1,{(\i-6)/3}) -- (0,0);
}
\draw (-1,-0.67) node[left] {$h_{v,1}$};
\draw (-1,-0.33) node[left] {$h_{v,2}$};
\draw (-1,0.167) node[left] {$\vdots$};
\draw (-1,0.67) node[left] {$h_{v,\ell}$};
\foreach \i in {1,2,3} {
	\draw[thick] (1,{(2-\i)/1.5}) -- (0,0);
}
\draw (1,0.67) node[right] {$h_{v,\ell+1}$};
\draw (1,0) node[right] {$\vdots$};
\draw (1,-0.67) node[right] {$h_{v,\ell+r}$};
\draw[fill] (0,0) circle (2pt) node[above] {$v$};
\end{scope}
\begin{scope}[xshift=5cm]
\draw[thick] (-1,0) node[left] {$h_{b,1}$} -- (0,0) node {$|$} node[above] {$b$} -- (1,0) node[right] {$h_{b,2}$};
\end{scope}
\end{tikzpicture}
\quad\cdots
\]

In particular, each base point $b\in B$ is assumed to be \emph{oriented} from the half-edge $h_{b,1}$ to $h_{b,2}$ in the above convention.
\end{definition}

\begin{example/definition}[The trivial and vertex bordered Legendrian graphs]\label{example/definition:trivial graph}
Let $n\ge 1$. The front projections of the trivial bordered Legendrian graph $\cT_n=(T_{n,\Left}\to T_n\leftarrow T_{n,\Right})$ of type $(n,n)$ and the vertex bordered graphs $\bfzero_n=(\emptyset\to 0_n\leftarrow 0_{n,\Right})$ and $\bfinfty_n=(\infty_{n,\Left}\to \infty_n\leftarrow\emptyset)$ of type $(0,n)$ and $(n,0)$ as depicted in Figure~\ref{figure:regular projections},
whose left and right borders are points lying on the red lines at the left and right, respectively.

For convenience's sake, we define $\cT_0=\bfzero_0=\bfinfty_0=(\emptyset\to\emptyset\leftarrow\emptyset)$.
\end{example/definition}

As usual, we say that two bordered Legendrian graphs $\scrT$ and $\scrT'$ are \emph{equivalent} if
they are isotopic, that is, there exists a family of bordered Legendrian graphs
\begin{align*}
\scrT_t&:\graf\times[0,1]\to J^1\bfU_t\subset J^1\RR_x,&
\scrT_0&=\scrT,&
\scrT_1&=\scrT'.
\end{align*}
\begin{remark}
It is important to note that during the isotopy between two bordered Legendrian graphs, the ambient manifold $J^1\bfU_t$ may changes. For example, any translation along the $x$-axis will give us an equivalence.
\end{remark}

In terms of projections, it is known that two front projections $\cT'$ and $\cT$ or two Lagrangian projections $\cT'_\lag$ and $\cT_\lag$ are projections of equivalent Legendrian graphs if and only if they can be connected by a sequence of front or Lagrangian \emph{Reidemeister moves} depicted in Figures~\ref{fig:front_RM} or \ref{fig:RM}, respectively. One may refer \cite[Proposition~4.2]{BI2009} or \cite[Proposition~2.1]{ABK2018}.

\begin{figure}[ht]
\subfigure[\label{fig:front_RM}Front Reidemeister moves]{$
\begin{tikzcd}[row sep=0pc,ampersand replacement=\&]
\vcenter{\hbox{\includegraphics[scale=0.7]{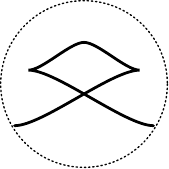}}}\arrow[r,"\RM{I}"]\&
\vcenter{\hbox{\includegraphics[scale=0.7]{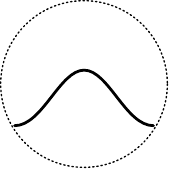}}}
\&
\vcenter{\hbox{\includegraphics[scale=0.7]{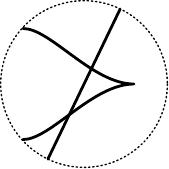}}}\arrow[r,"\RM{II}"]\&
\vcenter{\hbox{\includegraphics[scale=0.7]{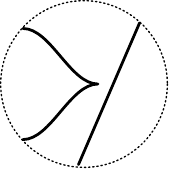}}}
\&
\vcenter{\hbox{\includegraphics[scale=0.7]{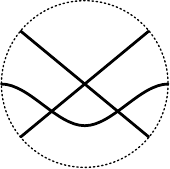}}}\arrow[r,leftrightarrow,"\RM{III}"]\&
\vcenter{\hbox{\includegraphics[scale=0.7]{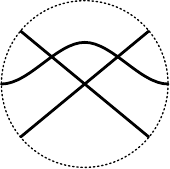}}}
\\
\vcenter{\hbox{\includegraphics[scale=0.7]{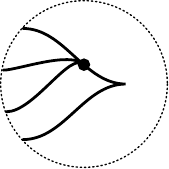}}}\arrow[r,"\RM{IV}"]\&
\vcenter{\hbox{\includegraphics[scale=0.7]{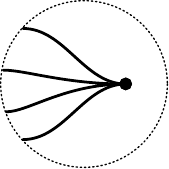}}}
\&
\begin{tikzpicture}[baseline=-.5ex,xscale=-1,scale=0.6]
\draw[densely dotted](0,0) circle (1);
\clip(0,0) circle (1);
\draw[thick] (1,.6) to[out=180,in=0] (-0.5,0);
\draw[thick] (1,.2) to[out=180,in=0] (-0.5,0);
\draw[thick] (1,-.2) to[out=180,in=0] (-0.5,0);
\draw[thick] (1,-.6) to[out=180,in=0] (-0.5,0);
\draw[fill] (-0.5,0) circle (2pt);
\draw[thick] (0,1) -- (0.5,-1);
\end{tikzpicture}\arrow[r,"\RM{V}"]
\&
\begin{tikzpicture}[baseline=-.5ex,xscale=-1,scale=0.6]
\draw[densely dotted](0,0) circle (1);
\clip(0,0) circle (1);
\draw[thick] (1,.6) to[out=180,in=0] (0,0);
\draw[thick] (1,.2) to[out=180,in=0] (0,0);
\draw[thick] (1,-.2) to[out=180,in=0] (0,0);
\draw[thick] (1,-.6) to[out=180,in=0] (0,0);
\draw[fill] (0,0) circle (2pt);
\draw[thick] (-0.5,1) -- (0,-1);
\end{tikzpicture}
\&
\vcenter{\hbox{\includegraphics[scale=0.7]{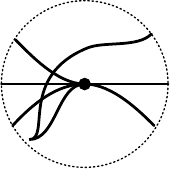}}}\arrow[r,"\RM{VI}"]\&
\vcenter{\hbox{\includegraphics[scale=0.7]{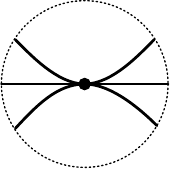}}}
\end{tikzcd}
$}

\subfigure[\label{fig:RM}Lagrangian Reidemeister moves]{$
\begin{tikzcd}[row sep=0pc,ampersand replacement=\&]
\vcenter{\hbox{\includegraphics[scale=0.7]{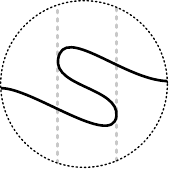}}}\arrow[r,"\RM{0_a}"]\&
\vcenter{\hbox{\includegraphics[scale=0.7]{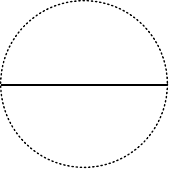}}}
\&
\vcenter{\hbox{\includegraphics[scale=0.7]{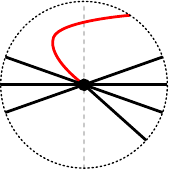}}}\arrow[r,"\RM{0_b}"]\&
\vcenter{\hbox{\includegraphics[scale=0.7]{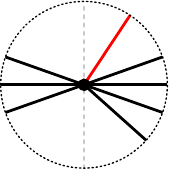}}}
\&
\vcenter{\hbox{\includegraphics[scale=0.7]{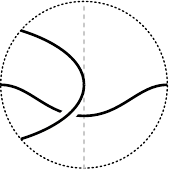}}}\arrow[r,"\RM{0_c}"]\&
\vcenter{\hbox{\includegraphics[scale=0.7]{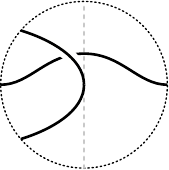}}}
\\
\vcenter{\hbox{\includegraphics[scale=0.7]{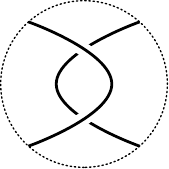}}}\arrow[r,"\RM{ii}"]\&
\vcenter{\hbox{\includegraphics[scale=0.7]{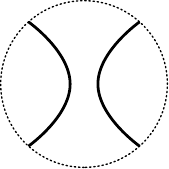}}}
\&
\vcenter{\hbox{\rotatebox[origin=c]{90}{\includegraphics[scale=0.7]{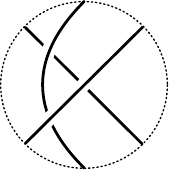}}}}\arrow[r,leftrightarrow,"\RM{iii_a}"]\&
\vcenter{\hbox{\rotatebox[origin=c]{90}{\includegraphics[scale=0.7]{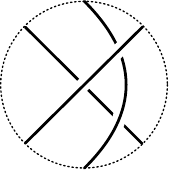}}}}
\&
\vcenter{\hbox{\rotatebox[origin=c]{90}{\includegraphics[scale=0.7]{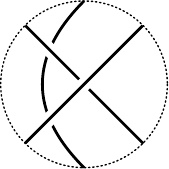}}}}\arrow[r,leftrightarrow,"\RM{iii_b}"]\&
\vcenter{\hbox{\rotatebox[origin=c]{90}{\includegraphics[scale=0.7]{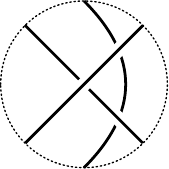}}}}
\\
\begin{tikzpicture}[baseline=-.5ex,scale=0.6]
\draw[densely dotted](0,0) circle (1);
\clip(0,0) circle (1);
\draw[thick] (-0.5,-1) -- (0,1);
\draw[white,line width=5] (-1,.6) -- (0.5,0) (-1,.2) -- (0.5,0) (-1,-.2) -- (0.5,0) (-1,-.6) -- (0.5,0);
\draw[thick] (-1,.6) -- (0.5,0) (-1,.2) -- (0.5,0) (-1,-.2) -- (0.5,0) (-1,-.6) -- (0.5,0);
\draw[fill] (0.5,0) circle (2pt);
\end{tikzpicture}
\arrow[r,"\RM{iv}"]\&
\begin{tikzpicture}[baseline=-.5ex,scale=0.6]
\draw[densely dotted](0,0) circle (1);
\clip(0,0) circle (1);
\draw[thick] (-1,.6) -- (0,0) (-1,.2) -- (0,0) (-1,-.2) -- (0,0) (-1,-.6) -- (0,0);
\draw[fill] (0,0) circle (2pt);
\draw[thick] (0,-1) -- (0.5,1);
\end{tikzpicture}
\end{tikzcd}
$}
\caption{Front and Lagrangian Reidemeister moves: Refelections are possible, the valency of vertex is arbitrary and the vertex can be replaced with a basepoint if it is bivalent.}
\end{figure}

\subsubsection{Maslov potentials for bordered Legendrian graphs}\label{section:Maslov potentials}
We consider $\grading$-valued Maslov potentials on bordered Legendrian graphs as follows:

Let $\cT\in\BLT_\front$ and $S=S(T)\subset\mathring{T}$ be the set of $x$-extreme points in its interior.
An \emph{$\grading$-valued Maslov potential} of $T$ is a function $\mu:R\to\grading$ from the set $R\coloneqq\pi_0\left(T\setminus(V\cup S)\right)$ of connected components of the complement of vertices and cusps such that for all $s\in S\setminus V$,
\begin{align}\label{equation:defining equation of Maslov potentials}
\mu(s^+)-\mu(s^-)&=1\in \grading,
\end{align}
where $s^+$ (resp. $s^-$) is the upper (resp. lower) strand near $s$.

For $T_\lag\in\BLT_\lag$, let $S_\lag=S(T_\lag)\subset\mathring{T}_\lag$ be the set of $x$-extreme points.
As before we define the set $R_\lag\coloneqq \pi_0(T_\lag\setminus(V_\lag\cup S_\lag))$ of connected components of the complement of vertices and $x$-extreme points. Then an $\grading$-valued Maslov potential of $T_\lag$ is a function $\mu:R_\lag\to\grading$ satisfying the following condition: for each $s\in S_\lag\setminus V_\lag$,
\begin{align}\label{equation:defining equation of Maslov potentials of Lagrangian projection}
\mu(s^+)-\mu(s^-) = \begin{cases}
1 & s\text{ is $x$-minimal};\\
-1 & s\text{ is $x$-maximal}.
\end{cases}
\end{align}

Diagrammatically, the above definition is depicted in Figures~\ref{figure:defining equation of Maslov potentials for front} and \ref{figure:defining equation of Maslov potentials for Lagrangian}.

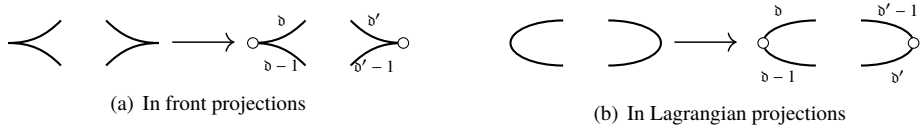
\begin{figure}[ht]
\subfigure[In front projections\label{figure:defining equation of Maslov potentials for front}]{\makebox[0.45\textwidth]{
\begin{tikzcd}[ampersand replacement=\&]
\begin{tikzpicture}[baseline=-.5ex]
\begin{scope}[xshift=-1cm]
\draw[thick] (.7,.3) to[out=225,in=0] (0,0) to[out=0,in=135] (.7,-.3);
\end{scope}
\begin{scope}[xshift=1cm]
\draw[thick] (-.7,.3) to[out=-45,in=180] (0,0) to[out=180,in=45] (-.7,-.3);
\end{scope}
\end{tikzpicture}\arrow[r] \& 
\begin{tikzpicture}[baseline=-.5ex]
\begin{scope}[xshift=-1cm]
\draw[thick] (.7,.3) to[out=225,in=0] node[midway,above] {\tiny$\fd$} (0,0) to[out=0,in=135] node[midway,below] {\tiny$\fd-1$} (.7,-.3);
\draw[fill=white] (0,0) circle (2pt);
\end{scope}
\begin{scope}[xshift=1cm]
\draw[thick] (-.7,.3) to[out=-45,in=180] node[midway,above] {\tiny$\fd'$}  (0,0) to[out=180,in=45] node[midway,below] {\tiny$\fd'-1$} (-.7,-.3);
\draw[fill=white] (0,0) circle (2pt);
\end{scope}
\end{tikzpicture}
\end{tikzcd}
}}
\subfigure[In Lagrangian projections\label{figure:defining equation of Maslov potentials for Lagrangian}]{\makebox[0.45\textwidth]{
\begin{tikzcd}[ampersand replacement=\&]
\begin{tikzpicture}[baseline=-.5ex]
\begin{scope}[xshift=-1cm]
\draw[thick] (.7,.3) arc (90:270:.7 and 0.3);
\end{scope}
\begin{scope}[xshift=1cm]
\draw[thick] (-.7,.3) arc (90:-90:.7 and 0.3);
\end{scope}
\end{tikzpicture}\arrow[r] \& 
\begin{tikzpicture}[baseline=-.5ex]
\begin{scope}[xshift=-1cm]
\draw[thick] (.7,.3) arc (90:180:.7 and 0.3) node[midway,above] {\tiny$\fd$} arc (180:270:.7 and 0.3) node[midway,below] {\tiny$\fd-1$};
\draw[fill=white] (0,0) circle (2pt);
\end{scope}
\begin{scope}[xshift=1cm]
\draw[thick] (-.7,.3) arc (90:0:.7 and 0.3) node[midway,above] {\tiny$\fd'-1$} arc (0:-90:.7 and 0.3) node[midway,below] {\tiny$\fd'$};
\draw[fill=white] (0,0) circle (2pt);
\end{scope}
\end{tikzpicture}
\end{tikzcd}
}}
\caption{Defining diagrams for Maslov potentials}
\end{figure}

Moreover, one can define $\mu_\Left\coloneqq \mu|_{T_\Left}:[n_\Left]\to\grading$ and $\mu_\Right\coloneqq \mu|_{T_\Right}:[n_\Right]\to\grading$ as the restrictions of $\mu$ to the connected components containing $T_\Left$ and $T_\Right$, respectively, under the canonical identifications $T_\Left\isomorphic [n_\Left]$ and $\cT_\Right\isomorphic [n_\Right]$.

\begin{definition}[Maslov potentials for bordered Legendrian graphs]
A Maslov potential for a bordered Legendrian graph $\cT$ is a triple $\bfmu=(\mu_\Left,\mu,\mu_\Right)$ and we denote the category of front and Lagrangian projections of Legendrian graphs with Maslov potentials by $\BLT_\front^\mu$ and $\BLT_\lag^\mu$, respectively.
\end{definition}

\begin{example}\label{example:Maslov potentials for trivial graphs}
Recall the bordered Legendrian graphs $\cT_n$, $\bfzero_n$ and $\bfinfty_n$ defined in Example/Definition~\ref{example/definition:trivial graph}.
Since they have no $x$-extreme points except for a vertex, there are no conditions for Maslov potentials. 
That is, any function $\mu:[n]=\{1,\dots,n\}\to\grading$ can be realized as a Maslov potential for $\cT_n$, $\bfzero_n$ or $\bfinfty_n$.
\end{example}

\begin{lemma}\cite[Theorem~2.21]{AB2018}\label{lemma:Reidemeister move preserves potential}
Let $\RM{m}:\cT'_\lag\to \cT_\lag$ be a Lagrangian Reidemeister move.
Then it lifts to $\RM{m}:(\cT'_\lag,{\mu'})\to (\cT_\lag,\mu)$. Namely, for given $\mu'$, there is a unique Maslov potential $\mu$ on $\cT_\lag$ such that 
\[
\RM{m}_{*}\mu'=\mu.
\]
\end{lemma}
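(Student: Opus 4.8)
The plan is to analyze how a single Lagrangian Reidemeister move changes the combinatorics that the Maslov potential is defined on, and then check that the constraints \eqref{equation:defining equation of Maslov potentials of Lagrangian projection} are respected. Concretely, fix a move $\RM{m}\colon \cT'_\lag\to\cT_\lag$ supported in a small disk $D$, and let $\mu'$ be a Maslov potential on $\cT'_\lag$. Outside $D$ the two diagrams agree, so we get a partial function on $R_\lag=\pi_0(T_\lag\setminus(V_\lag\cup S_\lag))$ simply by transporting $\mu'$ along the identification of the complements of $D$; the content of the lemma is that this extends \emph{uniquely} to all of $R_\lag$ and satisfies the defining equation at every $x$-extreme point, including the newly created or destroyed ones inside $D$.

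First I would organize the proof as a case check over the list of Lagrangian moves in Figure~\ref{fig:RM}: the moves $\RM{0_a},\RM{0_b},\RM{0_c}$ (creation/cancellation of crossings near a vertex or basepoint), $\RM{ii}$ (a Reidemeister-II type move creating a pair of $x$-extreme points), the two triple-point moves $\RM{iii_a},\RM{iii_b}$, and the vertex-slide move $\RM{iv}$. For each, I would note which connected components of the strand complement are merged or split and which $x$-extreme points are created or destroyed. The key observations are: (i) a crossing is never an $x$-extreme point and never a vertex, so moves that only manipulate crossings (and do not pass a strand over a cusp) do not change the set $S_\lag\setminus V_\lag$ nor the partition $R_\lag$ in a way that could violate \eqref{equation:defining equation of Maslov potentials of Lagrangian projection}; the transported function is automatically a valid potential, and uniqueness is clear because $\mu$ is forced on the unique region meeting the boundary of $D$. (ii) In the move $\RM{ii}$, a single strand acquires a new $x$-minimal point $s_-$ and a new $x$-maximal point $s_+$, together with a small "finger" region between them; on this new region $\mu$ is forced to be $\mu'(\text{ambient strand})+1$ by the equation at $s_-$, and then the equation at $s_+$ is automatically satisfied because it reads $\mu(s_+^+)-\mu(s_+^-)=(\mu'+1)-\mu'\cdot(\text{shifted})$... more precisely the two new equations are consistent and determine the new value uniquely. (iii) For $\RM{iv}$ (sliding a collection of half-edges of a vertex past a strand) and $\RM{iii_a},\RM{iii_b}$, no $x$-extreme points and no vertices are created or destroyed and the partition $R_\lag$ is canonically identified on both sides, so again $\mu=\RM{m}_*\mu'$ is forced and valid.

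The main obstacle, and the step requiring genuine care rather than bookkeeping, is the move $\RM{ii}$ (and any move in which a strand is pushed across a cusp or a new cusp pair appears), because this is the only place where the set of $x$-extreme points genuinely changes and where one must verify the \emph{sign} bookkeeping in \eqref{equation:defining equation of Maslov potentials of Lagrangian projection} — that an $x$-minimal point contributes $+1$ and an $x$-maximal point contributes $-1$ — is consistent around the newly created loop. Here one checks that going around the new bigon, the potential returns to its original value precisely because the $+1$ at the minimum cancels the $-1$ at the maximum; this consistency is what makes the extension exist, and the fact that the region adjacent to $\partial D$ pins down the overall additive constant is what makes it unique. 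I would present this case with the local picture of Figure~\ref{figure:defining equation of Maslov potentials for Lagrangian} and leave the remaining cases as routine inspections. Finally, uniqueness in all cases follows from the same principle: $T_\lag\setminus(V_\lag\cup S_\lag)$ is connected through the exterior of $D$ to the interior, so once $\mu$ is determined on any exterior region it is determined everywhere, and $\mu'$ determines it on the exterior.
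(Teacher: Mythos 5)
The paper offers no proof of this lemma: it is quoted from \cite[Theorem~2.21]{AB2018}, so there is no in-paper argument to measure yours against. On its own terms, your strategy is the standard and correct one: transport $\mu'$ across the identification of the diagrams outside the supporting disk $D$, note that crossings play no role in the definition of $R_\lag$ or in the constraints \eqref{equation:defining equation of Maslov potentials of Lagrangian projection}, and reduce both existence and uniqueness to a local check at whatever $x$-extreme points are created or destroyed, with uniqueness following because every component of $R_\lag$ inside $D$ is tied by a chain of defining equations to a component meeting $\partial D$. Your sign-cancellation check for a newly created $x$-minimum/$x$-maximum pair is exactly the nontrivial verification.

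Two pieces of bookkeeping should be tightened. First, your sorting of phenomena by the \emph{names} of the moves is unreliable: for instance, in the realization of $\Res\RM{II}$ the move $\RM{ii}$ merely sweeps a strand across an \emph{existing} $x$-extreme point of another strand, creating two crossings and leaving $S_\lag$ and $R_\lag$ untouched (your easy case), whereas in the realization of $\Res\RM{I}$ an $x$-max/$x$-min pair together with an interior component of $R_\lag$ appears or disappears (your hard case). The argument should be organized by which of these phenomena occurs in the local picture, with every move in Figure~\ref{fig:RM} then checked against that list. Second, and more substantively, you omit the case where an $x$-extreme point is created or destroyed on a half-edge adjacent to a vertex, as happens in $\RM{0_b}$, $\RM{0_c}$ and in the resolutions of $\RM{VI}$: there the move creates or kills a component of $R_\lag$ pinched between the vertex and the new extreme point. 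This case is actually \emph{easier} than your case (ii) — the value on the short component is forced by the single equation at the new extreme point, and no cycle of constraints arises precisely because \eqref{equation:defining equation of Maslov potentials of Lagrangian projection} imposes nothing at points of $V_\lag$ — but it must be mentioned, both because it is where the restriction $\mu_v$ to the half-edges can genuinely change under the move, and because it is the case in which one must see explicitly why the new component is still pinned to the exterior (through that one equation) so that uniqueness does not fail. With those two repairs the proof is complete.
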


\begin{definition}[Restrictions of Maslov potentials on vertices]
For each vertex $v$ of type $(\ell,r)$ with $\ell+r=n$, the set $H_v$ of half-edges can be identified with $\Zmod{n}$ by Definition~\ref{definition:types of vertices} and we denote the restriction $\mu|_{H_v}:\Zmod{n}\to\grading$ of a Maslov potential to $H_v$ by $\mu_v$.
\end{definition}

\subsubsection{Concatenations of bordered Legendrian graphs}
For $i=1,2$, let $\cT^i\in\BLT_\front$ be a front projection of type $\left(n^i_\Left, n^i_\Right\right)$.
Suppose that $n^1_\Right=n^2_\Left$.
Then we can naturally define the \emph{concatenation} $\cT^1_\front\cdot \cT^2_\front$ simply by concatenating and identifying $T^1_\Right$ and $T^2_\Left$.
The result $T$ becomes a front projection of the bordered Legendrian graph whose left and right borders $T_\Left\coloneqq T^1_\Left$ and $T_\Right\coloneqq T^2_\Right$ come naturally from $\cT^1$ and $\cT^2$, respectively.

\begin{remark}
It is important to note that we will not regard the points of concatenations as vertices of $T$.
Therefore $T$ has $n$-less edges than the disjoint union of $T^1$ and $T^2$.
\end{remark}

\begin{definition}[Closure]
For $\cT\in\BLT_\front$ of type $(n_\Left,n_\Right)$, the \emph{closure} $\hat{\cT}$ is defined by the Legendrian graph obtained by the concatenation
\[
\hat {\cT}\coloneqq \bfzero_{n_\Left}\cdot \cT \cdot \bfinfty_{n_\Right}\in\LT_\front
\]
as depicted in Figure~\ref{figure:closure}.
\end{definition}

\begin{figure}[ht]
\[
\begin{tikzcd}
\cT=\begin{tikzpicture}[baseline=-.5ex]
\foreach \i in {0.75, 0.25, -0.25, -0.75} {
\draw[thick, rounded corners] (-1,\i) -- (-0.75,\i) (0.25,\i) -- (0.5,\i);
}
\draw(-0.75,-1) rectangle (0.25,1);
\draw(-0.25,0) node {$T$};
\draw[red](-1,-1)--(-1,1);
\draw[red](0.5,-1)--(0.5,1);
\end{tikzpicture}\arrow[r,"\hat\cdot",mapsto]&
\begin{tikzpicture}[baseline=-.5ex]
\draw[fill](0.25,0) circle (0.05) node[above] {$0$};
\foreach \i in {0.75, 0.25, -0.25, -0.75} {
\draw[thick, rounded corners](0.25,0) to[out=0,in=180] (1,\i);
}
\draw[red](1,-1)--(1,1);
\end{tikzpicture}
\cdot
\begin{tikzpicture}[baseline=-.5ex]
\foreach \i in {0.75, 0.25, -0.25, -0.75} {
\draw[thick, rounded corners] (-1,\i) -- (-0.75,\i) (0.25,\i) -- (0.5,\i);
}
\draw(-0.75,-1) rectangle (0.25,1);
\draw(-0.25,0) node {$T$};
\draw[red](-1,-1)--(-1,1);
\draw[red](0.5,-1)--(0.5,1);
\end{tikzpicture}
\cdot
\begin{tikzpicture}[baseline=-.5ex]
\draw[fill](-0.25,0) circle (0.05) node[above] {$\infty$};
\foreach \i in {0.75, 0.25, -0.25, -0.75} {
\draw[thick, rounded corners](-0.25,0) to[out=180,in=0] (-1,\i);
}
\draw[red](-1,-1)--(-1,1);
\end{tikzpicture}
\arrow[r,equal]&
\begin{tikzpicture}[baseline=-.5ex]
\begin{scope}[xshift=-2cm]
\draw[fill](0.5,0) circle (0.05) node[above] {$0$};
\foreach \i in {0.75, 0.25, -0.25, -0.75} {
\draw[thick, rounded corners](0.5,0) to[out=0,in=180] (1.25,\i);
}
\end{scope}
\draw(-0.75,-1) rectangle (0.25,1);
\draw(-0.25,0) node {$T$};
\begin{scope}[xshift=1.5cm]
\draw[fill](-0.5,0) circle (0.05) node[above] {$\infty$};
\foreach \i in {0.75, 0.25, -0.25, -0.75} {
\draw[thick, rounded corners](-0.5,0) to[out=180,in=0] (-1.25,\i);
}
\end{scope}
\end{tikzpicture}=\hat \cT
\end{tikzcd}
\]
\caption{The closure of the front projection $\cT$}
\label{figure:closure}
\end{figure}
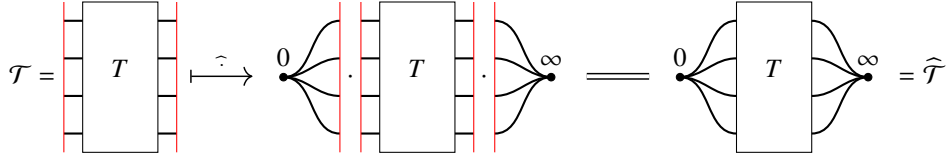

\begin{lemma}\label{lemma:potential extends to the closure}
The closure $\hat\cdot:\BLT_\front^\mu\to \LT_\front^\mu$ is well-defined.
\end{lemma}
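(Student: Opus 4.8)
The plan is to reduce the statement to a purely local verification at the two vertices $0_{n_\Left}$ and $\infty_{n_\Right}$ that are introduced by the closure operation. Recall that a Maslov potential on a bordered Legendrian graph is a triple $\bfmu=(\mu_\Left,\mu,\mu_\Right)$, where $\mu$ is a function on $R=\pi_0(T\setminus(V\cup S))$ satisfying \eqref{equation:defining equation of Maslov potentials}, and $\mu_\Left,\mu_\Right$ are its restrictions to the border components. Since the closure is defined by the concatenation $\hat\cT=\bfzero_{n_\Left}\cdot\cT\cdot\bfinfty_{n_\Right}$, and concatenation is defined by identifying the relevant border strands, I first observe that the underlying front $\hat T$ is regular whenever $\cT$ is: the new vertices $0_{n_\Left}$ and $\infty_{n_\Right}$ are, by Example/Definition~\ref{example/definition:trivial graph}, the only $x$-extreme points in $\bfzero_{n_\Left}$ and $\bfinfty_{n_\Right}$, and the $n$ horizontal strands emanating from each carry no crossings, no base points, and no other $x$-extreme points; moreover each of these strands contains a vertex (namely $0_{n_\Left}$ or $\infty_{n_\Right}$ itself), so condition (3) of Definition~\ref{definition:regular projections} is satisfied. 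Hence $\hat\cT\in\LT_\front$.

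Next I would produce the Maslov potential on $\hat\cT$. The set $\hat R=\pi_0(\hat T\setminus(\hat V\cup \hat S))$ decomposes as the disjoint union of $R$ (the interior components of $T$, unchanged) together with the $n_\Left$ strands attached at $0_{n_\Left}$ and the $n_\Right$ strands attached at $\infty_{n_\Right}$. On the copy of $R$ we set $\hat\mu=\mu$. The $i$-th strand attached at the new left vertex $0_{n_\Left}$ is glued, under the concatenation identification $T_\Left\isomorphic[n_\Left]$, to the border component of $\cT$ on which $\mu_\Left$ takes the value $\mu_\Left(i)$; so we are forced to set $\hat\mu$ equal to $\mu_\Left(i)$ on that whole strand, and similarly $\hat\mu=\mu_\Right(j)$ on the $j$-th strand at $\infty_{n_\Right}$. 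This defines $\hat\mu$ on all of $\hat R$, and uniqueness is immediate from the fact that each new strand is connected and contains a point of an old border component whose potential value is dictated by $\bfmu$.

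It remains to check that $\hat\mu$ satisfies the defining equation \eqref{equation:defining equation of Maslov potentials} at every $s\in\hat S\setminus\hat V$. But $\hat S\setminus\hat V$ is exactly $S\setminus V$: the only points of $\hat S$ not already in $S$ are the two vertices $0_{n_\Left}$ and $\infty_{n_\Right}$, which lie in $\hat V$ and are thus excluded (consistent with the fact that, by Example~\ref{example:Maslov potentials for trivial graphs}, $\cT_n$, $\bfzero_n$, $\bfinfty_n$ impose no constraints on Maslov potentials). For $s\in S\setminus V$, both the upper strand $s^+$ and the lower strand $s^-$ lie in the copy of $R$ inside $\hat R$, on which $\hat\mu=\mu$; hence $\hat\mu(s^+)-\hat\mu(s^-)=\mu(s^+)-\mu(s^-)=1$, as required. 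Finally I note the construction is compatible with the border data — $\hat\cT$ is of type $(0,0)$, so its potential is just $\hat\mu$ with empty border restrictions — so $\hat\mu$ is a genuine element of $\LT_\front^\mu$ and $\hat\cdot$ is well-defined as a map $\BLT_\front^\mu\to\LT_\front^\mu$.

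The only mild subtlety, and the one step worth spelling out carefully, is the bookkeeping in the previous paragraph: one must be sure that attaching $\bfzero_{n_\Left}$ and $\bfinfty_{n_\Right}$ does not create new $x$-extreme points \emph{on the old part} of the front — it does not, since the attaching strands are horizontal near the gluing locus and, per the Remark following the concatenation definition, the gluing points are not regarded as vertices — so the sets $S$, $V$ and the decomposition of $\hat R$ behave exactly as claimed. Everything else is a direct unwinding of the definitions.
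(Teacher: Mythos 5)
Your proof is correct and follows the same route as the paper's one-line argument (which simply observes that the extension of $\mu$ to $\hat\mu$ is unique by construction, and that by Example~\ref{example:Maslov potentials for trivial graphs} the new vertices $0_{n_\Left}$ and $\infty_{n_\Right}$ impose no Maslov-potential constraint since vertices are excluded from \eqref{equation:defining equation of Maslov potentials}); you have merely unwound the details. One small imprecision: the components of $\hat R$ meeting the borders are not disjoint from the old ones — each old border component and its attached new strand merge into a single component of $\hat T\setminus(\hat V\cup\hat S)$, since the gluing points are not vertices — but this only reinforces your point that the value of $\hat\mu$ there is forced.
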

\begin{proof}
It is obvious that for each $(\cT,\mu)\in\BLT_\front^\mu$, there is a unique way to extend $\mu$ on $T$ to $\hat \mu$ on $\hat T$ by definition of the closure, which is well-defined since any function on $[n]$ can be realized as a Maslov potential of $\bfzero_n$ or $\bfinfty_n$ as seen in Example~\ref{example:Maslov potentials for trivial graphs}.
\end{proof}

\subsubsection{Ng's resolution}
We introduce a combinatorial way, called the \emph{Ng's resolution} to obtain a regular Lagrangian projection $T_\lag\in\LT_\lag$ for given front projection $T\in\LT_\front$ defining the equivalent Legendrian graphs.
\begin{remark}
This is an extension of the original Ng's resolution for Legendrian knots to Legendrian graphs.
\end{remark}

\begin{definition}[Ng's resolution]\cite[Definition~2.1]{Ng2003}\label{definition:Ng's resolution}
For $T\in\LT_\front$, the \emph{Ng's resolution} $\Res(T)$ is a Lagrangian projection obtained by (combinatorially) replacing the local pieces as follows:
\begin{align*}
\Lcusp&\mapsto \leftarc,&
\Rcusp&\mapsto\rightkink,&
\crossing&\mapsto \crossingpositive,
\end{align*}
and for a vertex $v$ of type $(\ell,r)$, we take a replacement as follows:
\[
\begin{tikzcd}
\begin{tikzpicture}[baseline=-0.5ex,scale=0.8]
\foreach \i in {4,...,8} {
	\draw[thick] (-1,{(6-\i)/3}) to[out=0,in=180] (0,0);
}
\draw (-1,0.67) node[left] {$h_{v,1}$};
\draw (-1,0.33) node[left] {$h_{v,2}$};
\draw (-1,-0.167) node[left] {$\vdots$};
\draw (-1,-0.67) node[left] {$h_{v,\ell}$};
\foreach \i in {1,2,3} {
	\draw[thick] (1,{(2-\i)/1.5}) to[out=180,in=0] (0,0);
}
\draw (1,0.67) node[right] {$h_{v,\ell+1}$};
\draw (1,0) node[right] {$\vdots$};
\draw (1,-0.67) node[right] {$h_{v,\ell+r}$};
\draw[fill] (0,0) circle (2pt) node[above] {$v$};
\end{tikzpicture}\arrow[r,mapsto] &
\begin{tikzpicture}[baseline=-0.5ex,scale=0.8]
\draw[thick,rounded corners] (-2, -2/3 ) -- (-1.8, -2/3 ) -- (-0.6,2/3) -- (0,0);
\foreach \i in {7,6,5} {
	\draw[white,line width=3,rounded corners] (-2, {(6-\i)/3} ) -- (-1.8, {(6-\i)/3} ) -- ({-1.8+0.3*(8-\i)}, {-2/3} ) -- (-0.7,{(\i-6)/3});
	\draw[thick,rounded corners] (-2, {(6-\i)/3} ) -- (-1.8, {(6-\i)/3} ) -- ({-1.8+0.3*(8-\i)}, {-2/3} ) -- (-0.6,{(\i-6)/3}) --  (0,0);
}
\draw[white,line width=3,rounded corners] (-2, 2/3 ) -- (-1.8, 2/3 ) -- (-0.6,-2/3);
\draw[thick,rounded corners] (-2, 2/3 ) -- (-1.8, 2/3 ) -- (-0.6,-2/3) -- (0,0);
\draw (-2,0.67) node[left] {$h_{v,1}$};
\draw (-2,0.33) node[left] {$h_{v,2}$};
\draw (-2,-0.167) node[left] {$\vdots$};
\draw (-2,-0.67) node[left] {$h_{v,\ell}$};
\foreach \i in {1,2,3} {
	\draw[thick] (1,{(2-\i)/1.5}) -- (0,0);
}
\draw (1,0.67) node[right] {$h_{v,\ell+1}$};
\draw (1,0) node[right] {$\vdots$};
\draw (1,-0.67) node[right] {$h_{v,\ell+r}$};
\draw[fill] (0,0) circle (2pt) node[above] {$v$};
\end{tikzpicture}
\end{tikzcd}
\]

In particular, a right cusp with a base point will be mapped as follows:
\[
\begin{tikzcd}
\begin{tikzpicture}[baseline=-.5ex,scale=0.7]
\draw[thick] (-1,0.5) to[out=-45,in=180] (0,0) to[out=180,in=45] (-1,-0.5);
\draw (0,-0.2) -- (0,0.2);
\end{tikzpicture}\arrow[r,mapsto]&
\begin{tikzpicture}[baseline=-.5ex,scale=0.7]
\draw[thick] (0,-0.4) arc (-90:90:0.4) to[out=180,in=45] (-1,-0.5);
\draw[white,line width=5] (-1,0.5) to[out=-45,in=180] (0,-0.4);
\draw[thick] (-1,0.5) to[out=-45,in=180] (0,-0.4);
\draw (0.2,0) -- (0.6,0);
\end{tikzpicture}
\end{tikzcd}
\]
\end{definition}

For example, let $\cT\in\BLT_\front$ be a front projection of a bordered Legendrian graph. Then the Ng's resolution of its closure look like a picture in Figure~\ref{figure:resolution of the closure}.
Notice that there are $\binom{n_\Right}2$ many additional crossings in $\Res(\hat \cT)$ than $\cT$, which came from the right closure $\bfinfty_{n_\Right}$.

\begin{figure}[ht]
\[
\Res(\hat \cT)=
\begin{tikzpicture}[baseline=-0.5ex]
\begin{scope}
\foreach \t in {-2,...,2} {
\draw[thick,rounded corners] (-1,0) -- (-0.5,\t*0.4) -- (0,\t*0.4);
}
\draw (-0.5,0.8) node[left] {$1$};
\draw (-0.5,-0.8) node[left] {$n_\Left$};
\draw[fill](-1,0) circle (2pt) node[left] {$0$};
\end{scope}
\begin{scope}[xshift=3cm]
\draw[thick,rounded corners] (-1,-0.8)--(-0.8,-0.8) -- (0.8, 0.8) --(1.3,0);
\draw[white,line width=5,rounded corners] (-1,-0.4)--(-0.8,-0.4) -- (-.4,-0.8) -- (0.8, 0.4);
\draw[thick,rounded corners] (-1,-0.4)--(-0.8,-0.4) -- (-.4,-0.8) -- (0.8, 0.4)--(1.3,0);
\draw[white,line width=5,rounded corners] (-1,0)--(-0.8,0) -- (0, -0.8) -- (0.8, 0);
\draw[thick,rounded corners] (-1,0)--(-0.8,0) -- (0, -0.8) -- (0.8, 0)--(1.3,0);
\draw[white,line width=5,rounded corners] (-1,0.4)--(-0.8,0.4) -- (.4, -0.8) -- (0.8, -0.4);
\draw[thick,rounded corners] (-1,0.4)--(-0.8,0.4) -- (.4, -0.8) -- (0.8, -0.4) --(1.3,0);
\draw[white,line width=5,rounded corners] (-1,0.8)--(-0.8,0.8) -- (0.8, -0.8);
\draw[thick,rounded corners] (-1,0.8)--(-0.8,0.8) -- (0.8, -0.8)--(1.3,0);
\draw[fill] (1.3,0) circle (2pt) node[right] {$\infty$};
\draw (0.8,0.8) node[right] {$n_\Right$};
\draw (0.8,-0.8) node[right] {$1$};
\end{scope}
\draw (0,-1) rectangle (2,1);
\draw (1,0) node {$\Res(T)$};
\end{tikzpicture}
\]
\caption{The Ng's resolution of the closure of $\cT$}
\label{figure:resolution of the closure}
\end{figure}
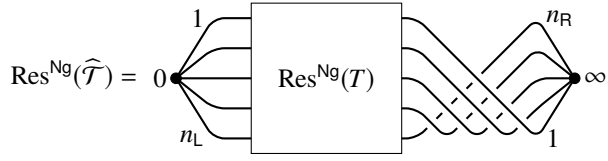

\begin{lemma}\label{lemma:functoriality of Ng's resolution}
The Ng's resolution $\Res:\LT_\front^\mu\to\LT_\lag^\mu$ is well-defined and preserves the equivalence.

In other words, each front Reidemeister move will be mapped to a sequence of Lagrangian Reidemeister moves with Maslov potentials.
\end{lemma}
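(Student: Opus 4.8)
The plan is to prove Lemma~\ref{lemma:functoriality of Ng's resolution} in two stages: first establish that $\Res$ is well-defined as a map $\LT_\front^\mu\to\LT_\lag^\mu$ (i.e., that the output is a genuine regular Lagrangian projection carrying a well-defined Maslov potential), and then verify that each front Reidemeister move of Figure~\ref{fig:front_RM} is carried to a finite composite of Lagrangian Reidemeister moves of Figure~\ref{fig:RM}, compatibly with the lifting to Maslov potentials guaranteed by Lemma~\ref{lemma:Reidemeister move preserves potential}.

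\textbf{Well-definedness.} First I would check that for $T\in\LT_\front$ the diagram $\Res(T)$ obtained by the local replacements in Definition~\ref{definition:Ng's resolution} is in fact in $\LT_\lag$: the only new intersections introduced are transverse double points coming from cusps and from the ``braiding'' fan at each vertex of type $(\ell,r)$, and away from a neighborhood of the cusps/vertices the diagram is the graph of the function $x\mapsto y$ determined by the slopes of the front, so it has isolated transverse crossings and the regularity conditions of Definition~\ref{definition:regular projections} hold. Next, for the Maslov potential: the set $R$ of components of $T\setminus(V\cup S)$ maps onto the set $R_\lag$ of components of $\Res(T)\setminus(V_\lag\cup S_\lag)$ — the replacements $\Lcusp\mapsto\leftarc$ and $\Rcusp\mapsto\rightkink$ turn an $x$-maximal cusp into a nearby $x$-maximal smooth point and an $x$-minimal cusp into a loop with one $x$-minimal point, so the correspondence of strands is a bijection near each former cusp — and one checks that the sign conventions in \eqref{equation:defining equation of Maslov potentials} versus \eqref{equation:defining equation of Maslov potentials of Lagrangian projection} match under this correspondence (the left cusp becomes $x$-maximal, giving $\mu(s^+)-\mu(s^-)=-1$, which is consistent with the incoming front condition as in Figures~\ref{figure:defining equation of Maslov potentials for front} and \ref{figure:defining equation of Maslov potentials for Lagrangian}; the right cusp's loop contributes an $x$-minimal point where the strand is attached to itself and no potential jump is forced). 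Hence $\mu$ transports uniquely to a Maslov potential $\Res_*\mu$ on $\Res(T)$, which is what it means for $\Res$ to land in $\LT_\lag^\mu$.

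\textbf{Compatibility with Reidemeister moves.} For the second assertion I would argue move by move. For each front Reidemeister move $\RM{I}$--$\RM{VI}$ of Figure~\ref{fig:front_RM}, one applies $\Res$ to both sides and exhibits an explicit sequence of Lagrangian moves $\RM{0_a},\RM{0_b},\RM{0_c},\RM{ii},\RM{iii_a},\RM{iii_b},\RM{iv}$ (with reflections) connecting the two resolved diagrams; this is the content of the displayed local pictures, and the verification is local and combinatorial. Moves away from cusps and vertices (the front $\RM{II}$ and $\RM{III}$) go to the corresponding Lagrangian planar-isotopy and triple-point moves essentially verbatim, since $\Res$ is the identity there up to a small isotopy; the cusp moves $\RM{I}$ and the vertex moves $\RM{V}$, $\RM{VI}$ require unpacking the replacement recipe and are where one actually uses the Lagrangian moves $\RM{0_*}$ that create/cancel the loops and the vertex fan. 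Once the underlying diagram-level statement is in hand, the Maslov-potential refinement is automatic: by Lemma~\ref{lemma:Reidemeister move preserves potential} each Lagrangian move lifts uniquely on potentials, so the composite lifts uniquely, and by uniqueness of $\Res_*$ on each side the lifted composite must agree with $\Res_*$ of the front move — no further computation is needed.

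\textbf{The hard part} will be the bookkeeping at vertices: the vertex replacement in Definition~\ref{definition:Ng's resolution} introduces $\binom{\ell}{2}+\binom{r}{2}$ or so crossings arranged in a specific braided fan, and checking that a front move involving a vertex (a $\RM{V}$ move sliding a strand past a vertex, or an isotopy reordering the cyclic labelling $H_v\cong\ZZ/n\ZZ$) resolves to a sequence of $\RM{iv}$ and $\RM{iii}$-type moves requires carefully tracking which strand of the fan is which; the analogous Legendrian-knot computation (one strand, one crossing) of \cite{Ng2003} is the prototype, and the graph case is a ``braid-theoretic'' elaboration of it. I expect no conceptual obstruction, only the need to organize the finitely many local cases; for readability I would present one representative vertex move in detail and indicate that the remaining cases are analogous.
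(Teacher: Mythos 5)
Your proposal follows essentially the same route as the paper: check well-definedness of the resolved diagram and of the transported Maslov potential, verify that each front Reidemeister move resolves to an explicit composite of Lagrangian moves, and invoke Lemma~\ref{lemma:Reidemeister move preserves potential} to lift everything to potentials. The only differences are cosmetic: the paper makes well-definedness immediate by exhibiting the vertex replacement as an honest Legendrian isotopy (so $\Res(T)$ is automatically a regular Lagrangian projection of an equivalent Legendrian graph), and you should correct the slip where you call the left cusp $x$-maximal --- $\Lcusp$ is $x$-minimal and remains $x$-minimal under $\Lcusp\mapsto\leftarc$, which is precisely why the sign conventions \eqref{equation:defining equation of Maslov potentials} and \eqref{equation:defining equation of Maslov potentials of Lagrangian projection} agree there, while the $x$-maximal right cusp is the one that acquires the kink.
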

\begin{proof}
The well-definedness is obvious since the Ng's resolution is indeed defined as follows:
\begin{center}
\scriptsize\def\svgscale{0.7}
\begingroup%
  \makeatletter%
  \providecommand\color[2][]{%
    \errmessage{(Inkscape) Color is used for the text in Inkscape, but the package 'color.sty' is not loaded}%
    \renewcommand\color[2][]{}%
  }%
  \providecommand\transparent[1]{%
    \errmessage{(Inkscape) Transparency is used (non-zero) for the text in Inkscape, but the package 'transparent.sty' is not loaded}%
    \renewcommand\transparent[1]{}%
  }%
  \providecommand\rotatebox[2]{#2}%
  \ifx\svgwidth\undefined%
    \setlength{\unitlength}{356.07366575bp}%
    \ifx\svgscale\undefined%
      \relax%
    \else%
      \setlength{\unitlength}{\unitlength * \real{\svgscale}}%
    \fi%
  \else%
    \setlength{\unitlength}{\svgwidth}%
  \fi%
  \global\let\svgwidth\undefined%
  \global\let\svgscale\undefined%
  \makeatother%
  \begin{picture}(1,0.17525977)%
    \put(0,0){\includegraphics[width=\unitlength,page=1]{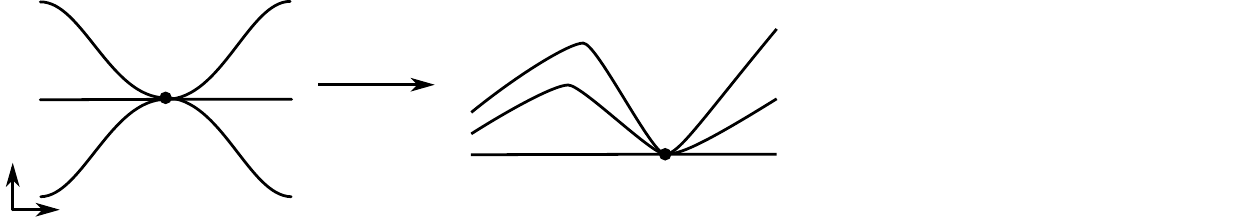}}%
    \put(0.05523567,0.00562905){\color[rgb]{0,0,0}\makebox(0,0)[lb]{\smash{$x$}}}%
    \put(-0.00093248,0.05056357){\color[rgb]{0,0,0}\makebox(0,0)[lb]{\smash{$z$}}}%
    \put(0,0){\includegraphics[width=\unitlength,page=2]{vertex_isotopy.pdf}}%
    \put(0.40415968,0.00576535){\color[rgb]{0,0,0}\makebox(0,0)[lb]{\smash{$x$}}}%
    \put(0.34799153,0.05069988){\color[rgb]{0,0,0}\makebox(0,0)[lb]{\smash{$z$}}}%
    \put(0.26162477,0.11796426){\color[rgb]{0,0,0}\makebox(0,0)[lb]{\smash{{\it isotopy}}}}%
    \put(0,0){\includegraphics[width=\unitlength,page=3]{vertex_isotopy.pdf}}%
    \put(0.65019555,0.12669009){\color[rgb]{0,0,0}\makebox(0,0)[lb]{\smash{$y=\frac{dz}{dx}$}}}%
    \put(0,0){\includegraphics[width=\unitlength,page=4]{vertex_isotopy.pdf}}%
    \put(0.79707586,0.01686268){\color[rgb]{0,0,0}\makebox(0,0)[lb]{\smash{$x$}}}%
    \put(0.74090771,0.05069988){\color[rgb]{0,0,0}\makebox(0,0)[lb]{\smash{$y$}}}%
    \put(0,0){\includegraphics[width=\unitlength,page=5]{vertex_isotopy.pdf}}%
  \end{picture}%
\endgroup%

\end{center}
Moreover, it is not hard to check that the Ng's resolution for each of front Reidemeister moves and their inverses gives us an equivalence. See Figure~\ref{figure:Reidemeister moves for Ng's resolution}.

Finally, by Lemma~\ref{lemma:Reidemeister move preserves potential}, we are done.
\end{proof}

\begin{figure}[ht]
\subfigure[$\Res\RM{I}=\RM{ii}$]{$
\begin{tikzcd}[ampersand replacement=\&,column sep=1.5pc]
\begin{tikzpicture}[baseline=-.5ex,scale=0.6]
\draw[dashed](0,0) circle (1);
\clip(0,0) circle (1);
\draw[thick] (-1,-0.5) to[out=0,in=180] (0.5,0) to[out=180,in=0] (0,0.5) to[out=180,in=0] (-0.5,0) to[out=0,in=180] (1,-0.5);
\end{tikzpicture}
\arrow[r,"\RM{I}"]\arrow[d,Rightarrow] \&
\begin{tikzpicture}[baseline=-.5ex,scale=0.6]
\draw[dashed](0,0) circle (1);
\clip(0,0) circle (1);
\draw[thick] (-1,-0.5) to[out=0,in=180] (0,0) to[out=0,in=180] (1,-0.5);
\end{tikzpicture}
\arrow[d,Rightarrow]\\
\begin{tikzpicture}[baseline=-.5ex,scale=0.6]
\draw[dashed](0,0) circle (1);
\clip(0,0) circle (1);
\draw[thick] (-1,-0.5) to[out=0,in=180] (0.5,0.5) arc (90:-90:0.3);
\draw[white,line width=5] (0.5,-0.1) to[out=180,in=0] (-0.3,0.5) arc (90:180:0.3) to[out=270,in=0] (1,-0.5);
\draw[thick] (0.5,-0.1) to[out=180,in=0] (-0.3,0.5) arc (90:180:0.3) to[out=270,in=0] (1,-0.5);
\end{tikzpicture}
\arrow[r,"\RM{ii}"] \&
\begin{tikzpicture}[baseline=-.5ex,scale=0.6]
\draw[dashed](0,0) circle (1);
\clip(0,0) circle (1);
\draw[thick] (-1,-0.5) to[out=0,in=180] (0,0) to[out=0,in=180] (1,-0.5);
\end{tikzpicture}
\end{tikzcd}
$}
\qquad
\subfigure[$\Res\RM{II}=\RM{ii}$ or $\RM{iii_a}\circ\RM{ii}$]{$
\begin{tikzcd}[ampersand replacement=\&,column sep=1.5pc]
\begin{tikzpicture}[baseline=-.5ex,scale=0.6]
\draw[dashed](0,0) circle (1);
\clip(0,0) circle (1);
\draw[thick] (1,-.5) to[out=180,in=0] (-0.5,0) to[out=0,in=180] (1,.5);
\draw[thick] (0,-1) -- (0.5,1);
\end{tikzpicture}\arrow[r,"\RM{II}"]\arrow[d,Rightarrow] \&
\begin{tikzpicture}[baseline=-.5ex,scale=0.6]
\draw[dashed](0,0) circle (1);
\clip(0,0) circle (1);
\draw[thick] (1,-.5) to[out=180,in=0] (-0.5,0) to[out=0,in=180] (1,.5);
\draw[thick] (-1,-1) -- (-0.5,1);
\end{tikzpicture}\arrow[d,Rightarrow]\\
\begin{tikzpicture}[baseline=-.5ex,scale=0.6]
\draw[dashed](0,0) circle (1);
\clip(0,0) circle (1);
\draw[thick] (0,-1) -- (0.5,1);
\draw[white,line width=5] (1,0.5) arc (90:270:1.5 and 0.5);
\draw[thick] (1,0.5) arc (90:270:1.5 and 0.5);
\end{tikzpicture}\arrow[r,"\RM{ii}"] \&
\begin{tikzpicture}[baseline=-.5ex,scale=0.6]
\draw[dashed](0,0) circle (1);
\clip(0,0) circle (1);
\draw[thick] (-1,-1) -- (-0.5,1);
\draw[white,line width=5] (1,0.5) arc (90:270:1.5 and 0.5);
\draw[thick] (1,0.5) arc (90:270:1.5 and 0.5);
\end{tikzpicture}
\end{tikzcd}\quad
\begin{tikzcd}[ampersand replacement=\&,column sep=1.5pc]
\begin{tikzpicture}[baseline=-.5ex,scale=0.6]
\draw[dashed](0,0) circle (1);
\clip(0,0) circle (1);
\draw[thick] (-1,-.5) to[out=0,in=180] (0.5,0) to[out=180,in=0] (-1,.5);
\draw[thick] (-0.5,-1) -- (0,1);
\end{tikzpicture}\arrow[rr,"\RM{II}"]\arrow[d,Rightarrow] \& \&
\begin{tikzpicture}[baseline=-.5ex,scale=0.6]
\draw[dashed](0,0) circle (1);
\clip(0,0) circle (1);
\draw[thick] (-1,-.5) to[out=0,in=180] (0.5,0) to[out=180,in=0] (-1,.5);
\draw[thick] (0.5,-1) -- (1,1);
\end{tikzpicture}\arrow[d,Rightarrow]\\
\begin{tikzpicture}[baseline=-.5ex,scale=0.6]
\draw[dashed](0,0) circle (1);
\clip(0,0) circle (1);
\draw[thick] (-0.5,-1) -- (0,1);
\draw[white,line width=5] (-1,-0.5) -- (-0.5,-0.5) to[out=0,in=180] (0.3,0.2) arc (90:-90:0.2);
\draw[thick] (-1,-0.5) -- (-0.5,-0.5)  to[out=0,in=180] (0.3,0.2) arc (90:-90:0.2);
\draw[white,line width=5] (0.3,-0.2) to[out=180,in=0] (-0.5,0.5) -- (-1,0.5);
\draw[thick] (0.3,-0.2) to[out=180,in=0] (-0.5,0.5) -- (-1,0.5);
\end{tikzpicture}\arrow[r,"\RM{iii_a}"] \&
\begin{tikzpicture}[baseline=-.5ex,scale=0.6]
\draw[dashed](0,0) circle (1);
\clip(0,0) circle (1);
\draw[thick] (0,-1) -- (0.5,1);
\draw[white,line width=5] (-1,-0.5) -- (-0.5,-0.5) to[out=0,in=180] (0.3,0.2) arc (90:-90:0.2);
\draw[thick] (-1,-0.5) -- (-0.5,-0.5)  to[out=0,in=180] (0.3,0.2) arc (90:-90:0.2);
\draw[white,line width=5] (0.3,-0.2) to[out=180,in=0] (-0.5,0.5) -- (-1,0.5);
\draw[thick] (0.3,-0.2) to[out=180,in=0] (-0.5,0.5) -- (-1,0.5);
\end{tikzpicture}\arrow[r,"\RM{ii}"] \&
\begin{tikzpicture}[baseline=-.5ex,scale=0.6]
\draw[dashed](0,0) circle (1);
\clip(0,0) circle (1);
\draw[thick] (0.5,-1) -- (1,1);
\draw[white,line width=5] (-1,-0.5) -- (-0.5,-0.5) to[out=0,in=180] (0.3,0.2) arc (90:-90:0.2);
\draw[thick] (-1,-0.5) -- (-0.5,-0.5)  to[out=0,in=180] (0.3,0.2) arc (90:-90:0.2);
\draw[white,line width=5] (0.3,-0.2) to[out=180,in=0] (-0.5,0.5) -- (-1,0.5);
\draw[thick] (0.3,-0.2) to[out=180,in=0] (-0.5,0.5) -- (-1,0.5);
\end{tikzpicture}
\end{tikzcd}
$}

\subfigure[$\Res\RM{III}=\RM{iii_a}$]{$
\begin{tikzcd}[ampersand replacement=\&,column sep=1.5pc]
\begin{tikzpicture}[baseline=-.5ex,scale=0.6]
\draw[dashed](0,0) circle (1);
\clip(0,0) circle (1);
\draw[thick] (-1,0) to[out=0,in=180] (0,-0.5) to[out=0,in=180] (1,0);
\draw[thick] (-1,-1) -- (1,1) (-1,1) -- (1,-1);
\end{tikzpicture}
\arrow[r,"\RM{III}"]\arrow[d,Rightarrow] \&
\begin{tikzpicture}[baseline=-.5ex,scale=0.6]
\draw[dashed](0,0) circle (1);
\clip(0,0) circle (1);
\draw[thick] (-1,0) to[out=0,in=180] (0,0.5) to[out=0,in=180] (1,0);
\draw[thick] (-1,-1) -- (1,1) (-1,1) -- (1,-1);
\end{tikzpicture}
\arrow[d,Rightarrow]\\
\begin{tikzpicture}[baseline=-.5ex,scale=0.6]
\draw[dashed](0,0) circle (1);
\clip(0,0) circle (1);
\draw[thick] (-1,-1) -- (1,1);
\draw[white,line width=5] (-1,0) to[out=0,in=180] (0,-0.5) to[out=0,in=180] (1,0);
\draw[thick] (-1,0) to[out=0,in=180] (0,-0.5) to[out=0,in=180] (1,0);
\draw[white,line width=5] (-1,1) -- (1,-1);
\draw[thick] (-1,1) -- (1,-1);
\end{tikzpicture}
\arrow[r,"\RM{iii_b}"] \&
\begin{tikzpicture}[baseline=-.5ex,scale=0.6]
\draw[dashed](0,0) circle (1);
\clip(0,0) circle (1);
\draw[thick] (-1,-1) -- (1,1);
\draw[white,line width=5] (-1,0) to[out=0,in=180] (0,0.5) to[out=0,in=180] (1,0);
\draw[thick] (-1,0) to[out=0,in=180] (0,0.5) to[out=0,in=180] (1,0);
\draw[white,line width=5] (-1,1) -- (1,-1);
\draw[thick] (-1,1) -- (1,-1);
\end{tikzpicture}
\end{tikzcd}
$}
\qquad
\subfigure[$\Res\RM{V}=\RM{iv}$ or $\RM{iv}\circ\RM{iii_a}\circ\cdots\circ\RM{iii_a}$]{$
\begin{tikzcd}[ampersand replacement=\&,column sep=1.5pc]
\begin{tikzpicture}[baseline=-.5ex,scale=0.6]
\draw[dashed](0,0) circle (1);
\clip(0,0) circle (1);
\draw[thick] (1,.6) to[out=180,in=0] (-0.5,0);
\draw[thick] (1,.2) to[out=180,in=0] (-0.5,0);
\draw[thick] (1,-.2) to[out=180,in=0] (-0.5,0);
\draw[thick] (1,-.6) to[out=180,in=0] (-0.5,0);
\draw[fill] (-0.5,0) circle (2pt);
\draw[thick] (0,-1) -- (0.5,1);
\end{tikzpicture}\arrow[r,"\RM{V}"]\arrow[d,Rightarrow] \&
\begin{tikzpicture}[baseline=-.5ex,scale=0.6]
\draw[dashed](0,0) circle (1);
\clip(0,0) circle (1);
\draw[thick] (1,.6) to[out=180,in=0] (-0.5,0);
\draw[thick] (1,.2) to[out=180,in=0] (-0.5,0);
\draw[thick] (1,-.2) to[out=180,in=0] (-0.5,0);
\draw[thick] (1,-.6) to[out=180,in=0] (-0.5,0);
\draw[fill] (-0.5,0) circle (2pt);
\draw[thick] (-1,-1) -- (-0.5,1);
\end{tikzpicture}\arrow[d,Rightarrow]\\
\begin{tikzpicture}[baseline=-.5ex,scale=0.6]
\draw[dashed](0,0) circle (1);
\clip(0,0) circle (1);
\draw[thick] (0,-1) -- (0.5,1);
\draw[white,line width=5] (1,.6) -- (-0.5,0) (1,.2) -- (-0.5,0) (1,-.2) -- (-0.5,0) (1,-.6) -- (-0.5,0);
\draw[thick] (1,.6) -- (-0.5,0) (1,.2) -- (-0.5,0) (1,-.2) -- (-0.5,0) (1,-.6) -- (-0.5,0);
\draw[fill] (-0.5,0) circle (2pt);
\end{tikzpicture}\arrow[r,"\RM{iv}"] \&
\begin{tikzpicture}[baseline=-.5ex,scale=0.6]
\draw[dashed](0,0) circle (1);
\clip(0,0) circle (1);
\draw[thick] (-1,-1) -- (-0.5,1);
\draw[thick] (1,.6) -- (-0.5,0) (1,.2) -- (-0.5,0) (1,-.2) -- (-0.5,0) (1,-.6) -- (-0.5,0);
\draw[fill] (-0.5,0) circle (2pt);
\end{tikzpicture}
\end{tikzcd}
\quad
\begin{tikzcd}[ampersand replacement=\&,column sep=1.5pc]
\begin{tikzpicture}[baseline=-.5ex,scale=0.6]
\draw[dashed](0,0) circle (1);
\clip(0,0) circle (1);
\draw[thick] (-1,.6) to[out=0,in=180] (0.5,0);
\draw[thick] (-1,.2) to[out=0,in=180] (0.5,0);
\draw[thick] (-1,-.2) to[out=0,in=180] (0.5,0);
\draw[thick] (-1,-.6) to[out=0,in=180] (0.5,0);
\draw[fill] (0.5,0) circle (2pt);
\draw[thick] (-0.5,-1) -- (0,1);
\end{tikzpicture}\arrow[rr,"\RM{V}"]\arrow[d,Rightarrow] \& \&
\begin{tikzpicture}[baseline=-.5ex,scale=0.6]
\draw[dashed](0,0) circle (1);
\clip(0,0) circle (1);
\draw[thick] (-1,.6) to[out=0,in=180] (0.5,0);
\draw[thick] (-1,.2) to[out=0,in=180] (0.5,0);
\draw[thick] (-1,-.2) to[out=0,in=180] (0.5,0);
\draw[thick] (-1,-.6) to[out=0,in=180] (0.5,0);
\draw[fill] (0.5,0) circle (2pt);
\draw[thick] (0.5,-1) -- (1,1);
\end{tikzpicture}\arrow[d,Rightarrow]\\
\begin{tikzpicture}[baseline=-.5ex,scale=0.6]
\draw[dashed](0,0) circle (1);
\clip(0,0) circle (1);
\draw[thick] (-0.75,-1) -- (-0.25,1);
\draw[white,line width=3,rounded corners] (-1,-0.45) -- (-0.6,-0.6) -- (0.2,0.6);
\draw[thick,rounded corners] (-1,-0.45) -- (-0.6,-0.6) -- (0.2,0.6) -- (0.5,0);
\draw[white,line width=3,rounded corners] (-1,-0.15) -- (-0.6,-0.2) -- (-0.33,-0.6) -- (0.2,0.2);
\draw[thick,rounded corners] (-1,-0.15) -- (-0.6,-0.2) -- (-0.33,-0.6) -- (0.2,0.2) -- (0.5,0);
\draw[white,line width=3,rounded corners] (-1,0.15) -- (-0.6,0.2) -- (-0.066,-0.6) -- (0.2,-0.2);
\draw[thick,rounded corners] (-1,0.15) -- (-0.6,0.2) -- (-0.066,-0.6) -- (0.2,-0.2)-- (0.5,0);
\draw[white,line width=3,rounded corners] (-1,0.45) -- (-0.6,0.6) -- (0.2,-0.6);
\draw[thick,rounded corners] (-1,0.45) -- (-0.6,0.6) -- (0.2,-0.6) -- (0.5,0);
\draw[fill] (0.5,0) circle (2pt);
\end{tikzpicture}\arrow[r,"\binom{m}2\RM{iii_a}"] \&
\begin{tikzpicture}[baseline=-.5ex,scale=0.6]
\draw[dashed](0,0) circle (1);
\clip(0,0) circle (1);
\draw[thick] (0,-1) -- (0.5,1);
\draw[white,line width=3,rounded corners] (-1,-0.45) -- (-0.6,-0.6) -- (0.2,0.6) -- (0.425, 0.15);
\draw[thick,rounded corners] (-1,-0.45) -- (-0.6,-0.6) -- (0.2,0.6) -- (0.5,0);
\draw[white,line width=3,rounded corners] (-1,-0.15) -- (-0.6,-0.2) -- (-0.33,-0.6) -- (0.2,0.2) -- (0.35,0.1);
\draw[thick,rounded corners] (-1,-0.15) -- (-0.6,-0.2) -- (-0.33,-0.6) -- (0.2,0.2) -- (0.5,0);
\draw[white,line width=3,rounded corners] (-1,0.15) -- (-0.6,0.2) -- (-0.066,-0.6) -- (0.2,-0.2) -- (0.35,-0.1);
\draw[thick,rounded corners] (-1,0.15) -- (-0.6,0.2) -- (-0.066,-0.6) -- (0.2,-0.2)-- (0.5,0);
\draw[white,line width=3,rounded corners] (-1,0.45) -- (-0.6,0.6) -- (0.2,-0.6) -- (0.35,-0.3);
\draw[thick,rounded corners] (-1,0.45) -- (-0.6,0.6) -- (0.2,-0.6) -- (0.5,0);
\draw[thick] (0.5,0) -- (0.35,0.3);
\draw[fill] (0.5,0) circle (2pt);
\end{tikzpicture}\arrow[r,"\RM{iv}"] \&
\begin{tikzpicture}[baseline=-.5ex,scale=0.6]
\draw[dashed](0,0) circle (1);
\clip(0,0) circle (1);
\draw[thick] (0.5,-1) -- (1,1);
\draw[white,line width=3,rounded corners] (-1,-0.45) -- (-0.6,-0.6) -- (0.2,0.6);
\draw[thick,rounded corners] (-1,-0.45) -- (-0.6,-0.6) -- (0.2,0.6) -- (0.5,0);
\draw[white,line width=3,rounded corners] (-1,-0.15) -- (-0.6,-0.2) -- (-0.33,-0.6) -- (0.2,0.2);
\draw[thick,rounded corners] (-1,-0.15) -- (-0.6,-0.2) -- (-0.33,-0.6) -- (0.2,0.2) -- (0.5,0);
\draw[white,line width=3,rounded corners] (-1,0.15) -- (-0.6,0.2) -- (-0.066,-0.6) -- (0.2,-0.2);
\draw[thick,rounded corners] (-1,0.15) -- (-0.6,0.2) -- (-0.066,-0.6) -- (0.2,-0.2)-- (0.5,0);
\draw[white,line width=3,rounded corners] (-1,0.45) -- (-0.6,0.6) -- (0.2,-0.6);
\draw[thick,rounded corners] (-1,0.45) -- (-0.6,0.6) -- (0.2,-0.6) -- (0.5,0);
\draw[fill] (0.5,0) circle (2pt);
\end{tikzpicture}
\end{tikzcd}
$}

\subfigure[$\Res\RM{VI}=\RM{0_b}\circ\RM{ii}\circ\cdots\circ\RM{ii}$]{$
\begin{tikzcd}[ampersand replacement=\&,column sep=1.5pc]
\begin{tikzpicture}[baseline=-.5ex,scale=0.6]
\draw[dashed](0,0) circle (1);
\clip(0,0) circle (1);
\draw[thick] (-1,.6) to[out=0,in=180] (0,0) to[out=0,in=180] (1,0.2);
\draw[thick] (-1,.2) to[out=0,in=180] (0,0) to[out=0,in=180] (1,-0.2);
\draw[thick] (-1,-.2) to[out=0,in=180] (0,0);
\draw[thick] (1,0.6) -- (0,0.6) to[out=180,in=0] (-0.75,-.4) to[out=0,in=180] (0,0);
\draw[fill] (0,0) circle (2pt);
\end{tikzpicture}\arrow[rr,"\RM{V}"]\arrow[d,Rightarrow] \& \&
\begin{tikzpicture}[baseline=-.5ex,scale=0.6]
\draw[dashed](0,0) circle (1);
\clip(0,0) circle (1);
\draw[thick] (-1,.4) to[out=0,in=180] (0,0) to[out=0,in=180] (1,0);
\draw[thick] (-1,0) to[out=0,in=180] (0,0) to[out=0,in=180] (1,-0.4);
\draw[thick] (-1,-.4) to[out=0,in=180] (0,0) to[out=0,in=180] (1,0.4);
\draw[fill] (0,0) circle (2pt);
\end{tikzpicture}\arrow[d,Rightarrow]\\
\begin{tikzpicture}[baseline=-.5ex,scale=0.6]
\draw[dashed](0,0) circle (1);
\clip(0,0) circle (1);
\draw[thick,rounded corners] (1,0.6) -- (0,0.6) -- (-0.9,-0.4) -- (-0.6,-0.7) -- (0.2,0.5) -- (0.5,0);
\draw[white,line width=3,rounded corners] (-1,-0.15) -- (-0.6,-0.2) -- (-0.33,-0.6) -- (0.2,0.2);
\draw[thick,rounded corners] (-1,-0.15) -- (-0.6,-0.2) -- (-0.33,-0.6) -- (0.2,0.2) -- (0.5,0);
\draw[white,line width=3,rounded corners] (-1,0.15) -- (-0.6,0.2) -- (-0.066,-0.6) -- (0.2,-0.2);
\draw[thick,rounded corners] (-1,0.15) -- (-0.6,0.2) -- (-0.066,-0.6) -- (0.2,-0.2)-- (0.5,0);
\draw[white,line width=3,rounded corners] (-1,0.45) -- (-0.6,0.6) -- (0.2,-0.6);
\draw[thick,rounded corners] (-1,0.45) -- (-0.6,0.6) -- (0.2,-0.6) -- (0.5,0);
\draw[thick] (0.5,0) -- (1,0) (0.5,0) -- (1,-0.4);
\draw[fill] (0.5,0) circle (2pt);
\end{tikzpicture}\arrow[r,"\ell\RM{ii}"] \&
\begin{tikzpicture}[baseline=-.5ex,scale=0.6]
\draw[dashed](0,0) circle (1);
\clip(0,0) circle (1);
\draw[thick,rounded corners] (-1,-0.4) -- (-0.8,-0.4) -- (-0.25,0.2) -- (0,0);
\draw[white,line width=3,rounded corners] (-1,0) -- (-0.8,0) -- (-0.5,-0.4) -- (-0.25,0);
\draw[thick,rounded corners] (-1,0) -- (-0.8,0) -- (-0.5,-0.4) -- (-0.25,0) -- (0,0);
\draw[white,line width=3,rounded corners] (-1,0.4) -- (-0.8,0.4) -- (-0.25,-0.4);
\draw[thick,rounded corners] (-1,0.4) -- (-0.8,0.4) -- (-0.25,-0.4) -- (0,0);
\draw[thick,rounded corners] (0,0) -- (-0.3,0.4) -- (1,0.4);
\draw[thick] (0,0) -- (1,0) (0,0) -- (1,-0.4);
\draw[fill] (0,0) circle (2pt);
\end{tikzpicture}\arrow[r,"\RM{0_b}"] \&
\begin{tikzpicture}[baseline=-.5ex,scale=0.6]
\draw[dashed](0,0) circle (1);
\clip(0,0) circle (1);
\draw[thick,rounded corners] (-1,-0.4) -- (-0.8,-0.4) -- (-0.25,0.4) -- (0,0);
\draw[white,line width=3,rounded corners] (-1,0) -- (-0.8,0) -- (-0.5,-0.4) -- (-0.25,0);
\draw[thick,rounded corners] (-1,0) -- (-0.8,0) -- (-0.5,-0.4) -- (-0.25,0) -- (0,0);
\draw[white,line width=3,rounded corners] (-1,0.4) -- (-0.8,0.4) -- (-0.25,-0.4);
\draw[thick,rounded corners] (-1,0.4) -- (-0.8,0.4) -- (-0.25,-0.4) -- (0,0);
\draw[thick] (0,0) -- (1,0.4) (0,0) -- (1,0) (0,0) -- (1,-0.4);
\draw[fill] (0,0) circle (2pt);
\end{tikzpicture}
\end{tikzcd}
$}

\caption{Ng's resolutions for front Reidemeister moves}
\label{figure:Reidemeister moves for Ng's resolution}
\end{figure}
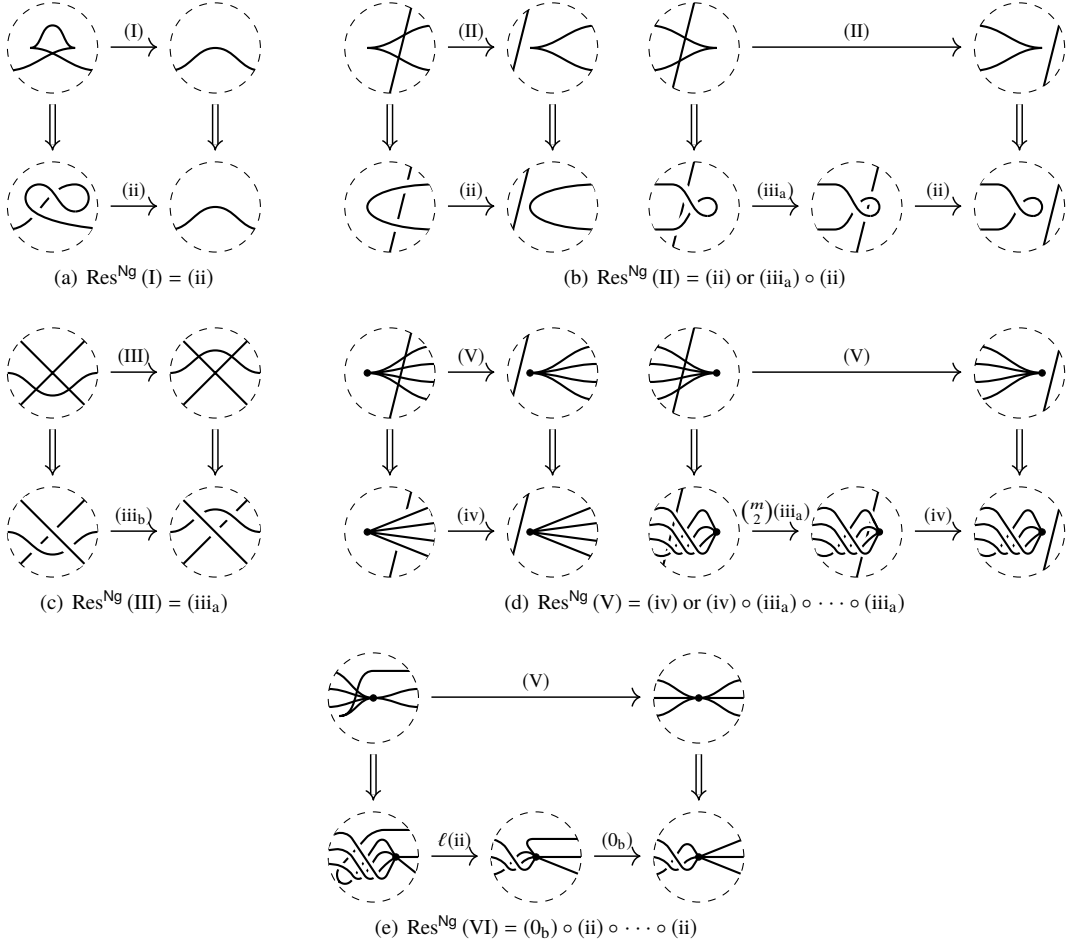

\section{Bordered DGA invariants for bordered Legendrian graphs}\label{section:bordered graphs}
In this section, we will consider a \emph{bordered Chekanov-Eliashberg's DGA} associated to each bordered Legendrian graph.

\subsection{Preliminaries on DGAs}
Throughout this paper, we mean by a \emph{differential graded algebra} (DGA) a pair $A=(\alg,\differential)$ of a unital associative graded free algebra $\alg$ over the Laurent polynomial ring $\ring[t_1^{\pm1},\dots,t^{\pm1}_k]$ for some $k\ge 0$.

\begin{example/definition}[Border DGAs]\label{example:border DGA}
Let $n\ge 1$ and $\mu:[n]\to\grading$ be a function.
The \emph{border DGA} $A_n(\mu)=(\alg_n,\differential_n)$ is defined as follows:
\begin{itemize}
\item The graded algebra $\alg_n\coloneqq\ring\left\langle K_n\right\rangle$ is freely generated by the graded set $K_n$, where 
\begin{align*}
K_n&\coloneqq \{k_{ab}\mid 1\le a<b\le n\},&
|k_{ab}|&\coloneqq \mu(a)-\mu(b)-1.
\end{align*}
\item The differential is given as 
\begin{equation}\label{equation:differential for border dga}
\differential_n(k_{ab})\coloneqq\sum_{a<c<b}(-1)^{|k_{ac}|-1}k_{ac}k_{cb}.
\end{equation}
\end{itemize}
\end{example/definition}

\begin{example/definition}[Internal DGAs]\label{example:internal DGA}
Let $\ell,r\ge0$ with $\ell+r=n\ge1$ and $\mu:\Zmod{n}\to\grading$ be a function.
The \emph{internal DGA} $I_{(\ell,r)}(\mu)=(\sfI_{(\ell,r)},\differential_n)$ of type $(\ell,r)$ is defined as follows:
\begin{itemize}
\item The graded algebra $\sfI_{(\ell,r)}\coloneqq\ring\left\langle \tilde V_{(\ell,r)} \right\rangle$ is freely generated over $\ring$ by the graded set $\tilde V_{(\ell,r)}$, where 
\begin{align*}
\tilde V_{(\ell,r)}&\coloneqq \{ \vg_{a,i}\mid a\in\Zmod{n},i\ge1 \},&
|\vg_{a,i}|&\coloneqq \mu(a)-\mu(a+i)+(n(\ell,r,a,i)-1),
\end{align*}
where $n(\ell,r,a,i)$ is the number defined as follows:\footnotemark
\begin{align*}
n(\ell,r,a,i)&\coloneqq\sum_{j=a}^{a+i-1} n(\ell,r,j,1),&
n(\ell,r,a,1)&\coloneqq\begin{cases}
2 & \ell r=0, a=n;\\
1 & \ell r\neq 0, a=r\text{ or }n;\\
0 &\text{otherwise}.
\end{cases}
\end{align*}
Therefore for $\ell_1+\ell_2=\ell$, we have
\[
|\vg_{a,\ell}| = |\vg_{a,\ell_1}|+|\vg_{a+\ell_1,\ell_2}|+1
\]
and moreover, $|\vg_{a,n}|=1$ for any $a\in \Zmod{n}$.

\item The differential is given as
\[
\differential_n (\vg_{a,i}) \coloneqq \delta_{i,n}+\sum_{i_1+i_2=i}(-1)^{|\vg_{a,i_1}|-1}\vg_{a,i_1}\vg_{a+i_1,i_2}.
\]
\end{itemize}

We will simply denote $I_{(0,n)}(\mu)$ by $I_n(\mu)$.
\end{example/definition}
\footnotetext{The geometric description for the number $n(\ell,r,a,i)$ will be given in Definition~\ref{definition:spiral curves}.}

\begin{remark}
For any $l+r=n$ and $\mu:\Zmod{n}\to\grading$, there exists $\mu'$ defined as $\mu'(a)=\mu(a)+1$ for $a\le \ell$ or $\mu(a)$ for $a>\ell$, where $I_{(\ell,r)}(\mu)\isomorphic I_n(\mu')$.
Therefore all internal DGAs can be assumed to be of type $(0,n)$ for some $n>0$.
\end{remark}

\begin{proposition}\label{proposition:differentials for border DGAs}
The assignment $k_{ab}\mapsto \vg_{a,b-a}$ defines a inclusion $A_n(\mu)\to I_n(\mu)$ between DGAs.
\end{proposition}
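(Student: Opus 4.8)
The plan is to take for $\iota$ the unique unital $\ring$-algebra homomorphism $\iota\colon A_n(\mu)\to I_n(\mu)=I_{(0,n)}(\mu)$ determined by $\iota(k_{ab})\coloneqq \vg_{a,b-a}$ for $1\le a<b\le n$. This is well posed because each such pair $(a,b)$ yields $a\in\Zmod n$ and $b-a\ge 1$, so $\vg_{a,b-a}$ is an honest generator of $\sfI_{(0,n)}$. It then suffices to verify three things in turn: that $\iota$ is degree-preserving, that it intertwines the two differentials (both written $\differential_n$), and that it is injective.

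Everything reduces to one elementary observation about the base-point count $n(0,n,a,i)$. Since $\ell r=0$ here, $n(0,n,j,1)$ equals $2$ when $j\equiv 0\pmod n$ and $0$ otherwise; hence $n(0,n,a,i)=\sum_{j=a}^{a+i-1}n(0,n,j,1)$ vanishes whenever $1\le a$ and $a+i\le n$, because then $\{a,a+1,\dots,a+i-1\}\subseteq\{1,\dots,n-1\}$ contains no multiple of $n$. Applying this with $i=b-a$ gives
\[
|\vg_{a,b-a}|=\mu(a)-\mu(b)+\bigl(n(0,n,a,b-a)-1\bigr)=\mu(a)-\mu(b)-1=|k_{ab}|,
\]
so $\iota$ is degree-preserving. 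For the differential, note that $1\le a<b\le n$ forces $b-a\le n-1<n$, so the term $\delta_{b-a,n}$ in $\differential_n(\vg_{a,b-a})$ drops out; writing $c=a+i_1$,
\[
\differential_n(\vg_{a,b-a})=\sum_{i_1+i_2=b-a}(-1)^{|\vg_{a,i_1}|-1}\vg_{a,i_1}\vg_{a+i_1,i_2}=\sum_{a<c<b}(-1)^{|\vg_{a,c-a}|-1}\vg_{a,c-a}\vg_{c,b-c}.
\]
Using $|\vg_{a,c-a}|=|k_{ac}|$ from the degree computation and $\vg_{a,c-a}\vg_{c,b-c}=\iota(k_{ac}k_{cb})$, the right-hand side is precisely $\iota\bigl(\differential_n(k_{ab})\bigr)$, so $\iota$ is a chain map.

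Injectivity is then formal: $\alg_n$ is $\ring$-free on the set of words in the alphabet $K_n$, $\sfI_{(0,n)}$ is $\ring$-free on the words in $\tilde V_{(0,n)}$, and $\iota$ sends the word $k_{a_1b_1}\cdots k_{a_mb_m}$ to the word $\vg_{a_1,b_1-a_1}\cdots\vg_{a_m,b_m-a_m}$; since $(a,b)\mapsto(a,b-a)$ is injective on $\{1\le a<b\le n\}$, distinct words map to distinct words, so $\iota$ carries the word basis of $\alg_n$ injectively into the word basis of $\sfI_{(0,n)}$ and is therefore injective. The only point where any care is needed—really the only thing beyond unwinding the two definitions—is the vanishing $n(0,n,a,b-a)=0$, which is the bookkeeping incarnation of the geometric fact that a strand running between levels $a$ and $b$ with $a<b\le n$ never passes the distinguished base point sitting at $n\equiv 0$ in $\Zmod n$; once that is in hand, the inclusion of DGAs falls directly out of the constructions.
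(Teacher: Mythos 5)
Your proof is correct and is exactly the ``direct computation'' that the paper's one-line proof alludes to: the key point in both cases is that $n(0,n,a,b-a)=0$ for $1\le a<b\le n$, which makes the gradings match and kills the $\delta_{i,n}$ term, after which the chain-map identity and injectivity on word bases are immediate. No difference in approach, only in the level of detail.
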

\begin{proof}
This follows easily from the direct computation.
\end{proof}

\begin{definition}[DGA homotopies]
Let $f,g:A'\to A$ be two DGA morphisms. 
We say that $f$ and $g$ are \emph{homotopic} via $H$, denoted by $f\stackrel{H}\homotopic g$
if there exists a \emph{DGA homotopy} $H:A\to A'$ which is an algebra homomorphism of degree 1 satisfying the following:
\begin{itemize}
\item $H$ is a chain homotopy:
\begin{equation}\label{equation:chain homotopy}
f-g=\differential\circ H + H\circ\differential'.
\end{equation}
\item $H$ is a $(f,g)$-derivation: for all $x,y\in\alg'$,
\begin{equation}\label{equation:derivation}
H(xy) = H(x) g(y)+(-1)^{|x|}f(x) H(y).
\end{equation}
\end{itemize}
\end{definition}

\begin{remark}\label{remark:homotopy_equiv}
It is known that the homotopy relation becomes an equivalence relation, but this is not straightforward at all. See \cite[Ch.~26]{FHT01}.
\end{remark}

Let us recall the notion of tame isomorphisms on DGAs.
\begin{definition}[Tame isomorphisms]
Let $A'=(\alg'=\ring\langle G\rangle,\differential')$ and $A=(\alg=\ring\langle G\rangle,\differential)$ be DGAs.
A DGA map $f: A'\to A$ is called an \emph{elementary isomorphism} if for some $g\in G$,
\begin{align*}
f(h)=
\begin{cases}
g+u&\text{ if }h=g\\
h&\text{ if }h\neq g,
\end{cases}
\end{align*}
where $u$ is a word in $A$ not containing $g$.

A \emph{tame isomorphism} is a composition of countably many (possibly finite) elementary isomorphisms.
\end{definition}

\begin{definition}[Stabilizations {\cite[Definition~2.16]{EN2015}}]\label{definition:stabilizations}
For a DGA $A=(\alg=\ring\langle G\rangle,\differential)$, a \emph{stabilization} of $A$ is a DGA which is tame isomorphic to a DGA $SA=(S\alg,\bar\differential)$ obtained from $A$ by adding a countably many (possibly finite) number of canceling pairs of generators $\{\hat e^i, e^i\mid i\in I\}$ for some index set $I$ so that
\begin{align*}
S\alg&=\ring\left\langle G\amalg\{\hat e^i, e^i\mid i\in I\}\right\rangle,&
|\hat e^i| &=|e^i| +1,&
\bar\differential(\hat e^i)&= e^i,&
\bar\differential(e^i)&= 0,
\end{align*}
and for each $\fd\in\grading$, there are only finitely many $\hat e^i$'s and $e^i$'s of degree $\fd$.
\end{definition}

For a stabilization $A'$ of $A$, there are two natural DGA morphisms $\pi:A'\to A$ and $\iota:A\to A'$ defined as the canonical projection and inclusion
\begin{align*}
\pi(s)&\coloneqq \begin{cases}
s & s\in A;\\
0 & \text{otherwise},
\end{cases}&
\iota(s)&\coloneqq s.
\end{align*}

We introduce a special kind of a DGA operation called a \emph{generalized stabilization} originally defined in \cite{AB2018} as follows:
\begin{definition}[Generalized stabilization]\label{def:Generalized stabilization}
Let $A=(\alg=\ring\langle G\rangle, \differential)$ be a DGA generated by the set $G$ and $\varphi: I_n(\mu)\to A$ be a DGA morphism for some $\mu:\Zmod{n}\to\grading$.

The \emph{$\fd$-th positive} or \emph{negative stabilization} of $A$ with respect to $\varphi$ is the DGA $S_\varphi^{\fd \pm}A=\left(S^{\fd}_\varphi\alg,\bar\differential^\pm\right)$ defined as follows: 
let $v_{a,i}\coloneqq\varphi(\vg_{a,i})\in A$.
\begin{itemize}
\item The graded algebra $S^{\fd}_\varphi \sfA$ is given as
\begin{align*}
S^{\fd}_\varphi \sfA&\coloneqq\ring\left\langle G\amalg\{e^1,\dots,e^n\}\right\rangle,&
|e^b| &\coloneqq\fd+\sum_{a<b}(|v_{a,1}|+1).
\end{align*}
\item The differentials $\bar\differential^\pm$ for $e^b$ are given as
\begin{align*}
\bar\differential^+ (e^b)&\coloneqq \sum_{a<b}(-1)^{|e^a|-1}e^a v_{a,b-a},&
\bar\differential^- (e^b)&\coloneqq \sum_{a<b}v_{n+1-b,b-a} e^a.
\end{align*}
\end{itemize}
\end{definition}

As observed in \cite[Remark~3.8]{AB2018}, the generalized stabilization is a composition of a stabilization and a destabilization.
For the completeness, we will give a proof in Appendix~\ref{appendix:proof of EN's stabilization}.
\begin{proposition}\cite[Appendix~B.2]{EN2015}\label{proposition:generalized stabilizations}
There exists a DGA $\tilde S^{\fd\pm}_\varphi A$ which is a common stabilization of both $A$ and $S^{\fd\pm}_\varphi A$.
\end{proposition}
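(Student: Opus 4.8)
The plan is to treat the positive stabilization $\tilde S^{\fd+}_\varphi A$; the negative case $\tilde S^{\fd-}_\varphi A$ is obtained by a symmetric argument, reversing the order of multiplication. Following \cite[Remark~3.8]{AB2018} and \cite[Appendix~B.2]{EN2015}, the strategy is to exhibit the passage from $A$ to $S^{\fd+}_\varphi A$ as an algebraic stabilization (in the sense of Definition~\ref{definition:stabilizations}) followed by a destabilization; the DGA $\tilde S^{\fd+}_\varphi A$ sitting in the middle is then, by construction, a stabilization of $A$, while, since $S^{\fd+}_\varphi A$ is recovered from it by deleting canceling pairs, it is simultaneously a stabilization of $S^{\fd+}_\varphi A$ — which is exactly the claim.

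First I would build $\tilde S^{\fd+}_\varphi A$ explicitly. Writing $v_{a,i}=\varphi(\vg_{a,i})\in A$, adjoin to $A$ the generators $e^1,\dots,e^n$ of $S^{\fd+}_\varphi A$, with the degrees of Definition~\ref{def:Generalized stabilization}, together with auxiliary generators $\hat e^1,\dots,\hat e^n$ with $|\hat e^b|=|e^b|+1$ (and, when $n$ is odd, a further countable family of canceling pairs in bounded degrees, which a count of generators on the two sides shows to be unavoidable). Extend the differential by $\bar\differential|_A=\differential$, $\bar\differential(e^b)=\bar\differential^{+}(e^b)$, and $\bar\differential(\hat e^b)=\pm e^b+\sum_{a<b}\pm\hat e^a v_{a,b-a}$, with signs chosen so that $\bar\differential^2=0$. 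The relation $\bar\differential(v_{a,i})=\delta_{i,n}+\sum_{i_1+i_2=i}\pm v_{a,i_1}v_{a+i_1,i_2}$, valid because $\varphi$ is a DGA morphism out of the internal DGA $I_n(\mu)$ (Example/Definition~\ref{example:internal DGA}), is precisely what makes $\bar\differential^2(\hat e^b)$ collapse, and the unit term $\delta_{i,n}$ never enters since the relevant indices obey $1\le b-a\le n-1$.

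Next I would exhibit the two (de)stabilization structures. On the $A$-side, replacing each $e^b$ by $\bar\differential(\hat e^b)$ is an elementary isomorphism, since $\bar\differential(\hat e^b)$ involves no $e^{b'}$; after it, $\{(\hat e^b,\bar\differential\hat e^b)\}_{b=1}^{n}$ are honest canceling pairs whose deletion returns $(A,\differential)$, so $\tilde S^{\fd+}_\varphi A$ is a stabilization of $A$. On the $S^{\fd+}_\varphi A$-side, the generators to be removed are $\hat e^1,\dots,\hat e^n$ together with the auxiliary pairs; because their natural partners $e^1,\dots,e^n$ already belong to the base DGA, one cannot cancel $(\hat e^b,e^b)$ directly, and must instead re-organize the destabilization by an Eilenberg swindle, threading the $\hat e^b$ through the auxiliary canceling pairs so that deleting all added pairs returns exactly $S^{\fd+}_\varphi A$. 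Finally one verifies the degreewise finiteness condition of Definition~\ref{definition:stabilizations}, which is immediate from the prescribed degrees.

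The hard part will be the second destabilization. The $A$-side is a routine tame-isomorphism computation, but on the $S^{\fd+}_\varphi A$-side the generators $e^1,\dots,e^n$ are already ``used up'', so the complementary generators have to be cancelled through auxiliary pairs; making this threading precise — in particular coping with the relation $\bar\differential(v_{a,n})=1+\cdots$, which is what blocks the naive attempt to cancel the internal-DGA generators against one another — and pinning down all the Koszul signs behind $\bar\differential^2=0$ is the technical heart of the argument.
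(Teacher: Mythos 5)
There is a genuine gap, and it sits exactly where you flag ``the hard part'': the destabilization down to $S^{\fd+}_\varphi A$. Your $\tilde S^{\fd+}_\varphi A$ adds only the $n$ generators $\hat e^1,\dots,\hat e^n$ on top of $S^{\fd+}_\varphi A$ (plus an unspecified ``countable family'' only when $n$ is odd). A stabilization of $S^{\fd+}_\varphi A$ must, up to tame isomorphism, consist of $S^{\fd+}_\varphi A$ together with cancelling \emph{pairs} $(\hat e,e)$ with $|\hat e|=|e|+1$; with only the $\hat e^b$'s available, this would force them to pair off among themselves, so one needs $|\hat e^b|=|\hat e^{b'}|+1$ for the paired generators. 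Since $|\hat e^b|=\fd+1+\sum_{a<b}(|v_{a,1}|+1)$ is dictated by the Maslov data, this fails in general (already for $n=2$ it requires $|v_{1,1}|\in\{0,-2\}$), and the parity issue you mention for odd $n$ is only the most visible symptom. So the object you construct is a stabilization of $A$ (your elementary isomorphism $e^b\mapsto\bar\differential(\hat e^b)$ is fine for that side) but not of $S^{\fd\pm}_\varphi A$. The ``Eilenberg swindle'' you defer is not a technical afterthought — it is the entire content of the proposition, and it forces you to adjoin \emph{infinitely many} auxiliary pairs from the outset, unconditionally.

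The paper's proof does exactly this: it sets $\tilde S\alg=\ring\langle G\amalg\{\hat e^i,e^i\mid i\ge1\}\rangle$ with the \emph{trivial} stabilization differential $\bar\differential(\hat e^i)=e^i$, $\bar\differential(e^i)=0$ (so the $A$-side is immediate), and then applies two explicit tame isomorphisms $\Phi^+$ and $\Psi^+$ after which $\bar\differential''(e^b)$ equals the generalized-stabilization differential for $b\le m$, while $\bar\differential''(e^{m+b})=\hat e^{b}$ and $\bar\differential''(\hat e^b)=0$ for all $b\ge1$; the reindexed pairs $(e^{m+b},\hat e^b)$ are then cancelling pairs over $S^{\fd+}_\varphi A$, and the degree shift works out because $|e^{m+b}|=|e^b|+2=|\hat e^b|+1$. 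Note also that the unit term $\delta_{i,n}$ in $\differential(\vg_{a,i})$, which you assert ``never enters'', is precisely what produces the crucial summand $\hat e^{b-m}$ in $\bar\differential'^+(e^b)$ for $b>m$; far from blocking the argument, it is what makes the re-pairing possible. To repair your proof you would need to replace your finite family by the full family $\{\hat e^i,e^i\}_{i\ge1}$ with degrees $|e^i|=\fd+|v_{1,i-1}|+1$ and carry out the shift-by-$m$ re-pairing explicitly.
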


\subsection{Bordered DGAs}
We consider DGAs together with additional structures, called \emph{bodered DGAs}.
\begin{definition}[Bordered DGAs]
A \emph{bordered DGA $\dga$ of type $\left(n_\Left, n_\Right\right)$} is a diagram
\[
\dga=\left(
A_\Left\stackrel{\phi_\Left}\longrightarrow A\stackrel{\phi_\Right}\longleftarrow A_\Right
\right)
\]
consisting of DGAs $A, A_\Left$ and $A_\Right$, and two DGA morphisms $\phi_\Left:A_\Left\to A$ and $\phi_\Right:A_\Right\to A$ such that
\begin{enumerate}
\item $\phi_\Left$ is \emph{injective}, and
\item for some $\mu_\Left:[n_\Left]\to\grading$ and $\mu_\Right:[n_\Right]\to\grading$,
\[
A_\Left\isomorphic A_{n_\Left}(\mu_\Left)\quad\text{ and }\quad
A_\Right\isomorphic A_{n_\Right}(\mu_\Right).
\]
\end{enumerate}

A \emph{bordered quasi-morphism} $\bff:\dga'\to\dga$ between two bordered DGAs $\dga$ and $\dga'$ is a triple $(f_\Left,f,f_\Right)$ of DGA morphisms such that in the following diagram, the left square is commutative and the right square is commutative \emph{up to homotopy}
\begin{align*}
\begin{tikzcd}[column sep=4pc, row sep=2pc, ampersand replacement=\&]
\dga'\arrow[d,"\bff"']\\
\dga
\end{tikzcd}&=\ \left(
\begin{tikzcd}[column sep=4pc, row sep=2pc, ampersand replacement=\&]
A'_\Left\arrow[r,"\phi'_\Left"]\arrow[d,"f_\Left"']
\& A'\arrow[d,"f"'] \& A'_\Right\arrow[l,"\phi'_\Right"']\arrow[d,,"f_\Right"]\arrow[Rightarrow,ld,"\exists H_\Right",sloped]\\
A_\Left\arrow[r,"\phi_\Left"] \& A \& A_\Right\arrow[l,"\phi_\Right"']
\end{tikzcd}\right)
\end{align*}
and a bordered quasi-morphism is a \emph{bordered morphism} if the homotopy $H_\Right$ is trivial, i.e., the squares above are strictly commutative.
\end{definition}

Now we consider stabilizations of bordered DGAs as follows:
\begin{definition}[Stabilizations of bordered DGAs]
for each bordered DGA $\dga$, we say that a bordered DGA $\dga'$ of $\dga'$ is said to be a \emph{(weak) stabilization} if there exists a bordered morphism $\bfpi=(\identity_\Left,\pi,\identity_\Right):\dga'\to\dga$ such that $\pi:A'\to A$ is the canonical projection, and a \emph{strong stabilization} if it is a weak stabilization and there exists a bordered morphism $\bfi=(\identity, \iota, \identity)$ where $\iota:A\to A'$ is the canonical inclusion.
\end{definition}

As an example of (weak) stabilizations, we consider the mapping cylinder in the bordered setting as follows: let $\dga=\left(A_\Left\stackrel{\phi_\Left}\longrightarrow A\stackrel{\phi_\Right}\longleftarrow A_\Right\right)$ be a bordered DGA of type $\left(n_\Left,n_\Right\right)$. That is,
\begin{align*}
A&=(\alg,\differential),& \alg&=\ring\langle G\rangle,\\
A_\Left&=(\alg_\Left,\differential_\Left),&
\alg_\Left&=\ring\langle K_\Left\rangle,&
K_\Left&\coloneqq\{k_{a'b'}\mid 1\le a'<b'\le n_\Left\},\\
A_\Right&=(\alg_\Right,\differential_\Right),&
\alg_\Right&=\ring\langle K_\Right\rangle,&
K_\Right&\coloneqq\{k_{ab}\mid 1\le a<b\le n_\Right\}.
\end{align*}

\begin{definition}[Mapping cylinders of bordered DGAs]\label{definition:mapping cylinder}
The \emph{mapping cylinder} $\hat\dga\coloneqq\left(
A_\Left\stackrel{\hat\phi_\Left}\longrightarrow \hat A \stackrel{\hat\phi_\Right}\longleftarrow A_\Right
\right)$ of $\dga$ will be defined as follows:
\begin{itemize}
\item The DGA $\hat A=(\hat\alg,\hat\differential)$ and its graded algebra $\hat \alg$ is defined as
\begin{align*}
\hat\alg&\coloneqq \ring\left\langle G\amalg K_\Right \amalg \hat K_\Right\right\rangle,&
\hat K_\Right&\coloneqq\left\{\hat k_{ab}~\middle|~k_{ab}\in K_\Right\right\},&
\left|\hat k_{ab}\right|&\coloneqq |k_{ab}|+1.
\end{align*}
\item The differential $\hat\differential$ for each $k_{ab}$ is the same as $\differential_\Right(k_{ab})$ and for $\hat k_{ab}$ it is defined as 
\begin{align}\label{equation:differential for mapping cylinder}
\hat\differential(\hat k_{ab})&\coloneqq 
k_{ab}-\phi_\Right(k_{ab})+\sum_{a<c<b}(-1)^{|\hat k_{ac}|-1}\hat k_{ac}\phi_\Right( k_{cb})+ k_{ac}\hat k_{cb}.
\end{align}
\item The morphism $\hat\phi_\Left$ is the composition of $\phi_\Left$ and the canonical inclusion $A\to\hat A$, and $\hat\phi_\Right$ is defined by $\hat\phi_\Right(k_{ab})=k_{ab}\in \hat\alg$.
\end{itemize}
\end{definition}

\begin{lemma}\label{lemma:mapping cylinders are stabilizations}
The DGA $\hat A$ is a weak stabilization of $A$ but not a strong stabilization.
\end{lemma}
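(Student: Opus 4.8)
\emph{Weak stabilization.} The plan is first to write down the candidate projection and verify it is a bordered morphism, and then to recognize $\hat A$ as an honest stabilization of $A$ in the sense of Definition~\ref{definition:stabilizations}. Define $\pi\colon\hat A\to A$ by $\pi|_{G}=\identity$, $\pi(k_{ab})=\phi_\Right(k_{ab})$ and $\pi(\hat k_{ab})=0$, extended multiplicatively. That $\pi$ is a DGA morphism is a direct check from the three cases of $\hat\differential$: on $G$ one has $\hat\differential|_G=\differential$ and $\differential(g)$ is a word in $G$; on $k_{ab}$ one has $\hat\differential(k_{ab})=\differential_\Right(k_{ab})$, whose $\pi$-image is $\phi_\Right(\differential_\Right(k_{ab}))=\differential(\phi_\Right(k_{ab}))$ since $\phi_\Right$ is a DGA map; and on $\hat k_{ab}$ every term of $\hat\differential(\hat k_{ab})$ other than $k_{ab}-\phi_\Right(k_{ab})$ involves a $\hat k$, hence dies under $\pi$, leaving $\phi_\Right(k_{ab})-\phi_\Right(k_{ab})=0=\differential(\pi(\hat k_{ab}))$. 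The triple $\bfpi=(\identity_\Left,\pi,\identity_\Right)$ makes the defining diagram strictly commute: the left square commutes because $\hat\phi_\Left$ is $\phi_\Left$ followed by the inclusion $A\hookrightarrow\hat A$ and $\pi$ restricts to $\identity_A$ on this inclusion, and the right square commutes because $\pi(\hat\phi_\Right(k_{ab}))=\pi(k_{ab})=\phi_\Right(k_{ab})$. Also $\hat\phi_\Left$ is injective (it is injective $\phi_\Left$ postcomposed with an inclusion), so $\hat\dga$ is a bordered DGA.

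It remains to identify $\hat A$ with a stabilization of $A$ so that $\pi$ becomes \emph{the} canonical projection. Set $w_{ab}\coloneqq\hat\differential(\hat k_{ab})\in\hat A$. From formula \eqref{equation:differential for mapping cylinder} one has $w_{ab}=k_{ab}-\phi_\Right(k_{ab})+u_{ab}$ where $u_{ab}$ is a word involving only $k_{cd}$ with $d-c<b-a$, the $\hat k$'s, and elements of $G$; in particular $w_{ab}-k_{ab}$ contains no occurrence of $k_{ab}$ or of any $k_{a'b'}$ of length $\ge b-a$. Ordering the finite set $K_\Right$ by increasing length $b-a$, the substitutions $k_{ab}\mapsto w_{ab}$ can therefore be performed one at a time as elementary isomorphisms, yielding a (finite, hence tame) isomorphism from $\hat A$ to the DGA $SA\coloneqq\ring\langle G\amalg\{\hat k_{ab},w_{ab}\}\rangle$ with $\bar\differential|_G=\differential$, $\bar\differential(\hat k_{ab})=w_{ab}$ and $\bar\differential(w_{ab})=0$ (the last from $\hat\differential^{2}=0$). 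Since $|w_{ab}|=|\hat k_{ab}|-1=|k_{ab}|$ and $K_\Right$ is finite, the degree and finiteness conditions of Definition~\ref{definition:stabilizations} hold, so $\hat A$ is a stabilization of $A$. Tracing the above change of variables, the canonical projection of $SA$ (killing all $w_{ab}$ and $\hat k_{ab}$) pulls back along this tame isomorphism precisely to the map $\pi$ defined above: on $G$ it is the identity, $\hat k_{ab}\mapsto 0$, and $k_{ab}=w_{ab}+\phi_\Right(k_{ab})-u_{ab}$ maps to $\phi_\Right(k_{ab})$ since $u_{ab}$ and $w_{ab}$ are killed. Hence $\hat\dga$ is a weak stabilization of $\dga$.

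\emph{Not a strong stabilization.} Suppose it were. Then Definition of strong stabilization supplies a bordered morphism $(\identity,\iota,\identity)\colon\dga\to\hat\dga$ with $\iota\colon A\to\hat A$ the canonical inclusion $g\mapsto g$. Strict commutativity of the right square forces $\hat\phi_\Right=\iota\circ\phi_\Right$ as DGA maps $A_\Right\to\hat A$. Evaluating on a generator $k_{ab}\in K_\Right$ (which is nonempty once $n_\Right\ge 2$) gives $\hat\phi_\Right(k_{ab})=k_{ab}$, a single free generator of $\hat\alg=\ring\langle G\amalg K_\Right\amalg\hat K_\Right\rangle$, whereas $\iota(\phi_\Right(k_{ab}))=\phi_\Right(k_{ab})$ is a $\ring$-linear combination of words in the generators $G$; these cannot be equal in a free algebra. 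This contradiction shows $\hat A$ is not a strong stabilization.

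\textbf{Main obstacle.} The only non-routine point is the tame-isomorphism step: one must check carefully that the change of generators $k_{ab}\rightsquigarrow w_{ab}=\hat\differential(\hat k_{ab})$ is realized by a \emph{composition of elementary} isomorphisms in the correct order, which rests entirely on the filtration of $K_\Right$ by the length $b-a$ and the observation that the correction term $u_{ab}$ in \eqref{equation:differential for mapping cylinder} introduces only strictly shorter $k$-generators (together with $\hat k$'s and scalars from $\phi_\Right$); and then that the resulting canonical projection matches the concrete $\pi$. Everything else is bookkeeping with degrees and the (strict) commutativity of the bordered squares.
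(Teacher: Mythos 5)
Your proof is correct and follows essentially the same route as the paper: a tame (change-of-generators) isomorphism exhibiting $\hat A$ as a stabilization with cancelling pairs $(\hat k_{ab},\hat\differential(\hat k_{ab}))$, the projection $\hat\bfpi$ giving a strictly commuting diagram, and the failure of strict commutativity for the canonical inclusion on the right square. Your version is in fact slightly more explicit than the paper's on both the elementary-isomorphism bookkeeping and the free-algebra argument for why $\hat\phi_\Right(k_{ab})=k_{ab}$ cannot equal $\phi_\Right(k_{ab})$, where the paper only records that the inclusion commutes up to the homotopy $\hat H_\Right(k_{ab})=\hat k_{ab}$.
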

\begin{proof}
Notice that in each differential $\hat\differential(\hat k_{ab})$ there is one and only one generator $k_{ab}$.
Therefore one may find a tame automorphism $\Phi$ on $\hat A$ that sends $\hat\differential(\hat k_{ab})$ to $k_{ab}$ so that the DGA $(\hat \alg,\hat\differential_\Phi)$ with the twisted differential becomes a stabilization of $A$.

Indeed, we have a canonical projection $\hat\bfpi:\hat\dga\to\dga$ which maps $k_{ab}\mapsto \phi_\Right(k_{ab})$ and $\hat  k_{ab}\mapsto0$.
Then it gives us a strictly commutative diagram
\begin{align}\label{equation:canonical projection of mapping cylinder}
\begin{tikzcd}[column sep=4pc, row sep=2pc, ampersand replacement=\&]
\hat\dga\arrow[d,"\hat\bfpi"']\\
\dga
\end{tikzcd}&=\ \left(
\begin{tikzcd}[column sep=4pc, row sep=2pc, ampersand replacement=\&]
A_\Left\arrow[r,"\hat\phi_\Left"]\arrow[d,equal] \& \hat A\arrow[d,"\hat\pi"] \& A_\Right\arrow[l,"\hat\phi_\Right"']\arrow[d,equal]\\
A_\Left\arrow[r,"\phi_\Left"] \& A \& A_\Right\arrow[l,"\phi_\Right"']
\end{tikzcd}\right)~,
\end{align}
and therefore it is a bordered morphism.
However, the canonical inclusion $\hat \bfi:\dga\to\hat\dga$ makes the following diagram commutative up to homotopy
\begin{align}\label{equation:canonical inclusion of mapping cylinder}
\begin{tikzcd}[column sep=4pc, row sep=2pc, ampersand replacement=\&]
\dga\arrow[d,"\hat \bfi"']\\
\hat\dga
\end{tikzcd}&=\ \left(
\begin{tikzcd}[column sep=4pc, row sep=2pc, ampersand replacement=\&]
A_\Left\arrow[r,"\phi_\Left"]\arrow[d,equal] \& A\arrow[d,"\hat \iota"] \& A_\Right\arrow[l,"\phi_\Right"']\arrow[d,equal]\arrow[Rightarrow,ld,"\exists \hat H_\Right",sloped]\\
A_\Left\arrow[r,"\hat\phi_\Left"] \& \hat A \& A_\Right\arrow[l,"\hat\phi_\Right"']
\end{tikzcd}\right)~,
\end{align}
where the homotopy $\hat H_\Right$ is obviously defined as $\hat H_\Right( k_{ab})\coloneqq \hat  k_{ab}$.
Therefore it only gives us a quasi-morphism.
\end{proof}

On the other hand, we have the following general lemma.
\begin{lemma}\label{lemma:mapping cylinder differential}
Let $f:K_\Right\to A$ be a function and $\bar f:A_\Right\to A$ be an algebra homomorphism extending $f$.
Then the endomorphism
\[
\hat\differential(\hat k_{ab})\coloneqq k_{ab}-f(k_{ab})+\sum_{a<c<b} (-1)^{|\hat k_{ac}|-1}\hat k_{ac} f(k_{cb})+k_{ac}\hat k_{cb}
\]
defines a differential on $\hat A$, i.e., $\hat\differential^2=0$, if and only if $\bar f$ is a DGA morphism.
\end{lemma}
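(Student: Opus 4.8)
The plan is to reduce everything to a computation on generators and then to isolate the single combination of terms that measures the failure of $\bar f$ to commute with the differentials. By construction $\hat\differential$ is the degree $-1$ graded derivation of $\hat\alg=\ring\langle G\amalg K_\Right\amalg\hat K_\Right\rangle$ extending $\differential$ on $G$, $\differential_\Right$ on $K_\Right$, and the prescribed formula on $\hat K_\Right$; hence $\hat\differential^2$ is a degree $-2$ derivation, and it is enough to check $\hat\differential^2=0$ on the generating set. On $g\in G$ this is $\differential^2 g=0$ in $A$, and on $k_{ab}\in K_\Right$ it is $\differential_\Right^2 k_{ab}=0$, which holds because the two families of terms in $\differential_\Right^2 k_{ab}$ produced by \eqref{equation:differential for border dga} cancel in pairs via the sign identity $(-1)^{N-1}+(-1)^{N}=0$ together with the degree relation $|k_{ab}|=|k_{ac}|+|k_{cb}|+1$. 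Neither of these uses $\bar f$, so the content of the lemma lives entirely in $\hat\differential^2(\hat k_{ab})$.

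First I would expand $\hat\differential^2(\hat k_{ab})$ by the graded Leibniz rule, substituting $\hat\differential k_{cd}=\differential_\Right k_{cd}$, $\hat\differential f(k_{cd})=\differential\bar f(k_{cd})$ (valid since $f$ takes values in $A$, on which $\hat\differential$ restricts to $\differential$), and the defining formula for $\hat\differential\hat k_{cd}$. Then I would sort the resulting monomials by how many $\hat K_\Right$-generators they contain; only the cases of zero and of exactly one occur. Among the one-$\hat k$ monomials, those independent of $f$ (of shape $k_{**}k_{**}\hat k_{**}$) cancel in pairs by the same $(-1)^{N-1}+(-1)^{N}=0$ mechanism; among the $f$-dependent ones, the monomials of shape $k_{**}\hat k_{**}f(k_{**})$ arising from the "outer" and the "inner" resolutions cancel against each other (again using the border degree relation to match signs). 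What survives are the monomials $\hat k_{ac}\,\differential\bar f(k_{cb})$ and $\hat k_{ae}\,f(k_{ec})f(k_{cb})$; since $\bar f$ is an algebra homomorphism, summing the latter over the inner index reassembles $\bar f(\differential_\Right k_{mb})$, so the one-$\hat k$ part collapses to $-\sum_{a<m<b}\hat k_{am}\bigl(\differential\bar f-\bar f\differential_\Right\bigr)(k_{mb})$. The zero-$\hat k$ part is treated identically: the pure $k_{**}k_{**}$ and the mixed $k_{**}f(k_{**})$ monomials cancel in pairs, and what is left, $-\differential\bar f(k_{ab})-\sum_{a<c<b}(-1)^{|k_{ac}|}f(k_{ac})f(k_{cb})$, equals $-\bigl(\differential\bar f-\bar f\differential_\Right\bigr)(k_{ab})$ using once more that $\bar f$ is an algebra homomorphism. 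Altogether this gives
\[
\hat\differential^2(\hat k_{ab})=-\bigl(\differential\bar f-\bar f\differential_\Right\bigr)(k_{ab})-\sum_{a<m<b}\hat k_{am}\,\bigl(\differential\bar f-\bar f\differential_\Right\bigr)(k_{mb}).
\]

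The conclusion is then immediate: if $\bar f$ is a DGA morphism both summands vanish, so $\hat\differential^2=0$; conversely, if $\hat\differential^2=0$, the component of $\hat\differential^2(\hat k_{ab})$ that is free of $\hat K_\Right$-generators must vanish, giving $(\differential\bar f-\bar f\differential_\Right)(k_{ab})=0$ for all $1\le a<b\le n_\Right$, and since $K_\Right$ generates $A_\Right$ while both $\differential\circ\bar f$ and $\bar f\circ\differential_\Right$ satisfy the same twisted Leibniz rule $D(xy)=D(x)\bar f(y)+(-1)^{|x|}\bar f(x)D(y)$, agreement on $K_\Right$ propagates to all of $A_\Right$, so $\bar f$ is a DGA morphism. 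I expect the only real difficulty to be the bookkeeping: keeping the Koszul signs consistent while squaring a derivation whose definition is itself recursive, and partitioning the resulting monomials by their $\hat K_\Right$-content so that the obstruction $\differential\bar f-\bar f\differential_\Right$ is cleanly isolated. It is also worth noting, for the "only if" direction, that one needs only the $\hat K_\Right$-free part of the expression to vanish, and that this already forces the chain-map condition on the generators $K_\Right$, hence everywhere.
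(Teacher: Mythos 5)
Your proof is correct and fills in exactly the ``direct computation'' that the paper omits (its proof of this lemma is one line: induction on $b-a$ plus direct computation, details omitted). I checked the key steps — the sign bookkeeping using $|\hat k_{ac}|=|k_{ac}|+1$ and $|k_{ab}|=|k_{ac}|+|k_{cb}|+1$, the pairwise cancellation of the $k\,k\,\hat k$ and $k\,\hat k\,f(k)$ families, and the resulting identity $\hat\differential^2(\hat k_{ab})=-(\differential\bar f-\bar f\differential_\Right)(k_{ab})-\sum_{a<c<b}\hat k_{ac}(\differential\bar f-\bar f\differential_\Right)(k_{cb})$ — and they all hold; isolating this explicit obstruction is a clean way to get both implications at once, with the converse correctly using freeness of $\hat\alg$ to extract the $\hat K_\Right$-free component and the twisted Leibniz rule to propagate the chain-map condition from generators to all of $A_\Right$.
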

\begin{proof}
It follows easily from the induction on $b-a$ and the direct computation. We omit the proof.
\end{proof}

\subsection{Chekanov-Eliashberg DGAs for bordered Legendrian graphs}\label{section:CE DGA}
Now we briefly review the construction of Chekanov-Eliashberg DGAs for Legendrian graphs with Maslov potentials.

Let $T_\lag=(V_\lag, E_\lag,B_\lag)\in\LT_\lag$ be a Lagrangian projection of a Legendrian graph with a Maslov potential $\mu$.
Then we consider two graded sets as follows:
\begin{itemize}
\item The set $C_\lag$ of crossings in $T_\lag$, where for each $c\in C_\lag$, its grading is defined as
\[
|c|\coloneqq\mu(c^+)-\mu(c^-)\quad\text{ if }c=\crossingpositive,\quad\text{ or }\quad
|c|\coloneqq\mu(c^+)-\mu(c^-)-1\quad\text{ if }c=\crossingnegative,
\]
where $c^+$ and $c^-$ in $R_\lag=T_\lag\setminus(V_\lag\amalg S_\lag)$ are components containing the upper and lower preimage of the crossing $c$ with respect to the $z$-coordinate.
\item The set $\tilde V_\lag$ of infinitely many generators from each vertex,
\[
\tilde V_\lag\coloneqq\{v_{a,i} \mid a\in \Zmod{\val(v)},\ i\geq1,\ v\in V_\lag\},
\]
whose grading is defined as
\begin{equation}\label{equation:degree of vertex generators}
|v_{a,i}| \coloneqq\mu_v(h_{v,a})-\mu_v(h_{v,a+i})+(n(v,a,i)-1),
\end{equation}
where $n(v,a,i)\coloneqq n(\ell,r,a,i)$ is the number defined in Example~\ref{example:internal DGA}.
\end{itemize}

Indeed, one can interpret each $v_{a,i}$ geometrically as follows:
\begin{definition}[Vertex generators and spiral curves]\label{definition:spiral curves}
Let $v$ be a vertex of type $(\ell,r)$.
Each $v_{a,i}$ are called a \emph{vertex generator} which may be regarded as a spiral curve $\gamma(v,a,i)$ which starts from the $a$-th half-edge at the vertex $v$ and rotates $i$ sectors in a clockwise direction as depicted in Figure~\ref{figure:vertex generators}.
\end{definition}

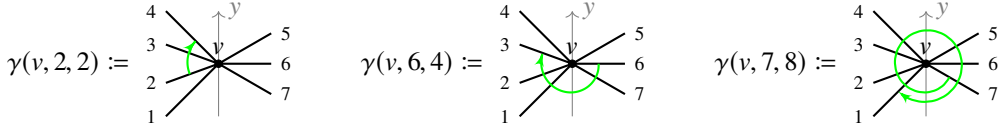
\begin{figure}[ht]
\begin{align*}
\gamma(v,2,2)&\coloneqq
\begin{tikzpicture}[baseline=-.5ex,scale=0.7]
\draw[gray,->] (0,-1)--(0,1) node[right] {$y$};
\draw[fill] (0,0) circle (2pt) node[above] {$v$};
\draw[thick] (0,0) -- (1,{tan(30)}) node[right] {\scriptsize$5$} (0,0) -- (1,0) node[right] {\scriptsize$6$} (0,0) -- (1,{-tan(30)}) node[right] {\scriptsize$7$};
\draw[thick] (0,0) -- (-1,-1) node[left] {\scriptsize$1$} (0,0) -- (-1,{tan(-200)}) node[left] {\scriptsize$2$} (0,0) -- (-1,{tan(200)}) node[left] {\scriptsize$3$} (0,0) -- (-1,1) node[left] {\scriptsize$4$};
\draw[thick,green,-latex'] plot[domain=-160:-225,smooth] ({0.5*(1-\x/1000)*cos(\x)},{0.5*(1-\x/1000)*sin(\x)});
\end{tikzpicture}&
\gamma(v,6,4)&\coloneqq
\begin{tikzpicture}[baseline=-.5ex,scale=0.7]
\draw[gray,->] (0,-1)--(0,1) node[right] {$y$};
\draw[fill] (0,0) circle (2pt) node[above] {$v$};
\draw[thick] (0,0) -- (1,{tan(30)}) node[right] {\scriptsize$5$} (0,0) -- (1,0) node[right] {\scriptsize$6$} (0,0) -- (1,{-tan(30)}) node[right] {\scriptsize$7$};
\draw[thick] (0,0) -- (-1,-1) node[left] {\scriptsize$1$} (0,0) -- (-1,{tan(-200)}) node[left] {\scriptsize$2$} (0,0) -- (-1,{tan(200)}) node[left] {\scriptsize$3$} (0,0) -- (-1,1) node[left] {\scriptsize$4$};
\draw[thick,green,-latex'] plot[domain=0:-200,smooth] ({0.5*(1-\x/1000)*cos(\x)},{0.5*(1-\x/1000)*sin(\x)});
\end{tikzpicture}&
\gamma(v,7,8)&\coloneqq
\begin{tikzpicture}[baseline=-.5ex,scale=0.7]
\draw[gray,->] (0,-1)--(0,1) node[right] {$y$};
\draw[fill] (0,0) circle (2pt) node[above] {$v$};
\draw[thick] (0,0) -- (1,{tan(30)}) node[right] {\scriptsize$5$} (0,0) -- (1,0) node[right] {\scriptsize$6$} (0,0) -- (1,{-tan(30)}) node[right] {\scriptsize$7$};
\draw[thick] (0,0) -- (-1,-1) node[left] {\scriptsize$1$} (0,0) -- (-1,{tan(-200)}) node[left] {\scriptsize$2$} (0,0) -- (-1,{tan(200)}) node[left] {\scriptsize$3$} (0,0) -- (-1,1) node[left] {\scriptsize$4$};
\draw[thick,green,-latex'] plot[domain=-30:-495,smooth] ({0.5*(1-\x/1000)*cos(\x)},{0.5*(1-\x/1000)*sin(\x)});
\end{tikzpicture}
\end{align*}
\caption{Examples of spiral curves}
\label{figure:vertex generators}
\end{figure}

\begin{remark}
The number $n(v,a,i)$ is the number of intersection between the $y$-axis and the spiral curve $\gamma(v,a,i)$.
For examples, the spiral curves in Figure~\ref{figure:vertex generators} give us the numbers $n$ as
\begin{align*}
n(v,2,2)&=0,&
n(v,6,4)&=1,&
n(v,7,8)&=3.
\end{align*}

Moreover, the following additivity holds for any $a\in\Zmod{\val(v)}$ and $i_1+i_2=i$
\begin{equation}\label{equation:additivity of n}
n(v,a,i) = n(v,a,i_1)+n(v,a+i_1,i_2)
\end{equation}
\end{remark}

\begin{definition}
The graded algebra $\alg^\CE(T_\lag,\mu)$ is defined to be the unital associative algebra over the Laurent polynomial ring
$\ring\left[t_b,t_b^{-1}~\middle|~b\in B_\lag\right]$, which is freely generated by the graded set $C_\lag \amalg \tilde V_\lag$.
\begin{align*}
\alg^\CE(T_\lag,\mu)&\coloneqq\ring\left[t_b,t_b^{-1}~\middle|~b\in B_\lag\right]\left\langle C_\lag \amalg \tilde V_\lag\right\rangle.
\end{align*}
\end{definition}

The differential on $\tilde V_\lag$ will be defined as the differential for the internal DGA. That is,
\begin{equation}\label{equation:differential for vertex generators}
\differential(v_{a,i})\coloneqq \delta_{\val(v),i}+\sum_{i_1+i_2=i}(-1)^{|v_{a,i_1}|+1}v_{a,i_1}v_{a+i_1,i_2}.
\end{equation}
Then for each $v$, the DG-subalgebra $I_v\subset A^\CE(T_\lag,\mu)$ generated by $v_{a,i}$'s is isomorphic to an internal DGA 
\begin{equation}\label{equation:internal DG-subalgebra}
I_v\isomorphic I_{(\ell,r)}(\mu_v),
\end{equation}
under the identification $\vg_{a,i}\mapsto v_{a,i}$, where $\mu_v$ is the restriction of $\mu$ to the set of half-edges of $v$.

Let $\Pi_t$ be a $(t+1)$-gon, and denote its boundary and vertices by $\boundary \Pi_t$ and $V\Pi_t=\{\bfx_0,\dots, \bfx_t \}$, respectively.
The differential for each crossing $c\in C$ are given by counting orientation preserving immersed polygons 
\begin{align}\label{eqn:polygon_map}
f:(\Pi_t,\boundary \Pi_t, V\Pi_t)\to(\RR^2,T_\lag,C_\lag\amalg V_\lag\amalg B_\lag)
\end{align}
which satisfies the following conditions:
\begin{itemize}
\item near $\bfx_0$, $f$ passes a convex corner of $c$ with positive Reeb sign in Figure~\ref{figure:Reeb_sign};
\item near $\bfx_k$ for each $k\ge 1$, $f$ passes either a convex corner of a crossing with negative Reeb sign, a half space of a base point, or a vertex.
\end{itemize}

\begin{figure}[ht]
\subfigure[Reeb sign\label{figure:Reeb_sign}]{\makebox[0.4\textwidth]{
$\begin{tikzpicture}[scale=0.5,baseline=-.5ex]
\draw[thick] (-1,-1) -- (1,1);
\draw[line width=5pt,white] (-1,1) -- (1,-1);
\draw[thick] (-1,1) -- (1,-1);
\draw(0,0) node[above] {$-$} node[below] {$-$} node[left] {$+$} node[right] {$+$};
\begin{scope}[xshift=3cm]
\draw[thick] (-1,1) -- (1,-1);
\draw[line width=5pt,white] (-1,-1) -- (1,1);
\draw[thick] (-1,-1) -- (1,1);
\draw(0,0) node[above] {$+$} node[below] {$+$} node[right] {$-$} node[left] {$-$};
\end{scope}
\end{tikzpicture}$}}
\subfigure[Orientation sign\label{figure:ori_sign}
]{\makebox[0.4\textwidth]{
$\begin{tikzpicture}[scale=0.5,baseline=-.5ex]
\begin{scope}
\draw[thick] (-1,-1) -- (1,1);
\draw[line width=5pt,white] (-1,1) -- (1,-1);
\fill[gray,opacity=0.4](-0.5,-0.5)--(0.5,0.5) arc (45:-135:0.707);
\draw[thick] (-1,1) -- (1,-1);
\draw(0,0) node[above] {$+$} node[below] {$\epsilon$} node[left] {$+$} node[right] {$\epsilon$};
\end{scope}
\begin{scope}[xshift=3cm]
\draw[thick] (-1,1) -- (1,-1);
\draw[line width=5pt,white] (-1,-1) -- (1,1);
\fill[gray,opacity=0.4](0.5,-0.5)--(-0.5,0.5) arc (135:-45:0.707);
\draw[thick] (-1,-1) -- (1,1);
\draw(0,0) node[above] {$\epsilon$} node[below] {$+$} node[right] {$\epsilon$} node[left] {$+$};
\draw(1.5,0) node[right] {$\epsilon\coloneqq \epsilon(\sfc)=(-1)^{|\sfc|-1}$};
\end{scope}
\end{tikzpicture}$}}
\caption{Reeb sign and orientation sign}
\label{figure:sign}
\end{figure}
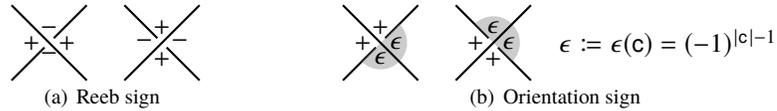

For such a map $f$ defined on $\Pi_t$ with a vertex $\bfx\in V\Pi_t$, the \emph{sign} $\sgn(f,\bfx)$ of  $f$ at $\bfx$ is the orientation sign defined as follows:
\begin{align*}
\sgn(f,\bfx)=
\begin{cases}
\text{the sign in Figure}~\ref{figure:ori_sign}&f(\bfx)\in C_\lag;\\
1 &f(\bfx)\in V_\lag.
\end{cases}
\end{align*}
The \emph{evaluation} $\tilde f(\bfx)$ of $f$ at $\bfx$ is given by
\begin{align*}
\widetilde f(\bfx)=
\begin{cases}
\sgn(f,\bfx)c & \text{ if }f(\bfx)=c\in C_\lag;\\
v_{a,j} & \text{ if } f(\bfx)=v\in V_\lag \text{, $f$ occupies $j$-sectors from $h_{v,a}$ to $h_{v,a+j}$ near $\bfx$};\\
t_b & f(\bfx)=b\in B_\lag, \text{ the orientation of $f$ matches with that of $b$};\\
t_b^{-1} & f(\bfx)=b\in B_\lag, \text{ the orientation of $f$ does not matches with that of $b$}.
\end{cases}
\end{align*}
Here, we are using the orientation convention for each base point $b$ as defined in Definition~\ref{definition:types of vertices}.

The \emph{grading} $|f|$ of $f$ defined on $\Pi_t$ is assigned by
\[
|f|=|\widetilde f(\bfx_0) |-\sum_{i=1}^t|\widetilde f(\bfx_i)|.
\]
For a crossing $c\in C_\lag$, consider the following moduli space
\[
\modulispace_t(c)\coloneqq\left\{f
~\middle|~ f \text{ is a map of }\eqref{eqn:polygon_map},\ f(\bfx_0)=c,\ |f|=1\right\}\big/\operatorname{Diff}^+(\Pi_t,\boundary\Pi_t,V\Pi_t).
\]

The \emph{differential} $\differential(c)$ of the DGA $A^\CE(T_\lag^\mu)$ is defined by
\[
\differential(c)\coloneqq \sum_{t\geq 0} \sum_{f\in \modulispace_t(c)}\sgn(f,\bfx_0)\tilde f(\bfx_1)\cdots \tilde f(\bfx_t).
\]

\begin{definition}[LCH DGAs for Legendrian graphs]
The \emph{Legendrian contact homology DGA}(LCH DGA) or \emph{Chekanov-Eliashberg} DGA $A^\CE(T_\lag,\mu)$ for a Legendrian graph $T_\lag^\mu\in\LT_\lag^\mu$ with a Maslov potential is defined as
\[
A^\CE(T_\lag,\mu)\coloneqq\left(\alg^\CE(T_\lag,\mu),\differential\right).
\]
\end{definition}

\begin{theorem}\cite[Theorem~A, B and E]{AB2018}\label{theorem:invariance of DGAs for Legendrian graphs}
For a Legendrian graph $(T_\lag,\mu)\in\LT_\lag^\mu$ with a Maslov potential, the pair of the LCH DGA $A^\CE(T_\lag,\mu)$ and the set of its internal DG-subalgebras $\left\{I_v\subset A^\CE(T_\lag,\mu)\mid v\in V\right\}$ corresponding to vertices is invariant under the Reidemeister moves up to generalized stable-tame isomorphisms.
\end{theorem}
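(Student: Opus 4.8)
Since this statement is quoted from \cite{AB2018}, the proof lives there; here is the line of argument one carries out. By \cite[Proposition~4.2]{BI2009} and \cite[Proposition~2.1]{ABK2018}, two regular Lagrangian projections in $\LT_\lag$ present equivalent Legendrian graphs if and only if they are connected by a finite sequence of the Lagrangian Reidemeister moves of Figure~\ref{fig:RM} and their inverses. ``Generalized stable--tame isomorphism'' — a finite zigzag of tame isomorphisms, stabilizations in the sense of Definition~\ref{definition:stabilizations}, and destabilizations, performed compatibly with the distinguished internal subalgebras — is by construction an equivalence relation. So the plan is: for each move $\RM{m}\colon(T'_\lag,\mu')\to(T_\lag,\mu)$ in Figure~\ref{fig:RM} and its reverse, produce a generalized stable--tame isomorphism $A^\CE(T'_\lag,\mu')\to A^\CE(T_\lag,\mu)$ which carries each internal subalgebra $I_v$ to its counterpart. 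That the Maslov data transports correctly along such a move is exactly Lemma~\ref{lemma:Reidemeister move preserves potential}, so this extra datum can be ignored in the bookkeeping.

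\emph{Moves away from a vertex.} The move $\RM{0_a}$ (and the vertex-free instances of $\RM{0_b},\RM{0_c}$) is a planar regular homotopy of the Lagrangian diagram creating no crossings; an ambient isotopy of $\RR^2_{xy}$ puts the moduli spaces $\modulispace_t(c)$ before and after into bijection, so $A^\CE$ is literally preserved up to a (trivial) tame isomorphism. The move $\RM{ii}$ births a canceling pair of crossings $e,\hat e$ with $\differential\hat e=e+w$ for a word $w$ not involving $\hat e$; a tame change of coordinates absorbing $w$ exhibits the post-move DGA as a stabilization of the pre-move one. The triple-point moves $\RM{iii_a},\RM{iii_b}$ are handled by the classical Chekanov--Eliashberg--Ng local polygon comparison (matching, near the triple point, immersed polygons before and after), yielding an explicit tame isomorphism. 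In all of these cases every internal subalgebra $I_v$ is untouched.

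\emph{Moves across a vertex.} This is the genuinely new content: the move $\RM{iv}$ (pushing a strand across a vertex $v$) and the vertex instances of $\RM{0_b},\RM{0_c}$. Sliding a strand from one side of a vertex $v$ of type $(\ell,r)$ to the other creates $n=\ell+r$ new crossings $e^1,\dots,e^n$ between the strand and the half-edges of $v$. Using the spiral-curve interpretation of the vertex generators $v_{a,i}$ (Definition~\ref{definition:spiral curves}), one counts the immersed polygons that run along the displaced strand and spiral around $v$: the result is exactly $\bar\differential^+(e^b)=\sum_{a<b}(-1)^{|e^a|-1}e^a v_{a,b-a}$, or its mirror $\bar\differential^-(e^b)=\sum_{a<b}v_{n+1-b,\,b-a}e^a$, the sign $\pm$ and the degree shift $\fd$ being read off the Maslov potential and the side the strand comes from. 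Hence the post-move DGA is, up to a tame isomorphism cleaning lower-order terms, the generalized stabilization $S^{\fd\pm}_\varphi A^\CE(T_\lag,\mu)$ of Definition~\ref{def:Generalized stabilization} along the canonical map $\varphi\colon I_n(\mu_v)\to A^\CE(T_\lag,\mu)$, $\varphi(\vg_{a,i})=v_{a,i}$, of \eqref{equation:internal DG-subalgebra}. By Proposition~\ref{proposition:generalized stabilizations} there is a common honest stabilization $\tilde S^{\fd\pm}_\varphi A^\CE(T_\lag,\mu)$ of $A^\CE(T_\lag,\mu)$ and of the post-move DGA; since all canceling pairs introduced lie outside every $I_w$ while $\varphi$ is assembled from the $v_{a,i}$, this zigzag preserves the whole collection of internal subalgebras. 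The vertex instances of $\RM{0_b},\RM{0_c}$ merely cyclically relabel the half-edges of $v$; the induced relabeling $v_{a,i}\mapsto v_{a',i}$ is an elementary automorphism realizing the internal-DGA isomorphism $I_{(\ell,r)}(\mu_v)\isomorphic I_n(\mu_v')$ of the Remark following Example~\ref{example:internal DGA}.

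\emph{Main obstacle.} The crux is the $\RM{iv}$ computation: one must verify, with the correct orientation signs (Figure~\ref{figure:ori_sign}), Reeb signs (Figure~\ref{figure:Reeb_sign}), and gradings, that the holomorphic-polygon count for the new crossings $e^b$ reproduces precisely the algebraic differential $\bar\differential^{\pm}$, and that no spurious polygon having both ends among the $e^b$ contributes. This rests on the local identification of the polygons spiraling around $v$ with the sectors enumerated by $n(\ell,r,a,i)$, together with a careful disjointness/convexity analysis near the vertex. Everything else — the $\RM{0}$ and $\RM{iii}$ moves, transport of Maslov potentials, and checking that each chosen zigzag respects the $I_v$'s — is routine; the only subtlety outside of this is the (known, nontrivial) fact recalled in Remark~\ref{remark:homotopy_equiv} that DGA homotopy, used implicitly when comparing stabilizations, is an equivalence relation.
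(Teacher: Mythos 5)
This theorem is imported verbatim from \cite{AB2018} (Theorems A, B and E) and the paper gives no proof of it, so there is no internal argument to compare against; your outline reproduces exactly the structure this paper relies on, namely that each Lagrangian Reidemeister move induces a tame isomorphism composed with an ordinary or generalized (de)stabilization (cf.\ the proof of Corollary~\ref{corollary:zig-zags of stabilizations}), with the vertex move $\RM{iv}$ realized as the generalized stabilization of Definition~\ref{def:Generalized stabilization} along $\varphi\colon I_n(\mu_v)\to A^\CE(T_\lag,\mu)$ and flattened to a zigzag of honest stabilizations by Proposition~\ref{proposition:generalized stabilizations}. Your identification of the $\RM{iv}$ polygon count as the crux, and your observation that the canceling pairs lie outside the internal subalgebras $I_v$ so that the peripheral structures are preserved, are consistent with how the result is stated and used here; the remaining sign and moduli-space verifications of course live in \cite{AB2018}.
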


\begin{remark}\label{remark:peripheral structures}
In \cite{AB2018}, internal DG-subalgebras corresponding to vertices are called \emph{peripheral structures}.
\end{remark}

\begin{corollary}\label{corollary:zig-zags of stabilizations}
Let $(T'_\lag,\mu'), (T_\lag,\mu)\in\LT_\lag^\mu$ be Lagrangian projections of two equivalent Legendrian graphs.
Then there exists a zig-zag of stabilizations between two LCH DGAs $A^\CE(T'_\lag,\mu')$ and $A^\CE(T_\lag,\mu)$
\[
\begin{tikzcd}
A^\CE(T'_\lag,\mu')\arrow[r,equal]& A_0
& A_1
\arrow[from=l,"i_1",yshift=.7ex,hook]\arrow[l,"\pi_1",yshift=-.7ex,->>] 
\arrow[from=r,"i_1'"',yshift=.7ex,hook']\arrow[r,"\pi_1'"',yshift=-.7ex,->>]
&\cdots
& A_{n-1}
\arrow[from=l,"i_{n-1}",yshift=.7ex,hook]\arrow[l,"\pi_{n-1}",yshift=-.7ex,->>] 
\arrow[from=r,"i_{n-1}'"',yshift=.7ex,hook']\arrow[r,"\pi_{n-1}'"',yshift=-.7ex,->>]
&A_n\arrow[r,equal]& A^\CE(T_\lag,\mu).
\end{tikzcd}
\]
\end{corollary}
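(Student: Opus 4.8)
The plan is to reduce the statement to a single Reidemeister move, feed that move into the Reidemeister invariance of the LCH DGA (Theorem~\ref{theorem:invariance of DGAs for Legendrian graphs}), and then unwind the resulting \emph{generalized} stable-tame isomorphism into an honest zig-zag of stabilizations by means of Proposition~\ref{proposition:generalized stabilizations}. First I would use that $(T'_\lag,\mu')$ and $(T_\lag,\mu)$, being Lagrangian projections of equivalent Legendrian graphs, are connected by a finite sequence of Lagrangian Reidemeister moves compatible with the Maslov potentials (by \cite{BI2009, ABK2018} together with Lemma~\ref{lemma:Reidemeister move preserves potential}; a planar isotopy of a fixed diagram alters the DGA only by a tame isomorphism, which will be absorbed below). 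Since zig-zags of stabilizations concatenate end-to-end along matching endpoints, it suffices to build one for a single move $\RM{m}:(T'_\lag,\mu')\to(T_\lag,\mu)$.

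For such a move, Theorem~\ref{theorem:invariance of DGAs for Legendrian graphs} gives $A^\CE(T'_\lag,\mu')\stabletameisom A^\CE(T_\lag,\mu)$; that is, there are finite composites of generalized stabilizations (Definition~\ref{def:Generalized stabilization}) $S'$ of $A^\CE(T'_\lag,\mu')$ and $S$ of $A^\CE(T_\lag,\mu)$ together with a tame isomorphism $S'\tameisom S$. Next I would apply Proposition~\ref{proposition:generalized stabilizations} to each elementary generalized stabilization step: for a DGA $A$ and a morphism $\varphi:I_n(\mu)\to A$, the DGA $\tilde S^{\fd\pm}_\varphi A$ is an honest stabilization (in the sense of Definition~\ref{definition:stabilizations}) of \emph{both} $A$ and $S^{\fd\pm}_\varphi A$, so the step $A\rightsquigarrow S^{\fd\pm}_\varphi A$ gets replaced by the length-two span
\[
A\ \longleftarrow\ \tilde S^{\fd\pm}_\varphi A\ \longrightarrow\ S^{\fd\pm}_\varphi A,
\]
whose arrows are the canonical projections, the canonical inclusions running the other way. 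Iterating over the composites defining $S'$ and $S$ produces honest zig-zags $A^\CE(T'_\lag,\mu')\rightsquigarrow S'$ and $S\rightsquigarrow A^\CE(T_\lag,\mu)$. Finally, since Definition~\ref{definition:stabilizations} only demands a tame isomorphism onto an ``added canceling pairs'' DGA, the isomorphism $S'\tameisom S$ lets me view the top DGA of the first span in the second zig-zag as a common stabilization of $S'$ as well; splicing the two zig-zags across this identification yields the zig-zag of stabilizations for $\RM{m}$, and concatenation over the sequence of moves finishes the argument.

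The only substantive step is the middle one. Theorem~\ref{theorem:invariance of DGAs for Legendrian graphs} controls the DGAs merely up to \emph{generalized} stable-tame isomorphism, and a generalized stabilization is a priori \emph{not} an ordinary stabilization: as remarked just before Proposition~\ref{proposition:generalized stabilizations}, it is a composite of a stabilization and a \emph{de}stabilization, so taken alone it would contribute an arrow pointing the wrong way in a zig-zag. Proposition~\ref{proposition:generalized stabilizations} is precisely the tool that converts the destabilization direction into a second honest stabilization --- a span with both legs honest stabilizations --- and without it the reduction to ordinary stabilizations would fail. Everything else (concatenating zig-zags, absorbing the tame and planar-isotopy isomorphisms, and tracking that the Maslov potentials are carried along by Lemma~\ref{lemma:Reidemeister move preserves potential}) is routine bookkeeping.
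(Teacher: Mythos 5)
Your proposal is correct and follows essentially the same route as the paper: reduce to individual Lagrangian Reidemeister moves, each of which gives (up to tame isomorphism) an inclusion into or projection from a possibly generalized stabilization, and then invoke Proposition~\ref{proposition:generalized stabilizations} to replace each generalized stabilization by the span $A\leftarrow \tilde S^{\fd\pm}_\varphi A\rightarrow S^{\fd\pm}_\varphi A$ of honest stabilizations. Your identification of that proposition as the one non-routine ingredient matches the paper's own (much terser) argument exactly.
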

\begin{proof}
It is known that up to tame isomorphisms, each Lagrangian Reidemeister move induces a DGA morphism which is either a canonical inclusion into or a projection from a (generalized) stabilization.
Since a generalized stabilization is a zig-zag of stabilizations by Proposition~\ref{proposition:generalized stabilizations}, we are done.
\end{proof}

Now let $(\cT,\bfmu)\in\BLT_\front^\mu$ be a front projection of a bordered Legendrian graph with a Maslov potential.
Then by Lemma~\ref{lemma:potential extends to the closure}, we have a Legendrian graph $(\hat \cT,\hat\bfmu)$ with a Maslov potential.
Let $A^\CE(\hat \cT,\hat\bfmu)$ be the Chekanov-Eliashberg's DGA for the Ng's resolution
\[
A^\CE(\hat \cT,\hat\bfmu)=\left(\alg^\CE(\hat \cT,\hat\bfmu),\hat\differential\right)\coloneqq A^\CE(\Res(\hat \cT,\hat\bfmu))
\]
of the closure $(\hat \cT,\hat\mu)$ as depicted in Figure~\ref{figure:resolution of the closure}.
Clearly, it consists of three parts, where the left and the right parts corresponding to the resolutions of $\bfzero_{n_\Left}$ and $\bfinfty_{n_\Right}$, and we denote the middle part by $\Res(T)$.

As mentioned earlier, there are $\binom{n_\Right}2$ more crossings in $\Res(\hat T)$, which will be denoted by 
\begin{align*}
\hat K_\Right&\coloneqq\left\{\hat k_{ab}~\middle|~1\leq a<b \leq n_\Right\right\},&
\left|\hat k_{ab}\right|&=|k_{ab}|+1.
\end{align*}
whose upper and lower strand corresponds to $a$ and $b$ in $T_\Right$, respectively.

For two vertices $0$ and $\infty$ coming from the closure, we define two subsets of vertex generators
\[
K_\Left\coloneqq\{k_{a'b'}=0_{a',b'-a'}\mid1\le a'<b' \le n_\Left\}\quad\text{ and }\quad
K_\Right\coloneqq\{k_{ab}=\infty_{a,b-a}\mid1\le a<b\le n_\Right\}.
\]
Then by Proposition~\ref{proposition:differentials for border DGAs}, the DG-subalgebras generated by $k_{a'b'}$'s and $k_{ab}$'s are isomorphic to border DGAs $A_{n_\Left}(\mu_\Left)$ and $A_{n_\Right}(\mu_\Right)$, where $\mu_\Left$ and $\mu_\Right$ are the restriction of $\mu$ to the left and right borders, respectively, as seen in Section~\ref{section:Maslov potentials}.

\begin{lemma}\label{lem:subDGA}
The bordered DGA below is well-defined:
\begin{equation}\label{equation:mapping cylinder}
\hat A^\CE(\cT,\bfmu)\coloneqq\left(A^\CE(T_\Left,\mu_\Left)\stackrel{\hat\phi_\Left}\longrightarrow \hat A^\CE(T,\mu)
\stackrel{\hat\phi_\Right}\longleftarrow A^\CE(T_\Right,\mu_\Right)\right),
\end{equation}
where 
\[
A^\CE(T_\Left,\mu_\Left)\coloneqq (\ring\langle K_\Left\rangle,\differential_\Left)\isomorphic A_{n_\Left}
\quad\text{ and }\quad
A^\CE(T_\Right,\mu_\Right)\coloneqq (\ring\langle K_\Right\rangle,\differential_\Right)\isomorphic A_{n_\Right},
\]
and $\hat A^\CE(T,\mu)$ is the subalgebra of $A^\CE(\hat \cT,\hat\bfmu)$ generated by
\begin{itemize}
\item all crossings and vertex generators in $\Res(T)$,
\item all elements in the set $\hat K_\Right$, and
\item all $k_{a'b'}$'s and $k_{ab}$'s for $1\le a'<b'<n_\Left$ and $1\le a<b<n_\Right$,
\end{itemize}
and the maps $\hat\phi_\Left$ and $\hat\phi_\Right$ are the canonical inclusions.
\end{lemma}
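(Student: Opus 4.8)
The plan is to deduce everything from the fact that $A^\CE(\hat\cT,\hat\bfmu)$ is already an honest DGA. By Lemma~\ref{lemma:potential extends to the closure} the closure $(\hat\cT,\hat\bfmu)$ lies in $\LT_\front^\mu$, so after Ng's resolution its Chekanov--Eliashberg differential $\hat\differential$ is a well-defined degree $-1$ derivation with $\hat\differential^2=0$. To see that $\hat A^\CE(\cT,\bfmu)$ is a bordered DGA I would then only need to check: (a) the graded subalgebra $\hat A^\CE(T,\mu)$ is closed under $\hat\differential$, so that it is itself a DGA; (b) the subalgebras generated by the $k_{a'b'}$'s and by the $k_{ab}$'s are $\hat\differential$-closed and isomorphic to $A_{n_\Left}(\mu_\Left)$ and $A_{n_\Right}(\mu_\Right)$, and are contained in $\hat A^\CE(T,\mu)$; (c) $\hat\phi_\Left$ and $\hat\phi_\Right$ are DGA morphisms; (d) $\hat\phi_\Left$ is injective. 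Granting (a) and (b), statement (c) is automatic, since $\hat\phi_\Left,\hat\phi_\Right$ are then inclusions of $\hat\differential$-closed subalgebras, and (d) is immediate because $\hat\phi_\Left$ is the inclusion of a free subalgebra on a subset of the generators of $A^\CE(\hat\cT,\hat\bfmu)$. So the real content is (a) together with the identifications in (b).

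For (b) I would recall that the vertex generators attached to the closure vertex $0$, which has type $(0,n_\Left)$, span a DG-subalgebra of $A^\CE(\hat\cT,\hat\bfmu)$ isomorphic to the internal DGA $I_{n_\Left}(\mu_\Left)$ by \eqref{equation:internal DG-subalgebra}, and that under this isomorphism $k_{a'b'}=0_{a',b'-a'}$; Proposition~\ref{proposition:differentials for border DGAs} then shows that the $k_{a'b'}$ span a DG-subalgebra isomorphic to the border DGA $A_{n_\Left}(\mu_\Left)$, because in the relevant index range the term $\delta_{\val(0),\,b'-a'}$ of \eqref{equation:differential for vertex generators} is absent and $\hat\differential(k_{a'b'})=\sum_{a'<c<b'}(-1)^{|k_{a'c}|-1}k_{a'c}k_{cb'}$ involves only generators of the same type. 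The same reasoning applied to the closure vertex $\infty$ of type $(n_\Right,0)$ identifies the subalgebra on the $k_{ab}$'s with $A_{n_\Right}(\mu_\Right)$. Both subalgebras sit inside $\hat A^\CE(T,\mu)$ by construction, so this also supplies the two border isomorphisms demanded by the definition of a bordered DGA. Moreover this already settles part of (a): for a vertex $v\in V$ of $T$ the differential $\hat\differential(v_{a,i})$ is given by the internal formula \eqref{equation:differential for vertex generators}, hence involves only vertex generators of the same $v$, all of which belong to $\hat A^\CE(T,\mu)$.

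What remains for (a) is to show $\hat\differential(c)\in\hat A^\CE(T,\mu)$ for each crossing $c$ of $\Res(T)$ and $\hat\differential(\hat k_{ab})\in\hat A^\CE(T,\mu)$ for each of the extra crossings $\hat k_{ab}\in\hat K_\Right$. Both are signed counts of orientation-preserving immersed polygons in $\Res(\hat\cT)$, and each contributing monomial is a product of corner evaluations; a corner lies at a crossing of $\Res(T)$, at one of the $\binom{n_\Right}{2}$ crossings $\hat k_{ab}$, at a base point of $B$, or at a vertex in $V\cup\{0,\infty\}$. The first three types and corners at $v\in V$ clearly evaluate into $\hat A^\CE(T,\mu)$; and a corner at $0$ (resp.\ $\infty$) evaluates to $0_{a',b'-a'}=k_{a'b'}$ (resp.\ $\infty_{a,b-a}=k_{ab}$), because $0$ and $\infty$ are the leftmost and rightmost points of $\Res(\hat\cT)$ (Figure~\ref{figure:resolution of the closure}), so an immersed polygon with a corner there can only occupy the bounded sectors between consecutive half-edges and thus produces one of the admissible vertex generators lying in $\hat A^\CE(T,\mu)$. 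Hence every monomial of $\hat\differential(c)$ and of $\hat\differential(\hat k_{ab})$ lies in $\hat A^\CE(T,\mu)$, which proves (a). As a consistency check I would also read off the polygons meeting the $\Res(\bfinfty_{n_\Right})$-region to see that $\hat\differential(\hat k_{ab})$ has precisely the shape of \eqref{equation:differential for mapping cylinder}; after the tame change of coordinates of Lemma~\ref{lemma:mapping cylinders are stabilizations} clearing the $\hat k$'s out of the differentials of the $\Res(T)$-generators, $\hat A^\CE(T,\mu)$ is then literally a mapping cylinder in the sense of Definition~\ref{definition:mapping cylinder}, and Lemma~\ref{lemma:mapping cylinder differential} forces the associated right-border restriction map to be a DGA morphism.

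The main obstacle will be the polygon bookkeeping in the previous paragraph: one must be certain that no immersed polygon rooted at a generator of $\Res(T)$ (or at an $\hat k_{ab}$) ever ``escapes'' past the extreme vertices $0$ or $\infty$ to produce a vertex generator outside the admissible ranges, and that the counts near $\Res(\bfinfty_{n_\Right})$ assemble exactly into a mapping-cylinder differential. Concretely this amounts to re-running, in the present bordered setting, the local disk computations for $\bfzero_n$ and $\bfinfty_n$ (and for the crossings produced by $\Res$) carried out in \cite{AB2018}; once those local computations are in hand, all of (a)--(d) follow formally as above.
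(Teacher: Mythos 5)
Your proposal is correct and its essential content coincides with the paper's own argument: the paper proves the lemma precisely by observing that the unbounded sectors at the closure vertices $0$ and $\infty$ (between $h_{0,n_\Left}$ and $h_{0,1}$, and between $h_{\infty,n_\Right}$ and $h_{\infty,1}$) can never be covered by an immersed polygon, so corners there evaluate only to the admissible generators $k_{a'b'}$ and $k_{ab}$, and the subalgebra is $\hat\differential$-closed. Your extra items (b)--(d) and the mapping-cylinder consistency check are correct but are handled in the paper by the surrounding discussion (Proposition~\ref{proposition:differentials for border DGAs} and Proposition~\ref{proposition:closure DGA is a mapping cylinder}) rather than inside this proof.
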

\begin{proof}
Let $f$ be an immersed polygon defined in \eqref{eqn:polygon_map} with $f(\bfx_0)\in C(\cT)$.
Since two vertices $0$ and $\infty$ in $\Res(\hat\cT)$ faces the unbounded region which correspond to the sectors between $h_{0,n_\Left}$ and $h_{0,1}$ and between $h_{\infty,n_\Right}$ and $h_{\infty,1}$, respectively, these unbounded regions are never covered by $f$.
\end{proof}

Furthermore, we have a smaller DG-subalgebra $A^\CE(T,\mu)$ as follows:
\begin{theorem}\label{theorem:subDGA}
The DG-subalgebra $A^\CE(T,\mu)$ of $\hat A^\CE(T,\mu)$ generated by $K_\Left$ and both crossings and vertex generators in $\Res(T)$ is well-defined.
\end{theorem}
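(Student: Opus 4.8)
The plan is to observe that, by construction, $A^{\CE}(T,\mu)$ is the subalgebra of $A^{\CE}(\hat\cT,\hat\bfmu)$ freely generated by the subset $G_0$ consisting of $K_\Left$, the crossings of $\Res(T)$, and the vertex generators $v_{a,i}$ of the vertices $v$ of $T$. Since $\hat\differential$ is a derivation, $A^{\CE}(T,\mu)$ is a DG-subalgebra exactly when $\hat\differential(g)\in A^{\CE}(T,\mu)$ for each $g\in G_0$, i.e.\ when no monomial of $\hat\differential(g)$ involves a generator outside $G_0$. By Lemma~\ref{lem:subDGA} we already know $\hat\differential$ preserves the larger subalgebra $\hat A^{\CE}(T,\mu)$, so the only thing left is the sharper statement that, for $g\in G_0$, the terms of $\hat\differential(g)$ never involve a generator $\hat k_{ab}\in\hat K_\Right$ nor a vertex generator $\infty_{a,i}=k_{a,a+i}$ of the vertex $\infty$ (these being the extra generators of $\hat A^{\CE}(T,\mu)$ not in $G_0$; recall the closure adds no base points).

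\textbf{The first two families.} For $g=k_{a'b'}\in K_\Left$, Proposition~\ref{proposition:differentials for border DGAs} identifies the subalgebra generated by $K_\Left$ with the border DGA $A_{n_\Left}(\mu_\Left)$, so $\hat\differential(k_{a'b'})$ is the border-DGA differential, a sum of words in $K_\Left$, and there is nothing to check. For $g=v_{a,i}$ a vertex generator of a vertex $v$ of $T$, the defining formula~\eqref{equation:differential for vertex generators} shows $\hat\differential(v_{a,i})$ is a sum of words in the vertex generators of the same $v$; these span the internal DG-subalgebra $I_v\subset A^{\CE}(T,\mu)$. Thus all the content is concentrated in the third family, the crossings $c$ of $\Res(T)$.

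\textbf{The crossings.} For such a $c$ one writes $\hat\differential(c)=\sum_{t\ge 0}\sum_{f\in\modulispace_t(c)}\sgn(f,\bfx_0)\,\tilde f(\bfx_1)\cdots\tilde f(\bfx_t)$ and analyzes the factors $\tilde f(\bfx_k)$, i.e.\ the corners of the immersed polygon $f$ away from its unique positive corner at $c$. A corner at a base point is at a base point of $T$, contributing $t_b^{\pm1}\in A^{\CE}(T,\mu)$; a corner at a crossing or vertex of $\Res(T)$ contributes a chosen generator. A corner at the vertex $0$ (of type $(0,n_\Left)$) is handled exactly as in the proof of Lemma~\ref{lem:subDGA}: the unique sector at $0$ facing the unbounded region, between $h_{0,n_\Left}$ and $h_{0,1}$, is never covered by $f$, so $f$ occupies a block of at most $n_\Left-1$ consecutive bounded sectors at $0$, contributing $0_{a',j}=k_{a',a'+j}$ with $1\le a'<a'+j\le n_\Left$, hence an element of $K_\Left$. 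So the theorem reduces to the single claim: \emph{no $f\in\modulispace_t(c)$ has a corner at the vertex $\infty$ or at a crossing $\hat k_{ab}\in\hat K_\Right$}, equivalently no such $f$ meets the right closure piece $\Res(\bfinfty_{n_\Right})$.

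\textbf{The main obstacle.} This last claim is where the geometry of Ng's resolution enters, and it is the hard part. From the construction in Figure~\ref{figure:resolution of the closure}, $\Res(\bfinfty_{n_\Right})$ is the resolution of the type-$(n_\Right,0)$ vertex $\infty$, and each of its $n_\Right$ strands is strictly monotone in $x$, terminating at $\infty$; hence $\infty$ is the only vertex and the only $x$-extreme point of $\Res(\bfinfty_{n_\Right})$, and its unique sector facing the unbounded region opens to the right of $\infty$. The argument I would run: suppose some $f\in\modulispace_t(c)$ has a boundary arc entering $\Res(\bfinfty_{n_\Right})$ across the right border; since the closure strands are $x$-monotone with common $x$-maximum at $\infty$, and such an arc turns only at the negative Reeb corners of the spiral crossings $\hat k_{ab}$ or at $\infty$, one shows the arc is forced to run into $\infty$ and back out, so $f$ wraps around $\infty$ and — as in Lemma~\ref{lem:subDGA} — must cover the unbounded sector at $\infty$, a contradiction. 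The delicate points are (a) that the negative Reeb corners of the spiral crossings cannot send the boundary arc back toward $\Res(T)$ in the $x$-direction, which should follow from the uniform type of those crossings produced by Ng's resolution, and (b) controlling precisely how $\partial f$ threads through the $\binom{n_\Right}{2}$ spiral crossings, for which I expect an innermost-region (minimal-area) reduction, or explicit bookkeeping of the cyclic order in which $\partial f$ visits the spokes at $\infty$, to be needed. Granting the claim, $\hat\differential(c)\in A^{\CE}(T,\mu)$ for every crossing $c$ of $\Res(T)$, and together with the first two families this shows $A^{\CE}(T,\mu)$ is closed under $\hat\differential$, i.e.\ is a well-defined DG-subalgebra.
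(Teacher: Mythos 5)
Your reduction is sound and matches the paper's setup: the border generators $K_\Left$ and the vertex generators are closed under $\hat\differential$ for formal reasons, the left closure is handled exactly as in Lemma~\ref{lem:subDGA}, and everything hinges on the single claim that no polygon $f\in\modulispace_t(c)$, with $c$ a crossing of $\Res(T)$, has a corner at some $\hat k_{ab}$ or at $\infty$. But that claim is precisely where your proposal stops being a proof: you sketch a boundary-threading argument through the $\binom{n_\Right}{2}$ spiral crossings and then explicitly defer the two points on which it depends --- that the negative Reeb corners of the $\hat k_{ab}$ cannot redirect $\partial f$ back toward $\Res(T)$, and the bookkeeping of how $\partial f$ visits the spokes at $\infty$ --- to an ``expected'' innermost-region reduction that you do not carry out. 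As written, the crucial step is asserted, not proved.

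The paper closes this gap with an action (area) argument rather than with combinatorics of the boundary. Each crossing of the Lagrangian projection is a Reeb chord with an action, and for any immersed polygon contributing to the differential one has $0<\area(f)=\mathfrak{a}(f(\bfx_0))-\sum_{k\ge1}\mathfrak{a}(f(\bfx_k))$, so every negative corner has action strictly smaller than that of the positive corner. The closure is attached to the right of $\Res(T)$ and arranged so that the actions of the crossings $\hat k_{ab}$ are very large compared with those of every crossing of $\Res(T)$; hence no $\hat k_{ab}$ (nor the vertex $\infty$, which lies still further to the right) can appear as a corner of a polygon whose positive corner lies in $\Res(T)$, i.e.\ $f$ never escapes $\Res(T)$ to the right. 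This monotonicity argument replaces all of the delicate boundary analysis you were anticipating; if you insist on a purely combinatorial route, you must actually supply the innermost-disk argument you only gesture at, since without it points (a) and (b) of your ``main obstacle'' remain open.
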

\begin{proof}
Since the action of Reeb chords increases as we go to the right and we regard crossings $\hat k_{ab}$ have very large actions relative to the action of $c$, the map $f$ never escape $\Res(T_\front)$ to the right. Therefore no generators in $K_\Right$ appear and we are done.
\end{proof}

\begin{corollary}\label{corollary:subDGA}
The map $\hat\phi_\Left$ is the composition of the canonical inclusions $\phi_\Left:A^\CE(T_\Left,\mu_\Left)\to A^\CE(T,\mu)$ and $A^\CE(T,\mu)\to \hat A^\CE(T,\mu)$.
\end{corollary}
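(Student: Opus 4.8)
The plan is to note that all three maps involved --- the inclusion $\hat\phi_\Left$ of Lemma~\ref{lem:subDGA}, the prospective $\phi_\Left$, and the inclusion $A^\CE(T,\mu)\hookrightarrow\hat A^\CE(T,\mu)$ supplied by Theorem~\ref{theorem:subDGA} --- are inclusions of sub-DGAs of $\hat A^\CE(T,\mu)$, so that the content of the statement is a bookkeeping comparison of where generators are sent, once one knows that $\phi_\Left$ is well-defined. Thus the first (and essentially only substantive) step is to check that the subalgebra of $A^\CE(T,\mu)$ generated by $K_\Left$ is closed under $\hat\differential$ and is isomorphic, as a DGA, to $A^\CE(T_\Left,\mu_\Left)=A_{n_\Left}(\mu_\Left)$.

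For this I would argue as follows. The vertex $0$ arising from the left closure $\bfzero_{n_\Left}$ has type $(0,n_\Left)$, and in the labelling conventions its half-edges match the left borders, so the restriction of $\hat\bfmu$ to the half-edges at $0$ is $\mu_\Left$ under the canonical identification $T_\Left\isomorphic[n_\Left]$. By \eqref{equation:differential for vertex generators} the differential of a vertex generator involves no immersed polygons and is purely the internal-DGA differential, so the DG-subalgebra generated by the generators $0_{a,i}$ of the vertex $0$ is isomorphic to $I_{n_\Left}(\mu_\Left)$ via $\vg_{a,i}\mapsto 0_{a,i}$. Proposition~\ref{proposition:differentials for border DGAs} then gives that $k_{a'b'}\mapsto\vg_{a',b'-a'}$ embeds $A_{n_\Left}(\mu_\Left)$ as a sub-DGA of $I_{n_\Left}(\mu_\Left)$; transporting along the above isomorphism shows that $K_\Left=\{0_{a',b'-a'}\mid 1\le a'<b'\le n_\Left\}$ generates a sub-DGA isomorphic to $A^\CE(T_\Left,\mu_\Left)$. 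Since $K_\Left$ lies among the generators of $A^\CE(T,\mu)$ by Theorem~\ref{theorem:subDGA}, this sub-DGA is contained in $A^\CE(T,\mu)$, and $\phi_\Left$ is taken to be the resulting inclusion.

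The final step is formal: $A^\CE(T_\Left,\mu_\Left)=\ring\langle K_\Left\rangle$ is free, so a DGA morphism out of it is determined by the images of the $k_{a'b'}$. Both $\hat\phi_\Left$ and the composite of $\phi_\Left$ with $A^\CE(T,\mu)\hookrightarrow\hat A^\CE(T,\mu)$ send $k_{a'b'}$ to $k_{a'b'}=0_{a',b'-a'}\in\hat A^\CE(T,\mu)$, so they coincide, which is the claim. I do not anticipate a real obstacle here; the only points that require care are the identification of the Maslov data at the vertex $0$ with $\mu_\Left$, and the observation that no unit term appears in $\hat\differential(k_{a'b'})$ --- this holds because $b'-a'\le n_\Left-1<n_\Left=\val(0)$, which is exactly what makes the $K_\Left$-subalgebra closed and is packaged into Proposition~\ref{proposition:differentials for border DGAs}.
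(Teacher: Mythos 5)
Your proposal is correct and follows essentially the same route as the paper, which simply observes that the claim is immediate from Lemma~\ref{lem:subDGA} and Theorem~\ref{theorem:subDGA}: both maps are inclusions of sub-DGAs of $\hat A^\CE(T,\mu)$ agreeing on the free generators $K_\Left$. The substantive verification you carry out --- that the $K_\Left$-subalgebra is closed under the differential and isomorphic to $A_{n_\Left}(\mu_\Left)$ via the vertex $0$ of type $(0,n_\Left)$ and Proposition~\ref{proposition:differentials for border DGAs} --- is exactly what the paper already established in the discussion preceding Lemma~\ref{lem:subDGA}, so you have just made the cited ingredients explicit.
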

\begin{proof}
This is a direct consequence of Lemma~\ref{lem:subDGA} and Theorem~\ref{theorem:subDGA}.
\end{proof}

Now let us focus on the differential $\hat\differential(\hat k_{ab})$. Indeed, there is a trichotomy on the moduli space $\modulispace_t(\hat k_{ab})$ as follows:
\begin{enumerate}
\item disks hitting the vertex $\infty$ have the contribution
\[
(-1)^{|\hat k_{ab}|-1}\left(k_{ab}+\sum_{a<c<b}\hat k_{ac}k_{cb}\right),\footnotemark
\]
\item disks not hitting $\infty$ but hitting $\hat k_{cb}$ have the contribution
\[
\sum_{a<c<b}g_{ac}\hat k_{cb}
\]
for some $g_{ac}\in A^\CE(T,\mu)$,
\item disks hitting neither $\infty$ nor $\hat k_{cb}$ have the contribution denoted by $g_{ab}\in A^\CE(T,\mu)$.
\end{enumerate}
\footnotetext{The sign convention given in Figure~\ref{figure:resolution of the closure} is used here.}

In summary, we have the following differential formula for $\hat k_{ab}$
\begin{equation}\label{equation:sign corrected differential}
\hat\differential(\hat k_{ab})=
(-1)^{|\hat k_{ab}|-1}k_{ab}+g_{ab}+\sum_{a<c<b}(-1)^{|\hat k_{ab}|-1}\hat k_{ac}k_{cb}+g_{ac}\hat k_{cb}
\end{equation}
for some $g_{ab}\in A^\CE(T,\mu)$.
Notice that this formula is the same as the differential formula for the mapping cylinder given in \ref{equation:differential for mapping cylinder} by replacing $\hat k_{ab}$ and $g_{ab}$ with $(-1)^{|\hat k_{ab}|-1}\hat k_{ab}$ and $(-1)^{|\hat k_{ab}|}g_{ab}$.
Therefore the assignment $k_{ab}\mapsto \bar g_{ab}$ will define a DGA morphism by Lemma~\ref{lemma:mapping cylinder differential} since $\hat\differential^2=0$.

Summarizing the above argument, we have the following theorem.
\begin{theorem/definition}[Well-definedness]\label{theorem:well-definedness of a bordered DGA}
Let $(\cT,\bfmu)\in\BLT_\front^\mu$ be a bordered Legendrian graph with a Maslov potential.
The bordered DGA 
\begin{equation}\label{equation:bordered DGAs for bordered Legendrian graphs}
A^\CE(\cT,\bfmu)\coloneqq\left(A^\CE(T_{\Left},{\mu_\Left})\stackrel{\phi_\Left}\longrightarrow A^\CE(T,\mu)\stackrel{\phi_\Right}\longleftarrow A^\CE(T_{\Right},{\mu_\Right})\right)
\end{equation}
is well-defined.
\end{theorem/definition}
\begin{proof}
The DGA $A^\CE(T,{\mu})$ is well-defined by Theorem~\ref{theorem:subDGA}, and two morphisms are DGA maps by Corollary~\ref{corollary:subDGA} and Lemma~\ref{lemma:mapping cylinder differential}, and we are done.
\end{proof}

\begin{example}[DGAs for the trivial and vertex bordered Legendrian graphs]\label{example:DGAs for trivial graphs}
Let $(\cT_n,\bfmu)\in\BLT_\front^\mu$ be the front projection of the trivial bordered graph with $n$ strands and with a Maslov potential $\bfmu=(\mu,\mu,\mu)$ for a function $\mu:[n]\to\grading$.
Then it is obvious that $T_n$ has no generators except for those for the left border $T_{n,\Left}$ and so is isomorphic to the border DGA $A_n(\mu)$ defined in Example/Definition~\ref{example:border DGA}.
Therefore the bordered LCH DGA for $T_n$ is
\[
A^\CE(\cT_n,\bfmu)=\left(A_n(\mu)\isomorphic A_n(\mu) \isomorphic A_n(\mu)\right).
\]

On the other hand, for the bordered Legendrian graph $(\bfzero_n,\bfmu)$ with a Maslov potential $\bfmu=(0,\mu,\mu)$ as shown in Example/Definitoin~\ref{example/definition:trivial graph}, the DGA $A^\CE(0_n,\mu)$ is the same as the internal DGA $I_n(\mu)$ since it has no left borders.
Therefore the bordered LCH DGA for $(\bfzero_n,\bfmu)$ is
\[
A^\CE(\bfzero_n,\bfmu)=\left(0\to I_n(\mu) \leftarrow A_n(\mu_\Right)\right),
\]
where the DGA morphism on the right is the inclusion described in Proposition~\ref{proposition:differentials for border DGAs}.
\end{example}

As a corollary of Theorem/Definition~\ref{theorem:well-definedness of a bordered DGA}, we have the following important observation.
\begin{proposition}\label{proposition:closure DGA is a mapping cylinder}
Let $(\cT,\bfmu)\in\BLT_\front^\mu$.
Then the bordered DGA $\hat A^\CE(\cT,{\bfmu})$ defined in \eqref{equation:mapping cylinder} is the mapping cylinder of the bordered DGA $A^\CE(\cT,{\bfmu})$ defined in \eqref{equation:bordered DGAs for bordered Legendrian graphs} in the sense of Definition~\ref{definition:mapping cylinder}.
\end{proposition}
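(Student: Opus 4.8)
The plan is to unwind both sides of the asserted identity and match them generator by generator, invoking only the structural facts established above. First I would pin down the underlying graded algebras. By Lemma~\ref{lem:subDGA} and Theorem~\ref{theorem:subDGA}, $A^\CE(T,\mu)$ is freely generated over $\ring$ (with the base-point variables of $\Res(T)$ adjoined) by the set $G$ consisting of $K_\Left$ together with all crossings and vertex generators in $\Res(T)$, and $\hat A^\CE(T,\mu)$ is freely generated by $G \amalg K_\Right \amalg \hat K_\Right$ with $|\hat k_{ab}| = |k_{ab}| + 1$. This is exactly the underlying graded algebra of the mapping cylinder of $A^\CE(\cT,\bfmu)$ in the sense of Definition~\ref{definition:mapping cylinder}, once one declares the right leg there to be the morphism $\phi_\Right\colon A^\CE(T_\Right,\mu_\Right)\to A^\CE(T,\mu)$ from Theorem/Definition~\ref{theorem:well-definedness of a bordered DGA}.

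Next I would compare the differentials in three cases. On $G$: since $A^\CE(T,\mu)$ is a DG-subalgebra of $A^\CE(\hat\cT,\hat\bfmu)$ by Theorem~\ref{theorem:subDGA}, $\hat\differential$ restricts to the differential of $A^\CE(T,\mu)$ on $G$, which is what the mapping cylinder demands. On $K_\Right$: each $k_{ab}$ equals the vertex generator $\infty_{a,b-a}$ at the vertex $\infty$, so Proposition~\ref{proposition:differentials for border DGAs} identifies the sub-DGA they generate with the border DGA $A_{n_\Right}(\mu_\Right)\isomorphic A^\CE(T_\Right,\mu_\Right)$ and hence gives $\hat\differential(k_{ab}) = \differential_\Right(k_{ab})$, precisely as in Definition~\ref{definition:mapping cylinder}. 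On $\hat K_\Right$: the trichotomy of $\modulispace_t(\hat k_{ab})$ recorded in \eqref{equation:sign corrected differential} expresses $\hat\differential(\hat k_{ab})$ in terms of elements $g_{ab}, g_{ac}\in A^\CE(T,\mu)$; performing the elementary change of generators $\hat k_{ab}\mapsto (-1)^{|\hat k_{ab}|-1}\hat k_{ab}$, together with the replacement $g_{ab}\mapsto (-1)^{|\hat k_{ab}|}g_{ab}$, transforms \eqref{equation:sign corrected differential} into the mapping-cylinder formula \eqref{equation:differential for mapping cylinder} with $\phi_\Right$ taken to be the algebra extension of $k_{ab}\mapsto g_{ab}$. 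By Lemma~\ref{lemma:mapping cylinder differential}, the relation $\hat\differential^2 = 0$ forces this $\phi_\Right$ to be a DGA morphism, and by construction it is the right leg of $A^\CE(\cT,\bfmu)$.

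Finally I would check the two border maps. Corollary~\ref{corollary:subDGA} says $\hat\phi_\Left$ factors as $A^\CE(T_\Left,\mu_\Left)\xrightarrow{\phi_\Left}A^\CE(T,\mu)\hookrightarrow \hat A^\CE(T,\mu)$, which is the left structure map of the mapping cylinder; and $\hat\phi_\Right$ is, by Lemma~\ref{lem:subDGA}, the canonical inclusion $k_{ab}\mapsto k_{ab}$, matching Definition~\ref{definition:mapping cylinder}. Assembling the three cases for the differential and the two structure maps yields the claim. The one genuinely delicate point is the last case: one must verify that the disk contributions $g_{ab}$ appearing in \eqref{equation:sign corrected differential} assemble into precisely the same morphism $\phi_\Right$ used to build $A^\CE(\cT,\bfmu)$, and that the signs survive the elementary rescaling of the cylinder variables. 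But all of the substantive input — the differential formula \eqref{equation:sign corrected differential}, the existence criterion of Lemma~\ref{lemma:mapping cylinder differential}, and the identification of the relevant sub-DGAs — is already in hand, so this step reduces to sign bookkeeping and matching of conventions.
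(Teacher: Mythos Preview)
Your proposal is correct and follows the same approach as the paper, which simply states that the result ``follows obviously from the equation \eqref{equation:sign corrected differential}.'' You have unpacked this one-line argument in full detail: matching graded algebras, checking the differential on $G$, $K_\Right$, and $\hat K_\Right$ via the disk trichotomy, and verifying the two structure maps. One minor comment: your closing caveat that ``one must verify that the disk contributions $g_{ab}$ \ldots\ assemble into precisely the same morphism $\phi_\Right$ used to build $A^\CE(\cT,\bfmu)$'' is not an additional obligation---$\phi_\Right$ is \emph{defined} in Theorem/Definition~\ref{theorem:well-definedness of a bordered DGA} precisely as the algebra extension of the sign-corrected $g_{ab}$'s, so this identification is tautological rather than a separate verification.
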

\begin{proof}
This follows obviously from the equation \eqref{equation:sign corrected differential}.
\end{proof}

\begin{theorem}\label{theorem:concatenation}
For $i=1,2$, let $(\cT^i,\bfmu^i)\in\BLT_\front^\mu$ be two front projections of type $(n^i_\Left,n^i_\Right)$.
Suppose that $n\coloneqq n^1_\Right=n^2_\Left$ and $\mu_n\coloneqq\mu^1_\Right=\mu^2_\Left$.
Then the bordered DGA for the concatenation $(\cT,\bfmu)\coloneqq (\cT^1,\bfmu^1)\cdot(\cT^2,\bfmu^2)$ is given as follows:
\begin{itemize}
\item the DGA $A^\CE(T,\mu)$ is defined to be the push-out of two DGA morphisms $\phi^1_\Right$ and $\phi^2_\Left$,
\[
\begin{tikzcd}[column sep=3pc,row sep=2pc]
A_n(\mu_n)\arrow[r,"\phi^2_\Left"]\arrow[d,"\phi^1_\Right"'] & A^\CE(T^2,\mu^2)\arrow[d,dashed,"i_{\bfU^2\bfU}"]\\
A^\CE(T^1,\mu^1)\arrow[r,dashed,"i_{\bfU^1\bfU}"] & A^\CE(T,\mu),
\end{tikzcd}
\]
\item two border DGAs $A^\CE(T_{\Left},{\mu_\Left})$ and $A^\CE(T_{\Right},{\mu_\Right})$ and morphisms $\phi_\Left$ and $\phi_\Right$ are defined as
\begin{align*}
\phi_\Left&:A^\CE(T_{\Left},{\mu_\Left})\coloneqq A^\CE(T^1_\Left,\mu^1_\Left)\stackrel{\phi^1_\Left}\longrightarrow A^\CE(T^1,\mu^1)\stackrel{i_{\bfU^1\bfU}}\longrightarrow A^\CE(T,\mu),\\
\phi_\Right&:A^\CE(T_{\Right},{\mu_\Right})\coloneqq A^\CE(T^2_\Right,\mu^1_\Right)\stackrel{\phi^2_\Right}\longrightarrow A^\CE(T^2,\mu^2)\stackrel{i_{\bfU^2\bfU}}\longrightarrow A^\CE(T,\mu).
\end{align*}
\end{itemize}
\end{theorem}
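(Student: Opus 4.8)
The plan is to show that the bordered DGA attached to the concatenation $\cT=\cT^1\cdot\cT^2$ by Theorem/Definition~\ref{theorem:well-definedness of a bordered DGA} literally coincides with the pushout diagram in the statement; once the underlying graded algebras and the structure maps are matched on generators, the universal property of the pushout is automatic. First I would analyze the underlying graded algebra. Since the concatenation points of $\cT^1\cdot\cT^2$ are by convention not vertices and Ng's resolution is defined locally, one has $\Res(T^1\cdot T^2)=\Res(T^1)\cdot\Res(T^2)$, the two middle diagrams being glued along a band of $n$ parallel horizontal strands carrying no crossing, vertex, $x$-extreme point or base point. Hence the crossings and vertex generators in $\Res(T)$ form the disjoint union of those in $\Res(T^1)$ and in $\Res(T^2)$, the base point set is $B=B^1\amalg B^2$, and all gradings are unchanged because $\mu$ restricts to $\mu^1$ and $\mu^2$. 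Recalling from Theorem~\ref{theorem:subDGA} that $\alg^\CE(T^i,\mu^i)$ is free over $\ring[t_b^{\pm1}\mid b\in B^i]$ on $K^i_\Left$ together with the crossings and vertex generators of $\Res(T^i)$, and that $\phi^2_\Left$ is the inclusion of the free sub-generating set $K^2_\Left$, this produces a canonical isomorphism of graded algebras
\[
\alg^\CE(T,\mu)\isomorphic\alg^\CE(T^1,\mu^1)\ast_{A_n(\mu_n)}\alg^\CE(T^2,\mu^2)
\]
formed along $\phi^1_\Right$ and $\phi^2_\Left$, under which $i_{\bfU^1\bfU}$ is the inclusion of the subalgebra generated by $K^1_\Left$ and $\Res(T^1)$, while $i_{\bfU^2\bfU}$ sends each generator of $\Res(T^2)$ to itself and each $k_{ab}\in K^2_\Left$ to $\phi^1_\Right(k_{ab})\in A^\CE(T^1,\mu^1)$. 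The identification of $A^\CE(T_\Left,\mu_\Left)$, $A^\CE(T_\Right,\mu_\Right)$, $\phi_\Left$, $\phi_\Right$ and the commutativity of the pushout square are then immediate from the construction; in particular the pushout DGA is well-defined precisely because it is being identified with $A^\CE(T,\mu)$, whose differential squares to zero.

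Next I would check that $\differential$ restricts to the Chekanov--Eliashberg differential of $A^\CE(T^1,\mu^1)$ on the subalgebra generated by $K^1_\Left$ and $\Res(T^1)$. Arranging the Lagrangian projection so that every Reeb chord of $\Res(T^2)$ has action much larger than every Reeb chord of $\Res(T^1)$, the ``action increases to the right'' argument from the proof of Theorem~\ref{theorem:subDGA} shows that an immersed polygon contributing to $\differential$ whose positive corner lies in $\Res(T^1)$ never reaches $\Res(T^2)$; hence $i_{\bfU^1\bfU}$ is a DGA map and only the differential on the $\Res(T^2)$-generators remains.

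The core of the argument is to produce the substitution $k_{ab}\mapsto\phi^1_\Right(k_{ab})$ on the $T^2$-part. My plan is to cut an immersed polygon $f$ with positive corner at a generator $c$ of $\Res(T^2)$ along the band of $n$ parallel strands: this should yield a ``core'' subpolygon lying in $\Res(T^2)$ together with finitely many ``caps'' lying in $\Res(T^1)$, each cap entering $\Res(T^1)$ through a consecutive block of strands $a,a+1,\dots,b$ of the band and having all its remaining corners inside $\Res(T^1)$. Read inside $\widehat{\cT^2}$, the core is precisely a polygon contributing to $\differential_{A^\CE(T^2,\mu^2)}$, in which each place where it exits left through a block $a,\dots,b$ is a hit of the closure vertex $0$ of $\bfzero_n$, i.e.\ a letter $k_{ab}\in K^2_\Left$. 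Capping that same exit by the fixed local spiral of $\Res(\bfinfty_{n^1_\Right})$ should set up an orientation- and grading-preserving bijection between the caps through $a,\dots,b$ and the polygons of $\Res(\widehat{\cT^1})$ counted by $g^1_{ab}$ in the trichotomy preceding \eqref{equation:sign corrected differential} --- that is, exactly the polygons computing $\phi^1_\Right(k_{ab})$. Summing over all such decompositions would yield $\differential(i_{\bfU^2\bfU}c)=i_{\bfU^2\bfU}(\differential_{A^\CE(T^2,\mu^2)}c)$, so that $i_{\bfU^2\bfU}$ is a DGA map and, together with the previous step, $A^\CE(\cT,\bfmu)$ is exhibited as the asserted pushout.

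The hard part will be this last step: making the cutting/gluing correspondence for immersed polygons across the band rigorous, in particular identifying a ``cap'' with a genuine $g^1$-polygon of $\Res(\widehat{\cT^1})$ via the resolution of $\bfinfty_{n^1_\Right}$, and then tracking the Reeb and orientation signs through that bijection so that they reproduce the signs already built into $\phi^1_\Right$ through \eqref{equation:sign corrected differential}.
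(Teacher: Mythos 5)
Your proposal is correct and follows essentially the same route as the paper, which identifies the generators as the disjoint union of those of the two pieces and cuts every immersed polygon along the original border $T^1_\Right=T^2_\Left$ so that the right part gives a generator of $K^2_\Left$ while the left part gives its image under $\phi^1_\Right$ (the paper then simply cites Theorem~C of \cite{AB2018} and omits the details). Your elaboration of the cutting into a core plus caps, and the identification of each cap with a $g^1_{ab}$-polygon via the resolution of $\bfinfty_{n^1_\Right}$, is exactly the omitted detail.
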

\begin{proof}
This is essentially nothing but Theorem~C in \cite{AB2018}.
The generators for $A^\CE(T,\mu)$ come from those for $A^\CE(T^1,\mu^1)$ and $A^\CE(T^2,\mu^2)$.
For the differential of $T$, every immersed polygon $f$ can be cut into two pieces along the original border $T^1_\Right=T^2_\Left$ so that the right part will give a generator in $T^2_\Left$ while the left part will give the image under $\phi^1_\Right$ for the corresponding generator in $T^1_\Right$. We omit the detail.
\end{proof}

For the rest of this section, we will prove the invariance of the bordered LCH DGAs.
\begin{theorem}[Invariance of bordered LCH DGAs]\label{theorem:invariance of bordered LCH DGAs}
The bordered LCH DGAs $A^\CE(\cT,{\bfmu})$ is invariant under Legendrian isotopy up to stablizations.

Moreover, for each $\RM{M}:(\cT'{\bfmu'})\to (\cT,{\bfmu})$, there is an induced bordered quasi-morphism 
\[
\RM{M}_*:A^\CE(\cT',{\bfmu'})\to A^\CE(\cT,\bfmu)
\]
such that two compositions $\RM{M}_*\circ \RM{M^{-1}}_*$ and $\RM{M^{-1}}_*\circ\RM{M}_*$ are homotopic to the identities.
\end{theorem}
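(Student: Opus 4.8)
The plan is to reduce the invariance statement for bordered Legendrian graphs to the already-established invariance for the closure, i.e.\ to Theorem~\ref{theorem:invariance of DGAs for Legendrian graphs} and its Corollary~\ref{corollary:zig-zags of stabilizations}, and then to carefully descend the zig-zag of stabilizations from the closed world to the bordered world. First I would recall that by Lemma~\ref{lemma:potential extends to the closure} a Legendrian isotopy $\RM{M}:(\cT',\bfmu')\to(\cT,\bfmu)$ induces, after taking closures, an equivalence $(\hat\cT',\hat\bfmu')\simeq(\hat\cT,\hat\bfmu)$ of (honest) Legendrian graphs, hence by Ng's resolution (Lemma~\ref{lemma:functoriality of Ng's resolution}) an equivalence of their Lagrangian projections. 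Corollary~\ref{corollary:zig-zags of stabilizations} then supplies a zig-zag of stabilizations and destabilizations connecting $A^\CE(\hat\cT',\hat\bfmu')$ and $A^\CE(\hat\cT,\hat\bfmu)$. By Proposition~\ref{proposition:closure DGA is a mapping cylinder}, each of these closure DGAs is the mapping cylinder $\hat A^\CE(\cT,\bfmu)$ of the bordered DGA $A^\CE(\cT,\bfmu)$ in the sense of Definition~\ref{definition:mapping cylinder}, so the task becomes: show that a zig-zag of stabilizations relating two mapping cylinders $\hat A^\CE(\cT',\bfmu')$, $\hat A^\CE(\cT,\bfmu)$ can be promoted to a chain of \emph{bordered} (weak) stabilizations relating $A^\CE(\cT',\bfmu')$ and $A^\CE(\cT,\bfmu)$, and that the induced maps assemble into a bordered quasi-morphism with the stated homotopy-inverse property.

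The key steps, in order, are as follows. (1) Organize the argument move-by-move: it suffices to treat a single front Reidemeister move $\RM{M}$ (and the elementary stabilizations it produces under $\Res$, as enumerated in Figure~\ref{figure:Reidemeister moves for Ng's resolution}), since quasi-morphisms compose and homotopies of DGA maps compose well (Remark~\ref{remark:homotopy_equiv}). (2) For a move supported away from the right border, the induced DGA map on $A^\CE(T,\mu)$ is, up to tame isomorphism, a canonical inclusion into or projection from a stabilization, and it restricts to the identity on the left border subalgebra $A^\CE(T_\Left,\mu_\Left)$ (which is unaffected) and commutes strictly with $\phi_\Left$ by Corollary~\ref{corollary:subDGA}; the right square's commutativity up to homotopy is inherited from the mapping-cylinder structure exactly as in the proof of Lemma~\ref{lemma:mapping cylinders are stabilizations}, where the homotopy $\hat H_\Right$ is given explicitly by $k_{ab}\mapsto\hat k_{ab}$. (3) For a move that touches the right border we use Proposition~\ref{proposition:closure DGA is a mapping cylinder} again: such a move changes the diagram $\Res(\hat\cT)$ in the region of $\bfinfty_{n_\Right}$ and in $\hat K_\Right$, but the pushout/mapping-cylinder formalism (Definition~\ref{definition:mapping cylinder}, Lemma~\ref{lemma:mapping cylinder differential}) lets us extract the induced map on $A^\CE(T,\mu)$ from the induced map on $\hat A^\CE(T,\mu)$, and the extra crossings $\hat k_{ab}$ only contribute to the homotopy $H_\Right$, not to the underlying square on $A^\CE$. (4) Finally, bootstrap from stabilizations to $A^\CE$ itself: since a generalized stabilization is a zig-zag of ordinary stabilizations (Proposition~\ref{proposition:generalized stabilizations}), and an ordinary stabilization $A\to SA$ has $\pi\circ\iota=\id$ and $\iota\circ\pi\homotopic\id$ by the standard DGA homotopy contracting the canceling pair $\{\hat e^i,e^i\}$, the composite zig-zag yields maps whose two round-trip compositions are homotopic to the identities; one checks these homotopies are compatible with the bordered structure (again using that the left square is strict and the right square's homotopies are the explicit $\hat H_\Right$-type homotopies, which compose to a derivation-homotopy by \eqref{equation:derivation}).

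The main obstacle I anticipate is Step (3) together with the bookkeeping in Step (4): the generalized stabilizations appearing in $\Res\RM{V}$ and $\Res\RM{VI}$ (the vertex moves) genuinely involve the internal DGAs $I_n(\mu)$ and the maps $\varphi$ of Definition~\ref{def:Generalized stabilization}, so one must verify that the canceling-pair homotopy for a \emph{generalized} stabilization still restricts correctly on the border subalgebras $A^\CE(T_\Left,\mu_\Left)$ and $A^\CE(T_\Right,\mu_\Right)$ — i.e.\ that the new generators $e^1,\dots,e^n$ never appear in $\phi_\Left$ or in the image of $\phi_\Right$ up to the controlled homotopy — and that when several moves are concatenated, the homotopies $H_\Right$ glue via \eqref{equation:derivation} and \eqref{equation:chain homotopy} into a single homotopy for the composite right square. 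This is where the argument is "not a trivial task" as flagged in the introduction; everything else is a routine translation of the Legendrian-knot invariance proof through the mapping-cylinder dictionary of Proposition~\ref{proposition:closure DGA is a mapping cylinder}.
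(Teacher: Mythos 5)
Your proposal is correct and follows essentially the same route as the paper: pass to the closure, apply Ng's resolution and Corollary~\ref{corollary:zig-zags of stabilizations} to get a zig-zag of stabilizations between the mapping cylinders, descend to the bordered DGAs via Proposition~\ref{proposition:closure DGA is a mapping cylinder} and the explicit homotopy $k_{ab}\mapsto\hat k_{ab}$, and obtain the round-trip homotopies by running the inverse sequence of moves. The obstacle you flag in Step (3)/(4) — that generalized stabilizations must respect the border subalgebras — is resolved in the paper in one line: the border DG-subalgebras sit inside the internal DG-subalgebras (peripheral structures), which are preserved by every morphism in the zig-zag by Theorem~\ref{theorem:invariance of DGAs for Legendrian graphs}.
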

\begin{proof}
For convenience's sake, we denote the DGAs for these graphs as follows: for $*=\Left, \Right$ or empty,
\begin{align*}
A'_*&\coloneqq A^\CE(T'_*,\mu'_*),&
\dga'&\coloneqq \hat A^\CE(\cT',{\bfmu'}),&
\hat A'&\coloneqq \hat A^\CE(T',{\mu'}),\\
A_*&\coloneqq A^\CE(T_*,{\mu_*}),&
\dga&\coloneqq \hat A^\CE(\cT,\bfmu),&
\hat A&\coloneqq \hat A^\CE(T,\mu).
\end{align*}

Due to Proposition~\ref{proposition:closure DGA is a mapping cylinder}, we have the following situation:
\begin{equation}\label{equation:diagram for invariance}
\begin{tikzcd}[column sep=4pc, row sep=2pc]
\dga'\arrow[from=d,"\hat\pi'"']&
A'_\Left \arrow[r,"\phi'_\Left"] \arrow[d,equal]& A'\arrow[from=d,"\hat \pi'"'] & A'_\Right\arrow[l,"\phi'_\Right"']\arrow[d,equal]\\
\hat\dga'\arrow[dashed,d,"{\RM{M}}_*"]&
A'_\Left \arrow[r,"\hat \phi'_\Left"] \arrow[d,equal] & \hat A' \arrow[dashed,d,"{\RM{M}}_*"] & A'_\Right \arrow[l,"\hat\phi'_\Right"'] \arrow[d,equal]\\
\hat\dga\arrow[d,"\hat\pi"]&
A_\Left \arrow[r,"\hat \phi_\Left"] \arrow[d,equal] & \hat A \arrow[d,"\hat\pi"] & A_\Right \arrow[l,"\hat\phi_\Right"'] \arrow[d,equal]\\
\dga&
A_\Left \arrow[r,"\phi_\Left"] & A & A_\Right\arrow[l,"\phi_\Right"'],
\end{tikzcd}
\end{equation}
where all squares in both upper and lower parts are commutative.

Let us regard $\RM{M}:\cT'\to \cT$ as the move $\RM{\hat M}:\hat \cT'\to\hat \cT$. By taking Ng's resolution, we have a sequence $\Res{\RM{\hat M}}$ of Lagrangian Reidemeister moves between $\Res(\hat \cT')$ and $\Res(\hat \cT)$ as seen in Lemma~\ref{lemma:functoriality of Ng's resolution}.
Then Corollary~\ref{corollary:zig-zags of stabilizations} implies that we have a zig-zag of stabilizations as follows:
\[
\begin{tikzcd}
A'\arrow[from=r,"\hat\pi'"']& \hat A'=A_0
& A_1
\arrow[from=l,"\iota_1",yshift=.7ex,hook]\arrow[l,"\pi_1",yshift=-.7ex,->>] 
\arrow[from=r,"\iota_1'"',yshift=.7ex,hook']\arrow[r,"\pi_1'"',yshift=-.7ex,->>]
&\cdots
& A_{n-1}
\arrow[from=l,"\iota_{n-1}",yshift=.7ex,hook]\arrow[l,"\pi_{n-1}",yshift=-.7ex,->>] 
\arrow[from=r,"\iota_{n-1}'"',yshift=.7ex,hook']\arrow[r,"\pi_{n-1}'"',yshift=-.7ex,->>]
&A_n=\hat A\arrow[r,"\hat\pi"]& A.
\end{tikzcd}
\]

It is important to note that all DGA morphisms in this zig-zag \emph{preserve} two border DG-subalgebras since they are DG-subalgebras of the internal DG-subalgebras which are invariant as seen in Theorem~\ref{theorem:invariance of DGAs for Legendrian graphs}.
Therefore the above zig-zag induce the zig-zags of stabilizations between bordered DGAs $\hat\dga'$ and $\hat\dga$ 
\[
\begin{tikzcd}
\dga'\arrow[from=r,"\hat\bfpi'"']& \hat \dga'=\dga_0
& \dga_1
\arrow[from=l,"\bfi_1",yshift=.7ex,hook]\arrow[l,"\bfpi_1",yshift=-.7ex,->>] 
\arrow[from=r,"\bfi_1'"',yshift=.7ex,hook']\arrow[r,"\bfpi_1'"',yshift=-.7ex,->>]
&\cdots
& \dga_{n-1}
\arrow[from=l,"\bfi_{n-1}",yshift=.7ex,hook]\arrow[l,"\bfpi_{n-1}",yshift=-.7ex,->>] 
\arrow[from=r,"\bfi_{n-1}'"',yshift=.7ex,hook']\arrow[r,"\bfpi_{n-1}'"',yshift=-.7ex,->>]
&\dga_n=\hat \dga\arrow[r,"\hat\bfpi"]& \dga,
\end{tikzcd}
\]
where all DGAs in the middle are strong stabilizations while $\hat\bfpi'$ and $\hat\bfpi$ are weak stabilizations.
Therefore the invariance up to stabilizations is proved.

In order to define the bordered DGA morphism $\RM{M}_*=(\identity,\RM{M}_*,\identity)$, we use the bordered quasi-morphism $\hat\bfi:\dga'\to\hat \dga'$ given as \eqref{equation:canonical inclusion of mapping cylinder}.
Then we define the DGA morphism $\RM{M}_*$ as the composition
\begin{align*}
\RM{M}_*&\coloneqq \hat\bfpi\circ {\RM{\hat M}}_*\circ\hat\bfi':\dga'\to \dga,&
{\RM{\hat M}}_*&=\Res\RM{\hat M}_*\coloneqq \bfpi_n'\circ \bfi_{n-1} \circ\cdots\circ \bfi_1 :\hat \dga'\to\hat \dga,
\end{align*}
which makes the diagram below commutative up to homotopy 
\[
\begin{tikzcd}[column sep=4pc, row sep=2pc]
\dga'\arrow[d,"\RM{M}_*"']\\
\dga
\end{tikzcd}=\ \left(
\begin{tikzcd}[column sep=4pc, row sep=2pc]
A'_\Left \arrow[r,"\phi'_\Left"] \arrow[d,equal]& A'\arrow[d,"\RM{M}_*"] & A'_\Right\arrow[l,"\phi'_\Right"']\arrow[d,equal]\arrow[ld,"H'_{\Right,\RM{M}}",Rightarrow,sloped]\\
A_\Left \arrow[r,"\phi_\Left"] & A & A_\Right\arrow[l,"\phi_\Right"']
\end{tikzcd}\right)
\]
where the homotopy $H'_{\Right,\RM{M}}$ can be defined explicitly as
\[
H'_{\Right,\RM{M}}\coloneqq \hat\pi\circ {\RM{M}}_*\circ H'_\Right:A'_\Right\to A.
\]

The last statement follows easily by choosing the sequence $\Res\RM{M^{-1}}$ of Lagrangian Reidemeister moves corresponding to each front Reidemeister move $\RM{M^{-1}}$ as the inverse of the sequence $\Res\RM{M}$.
Then two compositions $\RM{M}_*\circ\RM{M^{-1}}_*$ and $\RM{M^{-1}}_*\circ\RM{M}_*$ of induced maps will be compositions of morphisms which are either the identities or homotopic to the identity morphisms.
\end{proof}

\section{Augmentations}\label{section:augmentations}
In this section, we introduce augmentation varieties (with boundary conditions) and augmentation numbers for bordered Legendrian graphs, and show their invariance. In addition, we consider two key examples: the augmentation varieties for a trivial bordered Legendrian graph and for a vertex.

\begin{definition}[Augmentation variety]
Let $A=(\alg,\differential)$ be a DGA. 
An \emph{augmentation} of $A$ over $\field$ is a DGA morphism $\epsilon:A\to\field$, where $\field$ is a field with the trivial differential, i.e., 
\begin{equation}\label{equation:defining equation for augmentations}
\epsilon\circ\differential\equiv0,
\end{equation}
and the (\emph{full}) \emph{augmentation variety} of $A$ is then defined as
\[
\tilde\aug(A;\field)\coloneqq\{\epsilon\mid \epsilon\text{ is an augmentation of $A$ over $\field$}\}.
\]
\end{definition}

We firstly introduce an equivalence relation $\relation$ on the set of affine algebraic varieties over $\field$.
\begin{definition}[Equivalence of augmentation varieties]\label{def:equiv rel on aug var}
Let $X$ any $Y$ be two affine algebraic varieties over $\field$.
We say that $X$ and $Y$ are \emph{equivalent}, denoted by $X\relation Y$ if they are isomorphic as affine algebraic varieties up to stabilizations, i.e., there exists $m\ge0$ and $n\ge0$ such that
\[
X\times\field^m\isomorphic Y\times\field^n.
\]
\end{definition}

\begin{lemma}\label{lemma:invariance of augmentation variety under stabilization}
Let $A'$ be a stabilization of $A$.
Then two full augmentation varieties $\tilde\aug(A;\field)$ and $\tilde\aug(A';\field)$ are equivalent.
\end{lemma}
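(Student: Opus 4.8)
The plan is to reduce to the standard normal form of a stabilization and then read off augmentations of that form directly. By Definition~\ref{definition:stabilizations}, $A'$ is tame isomorphic to a DGA $SA=(S\alg,\bar\differential)$ with $S\alg=\ring\langle G\amalg\{\hat e^i,e^i\mid i\in I\}\rangle$, where $A=(\ring\langle G\rangle,\differential)$, $\bar\differential|_{\ring\langle G\rangle}=\differential$, $\bar\differential(\hat e^i)=e^i$, $\bar\differential(e^i)=0$, $|\hat e^i|=|e^i|+1$, and only finitely many of the $e^i,\hat e^i$ sit in any given degree. First I would observe that a tame isomorphism $\psi\colon SA\to A'$ induces a bijection $\tilde\aug(A';\field)\to\tilde\aug(SA;\field)$, $\epsilon\mapsto\epsilon\circ\psi$, which is an isomorphism of affine varieties: writing $\psi$ as a (locally finite, triangular) composition of elementary isomorphisms, its effect on the finitely many degree-$0$ generators that actually appear is a polynomial change of coordinates with polynomial inverse, so pullback of augmentations is a morphism in both directions. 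Hence it suffices to prove $\tilde\aug(SA;\field)\relation\tilde\aug(A;\field)$.

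Next I would describe $\tilde\aug(SA;\field)$ explicitly. An augmentation $\epsilon\colon SA\to\field$ is a graded unital algebra map with $\epsilon\circ\bar\differential=0$; since $\field$ is concentrated in degree $0$, $\epsilon$ is determined by its values on degree-$0$ generators, and the only nonvacuous constraints are $\epsilon(\bar\differential h)=0$ over degree-$1$ generators $h$. I would then sort the canceling pairs $(\hat e^i,e^i)$ by $|e^i|$: if $|e^i|\ge 1$ or $|e^i|\le-2$, neither generator lies in degree $0$ and no new coordinate or constraint arises; if $|e^i|=0$, then $e^i$ is a degree-$0$ coordinate but the degree-$1$ generator $\hat e^i$ forces $\epsilon(e^i)=\epsilon(\bar\differential\hat e^i)=0$, pinning that coordinate; if $|e^i|=-1$, then $\hat e^i$ is a degree-$0$ generator which — crucially — appears in the differential of \emph{no} generator of $SA$ (as $\bar\differential$ on $G$ lands in $\ring\langle G\rangle$, $\bar\differential\hat e^j=e^j$, $\bar\differential e^j=0$), so $\epsilon(\hat e^i)\in\field$ is entirely free. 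Finally, since $\bar\differential$ on $\ring\langle G\rangle$ equals $\differential$ and takes values there, the remaining constraints $\epsilon(\differential h)=0$ over degree-$1$ generators $h\in G$ are exactly those cutting out $\tilde\aug(A;\field)$.

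The last step is to assemble this into an isomorphism. Setting $N\coloneqq\#\{i\in I\mid|e^i|=-1\}$, finite by the per-degree finiteness, I would define
\[
\Phi\colon\tilde\aug(SA;\field)\longrightarrow\tilde\aug(A;\field)\times\field^N,\qquad \epsilon\longmapsto\bigl(\epsilon|_A,\,(\epsilon(\hat e^i))_{|e^i|=-1}\bigr),
\]
with candidate inverse sending $(\epsilon_0,(c_i))$ to the graded algebra map $\epsilon$ determined by $\epsilon|_A=\epsilon_0$, $\epsilon(\hat e^i)=c_i$ for $|e^i|=-1$, and $\epsilon=0$ on all other new generators; the only thing to check is $\epsilon\circ\bar\differential=0$ on generators, which reduces to $\epsilon_0\circ\differential=0$ on $G$ and $\epsilon(e^i)=0$ on the $\hat e^i$, both true by construction. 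As both maps are polynomial, $\tilde\aug(SA;\field)\isomorphic\tilde\aug(A;\field)\times\field^N$, so taking $m=N$, $n=0$ in Definition~\ref{def:equiv rel on aug var} gives $\tilde\aug(A;\field)\relation\tilde\aug(SA;\field)$, and composing with the isomorphism from the first step yields $\tilde\aug(A;\field)\relation\tilde\aug(A';\field)$. There is no deep obstacle here; the step I would be most careful about is the degree bookkeeping of the canceling pairs — in particular the point that a pair with $|\hat e^i|=0$ contributes a genuinely unconstrained coordinate rather than one buried in some relation, which is precisely why the extra factor is affine space $\field^N$ and the statement is an equivalence of varieties-up-to-stabilization rather than an outright isomorphism — together with verifying that the tame-isomorphism reduction is honestly a morphism of varieties.
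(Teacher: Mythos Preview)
Your proof is correct and follows essentially the same approach as the paper: both identify $\tilde\aug(SA;\field)\isomorphic\tilde\aug(A;\field)\times\field^N$ with $N$ the number of degree-$0$ generators $\hat e^i$ (equivalently, your $\#\{i\mid |e^i|=-1\}$). The paper dismisses this as ``obvious'' in two lines, whereas you have spelled out the degree bookkeeping for each type of canceling pair and made explicit the tame-isomorphism reduction step; these are exactly the details the paper suppresses.
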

\begin{proof}
This is obvious. Indeed, if $A'$ has additional cancelling pairs $\{\hat e^i, e^i\mid i\in I\}$ as in Definition~\ref{definition:stabilizations}, then the augmentation variety $\tilde\aug(A';\field)$ is the product space
\[
\tilde\aug(A';\field)\isomorphic \tilde\aug(A;\field)\times \field^N,
\]
where $N$ is the number of generators $\hat e^i$ of degree $0$.
\end{proof}

\subsection{Augmentation varieties for border DGAs}\label{sec:aug_variety_lines}
Let $(\cT_n,\bfmu)\in\BLT_\front^\mu$ be a front projection of the trivial bordered Legendrian graph consisting of $n$ parallel strands with a Maslov potential $\bfmu$ defined in Example/Definition~\ref{example/definition:trivial graph}.
Recall that $A^\CE(\cT_n,\bfmu)=(A_n(\mu)\isomorphic A_n(\mu)\isomorphic A_n(\mu))$ as seen in Example~\ref{example:DGAs for trivial graphs}, where $A_n(\mu)=(\alg_n(\mu),\differential_n)$ is generated by elements $k_{ab}$'s 
\begin{align*}
\alg_n(\mu)&=\ring\langle K_n\rangle,&
K_n&\coloneqq\{k_{ab}\mid 1\leq a<b\leq n\},&
|k_{ab}|&=\mu(a)-\mu(b)-1,
\end{align*}
whose differential is given by
\[
\differential_n(k_{ab})=\sum_{a<c<b}(-1)^{|k_{ac}|+1}k_{ac}k_{cb}
\]
as defined in Example/Definition~\ref{example:border DGA}.

Let us denote the augmentation variety for $A_n(\mu)$ by $\tilde\aug(T_n,\mu;\field)$. Then by regarding $\epsilon(k_{ab})$ as a variable $x_{ab}$, the augmentation variety of $(T_n,\mu)$ is an affine algebraic variety
\begin{align*}
\tilde\aug(T_n,\mu;\field)\isomorphic\left\{(x_{ab})\in \field^{n(n-1)/2}~\middle|~
1\le a<b\le n,
x_{ab}=0\text{ if }|k_{ab}|\neq0, 
\sum_{a<c<b} x_{ac}x_{ab}=0
\right\}.
\end{align*}

\begin{definition}[Morse complex]\label{def:Morse complex}
Let $\mu:[n]\to\grading$ be a function. 
We define a decreasing filtration $F^\bullet C_n$
\[
C_n=F^0 C_n\supset F^1 C_n\supset\cdots\supset F^n C_n=\{0\}
\]
of the free graded $\field$-module $C_n=C_n(\mu)$ as follows:
\begin{align*}
F^a C_n&\coloneqq\bigoplus_{a<b\le n}\field\langle e_b\rangle,&
|e_b|\coloneqq -\mu(b).\footnotemark
\end{align*}
\footnotetext{The convention used here differs from \cite[Def.4.4]{Su2017} by a negative sign, which is more convenient for our purpose in this article.}

The group of $\field$-linear automorphisms on $F^\bullet C_n$, i.e., automorphisms on the graded $\field$-module $C_n$ preserving the filtration, will be denoted by $B(T_n,\mu;\field)$.

A \emph{Morse complex} $d\in\End_\field(F^\bullet C_n)$ on $F^{\bullet}C_n$ makes $(C_n,d)$ a cochain complex and preserves the filtration, and a Morse complex $d$ is said to be \emph{acyclic} if the induced cohomology vanishes.
We denote the sets of all Morse complexes and all acyclic Morse complexes on $F^{\bullet}C_n$ by $\tilde{\MC}(T_n,\mu;\field)$ and $\MC(T_n,\mu;\field)$, respectively.
\end{definition}

\begin{remark}\label{remark:upper-triangular}
As a matrix, each element in $B(T_n,\mu;\field)$ is upper-triangular with respect to the ordered bases $\{e_1,\dots,e_n\}$ since it preserves the filtration.
\end{remark}

\begin{definition}\label{definition:GNR and NR}
We define $\GNR(T_n,\mu)$ to be the set of all involutions $\ruling$ on $[n]$ such that for each $a< b=\ruling(a)$,
\[
|k_{ab}|=0\quad\text{ or equivalently,}\quad|e_a|=|e_b|-1.
\]
The subset consisting of involutions {\em without} fixed points will be denoted by $\NR(T_n,\mu)$.
\end{definition}
We'll see in the next section that $\GNR(T_n,\mu)$ is known as the set of {\em generalized normal rulings}, and $\NR(T_n,\mu)$ is the set of {\em normal rulings} of $T_n,\mu$.

\begin{lemma}[{\cite[Definition~4.4]{Su2017}}]\label{lem:aug and Morse complex for trivial tangle}
There is a canonical identification 
\[
\setlength\arraycolsep{2pt}
\begin{array}{ccrcl}
\Xi&:&\tilde\aug(T_n,\mu;\field) &\stackrel{\isomorphic}{\longrightarrow}& \tilde{\MC}(T_n,\mu;\field)\\
&&\epsilon &\longmapsto& d=d(\epsilon),
\end{array}
\]
where
\[
d(e_a)\coloneqq\sum_{a<b}(-1)^{\mu(a)}\epsilon(k_{ab})e_b.
\]
\end{lemma}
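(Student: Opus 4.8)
The plan is to read the defining formula as a dictionary: the scalar $\epsilon(k_{ab})$, up to the sign $(-1)^{\mu(a)}$, is to be the $(a,b)$-matrix entry of the differential $d$ in the ordered basis $e_1,\dots,e_n$. I would then verify, in turn, that this dictionary sends augmentations to Morse complexes, that the reverse dictionary sends Morse complexes to augmentations, and that the two assignments are mutually inverse morphisms of affine varieties.

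\emph{Step 1: $d(\epsilon)$ is a graded, filtration-preserving endomorphism.} Since $\epsilon$ is a unital DGA map into $\field$ (concentrated in degree $0$), it annihilates every generator of nonzero degree; hence $\epsilon(k_{ab})=0$ unless $|k_{ab}|=\mu(a)-\mu(b)-1=0$. For the surviving terms $|e_b|=-\mu(b)=-\mu(a)+1=|e_a|+1$, so $d(\epsilon)$ has cohomological degree $+1$; and only basis vectors $e_b$ with $b>a$ occur in $d(\epsilon)(e_a)$, so $d(\epsilon)$ is strictly upper triangular and preserves $F^\bullet C_n$. Thus $d(\epsilon)\in\End_\field(F^\bullet C_n)$ is a candidate Morse complex, and the only remaining condition to check is $d(\epsilon)^2=0$.

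\emph{Step 2: the main point --- $d(\epsilon)^2=0 \iff \epsilon\circ\differential_n=0$.} Write $d_{ab}:=(-1)^{\mu(a)}\epsilon(k_{ab})$ for the matrix entries. The coefficient of $e_b$ in $d(\epsilon)^2(e_a)$ is $\sum_{a<c<b}d_{ac}d_{cb}$. On the other side, $\epsilon(\differential_n(k_{ab}))=\sum_{a<c<b}(-1)^{|k_{ac}|-1}\epsilon(k_{ac})\epsilon(k_{cb})$, where only the terms with $|k_{ac}|=|k_{cb}|=0$ survive (the augmentation kills the rest). For such a term $(-1)^{|k_{ac}|-1}=-1$; substituting $\epsilon(k_{ac})=(-1)^{\mu(a)}d_{ac}$, $\epsilon(k_{cb})=(-1)^{\mu(c)}d_{cb}$ and using $\mu(a)=\mu(c)+1$ (which makes $\mu(a)+\mu(c)$ odd), the overall sign $(-1)^{1+\mu(a)+\mu(c)}$ equals $+1$, so the term is exactly $d_{ac}d_{cb}$. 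Hence $\epsilon(\differential_n(k_{ab}))$ equals the coefficient of $e_b$ in $d(\epsilon)^2(e_a)$ for every $a<b$, and the two vanishing conditions coincide. I expect this sign reconciliation --- checking that the $(-1)^{\mu(a)}$ twist is precisely the one turning $\epsilon\circ\differential_n=0$ into $d^2=0$ on the nose --- to be the only genuinely delicate point.

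\emph{Step 3: inverse map and conclusion.} Conversely, given $d\in\tilde{\MC}(T_n,\mu;\field)$, its matrix entries $d_{ab}$ vanish unless $b>a$ (filtration) and $\mu(a)=\mu(b)+1$ (degree $+1$), so $\epsilon(k_{ab}):=(-1)^{\mu(a)}d_{ab}$ is a function on $K_n$ supported on degree-$0$ generators, and extends uniquely to a unital algebra map $A_n(\mu)\to\field$; by Step 2, $d^2=0$ forces $\epsilon\circ\differential_n=0$, so $\epsilon$ is an augmentation. Because $(-1)^{\mu(a)}$ is an involution, $\epsilon\mapsto d(\epsilon)$ and $d\mapsto\epsilon$ are mutually inverse, and each is regular --- indeed affine-linear --- in the evident affine coordinates (the values $\epsilon(k_{ab})$ on one side, the matrix entries of $d$ on the other). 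Therefore $\Xi$ is an isomorphism of affine algebraic varieties, as claimed.
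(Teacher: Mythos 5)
Your proof is correct, and it is the standard direct verification: the paper itself only cites \cite[Definition~4.4]{Su2017} for this lemma, and the argument you give (the matrix-entry dictionary $d_{ab}=(-1)^{\mu(a)}\epsilon(k_{ab})$ together with the sign check that $\epsilon\circ\differential_n=0$ matches $d^2=0$ entrywise) is exactly the computation carried out there, and is the same computation the paper performs for the analogous vertex version in Lemma~\ref{lem:aug and Morse complex for a vertex}. The one genuinely delicate point — that $(-1)^{|k_{ac}|-1}$ combined with the two sign twists $(-1)^{\mu(a)}$, $(-1)^{\mu(c)}$ gives $+1$ precisely because $\mu(a)=\mu(c)+1$ on surviving terms — is handled correctly in your Step~2.
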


Under this identification, 
$B(T_n,\mu;\field)$ acts on $\tilde\MC(T_n,\mu;\field)$ via conjugation:  for each $g\in B(T_n,\mu;\field)$,
\[
\Xi(g\cdot\epsilon)\coloneqq g\cdot \Xi(\epsilon)=g\circ \Xi(\epsilon)\circ g^{-1}.
\]
Moreover, we can say that an augmentation $\epsilon$ is \emph{acyclic} if so is $\Xi(\epsilon)$ and we denote the subvariety of $\tilde\aug(T_n,\mu;\field)$ of acyclic augmentations by $\aug(T_n,\mu;\field)$, which can be identified with $\MC(T_n,\mu;\field)$ by Lemma~\ref{lem:aug and Morse complex for trivial tangle}.

\begin{definition}[Canonical augmentations and differentials]\label{def:canonical differential}
For each involution $\ruling\in \GNR(T_n,\mu)$, the \emph{canonical augmentation} $\epsilon_\ruling\in\tilde\aug(T_n,\mu;\field)$ and \emph{differential} $d_{\ruling}\in\tilde{\MC}(T_n,\mu;\field)$ are defined as
\begin{align*}
\epsilon_\ruling(k_{ab})&\coloneqq\begin{cases}
1 & a<b=\ruling(a);\\
0 &\text{otherwise},
\end{cases}&
d_{\ruling}(e_a)&\coloneqq
\begin{cases}
(-1)^{\mu(a)}e_b &a<b=\ruling(a);\\
0&\text{otherwise}
\end{cases}
\end{align*}
so that $\Xi(\epsilon_\ruling)=d_\ruling$.

The orbit of $\epsilon_{\ruling}$ under the action of $B(T_n,\mu;\field)$ will be denoted by
\[
\aug^\ruling(T_n,\mu;\field)\coloneqq B(T_n,\mu;\field)\cdot \epsilon_{\ruling},
\]
whose stabilizer subgroup will be denoted by $\Stab^\ruling(T_n,\mu;\field)$.
\end{definition}

Notice that each $\ruling\in \GNR(T_n,\mu)$ defines a partition $[n]=U_\ruling\amalg H_\ruling\amalg L_\ruling$ together with a bijection $\ruling:U_\ruling\stackrel{\isomorphic}{\longrightarrow}L_\ruling$ satisfying that 
$|k_{ab}|=0$ for all $a\in U_\ruling<b=\ruling(a)\in L_\ruling$ and $c=\ruling(c)$ for all $c\in H_\ruling$.

\begin{definition}\label{def:indices for an involution at a trivial tangle}
For each $i\in [n]$, we define
\[
I(i):=\{j\in [n] \mid j>i, \mu(j)=\mu(i) \}.
\]
For any fixed $\ruling\in \GNR(T_n,\mu)$ and any $i\in H_\ruling$, \emph{define} $\ruling(i)=\infty$. For any $i\in U_\ruling\amalg H_\ruling$, \emph{define}
\[
A_{\ruling}(i):=\{j\in U_\ruling \mid j\in I(i) \text{ and $\ruling(j)<\ruling(i)$}\}.
\]

Now, define $A_b(\ruling)\in\NN$ by
\[
A_b(\ruling):=\sum_{i\in U_\ruling\amalg H_\ruling}|A_\ruling(i)|+\sum_{i\in L_\ruling}|I(i)|.
\]
\end{definition}

\begin{lemma}[{\cite[Lem.4.5, 5.7 and Cor.5.8]{Su2017}}]\label{lem:ruling stratification for trivial tangle}
Let $\mu:[n]\to\ZZ$ be a function as before.
\begin{enumerate}
\item
The group action of $B(T_n,\mu;\field)$ induces decompositions on $\tilde{\MC}(T_n,\mu;\field)$ and $\MC(T_n,\mu;\field)$ into finitely many orbits 
\begin{align*}
\tilde{\MC}(T_n,\mu;\field)&=\coprod_{\ruling\in \GNR(T_n,\mu)}B(T_n,\mu;\field)\cdot d_{\ruling},&
\MC(T_n,\mu;\field)&=\coprod_{\ruling\in \NR(T_n,\mu)}B(T_n,\mu;\field)\cdot d_{\ruling}.
\end{align*}
In particular, we have a decomposition of $\aug(T_n,\mu;\field)$ over the finite set $\NR(T_n,\mu)$
\[
\aug(T_n,\mu;\field)=\coprod_{\ruling\in \NR(T_n,\mu)}\aug^{\ruling}(T_n,\mu;\field),
\]
\item
For all $\ruling\in\GNR(T_n,\mu)$, the principal bundle 
\[
\pi_{\ruling}:B(T_n,\mu;\field)\rightarrow B(T_n,\mu;\field)\cdot d_{\ruling}
\]
admits a canonical section $\varphi_{\ruling}$. 
That is, $\varphi_{\ruling}(d)\cdot d_{\ruling}=d$ for all $d\in B(T_n,\mu;\field)\cdot d_{\ruling}$.

\item
For all $\ruling\in \GNR(T_n,\mu)$, we have:
\begin{equation}
B(T_n,\mu;\field)\cdot d_{\ruling}\isomorphic (\field^\times)^{|L_{\ruling}|}\times\field^{A_b(\ruling)}.
\end{equation}
\end{enumerate}
\end{lemma}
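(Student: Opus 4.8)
The plan is first to transport the whole statement along the $B(T_n,\mu;\field)$-equivariant identification $\Xi\colon\tilde\aug(T_n,\mu;\field)\stackrel{\isomorphic}{\longrightarrow}\tilde\MC(T_n,\mu;\field)$ of Lemma~\ref{lem:aug and Morse complex for trivial tangle}, so that everything becomes a statement about the conjugation action of $B(T_n,\mu;\field)$ on $\tilde\MC(T_n,\mu;\field)$. In matrix terms $B(T_n,\mu;\field)$ is the group of invertible matrices supported in positions $(b,a)$ with $b\ge a$ and $\mu(b)=\mu(a)$, so as a variety $B(T_n,\mu;\field)\isomorphic(\field^\times)^n\times\field^{\sum_i|I(i)|}$, while a Morse complex $d$ is a matrix supported in positions $(b,a)$ with $b>a$ and $\mu(b)=\mu(a)-1$, subject to $d^2=0$; in particular $d_\ruling\in\tilde\MC(T_n,\mu;\field)$ since $\ruling\in\GNR(T_n,\mu)$ forces $\mu(\ruling(a))=\mu(a)-1$ and $\ruling^2=\identity$ forces $d_\ruling^2=0$. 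Conjugation by $g\in B(T_n,\mu;\field)$ is an isomorphism of filtered cochain complexes $(C_n,F^\bullet,d)\to(C_n,F^\bullet,g\cdot d)$, so acyclicity and all invariants of the filtered complex up to filtered isomorphism are constant on orbits.

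For (1) the key is the filtered normal form: every $d\in\tilde\MC(T_n,\mu;\field)$ lies in the orbit of $d_\ruling$ for a unique $\ruling\in\GNR(T_n,\mu)$. Existence is a Barannikov-type reduction using only the admissible basis changes $e_a\mapsto\lambda e_a$ ($\lambda\in\field^\times$) and $e_a\mapsto e_a+\lambda e_b$ with $b>a$, $\mu(b)=\mu(a)$ — which generate $B(T_n,\mu;\field)$ — sweeping $a$ from $n$ down to $1$ and clearing $d(e_a)$ to either $0$ or a single normalized term $(-1)^{\mu(a)}e_{\ruling(a)}$; the grading constraint forces $\mu(\ruling(a))=\mu(a)-1$ and $d^2=0$ forces the resulting partial matching $\ruling$ to be an involution. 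Uniqueness holds because $\ruling$ is a complete invariant of $(C_n,F^\bullet,d)$ up to filtered isomorphism, and by the previous paragraph conjugation realizes such isomorphisms, so distinct $d_\ruling$ lie in distinct orbits. Since $H(C_n,d_\ruling)$ is spanned by the $e_c$ with $\ruling(c)=c$, $d_\ruling$ is acyclic iff $\ruling$ has no fixed point, i.e. iff $\ruling\in\NR(T_n,\mu)$; and $\GNR(T_n,\mu)$ is finite, being a set of involutions on $[n]$. Transporting back through $\Xi$ gives the three decompositions, in particular $\aug(T_n,\mu;\field)=\coprod_{\ruling\in\NR(T_n,\mu)}\aug^\ruling(T_n,\mu;\field)$.

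For (2) I would make the reduction of (1) deterministic: always clear $d(e_a)$ using the minimal admissible target index and always normalize pivots to the standard values. Applied to a $d$ ranging over the orbit $B(T_n,\mu;\field)\cdot d_\ruling$, this produces a distinguished $g=g(d)\in B(T_n,\mu;\field)$ with $g\cdot d=d_\ruling$; every arithmetic step in the algorithm is addition, multiplication, or division by a pivot entry, and the pivot entries are exactly the matrix coordinates that vanish nowhere on the orbit (that is precisely what it means for $d$ to have matching $\ruling$), so the entries of $g(d)$ are regular functions on $B(T_n,\mu;\field)\cdot d_\ruling$. Setting $\varphi_\ruling(d)\coloneqq g(d)^{-1}$ then gives a morphism of varieties with $\varphi_\ruling(d)\cdot d_\ruling=d$, i.e. a canonical section of $\pi_\ruling$.

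Finally, for (3) the section $\varphi_\ruling$ trivializes the principal $\Stab^\ruling(T_n,\mu;\field)$-bundle $\pi_\ruling$, so $B(T_n,\mu;\field)\isomorphic\bigl(B(T_n,\mu;\field)\cdot d_\ruling\bigr)\times\Stab^\ruling(T_n,\mu;\field)$ as varieties and the image of $\varphi_\ruling$ is an explicit coordinate slice of $B(T_n,\mu;\field)$, a product of multiplicative and additive one-parameter subgroups; concretely, a point of the orbit is freely prescribed by its $|L_\ruling|$ pivot values in $\field^\times$ together with the off-pivot entries of $d$ that survive the reduction, each in $\field$. The remaining content is the combinatorial identity that the number of surviving off-pivot parameters equals $A_b(\ruling)=\sum_{i\in U_\ruling\amalg H_\ruling}|A_\ruling(i)|+\sum_{i\in L_\ruling}|I(i)|$, obtained by matching each term to the concrete family of matrix positions not eliminated by columns of smaller pivot; this bookkeeping, which is the substance of \cite[Lem.~5.7, Cor.~5.8]{Su2017}, is the step I expect to be the main obstacle, the rest being a fairly standard filtered-complex reduction.
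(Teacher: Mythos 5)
Your proposal is correct and follows essentially the same route the paper takes: the paper does not prove this lemma itself but cites \cite{Su2017} and sketches exactly your Barannikov normal-form/pivot-reduction argument in Remark~\ref{remark:indices for trivial}, and its detailed proof of the vertex analogue (Lemma~\ref{lem:stratification of aug var at a vertex}, via admissible elements and the unique $e_i'(d)$ depending algebraically on $d$) is the same strategy you describe for producing the canonical section and the parameter count $(\field^\times)^{|L_\ruling|}\times\field^{A_b(\ruling)}$. The only caution is a phrasing one: the orbit is not literally the coordinate slice of matrices supported on pivot and ``L-shape'' positions, but is isomorphic to it via your deterministic reduction, which is how the count of $A_b(\ruling)$ free parameters should be read.
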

\begin{remark}\label{remark:indices for trivial}
Due to the result of Barannikov \cite{Bar1994}, for each Morse complex $d$, we have the Barannikov's normal form of $d$ which is a canonical differential $d_\ruling$ for some $\ruling\in\GNR(T_n,\mu)$ up to the action of $B(T_n,\mu;\field)$.
Indeed, $d_\ruling$ can be obtained as follows:
we pick a first nontrivial element from $(i,i+j)$ entry with the lexicographic order on $(i,j)$ such as
\[
(1,2)\to(2,3)\to\cdots\to(1,3)\to(2,4)\to\cdots\to(1,n)
\]
and use it as a povot to apply upward row operations and right column operations, where the entries to be cancelled out look like ``L''-shape.
After that, we pick the next pivot and do row and column operations to the corresponding ``L''-shape as before until we have only pivot entries.
\[
d_1=\left(\begin{tikzpicture}[baseline=-.5ex]
\matrix (m) [matrix of math nodes]
{
0 & 0 & a & b\\
0 & 0 & \mathbf{c}\vphantom{0} & d\\
0 & 0 & 0 & 0\\
0 & 0 & 0 & 0\\
};
\draw[->] (m-1-2.north west) -- (m-2-3.north west);
\fill[gray,opacity=0.3] (m-2-3.south west) -- (m-1-3.north west) -- (m-1-3.north east) -- (m-2-3.north east) -- (m-2-4.north east) -- (m-2-4.south east) -- cycle;
\end{tikzpicture}\right)\longrightarrow
d_{\ruling_1}=\left(\begin{tikzpicture}[baseline=-.5ex]
\matrix (m) [matrix of math nodes]
{
0 & 0 & 0 & \mathbf{b'}\\
0 & 0 & \mathbf{c}\vphantom{0} & 0\\
0 & 0 & 0 & 0\\
0 & 0 & 0 & 0\\
};
\draw[->] (m-2-3.south east) -- (m-3-4.south east);
\draw[->] (m-1-3.north west) -- (m-2-4.south east);
\end{tikzpicture}\right)\qquad
d_2=\left(\begin{tikzpicture}[baseline=-.5ex]
\matrix (m) [matrix of math nodes]
{
0 & 0 & \mathbf{a}\vphantom{0} & b\vphantom{0}\\
0 & 0 & 0 & d\\
0 & 0 & 0 & 0\\
0 & 0 & 0 & 0\\
};
\draw[->] (m-1-2.north west) -- (m-3-4.south east);
\fill[gray,opacity=0.3] (m-1-3.south west) -- (m-1-3.north west) -- (m-1-4.north east) -- (m-1-4.north east) -- (m-1-4.south east) -- cycle;
\end{tikzpicture}\right)\longrightarrow
d_{\ruling_2}=\left(\begin{tikzpicture}[baseline=-.5ex]
\matrix (m) [matrix of math nodes]
{
0 & 0 & \mathbf{a} & 0\\
0 & 0 & 0 & \mathbf{d}\\
0 & 0 & 0 & 0\\
0 & 0 & 0 & 0\\
};
\draw[->] (m-1-4.north west) -- (m-1-4.south east);
\end{tikzpicture}\right),
\]
where 
\[
\ruling_1=\{\{1,4\},\{2,3\}\}\quad\text{ and }\quad
\ruling_2=\{\{1,3\},\{2,4\}\}.
\]

Then there are exactly $A_b(\ruling)$-many non-pivot positions lying in the ``L''-shapes whose degrees are the same as of the pivot, which are cancelled during this process.
For example above, we have two such entries at $\{(1,3),(2,4)\}$ and only one entry $\{(1,4)\}$, or equivalently, $A_b(\ruling_1)=2$ and $A_b(\ruling_2)=1$, respectively.

All the detailed computations have been done in \cite{Su2017}.
\end{remark}

\begin{lemma}\label{lem:DGA homotopy for trivial tangle}
Let $\epsilon_1,\epsilon_2\in\tilde\aug(T_n,\mu;\field)$ be two augmentations. Then a function $h:K_n\to \field$ extends to a DGA homotopy between $\epsilon_1$ and $\epsilon_2$ if and only if 
\[
\Xi(\epsilon_1)=u\cdot \Xi(\epsilon_2),
\]
where $u$ is defined by 
\[
u(e_a)\coloneqq e_a+\sum_{a<b}h(k_{ab})e_b.
\] 
\end{lemma}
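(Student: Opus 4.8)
The strategy is to unwind both sides of the claimed equivalence into explicit statements about the matrix entries $h(k_{ab})$, $\epsilon_1(k_{ab})$, $\epsilon_2(k_{ab})$ and then observe that they literally coincide. First I would translate the conclusion $\Xi(\epsilon_1) = u \cdot \Xi(\epsilon_2) = u \circ \Xi(\epsilon_2) \circ u^{-1}$ using the formula $d(e_a) = \sum_{a<b} (-1)^{\mu(a)} \epsilon(k_{ab}) e_b$ from Lemma~\ref{lem:aug and Morse complex for trivial tangle}. Note that $u$ as defined is unipotent upper-triangular (it lies in $B(T_n,\mu;\field)$, provided $h$ is supported in degree-$0$ entries, which is automatically forced on generators of degree $\neq 0$ since $h$ has degree $1$ and $\field$ is concentrated in degree $0$), so $u = \mathrm{Id} + N$ with $N$ strictly upper-triangular and nilpotent; hence $u^{-1} = \mathrm{Id} - N + N^2 - \cdots$ makes sense. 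The equation $\Xi(\epsilon_1) \circ u = u \circ \Xi(\epsilon_2)$ is the cleaner form to work with, avoiding the inverse.

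Next I would spell out the defining equation~\eqref{equation:chain homotopy} of a DGA homotopy $H \colon A_n(\mu) \to \field$, namely $\epsilon_1 - \epsilon_2 = \epsilon_1 \circ 0 \cdot H + H \circ \differential_n$ (the chain-homotopy condition, with both differentials on $\field$ being zero so only $H \circ \differential_n$ survives) together with the $(\epsilon_1,\epsilon_2)$-derivation property~\eqref{equation:derivation}. Because $\field$ has trivial differential, the chain-homotopy identity reduces to $\epsilon_1(k_{ab}) - \epsilon_2(k_{ab}) = H(\differential_n(k_{ab}))$, and by the derivation property $H$ is determined on the free algebra by its values $h(k_{ab}) := H(k_{ab})$ on generators; so ``$h$ extends to a DGA homotopy'' is precisely the assertion that for every $a < b$,
\[
\epsilon_1(k_{ab}) - \epsilon_2(k_{ab}) = H\Bigl(\sum_{a<c<b} (-1)^{|k_{ac}|-1} k_{ac} k_{cb}\Bigr) = \sum_{a<c<b}(-1)^{|k_{ac}|-1}\bigl(h(k_{ac})\epsilon_2(k_{cb}) + (-1)^{|k_{ac}|}\epsilon_1(k_{ac}) h(k_{cb})\bigr),
\]
using~\eqref{equation:derivation} and~\eqref{equation:differential for border dga}. (One must be slightly careful about the sign conventions in $\differential_n$ — the excerpt states it two ways, $(-1)^{|k_{ac}|-1}$ in Example/Definition~\ref{example:border DGA} versus $(-1)^{|k_{ac}|+1}$ later; since these agree mod $2$ up to sign this is harmless, but I would fix one convention and carry it through.)

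Finally I would expand the matrix equation $\Xi(\epsilon_1) \circ u = u \circ \Xi(\epsilon_2)$ entry by entry and check it gives exactly the displayed system above. Writing $d_i = \Xi(\epsilon_i)$ and comparing the coefficient of $e_b$ in $(d_1 \circ u)(e_a)$ versus $(u \circ d_2)(e_a)$ produces, after collecting signs $(-1)^{\mu(\cdot)}$ and using $|k_{ac}| = \mu(a) - \mu(c) - 1$, precisely the relation rewriting $\epsilon_1(k_{ab}) - \epsilon_2(k_{ab})$ as the sum over intermediate $c$. The only real bookkeeping obstacle is matching the sign factors: the conjugation introduces $(-1)^{\mu(a)}$ and $(-1)^{\mu(c)}$ factors from the two copies of $\Xi$, while the derivation property introduces $(-1)^{|k_{ac}|}$, and one must verify these are consistent — I expect this to come down to the identity $(-1)^{\mu(a)}(-1)^{\mu(c)} = (-1)^{\mu(a)-\mu(c)} = -(-1)^{|k_{ac}|}$, which closes the argument. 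Since all of this is a finite, purely formal computation with no convergence or moduli-theoretic input, the ``hard part'' is genuinely only organizing the signs; the logical content is a direct bijection between the homotopy data $h$ and the conjugating element $u$.
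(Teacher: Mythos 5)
Your plan is correct and coincides with the paper's own proof: the paper likewise reduces the homotopy conditions \eqref{equation:chain homotopy} and \eqref{equation:derivation} to the entrywise relation $(\epsilon_2-\epsilon_1)(k_{ab})=\sum_{a<c<b}(-1)^{\mu(a)-\mu(c)}\bigl(h(k_{ac})\epsilon_1(k_{cb})-\epsilon_2(k_{ac})h(k_{cb})\bigr)$, rewrites it as the matrix identity $D_2(I+H)=(I+H)D_1$ with $D_i=((-1)^{\mu(a)}\epsilon_i(k_{ab}))$, and identifies this with $u\circ\Xi(\epsilon_2)=\Xi(\epsilon_1)\circ u$. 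The only point to fix when executing it is the choice of which augmentation plays the role of $f$ and which of $g$ in \eqref{equation:derivation} (you and the paper make opposite choices), since that choice determines whether one lands on $\Xi(\epsilon_1)=u\cdot\Xi(\epsilon_2)$ or the reversed identity.
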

\begin{proof}
It is not hard to check that $h$ extends to a DGA homotopy from $\epsilon_1$ to $\epsilon_2$ if and only if 
\begin{itemize}
\item $h(k_{ab})=0$ for all $|k_{ab}|\neq -1$ and
\item for all $1\le a<b\le n$,
\begin{align*}
(\epsilon_2-\epsilon_1)(k_{ab})&
=h\circ\differential (k_{ab})\\
&=\sum_{a<c<b}(-1)^{|k_{ac}|-1}h(k_{acb})\\
&=\sum_{a<c<b}(-1)^{|k_{ac}|-1}h(k_{ac})\epsilon_1(k_{cb})+(-1)^{|k_{ac}|}\epsilon_2(k_{ac})h(k_{cb})\\
&=\sum_{a<c<b}(-1)^{\mu(a)-\mu(c)}\left(h(k_{ac})\epsilon_1(k_{cb})-\epsilon_2(k_{ac})h(k_{cb})\right).
\end{align*}
\end{itemize}

Notice that as mentioned in Remark~\ref{remark:upper-triangular}, the presentation matrix $D_i=((-1)^{\mu(a)}\epsilon_i(k_{ab}))$ for $\Xi(\epsilon_i)$ is a strictly upper-triangular matrix. 

Let $H=(h(k_{ab}))$. Then $H$ is also strictly upper-triangular. Then we can rewrite the above condition as 
\[
D_2-D_1=HD_1-D_2H \quad\text{ or equivalently, }\quad D_2(I+H)=(I+H)D_1,
\]
and again, it is equivalent to $u\circ \Xi(\epsilon_2)=\Xi(\epsilon_1)\circ u$.

Finally, by the definition of the action of $u\in B(T_n,\mu;\field)$, this is the same as the condition $\Xi(\epsilon_1)= u\cdot\Xi(\epsilon_2)$ as desired.
\end{proof}

As a consequence, we can say that two homotopic augmentations are in the same orbit.
\begin{corollary}\label{corollary:homotopy means the same orbit}
Let $\epsilon$ and $\epsilon'$ be two homotopic augmentations.
If $\epsilon\in\aug^\ruling(T_n,\mu;\field)$, then so is $\epsilon'$.
\end{corollary}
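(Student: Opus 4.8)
The plan is to reduce the statement to a computation inside the group $B(T_n,\mu;\field)$ by means of Lemma~\ref{lem:DGA homotopy for trivial tangle}, which already translates ``being DGA homotopic'' into ``being in the same $B(T_n,\mu;\field)$-orbit'' under the conjugation action on $\tilde{\MC}(T_n,\mu;\field)$ transported via the identification $\Xi$ of Lemma~\ref{lem:aug and Morse complex for trivial tangle}.

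First I would unwind the hypothesis $\epsilon\homotopic\epsilon'$. Since the homotopy relation on DGA maps is an equivalence relation (Remark~\ref{remark:homotopy_equiv}), there is an actual DGA homotopy $H$ between $\epsilon$ and $\epsilon'$; restricting $H$ to the free generating set $K_n$ and using that $A_n(\mu)$ has no base-point variables (so $K_n$ generates it freely over $\ZZ$), one gets a function $h\colon K_n\to\field$ that extends to $H$. Then Lemma~\ref{lem:DGA homotopy for trivial tangle} yields $\Xi(\epsilon)=u\cdot\Xi(\epsilon')$, where $u\in B(T_n,\mu;\field)$ is the (unipotent, hence invertible) filtration-preserving automorphism $u(e_a)=e_a+\sum_{a<b}h(k_{ab})e_b$. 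Next I would use $\epsilon\in\aug^\ruling(T_n,\mu;\field)=B(T_n,\mu;\field)\cdot\epsilon_\ruling$: by the $B(T_n,\mu;\field)$-equivariance of $\Xi$ and Definition~\ref{def:canonical differential}, this says $\Xi(\epsilon)=g\cdot d_\ruling$ for some $g\in B(T_n,\mu;\field)$. Combining the two relations gives $\Xi(\epsilon')=(u^{-1}g)\cdot d_\ruling$, and since $u^{-1}g\in B(T_n,\mu;\field)$ we conclude $\Xi(\epsilon')\in B(T_n,\mu;\field)\cdot d_\ruling$, i.e.\ $\epsilon'\in\aug^\ruling(T_n,\mu;\field)$, as desired. (If one prefers not to invoke transitivity of homotopy, the same argument applied to each step of a zig-zag of homotopies, composing the resulting elements of $B(T_n,\mu;\field)$, works verbatim, since $B(T_n,\mu;\field)$ is a group.)

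The only point needing care — and where a gap could hide — is the passage from an \emph{arbitrary} DGA homotopy $H$ to a function $h$ of the form appearing in Lemma~\ref{lem:DGA homotopy for trivial tangle}: one checks that the $(\epsilon,\epsilon')$-derivation property forces $H$ to be the unique extension of its restriction $h=H|_{K_n}$, and that $h$ satisfies the hypotheses of that lemma (in particular $h(k_{ab})=0$ whenever $|k_{ab}|\neq -1$, which follows from $H$ having degree $1$ and $\field$ being concentrated in degree $0$). Everything else is formal manipulation of orbits, and for the cleanest phrasing one may additionally recall from Lemma~\ref{lem:ruling stratification for trivial tangle} that distinct canonical differentials $d_\ruling$ lie in distinct $B(T_n,\mu;\field)$-orbits, so the orbit through $\Xi(\epsilon)$ determines $\ruling$ unambiguously.
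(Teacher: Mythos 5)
Your argument is correct and is exactly the route the paper intends: the corollary is stated as an immediate consequence of Lemma~\ref{lem:DGA homotopy for trivial tangle}, which converts the homotopy into an element $u\in B(T_n,\mu;\field)$ with $\Xi(\epsilon)=u\cdot\Xi(\epsilon')$, after which the orbit statement is formal. Your extra care about extracting $h=H|_{K_n}$ from an arbitrary homotopy and the degree constraint forcing $h(k_{ab})=0$ for $|k_{ab}|\neq-1$ is a correct filling-in of details the paper leaves implicit.
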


\subsection{Augmentation varieties with boundary conditions}
Let $(\cT,\bfmu)\in\BLT_\front^\mu$ be a front projection of a bordered Legendrian graph of type $(n_\Left,n_\Right)$.
Then we have a diagram of augmentation varieties
\begin{equation}\label{equation:bordered augmentation varieties}
\tilde\aug(\cT,\bfmu;\field)=\left(
\tilde\aug(T_\Left,{\mu_\Left};\field)\stackrel{\phi^*_\Left}\longleftarrow 
\tilde\aug(T,{\mu};\field)\stackrel{\phi^*_\Right}\longrightarrow 
\tilde\aug(T_\Right,{\mu_\Right};\field)
\right),
\end{equation}
where $\tilde\aug(T_*,{\mu_*};\field)\coloneqq\tilde\aug(A^\CE(T_*,{\mu_*});\field)$ for $*=\Left,\Right$ or empty.

\begin{proposition}[Gluing property of augmentation varieties]\label{prop:restriction_property_aug_variety}
Let $(\cT^i,\bfmu^i)$ be two bordered Legendrian graphs of type $(n_\Left^i,n_\Right^i)$, for $i=1,2$.
Suppose that $n \coloneqq n_\Right^1 = n_\Left^2$ and $\mu_n \coloneqq \mu_\Right^1 = \mu_\Left ^2$.
Then the augmentation variety for the concatenation $(\cT,\mu)\coloneqq (\cT^1,\mu^1)\cdot (\cT^2,\mu^2)$ is given as follows:
\begin{itemize}
\item the variety $\tilde\aug(T,\mu;\field)$ is defined to be the fiber product of two induced maps $(\phi^1_\Right)^*$ and $(\phi^2_\Left)^*$,
\[
\begin{tikzcd}[column sep=3pc,row sep=2pc]
\tilde\aug(T,\mu;\field)\arrow[r,"i^*_{U^2 U}",dashed]\arrow[d,"i^*_{U^1 U}"',dashed] & \tilde\aug(T^2,\mu^2;\field)\arrow[d,"(\phi^2_\Left)^*"]\\
\tilde\aug(T^1,\mu^1;\field)\arrow[r,"(\phi^1_\Right)^*"] & \tilde\aug(T_n,\mu_n).
\end{tikzcd}
\]
\item two border varieties $\tilde\aug(T_\Left,\mu_\Left;\field)$ and $\tilde\aug(T_\Right,\mu_\Right;\field)$ and morphisms $\phi^*_\Left$ and $\phi^*_\Right$ are defined as
\begin{align*}
\phi_\Left^*&:\tilde\aug(T,\mu;\field)\stackrel{i^*_{U^1 U}}\longrightarrow\tilde\aug(T^1,\mu^1;\field)\stackrel{(\phi^1_\Left)^*}\longrightarrow \tilde\aug(T^1_\Left,\mu^1_\Left;\field)=\tilde\aug(T_\Left,\mu_\Left;\field)\\
\phi_\Right^*&:\tilde\aug(T,\mu;\field)\stackrel{i^*_{U^2 U}}\longrightarrow\tilde\aug(T^2,\mu^2;\field)\stackrel{(\phi^2_\Right)^*}\longrightarrow \tilde\aug(T^2_\Right,\mu^2_\Right;\field)=\tilde\aug(T_\Right,\mu_\Right;\field).
\end{align*}
\end{itemize}
\end{proposition}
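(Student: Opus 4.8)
The plan is to obtain the gluing property as a formal consequence of the pushout presentation of the concatenation DGA in Theorem~\ref{theorem:concatenation}, using the fact that ``taking augmentations'' is a representable contravariant functor on DGAs and hence carries pushouts to fibre products. First I would recall the functor-of-points description: for a DGA $A=(\alg,\differential)$ with a chosen free generating set $G$ over its Laurent coefficient ring, a DGA morphism $A\to R$ into a commutative $\ring$-algebra $R$ (with zero differential) is the datum of a unit of $R$ for each Laurent variable together with an element of $R$ for each degree-$0$ generator (all higher-degree generators being sent to $0$), subject to the relations $\epsilon\circ\differential(g)=0$, $g\in G$; thus $R\mapsto\hom_{\DGA}(A,R)$ is represented by an affine scheme of finite type over $\ring$, and $\tilde\aug(A;\field)$ is its variety of $\field$-points. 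For $A=A_n(\mu)$ this is exactly the description recalled in Section~\ref{sec:aug_variety_lines}.

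Next I would apply $\hom_{\DGA}(-,R)$ to the pushout square of Theorem~\ref{theorem:concatenation}, whose corner is $A_n(\mu_n)$ with legs $\phi^1_\Right$ and $\phi^2_\Left$. Viewing a pushout in $\DGA$ as a limit in $\DGA^{\op}$ and noting that $\hom_{\DGA}(-,R)=\hom_{\DGA^{\op}}(R,-)$ is a covariant hom-functor and hence preserves limits, one gets a bijection, natural in $R$,
\[
\hom_{\DGA}(A^\CE(T,\mu),R)\ \isomorphic\ \hom_{\DGA}(A^\CE(T^1,\mu^1),R)\ \times_{\hom_{\DGA}(A_n(\mu_n),R)}\ \hom_{\DGA}(A^\CE(T^2,\mu^2),R),
\]
the two maps to the base being precomposition with $\phi^1_\Right$ and $\phi^2_\Left$. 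Since all three functors are representable, this identity of functors upgrades to an isomorphism of the representing schemes, so the fibre product on the right exists as an affine scheme and represents the left-hand functor; passing to $\field$-points yields the asserted presentation of $\tilde\aug(T,\mu;\field)$ as the fibre product of $(\phi^1_\Right)^*$ and $(\phi^2_\Left)^*$ over $\tilde\aug(T_n,\mu_n)$, with $i^*_{U^1U}$ and $i^*_{U^2U}$ the two projections (the duals of the structure maps $i_{\bfU^1\bfU}$, $i_{\bfU^2\bfU}$ of the pushout). The claims about the border varieties and about $\phi^*_\Left$, $\phi^*_\Right$ then follow by dualising the identities $\phi_\Left=i_{\bfU^1\bfU}\circ\phi^1_\Left$ and $\phi_\Right=i_{\bfU^2\bfU}\circ\phi^2_\Right$ from Theorem~\ref{theorem:concatenation} and using functoriality.

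Given Theorem~\ref{theorem:concatenation}, there is no genuine obstacle, only bookkeeping; the single point I would treat with care is the step from a bijection of sets of $R$-points to an isomorphism of \emph{schemes} (the Proposition asserts that $\tilde\aug(T,\mu;\field)$ \emph{is} the fibre product as a variety, not merely on $\field$-points). Representability handles this, but it can also be seen directly from presentations: the degree-$0$ generators of $A^\CE(T,\mu)$ are the disjoint union of those of $A^\CE(T^1,\mu^1)$ and $A^\CE(T^2,\mu^2)$ — no generator is created at the seam, cf.\ the proof of Theorem~\ref{theorem:concatenation} — the equations $\epsilon\circ\differential=0$ separate into the $T^1$- and the $T^2$-relations, and the remaining constraint is precisely that the restrictions of $\epsilon$ along $\phi^1_\Right$ and along $\phi^2_\Left$ agree in $\tilde\aug(T_n,\mu_n)$. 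Here one should keep in mind that $\phi^1_\Right$ sends the border generators $k_{ab}$ to words built from crossings and vertex generators of $\Res(T^1)$ (see \eqref{equation:sign corrected differential}), not to coordinates, so this last condition is an honest system of polynomial equations rather than an identification of variables; verifying that these are exactly the fibre-product equations is the only computation, and it is routine. Entirely analogous remarks dispose of the coefficient-ring bookkeeping — the Laurent ring of $A^\CE(T,\mu)$ is the tensor product over $\ring$ of those of the two pieces, since no base point lies at the seam — so the resulting $(\field^\times)$-factors decouple consistently with the fibre product.
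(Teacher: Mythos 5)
Your proposal is correct and follows essentially the same route as the paper, whose proof simply observes that the statement follows from the definition of the augmentation variety together with the pushout presentation of the concatenation DGA in Theorem~\ref{theorem:concatenation}; applying $\hom_{\DGA}(-,\field)$ turns that pushout into the asserted fibre product. Your additional remarks on representability and on the explicit polynomial equations at the seam are correct elaborations of details the paper leaves implicit.
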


\begin{proof}
It follows directly from the definition of the augmentation variety and Theorem~\ref{theorem:concatenation}.
\end{proof}

Note that two augmentation varieties $\tilde\aug(T_\Left,{\mu_\Left};\field)$ and $\tilde\aug(T_\Right,{\mu_\Right};\field)$ are isomorphic to the augmentation varieties $\tilde\aug(T_{n_\Left},{\mu_\Left};\field)$ and $\tilde\aug(T_{n_\Right},{\mu_\Right};\field)$ of two trivial bordered Legendrian graphs $(T_{n_\Left},{\mu_\Left})$ and $(T_{n_\Right},{\mu_\Right})$, 
which can be decomposed over the finite sets $\GNR(T_{n_\Left},{\mu_\Left})$ and $\GNR(T_{n_\Right},{\mu_\Right})$ by Lemma~\ref{lem:ruling stratification for trivial tangle}.

We will consider the augmentation varieties for bordered Legendrian graphs with constraints, called \emph{boundary conditions}, and prove the invariance of augmentation varieties with boundary conditions.

\begin{definition}[Augmentation varieties with boundary conditions]\label{def:aug var with bdy conditions}
Let $(\cT,\mu)\in\BLT_\front^\mu$.
For $\ruling_\Left\in\NR(T_\Left,{\mu_\Left})$, $\ruling_\Right\in\NR(T_\Right,{\mu_\Right})$ and $\epsilon_\Left\in\aug(T_\Left,{\mu_\Left};\ruling_\Left;\field)$, we define subvarieties of $\tilde\aug(T,\mu;\field)$ as
\begin{align*}
\aug(\cT,\bfmu;\epsilon_\Left,\ruling_\Right;\field)&\coloneqq
\left(\phi^*_\Left\right)^{-1}(\epsilon_\Left) \cap 
\left(\phi^*_\Right\right)^{-1}(\aug^{\ruling_\Right}(T_\Right,{\mu_\Right};\field))\\
&=\left\{\left(\epsilon_\Left\stackrel{\phi^*_\Left}\longleftarrow \epsilon \stackrel{\phi^*_\Right}\longrightarrow \epsilon_\Right\right)\in\tilde\aug(T,\mu;\field)~\middle|~
\epsilon_\Right\in \aug^{\ruling_\Right}(T_\Right,{\mu_\Right};\field)\right\};\\
\aug(\cT,\bfmu;\ruling_\Left,\ruling_\Right;\field)&\coloneqq
\left(\phi^*_\Left\right)^{-1}(\aug^{\ruling_\Left}(T_\Right,{\mu_\Left};\field)) \cap 
\left(\phi^*_\Right\right)^{-1}(\aug^{\ruling_\Right}(T_\Right,{\mu_\Right};\field))\\
&=\left\{\left(\epsilon_\Left\stackrel{\phi^*_\Left}\longleftarrow \epsilon \stackrel{\phi^*_\Right}\longrightarrow \epsilon_\Right\right)\in\tilde\aug(T,\mu;\field)~\middle|~
\epsilon_*\in \aug^{\ruling_*}(T_*,{\mu_*};\field), *=\Left\text{ or }\Right
\right\}
\end{align*}
\end{definition}

\begin{remark}\label{rem:aug-orbit and orbit-orbit}
The augmentation-orbit and orbit-orbit boundary conditions are closed related, as follows:
\begin{align*}\label{eqn:aug-orbit and orbit-orbit}
\aug(\cT,\bfmu;\ruling_\Left,\ruling_\Right;\field)
&=\aug^{\ruling_\Left}(T_\Left,{\mu_\Left};\field)\times \aug(\cT,\bfmu;\epsilon_\Left,\ruling_\Right;\field)\\
&\isomorphic\left( (\field^\times)^{n_\Left/2} \times \field^{A_b(\ruling_\Left)} \right) \times \aug(\cT,\bfmu;\epsilon_\Left,\ruling_\Right;\field)
\end{align*}
See Proposition \ref{prop:aug-orbit vs orbit-orbit} for more details.
Thus we will see that the two definitions of augmentation varieties with boundary conditions lead to equivalent results \emph{up to a normalization}.
\end{remark}

\begin{theorem}[Invariance of augmentation varieties with boundary conditions]\label{thm:inv of aug var}
Let $(\cT,\mu)\in\BLT_\front^\mu$ and $\epsilon_\Left$, $\ruling_\Left$ and $\ruling_\Right$ be as above.
Then $\aug(\cT,\bfmu;\epsilon_\Left,\ruling_\Right;\field)$ and $\aug(\cT,\bfmu;\ruling_\Left,\ruling_\Right;\field)$ are Legendrian isotopy invariants up to equivalence.
\end{theorem}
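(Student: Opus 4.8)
The plan is to reduce the invariance of the augmentation varieties with boundary conditions to the invariance of the bordered LCH DGA up to stabilizations, which is already established in Theorem~\ref{theorem:invariance of bordered LCH DGAs}, together with the stratification results for border DGAs from Lemma~\ref{lem:ruling stratification for trivial tangle} and the homotopy-invariance statement of Corollary~\ref{corollary:homotopy means the same orbit}. Concretely, suppose $\RM{M}:(\cT',\bfmu')\to(\cT,\bfmu)$ is a front Reidemeister move. By Theorem~\ref{theorem:invariance of bordered LCH DGAs} we have an induced bordered quasi-morphism $\RM{M}_*=(\identity_\Left,\RM{M}_*,\identity_\Right):A^\CE(\cT',\bfmu')\to A^\CE(\cT,\bfmu)$, realized (via the mapping-cylinder picture \eqref{equation:diagram for invariance}) as a zig-zag of strong stabilizations sandwiched between two weak stabilizations $\hat\bfpi',\hat\bfpi$, and similarly $\RM{M^{-1}}_*$, with the two round-trip compositions homotopic to the identity. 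Dualizing, the middle DGA map induces a morphism of augmentation varieties $\tilde\aug(T,\mu;\field)\to\tilde\aug(T',\mu';\field)$, and since the border maps are identities the induced maps on the border varieties $\tilde\aug(T_*,\mu_*;\field)$ are literally the identity; in particular $\phi^*_\Left$ and $\phi^*_\Right$ commute strictly with $\RM{M}_*^{*}$ on the left, and commute up to the chain homotopy $H'_{\Right,\RM{M}}$ on the right.

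First I would handle the two weak stabilizations $\hat\bfpi',\hat\bfpi$: by Lemma~\ref{lemma:invariance of augmentation variety under stabilization} a weak stabilization induces, on full augmentation varieties, a projection that is (up to the equivalence relation $\relation$ of Definition~\ref{def:equiv rel on aug var}) just projection off an affine space of the form $\field^N$; since these extra coordinates are unconstrained they carry no boundary data, and the fibered/constrained subvarieties $\aug(\cT,\bfmu;\epsilon_\Left,\ruling_\Right;\field)$ and $\aug(\cT,\bfmu;\ruling_\Left,\ruling_\Right;\field)$ simply pick up a factor $\field^N$, hence are unchanged up to $\relation$. The same reasoning applies to each strong stabilization in the zig-zag, except that a strong stabilization has both a projection and a section compatible with the border inclusions; this is precisely what guarantees that the constrained subvariety upstairs is the preimage of the one downstairs and differs from it only by an affine factor. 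The key point making all of this go through is the sentence in the proof of Theorem~\ref{theorem:invariance of bordered LCH DGAs} that all DGA morphisms in the zig-zag \emph{preserve the two border DG-subalgebras}, because those are sub-DGAs of the invariant internal DGAs; this means that passing through the zig-zag never alters the restriction maps $\phi_\Left,\phi_\Right$ up to the identity on border algebras, so the boundary conditions $\epsilon_\Left$ and the orbits $\aug^{\ruling_\Left}$, $\aug^{\ruling_\Right}$ are transported to themselves.

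Next I would deal with the remaining subtlety: the right square commutes only up to the explicit homotopy $H'_{\Right,\RM{M}}$, so $\RM{M}_*^{*}$ does not send $\epsilon$ with $\phi^*_\Right(\epsilon)=\epsilon_\Right$ to an augmentation with the \emph{same} right restriction, but to one whose right restriction is homotopic to $\epsilon_\Right$. Here I invoke Corollary~\ref{corollary:homotopy means the same orbit}: two homotopic augmentations of a trivial bordered graph lie in the same $B(T_{n_\Right},\mu_\Right;\field)$-orbit, so membership in the stratum $\aug^{\ruling_\Right}(T_\Right,\mu_\Right;\field)$ is preserved under the homotopy. Consequently $\RM{M}_*^{*}$ carries $\left(\phi^*_\Right\right)^{-1}(\aug^{\ruling_\Right})$ into $\left(\phi'^*_\Right\right)^{-1}(\aug^{\ruling_\Right})$, and likewise for the left (where the square is strictly commutative, so the fiber over a fixed $\epsilon_\Left$, or the left orbit, is preserved on the nose). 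Running the same argument with $\RM{M^{-1}}$ and using that the two composites are homotopic to the identity — hence induce, on augmentation varieties, maps homotopic to the identity and therefore preserving strata by Corollary~\ref{corollary:homotopy means the same orbit} again — shows that $\RM{M}_*^{*}$ restricts to an isomorphism $\aug(\cT,\bfmu;\epsilon_\Left,\ruling_\Right;\field)\xrightarrow{\isomorphic}\aug(\cT',\bfmu';\epsilon_\Left,\ruling_\Right;\field)$ up to the affine factors coming from the (de)stabilizations, i.e.\ an equivalence in the sense of Definition~\ref{def:equiv rel on aug var}. The statement for the orbit-orbit version $\aug(\cT,\bfmu;\ruling_\Left,\ruling_\Right;\field)$ then follows from the relation to the augmentation-orbit version noted in Remark~\ref{rem:aug-orbit and orbit-orbit} (to be proved in Proposition~\ref{prop:aug-orbit vs orbit-orbit}), since that relation only adds the fixed affine factor $(\field^\times)^{n_\Left/2}\times\field^{A_b(\ruling_\Left)}$.

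The main obstacle I anticipate is not any single step but the bookkeeping needed to verify that the zig-zag of (generalized) stabilizations from Theorem~\ref{theorem:invariance of bordered LCH DGAs} interacts correctly with the \emph{constrained} subvarieties rather than just the full augmentation varieties — that is, confirming that at each stage the extra generators are degree-zero and unconstrained relative to the boundary conditions, so that the stratification indices $A_b(\ruling)$ and the orbit decompositions of Lemma~\ref{lem:ruling stratification for trivial tangle} are genuinely untouched. The homotopy-vs-strict distinction on the right border is the other delicate point, and it is resolved entirely by Corollary~\ref{corollary:homotopy means the same orbit}; once that is in hand, the rest is a diagram chase combined with Lemma~\ref{lemma:invariance of augmentation variety under stabilization}.
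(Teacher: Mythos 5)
Your proposal is correct and follows essentially the same route as the paper: reduce to the $\aug(\cT,\bfmu;\epsilon_\Left,\ruling_\Right;\field)$ case, run the zig-zag of (de)stabilizations from Theorem~\ref{theorem:invariance of bordered LCH DGAs} (which fix the border DG-subalgebras, so constrained subvarieties only pick up affine factors), and absorb the homotopy-commutativity of the right square into the $B(T_{n_\Right},\mu_\Right;\field)$-orbit structure — the paper does this by producing the explicit $u\in B(T_{n_\Right},\mu_\Right;\field)$ from Lemma~\ref{lem:DGA homotopy for trivial tangle}, which is the same fact you invoke via Corollary~\ref{corollary:homotopy means the same orbit}. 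The orbit-orbit case is then deduced from Remark~\ref{rem:aug-orbit and orbit-orbit} exactly as in the paper.
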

\begin{proof}
It is enough to prove the invariance of the variety $\aug(\cT,\bfmu;\epsilon_\Left,\ruling_\Right;\field)$, and we will follow the argument described in the proof of Theorem~\ref{theorem:invariance of bordered LCH DGAs}.

Let us first consider the mapping cylinder $\hat A^\CE(\cT,\mu)$ of $A^\CE(\cT,\mu)$ defined in Lemma~\ref{lem:subDGA} and Proposition~\ref{proposition:closure DGA is a mapping cylinder}.
Then the canonical inclusion $\hat i$ commutes with the structure morphisms $\phi_\Right$ and $\hat\phi_\Right$ up to homotopy $\hat H_\Right$ which implies that there exists $u\in B(T_{n_\Right},{\mu_\Right};\field)$ that makes the following diagram commutative:
\[
\begin{tikzcd}[column sep=3pc, row sep=2pc]
\tilde\aug(T_\Left,{\mu_\Left};\field)\arrow[d,equal] 
&\tilde\aug(T,{\mu};\field) \arrow[l,"\phi^*_\Left"'] \arrow[r,"\phi^*_\Right"] 
&\tilde\aug(T_\Right,{\mu_\Right};\field)\\
\tilde\aug(T_\Left,{\mu_\Left};\field) 
&\hat\aug(T,\mu;\field)\arrow[u,"\hat i^*"',->>] \arrow[l,"\hat\phi^*_\Left"'] \arrow[r,"\hat\phi^*_\Right"]
&\tilde\aug(T_\Right,{\mu_\Right};\field)\arrow[u,"\isomorphic","u\cdot(-)"']
\end{tikzcd}
\]
where $\hat\aug(T,\mu;\field)\coloneqq\tilde\aug(\hat A^\CE(T,\mu);\field)$.
We denote the corresponding subvariety with boundary conditions in $\hat\aug(T,\mu;\field)$ by $\hat\aug(\cT,\bfmu;\epsilon_\Left,\ruling_\Right;\field)$.
\[
\hat\aug(\cT,\bfmu;\epsilon_\Left,\ruling_\Right;\field)\coloneqq
\left(\hat\phi_\Left^*\right)^{-1}(\epsilon_\Left)\cap
\left(\hat\phi_\Right^*\right)^{-1}(\aug^{\ruling_\Right}(T_{\Right},{\mu_\Right};\field)).
\]

By Lemma~\ref{lemma:mapping cylinder differential}, $\hat A^\CE(T,\mu)$ is isomorphic to a stabilization of $A^\CE(T,\mu)$ and the map $\hat i^*$ is the projection of the trivial vector bundle
\[
\hat i^*:\hat\aug(T,\mu;\field)\isomorphic \tilde\aug(T,{\mu};\field)\times\field^N\to\tilde\aug(T,{\mu};\field).
\]
Here the number $N$ is the number of generators $\hat k_{ab}$'s of degree $0$ in $\hat A^\CE(T,\mu)$.
Hence for each $\epsilon_\Right\in \tilde\aug(T_\Right,{\mu_\Right};\field)$,
\begin{align*}
\left(\hat \phi^*_\Right\right)^{-1}\left(u^{-1}\cdot \epsilon_\Right\right)&=
\left(\phi^*_\Right\circ\hat i^*\right)^{-1}(\epsilon_\Right)
\isomorphic\left(\phi^*_\Right\right)^{-1}(\epsilon_\Right)\times \field^N,
\end{align*}
which implies that
\begin{align*}
\hat\aug(\cT,\bfmu;\epsilon_\Left,\ruling_\Right;\field)&=
\left(\hat\phi_\Left^*\right)^{-1}(\epsilon_\Left)\cap
\left(\hat\phi_\Right^*\right)^{-1}(\aug^{\ruling_\Right}(T_{\Right},{\mu_\Right};\field))\\
&\isomorphic\left(\left(\phi^*_\Left\right)^{-1}(\epsilon_\Left)\times \field^N\right)\cap
\left(\left(\phi^*_\Right\right)^{-1}(\aug^{\ruling_\Right}(T_{\Right},{\mu_\Right};\field))\times \field^N\right)\\
&=\aug(T,\mu;\epsilon_\Left,\ruling_\Right;\field)\times\field^N.
\end{align*}
Obviously, this is a trivial vector bundle over $\aug(\cT,\bfmu;\epsilon_\Left,\ruling_\Right;\field)$ and therefore
\[
\hat\aug(\cT,\bfmu;\epsilon_\Left,\ruling_\Right;\field)\relation\aug(\cT,\bfmu;\epsilon_\Left,\ruling_\Right;\field).
\]

The rest of the proof follows obviously from the fact that each front Reidemeister move induces a zig-zag sequence of stabilizations of bordered DGAs as discussed already in the proof of Theorem~\ref{theorem:invariance of bordered LCH DGAs}, where both inclusion and projection of each stabilization fixes both left and right border DGAs.
\end{proof}

\begin{definition}[Augmentation numbers]
Let $(\cT,\bfmu)\in\BLT_\front^\mu$.
For $\ruling_\Left\in\NR(T_{\Left},{\mu_\Left})$ and $\ruling_\Right\in\NR(T_\Right,{\mu_\Right})$ and a finite field $\FF_q$, the \emph{augmentation number} with boundary condition $(\ruling_\Left,\ruling_\Right)$ for $(\cT,\bfmu)$ is the normalization of the number of $\FF_q$-points in the augmentation variety $\aug(\cT,\bfmu;\epsilon_\Left,\ruling_\Right;\field)$
\[
\augnumber(\cT,\bfmu;\ruling_\Left,\ruling_\Right;\FF_q)\coloneqq q^{-\dim_\field\aug(\cT,\bfmu;\epsilon_\Left,\ruling_\Right;\field)}\#\aug(\cT,\bfmu;\epsilon_\Left,\ruling_\Right;\FF_q),
\]
where $\epsilon_\Left=\epsilon_{\ruling_\Left}$ is the canonical augmentation for $\ruling_\Left$.
\end{definition}

\begin{remark}
If we use $\aug(\cT,\bfmu;\ruling_\Left,\ruling_\Right;\field)$ instead of $\aug(\cT,\bfmu;\epsilon_\Left,\ruling_\Right;\field)$, then as seen in Remark~\ref{rem:aug-orbit and orbit-orbit}, 
\[
\aug(\cT,\bfmu;\ruling_\Left,\ruling_\Right;\field)\isomorphic (\field^\times)^{n_\Left/2}\times\field^{A_b(\ruling_\Left)}\times\aug(\cT,\bfmu;\epsilon_\Left,\ruling_\Right;\field)
\]
and therefore the following relation holds.
\begin{align*}
&\mathrel{\hphantom{=}}q^{-\dim_\field\aug(\cT,\bfmu;\ruling_\Left,\ruling_\Right;\field)}\#\aug(\cT,\bfmu;\ruling_\Left,\ruling_\Right;\FF_q)\\
&=q^{-\left(n_\Left/2 + A_b(\ruling_\Left) \right)-\dim_\field\aug(\cT,\bfmu;\epsilon_\Left,\ruling_\Right;\field)}
\#\left((\field^\times)^{n_\Left/2}\times\field^{A_b(\ruling_\Left)}\right)
\#\aug(\cT,\bfmu;\epsilon_\Left,\ruling_\Right;\FF_q)\\
&=q^{-\left(n_\Left/2 + A_b(\ruling_\Left) \right)}(q-1)^{n_\Left/2} q^{A_b(\ruling_\Left)}\augnumber(\cT,\bfmu;\ruling_\Left,\ruling_\Right;\FF_q)\\
&=\left( \frac{q-1}{q} \right)^{n_\Left/2} \augnumber(\cT,\bfmu;\ruling_\Left,\ruling_\Right;\FF_q).
\end{align*}
\end{remark}

As a consequence of Theorem \ref{thm:inv of aug var}, we obtain the following proposition.
\begin{proposition}\label{prop:augnumber invarince}
The augmentation number $\augnumber(\cT,\bfmu;\ruling_\Left,\ruling_\Right;\FF_q)$ is a Legendrian isotopy invariant for $(\cT,\bfmu)$.
\end{proposition}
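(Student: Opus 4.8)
The plan is to derive this as a formal consequence of Theorem~\ref{thm:inv of aug var}, so essentially no new geometric input is needed. The first thing I would isolate is the elementary fact that the normalized point count $q^{-\dim_\field X}\#X(\FF_q)$ attached to an affine variety $X$ over $\FF_q$ depends only on the equivalence class of $X$ under the relation $\relation$ of Definition~\ref{def:equiv rel on aug var}. Indeed, if $X\times\field^m\isomorphic Y\times\field^n$, then counting $\FF_q$-points gives $q^m\#X(\FF_q)=q^n\#Y(\FF_q)$, while comparing dimensions gives $\dim_\field X+m=\dim_\field Y+n$; combining the two relations yields $q^{-\dim_\field X}\#X(\FF_q)=q^{-\dim_\field Y}\#Y(\FF_q)$. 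The case $X=\emptyset$ (hence $Y=\emptyset$) is trivial, both quantities being $0$.

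Next I would check that the boundary-condition data transports along a Legendrian isotopy. This is immediate from the way Theorems~\ref{theorem:invariance of bordered LCH DGAs} and \ref{thm:inv of aug var} are proved: every stabilization occurring in the relevant zig-zag fixes the two border DGAs $A^\CE(T_\Left,\mu_\Left)$ and $A^\CE(T_\Right,\mu_\Right)$, so the finite sets $\NR(T_\Left,\mu_\Left)$ and $\NR(T_\Right,\mu_\Right)$, the orbits $\aug^{\ruling_\Right}(T_\Right,\mu_\Right;\field)$, and the canonical augmentation $\epsilon_{\ruling_\Left}$ are literally the same data for a pair of isotopic bordered Legendrian graphs $(\cT,\bfmu)$ and $(\cT',\bfmu')$. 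Hence, for fixed $\ruling_\Left\in\NR(T_\Left,\mu_\Left)$ and $\ruling_\Right\in\NR(T_\Right,\mu_\Right)$, the augmentation varieties with boundary conditions are well-defined on both sides and, by Theorem~\ref{thm:inv of aug var}, satisfy $\aug(\cT,\bfmu;\epsilon_{\ruling_\Left},\ruling_\Right;\field)\relation\aug(\cT',\bfmu';\epsilon_{\ruling_\Left},\ruling_\Right;\field)$.

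Finally I would combine the two steps: apply the point-count invariance of the first paragraph, with $\field=\FF_q$, to the $\relation$-equivalent pair of varieties produced by the isotopy, obtaining $\augnumber(\cT,\bfmu;\ruling_\Left,\ruling_\Right;\FF_q)=\augnumber(\cT',\bfmu';\ruling_\Left,\ruling_\Right;\FF_q)$. There is no real obstacle at this level: the entire difficulty sits inside Theorem~\ref{thm:inv of aug var} (equivalently the bordered-DGA invariance of Theorem~\ref{theorem:invariance of bordered LCH DGAs}), whose proof must navigate generalized stabilizations through mapping cylinders. For the present statement the only subtlety worth flagging is that the normalization exponent is taken to be exactly $\dim_\field$, which is precisely the choice that renders the invariant insensitive to the auxiliary affine factors $\field^N$ introduced by those stabilizations.
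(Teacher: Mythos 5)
Your proposal is correct and follows essentially the same route as the paper: the paper's proof of Proposition~\ref{prop:augnumber invarince} likewise just observes that the normalized count $q^{-\dim_\field X}\#X(\FF_q)$ is constant on $\relation$-equivalence classes and then invokes Theorem~\ref{thm:inv of aug var}. Your additional remarks on the transport of boundary data and on why $\dim_\field$ is the right normalization exponent are consistent with (and implicit in) the paper's argument.
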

\begin{proof}
Recall the equivalence of algebraic varieties in Definition~\ref{def:equiv rel on aug var}, that is,
\[
X\relation Y\Longleftrightarrow X\times \field^m\isomorphic Y\times \field^n
\]
for some $m, n$.
Then the numbers of $\FF_q$ points and dimensions are
\begin{align*}
\#(X\times \field^m)(\FF_q) &= \# X(\FF_q)\cdot q^m&
\dim_\field(X\times \field^m) &= \dim_\field X+ m,\\
\#(Y\times \field^n)(\FF_q) &= \# Y(\FF_q)\cdot q^n,&
\dim_\field(Y\times \field^n) &= \dim_\field Y+ n.
\end{align*}
Therefore the normalizations of the numbers of $\FF_q$ points coincide
\begin{align*}
q^{-\dim_\field X}\#X(\FF_q) &= q^{-\dim_\field (X\times \field^m)} \#(X\times\field^m)(\FF_q)\\
&=q^{-\dim_\field (Y\times \field^n)} \#(Y\times\field^n)(\FF_q) = q^{-\dim_\field Y}\#Y(\FF_q).\qedhere
\end{align*}
\end{proof}

\subsection{Augmentation varieties for internal DGAs}\label{sec:aug_variety_vertex}
The key example concerns the information near a vertex of a bordered Legendrian graph.
Recall the construction of the DGA $A^\CE(\cT,\mu)$.
As seen in the equation \eqref{equation:internal DG-subalgebra}, for each vertex $v$, there is a DG-subalgebra of $I_v\subset A^\CE(\cT,\mu)$.

Let $v$ be a vertex of type $(\ell,r)$ with $n=\ell+r$, then $I_v=(\sfI_v,\differential_v)\isomorphic I_{(\ell,r)}(\mu_v)$ is as follows:
\begin{align*}
\sfI_v&=\ring\langle v_{a,i}\mid 1\le a\le n, i\ge 1\rangle,&
|v_{a,i}|&=\mu_v(a) - \mu_v(a+i)+ n(v,a,i)-1,
\end{align*}
where $n(v,a,i)$ is defined in Section~\ref{section:CE DGA}.
Notice that among these infinitely many geneartors, there are only finitely many generators in each degree.
Indeed, there exists $N=N(\mu_v)\ge m$ such that $|v_{a,i}|>1$ for any $a\in\Zmod{n}$ and $i>N$.

The differential $\differential_v$ is defined by
\[
\differential_v v_{a,i} \coloneqq \delta_{i,n}+\sum_{i_1+i_2=i}(-1)^{|v_{a,i_1}|-1} v_{a,i_1}v_{a+i_1,i_2}.
\]

As before, by regarding $\epsilon(v_{a,i})$ as $x_{a,i}$, the augmentation variety at $v$ is an affine algebraic variety
\[
\tilde\aug(v;\field)\isomorphic\left\{
(x_{a,i})\in \field^{n N}~\middle|~ 
a\in\Zmod{n}, i\in[N], 
x_{a,i}=0\text{ if }|x_{a,i}|\neq0, \sum_{i_1+i_2=i} x_{a,i_1}x_{a+i_1,i_2}=\delta_{i,n}
\right\},
\]
which will be denoted by $\aug(v;\field)$.

We introduce a Morse complex for a vertex as follows:
\begin{definition}[Morse complex at a vertex]\label{def:Morse complex at a vertex}
Let $\field[Z]$ be the graded polynomial ring in one variable $Z$ with $|Z|=1$. 
We define a free graded left $\field[Z]$-module 
\begin{align*}
C_v&\coloneqq \field[Z]\langle e_1,\dots,e_n\rangle,& |e_a|&\coloneqq -\mu_v(a)
\end{align*}
and a decreasing filtration 
\[
C_v\supset F^1 C_v\supset\cdots\supset F^\ell C_v\supset Z\cdot F^{\ell+1}C_v\supset\cdots\supset Z\cdot F^{\ell+r}C_v\supset Z^2\cdot F^{n+1} C_v = Z^2\cdot F^1 C_v
\]
of $C_v$ by free graded left $\field$-submodules such that for each $a\in\Zmod{n}$,
\begin{align*}
F^aC_v&\coloneqq \field\langle e_a\rangle\oplus\bigoplus_{i>0} \field\langle Z^{n(v,a,i)}e_{a+i} \rangle.
\end{align*}

The group of $\field[Z]$-superlinear automorphisms of $C_v$ preserving $F^\bullet$ will be denoted by $B(v;\field)$ and its unipotent subgroup will be denoted by $U(v;\field)\subset B(v;\field)$ consisting of automorphisms $u\in B(v;\field)$ such that $\langle u(e_a),e_a\rangle=1$ for all $a\in\Zmod{n}$.

We define $\MC(v;\field)$ to be the set of all $\field[Z]$-superlinear endormorphisms $d$ of degree $1$ which preserves $F^{\bullet}$ and satisfies $d^2+Z^2=0$.
\end{definition}

In the above definition, the pairing $\langle~,~\rangle:C_v\otimes C_v\rightarrow\field$ is $\field$-bilinear and satisfies $\langle Z^ie_a,Z^je_b\rangle\coloneqq \delta_{i,j}\cdot\delta_{a,b}$. Moreover, we assume that $d\in\MC(v;\field)$ is super-commutative with $Z$. That is, for all $x\in C_v$
\begin{equation}\label{equation:super commutativity}
d(Z\cdot x) = -Z\cdot d(x).
\end{equation}
and therefore $(d+Z)^2=0$ if and only if $d^2+Z^2=0$.

\begin{remark}\label{rem:morphisms of Morse complexes at a vertex}
A homogeneous $\field[Z]$-superlinear endormorphism $d\in\End(C_v)$ preserves $F^{\bullet}$ if and only if $d(e_a)\in F^a C_v$ for all $a\in\Zmod{m}$, that is,
\[
d(e_a)=c_{a,0}e_a+\sum_{i>0}c_{a,i}Z^{n(v,a,i)}e_{a+i}
\] 
for some $c_{a,i}\in\field$, with $c_{a,0}=0$ if $|d|\neq 0$.
\end{remark}

\begin{lemma}\label{lem:aug and Morse complex for a vertex}
There is a canonical identification
\begin{align*}
\Xi_v:\aug(v;\field)&\stackrel{\isomorphic}{\to} \MC(v;\field);\\
\epsilon &\mapsto d=\Xi_v(\epsilon),
\end{align*}
where
\[
d(e_a)=(-1)^{\mu_v(a)}\sum_{i>0}\epsilon(v_{a,i})Z^{n(v,a,i)}e_{a+i}.
\]
\end{lemma}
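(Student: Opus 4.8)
The plan is to verify that the explicit assignment $\epsilon \mapsto d = \Xi_v(\epsilon)$ given by $d(e_a) = (-1)^{\mu_v(a)}\sum_{i>0}\epsilon(v_{a,i})Z^{n(v,a,i)}e_{a+i}$ sets up a bijection by producing an explicit inverse and checking that both directions land in the correct target sets. First I would record that, by Remark~\ref{rem:morphisms of Morse complexes at a vertex}, an element $d\in\MC(v;\field)$ is determined by scalars $c_{a,i}\in\field$ with $d(e_a)=c_{a,0}e_a+\sum_{i>0}c_{a,i}Z^{n(v,a,i)}e_{a+i}$; the candidate inverse is then $\epsilon(v_{a,i})\coloneqq(-1)^{\mu_v(a)}c_{a,i}$ for $i>0$, extended multiplicatively and $\field$-linearly to all of $\sfI_v$. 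So the content of the proof is a degree-matching step and a differential-matching step.

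The degree-matching step: I need to check that $d$ preserves $F^\bullet$ and has degree $1$ \emph{if and only if} the corresponding $\epsilon$ is graded, i.e. $\epsilon(v_{a,i})=0$ whenever $|v_{a,i}|\neq 0$ and $\epsilon$ sends degree-$0$ generators to $\field$. The key computation here is that $|Z^{n(v,a,i)}e_{a+i}| - |e_a| = n(v,a,i) - \mu_v(a+i) + \mu_v(a)$, and comparing with the formula $|v_{a,i}| = \mu_v(a)-\mu_v(a+i)+n(v,a,i)-1$ from Example~\ref{example:internal DGA} shows this equals $|v_{a,i}|+1$. Hence the coefficient $c_{a,i}$ (respectively $\epsilon(v_{a,i})$) is allowed to be nonzero exactly when $d$ raises degree by $1$ along that matrix entry, which is exactly the condition $|v_{a,i}|=0$. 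This also shows $d$ preserving $F^\bullet$ is automatic from the shape of the formula, matching Remark~\ref{rem:morphisms of Morse complexes at a vertex}, and that superlinearity/supercommutativity with $Z$ (equation~\eqref{equation:super commutativity}) is built into the definition by declaring $d$ to be $\field[Z]$-superlinear.

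The differential-matching step, which I expect to be the main obstacle, is to show that $\epsilon\circ\differential_v\equiv 0$ (the augmentation condition~\eqref{equation:defining equation for augmentations}) is equivalent to $d^2+Z^2=0$. Applying $\epsilon$ to $\differential_v v_{a,i} = \delta_{i,n}+\sum_{i_1+i_2=i}(-1)^{|v_{a,i_1}|-1}v_{a,i_1}v_{a+i_1,i_2}$ gives, for each $(a,i)$, the scalar equation
\[
0 = \delta_{i,n} + \sum_{i_1+i_2=i}(-1)^{|v_{a,i_1}|-1}\epsilon(v_{a,i_1})\epsilon(v_{a+i_1,i_2}).
\]
On the other side, computing $(d+Z)^2(e_a) = d^2(e_a)+Z^2 e_a$ (using $dZ=-Zd$) and reading off the coefficient of $Z^{n(v,a,i)}e_{a+i}$ will produce exactly the same sum, once one uses the additivity $n(v,a,i)=n(v,a,i_1)+n(v,a+i_1,i_2)$ from equation~\eqref{equation:additivity of n} to see that the intermediate powers of $Z$ assemble correctly, the sign $(-1)^{\mu_v(a)}$ in $\Xi_v$ telescopes across the composition via $(-1)^{\mu_v(a)}(-1)^{\mu_v(a+i_1)} = (-1)^{\mu_v(a)+\mu_v(a+i_1)}$ combined with the sign $(-1)^{|v_{a,i_1}|-1}$, and the $Z^2 e_a$ term accounts for the $\delta_{i,n}$ (recall $|v_{a,n}|=1$ and $n(v,a,n)=2$). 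The careful sign bookkeeping here—reconciling the Koszul sign $(-1)^{|v_{a,i_1}|-1}$ in the DGA differential with the signs produced by $d$ being an odd $\field[Z]$-superlinear endomorphism—is the delicate point; everything else is formal. Once the two scalar systems are seen to coincide termwise, bijectivity of $\Xi_v$ is immediate, since $\epsilon\mapsto(c_{a,i})$ and $(c_{a,i})\mapsto\epsilon$ are visibly mutually inverse on the underlying coefficient data.
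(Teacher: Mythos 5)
Your proposal is correct and follows essentially the same route as the paper's proof: the degree check via $|Z^{n(v,a,i)}e_{a+i}|-|e_a|=|v_{a,i}|+1$ and the direct computation showing that $d^2(e_a)=-Z^2e_a$ is equivalent to $\epsilon\circ\differential_v=0$, using super-commutativity with $Z$, the additivity $n(v,a,i)=n(v,a,i_1)+n(v,a+i_1,i_2)$, and the identification of the $\delta_{i,n}$ term with $Z^2e_a$. The only cosmetic difference is that you spell out the inverse map explicitly, whereas the paper verifies one direction and leaves the (evident) invertibility of the coefficient correspondence implicit.
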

\begin{proof}
Since $d=\Xi_v(\epsilon)$ is of the form in Remark~\ref{rem:morphisms of Morse complexes at a vertex}, it preserves $F^{\bullet}C_v$ and is of degree 1 since $(-1)^{\mu_v(a)}\epsilon(v_{a,i})e_{a+i}\neq0$ only if $|v_{a,i}|=0$ and so
\[
|d|=|Z^{n(v,a,i)}e_{a+i}|-|e_a|=\mu_v(a)-\mu_v(a+i)+n(v,a,i)=|v_{a,i}|+1=1.
\]

Moreover, we have 
\begin{align*}\allowdisplaybreaks
d^2(e_a)&=d\left(\sum_{i_1>0}(-1)^{\mu_v(a)}\epsilon(v_{a,i_1})Z^{n(v,a,i_1)}e_{a+i_1}\right)\\
&=\sum_{i_1>0}(-1)^{\mu_v(a)+n(v,a,i_1)}\epsilon(v_{a,i_1})Z^{n(v,a,i_1)}d(e_{a+i_1})
\tag{by \eqref{equation:super commutativity}}\\
&=\sum_{i_1>0}(-1)^{\mu_v(a)+n(v,a,i_1)}\epsilon(v_{a,i_1})Z^{n(v,a,i_1)}\left(\sum_{i_2>0}(-1)^{\mu_v(a+i_1)}\epsilon(v_{a+i_1,i_2})Z^{n(v,a+i_1,i_2)}e_{a+i_1+i_2}\right)\\
&=\sum_{i_1,i_2>0}(-1)^{\mu_v(a)-\mu_v(a+i_1)+n(v,a,i_1)}\epsilon(v_{a,i_1}v_{a+i_1,i_2})Z^{n(v,a,i_1+i_2)}e_{a+i_1+i_2}
\tag{by \eqref{equation:additivity of n}}\\
&=\sum_{i>0}Z^{n(v,a,i)}\sum_{i_1+i_2=i}(-1)^{|v_{a,i_1}|+1}\epsilon(v_{a,i_1}v_{a+i_1,i_2})e_{a+i}
\tag{by \eqref{equation:degree of vertex generators}}\\
&=\sum_{i>0}Z^{n(v,a,i)}\left((\epsilon\circ\differential)(v_{a,i})-\delta_{i,n}\right)e_{a+i}
\tag{by \eqref{equation:differential for vertex generators}}\\
&=-Z^2e_a.
\end{align*}
The last equality holds by the defining equation of the augmentation \eqref{equation:defining equation for augmentations} and we are done.
\end{proof}

From now on, we will always use the identification above. In particular, the algebraic group $B(v;\field)$ acts naturally on $\aug(v;\field)$ via conjugation.

\begin{definition}[Canonical augmentations and differentials at a vertex]\label{def:canonical augmentation at a vertex}
Let us define as before the set $\NR(v)$
of all fixed-point-free involutions $\ruling$ on $[n]$ satisfying that $|v_{a,b-a}|=0$ for all $1\le a<b=\ruling(a)\le n$.

For each involution $\ruling\in \NR(v)$, define the \emph{canonical augmentation} $\epsilon_\ruling\in\aug(v;\field)$ and \emph{differential} $d_{\ruling}\in\MC(v;\field)$ as follows:
\begin{align*}
\epsilon_\ruling(v_{c,i})&\coloneqq
\begin{cases}
1 & 1\leq a=c<b=a+i=\ruling(a)\leq n;\\
1 & 1\leq a=b+i-n<c=b=\ruling(a)\leq n;\\
0 &\text{otherwise},
\end{cases}\\
d_\ruling(e_c)&\coloneqq \begin{cases}
(-1)^{\mu_v(c)}Z^{n(v,a,b-a)}e_b & a=c<b=\ruling(a);\\
(-1)^{\mu_v(c)}Z^{n(v,b,n+a-b)}e_a & a<c=b=\ruling(a).
\end{cases}
\end{align*}

We denote the orbit of $\epsilon_\ruling$ under the action of $B(v;\field)$ by
\[
\aug^\ruling(v;\field)\coloneqq B(v;\field)\cdot \epsilon_{\ruling}
\]
and denote its stabilizer subgroup by $\Stab^\ruling(v;\field)\subset B(v;\field)$.
\end{definition}

As before, each $\ruling\in \NR(v)$ defines a partition $[n]=U_\ruling\amalg L_\ruling$, together with a bijection $\ruling:U_\ruling\stackrel{\isomorphic}\longrightarrow L_\ruling$ such that $a<b=\ruling(a)\in L_\ruling$ for any $a\in U_\ruling$.

\begin{definition}\label{def:indices for an involution at a vertex}
For any $a\in[n]$, let 
\[
I_v(a)\coloneqq \{i>0 \mid \mu(a)-\mu(a+i)-n(v,a,i)=0,\text{i.e., }|v_{a,i}|=1\}
\]
which is finite due to the degree reason.

For any $a\in U_\ruling$, define
\begin{align*}
A_{\ruling}(a)\coloneqq \{b\in U \mid \ruling(b)<\ruling(a), \text{ and } b-a\in I_v(a)\}.
\end{align*}
By definition, we have $A_{\ruling}(b)\subsetneq A_{\ruling}(a)$ for all $b\in A_{\ruling}(a)$.
Now, define $A_v(\ruling)\in\NN$ by
\begin{align*}
A_v(\ruling)\coloneqq \sum_{a\in U}|A_{\ruling}(a)|+\sum_{a\in L}|I_v(a)|.
\end{align*}
\end{definition}

Then we have the similar result to Lemma \ref{lem:ruling stratification for trivial tangle}, which will be proved in Appendix~\ref{sec:Orbits of augmentation varieties for internal DGAs}.
\begin{lemma}\label{lem:stratification of aug var at a vertex}
Let $v$ be a vertex as above.

\begin{enumerate}
\item There are decompositions of $\aug(v;\field)$ and $\MC(v;\field)$ over the finite set $\NR(v)$
\begin{align*}
\aug(v;\field)&=\coprod_{\ruling\in \NR(v)}\aug^{\ruling}(v;\field)&
\MC(v;\field)&=\coprod_{\ruling\in \NR(v)}B(v;\field)\cdot d_{\ruling}
\end{align*}
In particular, $\aug(v;\field)=\emptyset$ if $\val(v)$ is odd.

\item For all $\ruling\in\NR(v)$, the $\Stab^\ruling(v;\field)$-principal bundle 
\[
\pi_{\ruling}:B(v;\field)\to B(v;\field)\cdot d_{\ruling}
\]
admits a natural algebraic section $\varphi_{\ruling}$, i.e. $\varphi_{\ruling}(d)\cdot d_{\ruling}=d$ for all $d\in B(v;\field)\cdot d_{\ruling}$. In other words, we have a trivialization of $\pi_{\ruling}$: 
\[
B(v;\field)\isomorphic  \Stab^\ruling(v;\field)\times B(v;\field)\cdot d_{\ruling}
\]
\item For all $\ruling\in\NR(v)$, we have:
\begin{equation}
B(v;\field)\cdot d_{\ruling}\isomorphic (\field^\times)^{\frac{\val(v)}{2}}\times\field^{A_v(\ruling)}.
\end{equation}
\end{enumerate}
\end{lemma}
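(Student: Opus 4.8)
The strategy is to prove a Barannikov-type normal form theorem for the Morse complexes at $v$, adapted to the spiral filtration on $C_v$ and to the twisted closedness relation $d^2+Z^2=0$, and then to extract all three statements from the explicit reduction algorithm, closely paralleling the treatment of the trivial tangle in \cite{Su2017} and in Lemma~\ref{lem:ruling stratification for trivial tangle}. Via the isomorphism $\Xi_v$ of Lemma~\ref{lem:aug and Morse complex for a vertex} it suffices to work on the $\MC$-side and to transport the decomposition, the section, and the dimension count back to $\aug(v;\field)$ afterwards; in particular acyclicity of an augmentation will correspond to the associated involution being fixed-point-free. I would begin by recording that $\MC(v;\field)$ is a \emph{finite-dimensional} affine variety: by Remark~\ref{rem:morphisms of Morse complexes at a vertex} a Morse complex $d$ is encoded by coefficients $c_{a,i}\in\field$ that must vanish unless $|v_{a,i}|=0$, hence by finitely many parameters because of the degree bound $N=N(\mu_v)$, cut out by the polynomial equations coming from $d^2=-Z^2$.

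The core step is the reduction algorithm. I would order the admissible positions $(a,i)$ with $|v_{a,i}|=0$ lexicographically, first by the number of sectors $i$ and then by $a$, so that the entry at position $(a,i)$ corresponds to the spiral term $Z^{n(v,a,i)}e_{a+i}$ appearing in $d(e_a)$ --- the direct analogue of the order $(1,2)\to(2,3)\to\cdots$ used in Remark~\ref{remark:indices for trivial}. Taking the smallest nonzero entry as a pivot, one clears the associated ``L-shaped'' region by upward row and rightward column operations realized by elements of $B(v;\field)$; here one must check that these operations are $\field[Z]$-superlinear and preserve the filtration $C_v\supset F^1C_v\supset\cdots\supset F^\ell C_v\supset Z\cdot F^{\ell+1}C_v\supset\cdots\supset Z^2\cdot F^1C_v$, which is precisely where the wrap-around of the filtration and the bookkeeping of the $Z$-powers $n(v,a,i)$ enter. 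Iterating produces a normal form $d_\ruling$; because $Z^2\ne 0$, the relation $d^2=-Z^2$ forces every $e_a$ to be matched by a pivot, so the surviving pivots define a \emph{fixed-point-free} involution $\ruling$ of $[n]$ with $|v_{a,\ruling(a)-a}|=0$, i.e. $\ruling\in\NR(v)$ (using $n(v,a,n)=2$ to see that $d_\ruling^2=-Z^2$). In particular $\NR(v)=\emptyset$, hence $\MC(v;\field)=\aug(v;\field)=\emptyset$, when $\val(v)$ is odd. The usual uniqueness part of Barannikov's argument shows distinct $\ruling$ yield non-conjugate $d_\ruling$, giving the disjoint decompositions in part~(1).

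For part~(2), applying the reduction algorithm to an arbitrary $d\in B(v;\field)\cdot d_\ruling$ outputs a definite element $g=\varphi_\ruling(d)\in B(v;\field)$ with $\varphi_\ruling(d)\cdot d_\ruling=d$, each elementary operation having coefficients polynomial in the entries of $d$; thus $\varphi_\ruling$ is an algebraic section of $\pi_\ruling$ and $g\mapsto\bigl(\varphi_\ruling(g\cdot d_\ruling)^{-1}g,\ g\cdot d_\ruling\bigr)$ trivializes the $\Stab^\ruling(v;\field)$-bundle. For part~(3), I would count the free parameters surviving in the reduced form: the $\val(v)/2$ pivot entries (one for each pair of $\ruling$) may be rescaled by arbitrary nonzero scalars, contributing a factor $(\field^\times)^{\val(v)/2}$, while the non-pivot entries that lie inside the swept-out L-shapes and carry the same degree as their pivot remain as free parameters in $\field$; matching these L-shape positions with the sets $A_\ruling(a)$ and $I_v(a)$ of Definition~\ref{def:indices for an involution at a vertex} shows there are exactly $A_v(\ruling)$ of them. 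Every remaining entry is then determined by the pivot/cancellation pattern together with $d^2=-Z^2$, so $B(v;\field)\cdot d_\ruling\isomorphic(\field^\times)^{\val(v)/2}\times\field^{A_v(\ruling)}$.

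The main obstacle is the circular bookkeeping in the reduction step: unlike the trivial tangle, where the filtration is an honest flag and $d^2=0$, here the filtration spirals and returns with an extra $Z^2$ while $d$ satisfies $d^2=-Z^2$, so one must verify carefully that each row/column operation used to clear an L-shape genuinely lands in $B(v;\field)$, and that the cancelled positions are counted precisely by the numbers $n(v,a,i)$ through the sets $I_v(a)$. A possible shortcut, which I would keep in reserve, is to ``unroll'' the spiral: truncating after $N$ sectors exhibits a neighborhood of $v$ as a slice of a trivial tangle on roughly $\val(v)\cdot N/2$ strands, so that one could import Lemma~\ref{lem:ruling stratification for trivial tangle} directly once the compatibility of the two filtrations is checked; I would nonetheless carry out the direct reduction, as it also yields the explicit section $\varphi_\ruling$ needed for part~(2).
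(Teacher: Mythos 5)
Your overall strategy is sound and lands on the same three deliverables as the paper (a canonical representative $d_\ruling$ in each orbit, an algebraic section built from the reduction, and a parameter count matching $(\field^\times)^{\val(v)/2}\times\field^{A_v(\ruling)}$), but the paper's execution is genuinely different from your matrix-reduction plan. Instead of running a Barannikov/Gaussian elimination on the entries of $d$, the paper (Proposition/Definition~\ref{def/prop:structure of Morse complexes at a vertex} and Lemma~\ref{lem:properties of orbits at a vertex}) works intrinsically with \emph{$(i,t)$-admissible elements} $x=Z^t(c_0e_i+\sum_k c_kZ^{n(v_{i,k})}e_{i+k})$, $c_0\neq 0$, and defines the conjugation-invariant $\ruling_{[d]}(i)\coloneqq\max\{\ruling_d(x)\mid x\ i\text{-admissible}\}$, where $\ruling_d(x)=k$ means $dx$ is $(i+k,n(v_{i,k}))$-admissible. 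This single invariant simultaneously produces the involution, proves it is a complete invariant of the orbit (hence the disjointness in part~(1)), and sidesteps the spiral bookkeeping you correctly identify as the main obstacle: filtration-preservation is built into the definition of admissibility rather than checked operation by operation. For part~(2) the paper does not extract the section from a reduction algorithm but proves directly that for each $i\in U$ there is a \emph{unique} $i$-admissible $e_i'=e_i+\sum_{j\in A_\ruling(i)}a_{j-i}Z^{n(v_{i,j-i})}e_j$ with $de_i'$ admissible at $\ruling(i)$, the uniqueness coming from an invertible lower-triangular linear system in the $a_{j-i}$; this is what makes $\varphi_\ruling$ algebraic and what pins the affine parameters to exactly $A_v(\ruling)$ in part~(3). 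Your approach, if completed, would buy a more algorithmic and visibly Barannikov-flavoured proof; the paper's buys a cleaner treatment of the wrap-around at the cost of more abstract definitions.

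The one step you leave genuinely unverified is the interaction between your chosen pivot order and the pivots forced by $d^2=-Z^2$. Choosing a pivot at position $(a,k)$ forces, via $\langle d^2e_a,Z^2e_a\rangle=-1$, a partner at $(a+k,\val(v)-k)$, and you must show this forced partner is itself \emph{maximal} in its row, i.e.\ that no $(a+k)$-admissible element drops further than $\val(v)-k$ sectors; otherwise your algorithm could select a pivot for $e_{a+k}$ inconsistent with the one already forced, and the output would not be an involution. This is exactly the content of part~(3) of Proposition/Definition~\ref{def/prop:structure of Morse complexes at a vertex} ($\ruling_{[d]}(i)=k\Rightarrow\ruling_{[d]}(i+k)=\val(v)-k$), whose proof requires a nontrivial correction argument (modifying $e_i$ by a multiple of a putative longer-dropping element to derive a contradiction with the maximality of $k$). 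Your sketch asserts the conclusion but supplies no argument for it; similarly, "the usual uniqueness part of Barannikov's argument" needs to be replaced by an actual conjugation invariant, which is what $\ruling_{[d]}$ provides. These are fixable, but they are the two places where the vertex case really differs from Lemma~\ref{lem:ruling stratification for trivial tangle}.
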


\section{Rulings}\label{section:rulings}
In this section, we prove that augmentation numbers for (bordered) Legendrian graphs are computed by certain associated ruling polynomials.

\subsection{Resolution of vertices}\label{sec:resolution of vertex}

Let us recall from \cite{ABK2019} the construction of ruling invariants for (bordered) Legendrian graphs. 
The key is to resolve a vertex $v$ with respect to an involution $\ruling\in\NR(v)$. At first glance, this operation may look weird. However, there is a geometric intuition from the augmentation side. By studying the structure of the augmentation variety for an elementary bordered Legendrian graph with a single vertex, the operation of resolving the vertex arises naturally. See Remark \ref{rem:ruling decomposition}. 

Let $(\cT,\bfmu)\in\BLT_\front^\mu$ be a bordered Legendrian graph with Maslov potential $\bfmu$ and $v$ be a vertex of type $(\ell,r)$ with $\ell+r=n$. 
We identify the set $H_v$ of half-edges at $v$ with $[n]$ as defined in Definition~\ref{definition:types of vertices}.

Let $\ruling\in\NR(v)$ be a fixed-point-free involution (or a \emph{perfect matching}) on $[n]$.
We first recall the construction of the \emph{marked} bordered Legendrian graph $(\cT_{v,\ruling},M_{v,\ruling})$, which is the pair of the following:
\begin{itemize}
\item the bordered Legendrian link $\cT_{v,\ruling}$ of type $(\ell,r)$, and 
\item the set $M_{v,\ruling}$ of \emph{marking}, which is a subset of crossings in $\cT_{v,\ruling}$.
\end{itemize}

We first split $[n]$ into the following three subsets
\begin{align*}
L_v(\ruling)&=\{a\in [n] \mid a<\ruling(a)\le \ell \}=\{a^L_1<\dots<a^L_s\};\\
B_v(\ruling)&=\{a\in [n] \mid a\le\ell<\ruling(a) \}=\{a^B_1<\dots<a^B_t\};\\
R_v(\ruling)&=\{a\in [n] \mid \ell<a<\ruling(a)\}=\{a^R_1<\dots<a^R_u\}.
\end{align*}
Then $\ell=2s+t$ and $r=2u+t$.

\begin{example}\label{example:resolution of a vertex}
For example, let us assume that $v$ of type $(7,5)$ and $\ruling$ is given as
\begin{align*}
\ruling=\{(1,10), (2,5), (3,7), (4,8), (6,11), (9,12)\},
\end{align*}
where $(a,b)\in\ruling$ means that $a<b=\ruling(a)$.
Then three sets above are defined as
\begin{align*}
L_v(\ruling)&=\{2,3\},&
B_v(\ruling)&=\{1,4,6\},&
R_v(\ruling)&=\{9\}.
\end{align*}
\end{example}

For each $a^L_k\in L_v(\ruling)$, we assign a bordered Legendrian graph $\cT_{v,\ruling}(a^L_k)$ of type $(\ell-2k,\ell-2k-2)$ for some $k$ with one right cusp and several markings as depicted in Figure~\ref{figure:generalized cusps}.
By concatenating bordered Legendrian graphs $\cT_{v,\ruling}(a^L_1),\dots,\cT_{v,\ruling}(a^L_s)$, we obtain a bordered Legendrian graph $(\cT_{v,\ruling}^L,M_{v,\ruling}^L)$ of type $(\ell,t)$ as depicted in Figure~\ref{figure:concatenation of generalized left cusps}.
By the same procedure, we assign a bordered Legendrian graph $(\cT_{v,\ruling}^R,M_{v,\ruling}^R)$ of type $(t,r)$.

\begin{figure}[ht]
\subfigure[A marked cusp\label{figure:generalized cusps}]{\makebox[0.3\textwidth]{$
\cT_{v,\ruling}(a^L_k)=
\begin{tikzpicture}[baseline=-.5ex,scale=1]
\draw[red] (-1,-1) -- (-1,1);
\draw[thick] (-1,0.9) to[out=0,in=180] (0,0.6);
\draw[thick] (-1,0.3) to[out=0,in=180] (0,0.3);
\draw[thick] (-1,0) to[out=0,in=180] (0,0);
\draw[thick] (-1,-0.6) to[out=0,in=180] (0,-0.3);
\draw[thick] (-1,-0.9) to[out=0,in=180] (0,-0.6);
\draw[thick] (-1,0.6) node[left] {$a^L_k$} to[out=0,in=180] (-0.5,-0.3) -- (-1,-0.3) node[left] {$\ruling(a^L_k)$};
\draw[densely dotted,fill=black,opacity=0.5] (-0.77,0.3) circle (0.1) (-0.73,0) circle (0.1);
\draw[red] (0,-1) -- (0,1);
\end{tikzpicture}
$}}
\subfigure[Concatenated marked cusps\label{figure:concatenation of generalized left cusps}]{\makebox[0.6\textwidth]{$
\cT_{v,\ruling}^L=
\begin{tikzpicture}[baseline=-.5ex,scale=1]
\begin{scope}
\draw[red] (-1,-1) -- (-1,1);
\draw[thick] (-1,0.9) to[out=0,in=180] (0,0.6);
\draw[thick] (-1,0.3) to[out=0,in=180] (0,0.3);
\draw[thick] (-1,0) to[out=0,in=180] (0,0);
\draw[thick] (-1,-0.6) to[out=0,in=180] (0,-0.3);
\draw[thick] (-1,-0.9) to[out=0,in=180] (0,-0.6);
\draw[thick] (-1,0.6) to[out=0,in=180] (-0.5,-0.3) -- (-1,-0.3);
\draw[densely dotted,fill=black,opacity=0.5] (-0.77,0.3) circle (0.1) (-0.73,0) circle (0.1);
\end{scope}
\begin{scope}[xshift=1cm]
\draw[thick] (-1,0.6) to[out=0,in=180] (0,0.3);
\draw[thick] (-1,0) to[out=0,in=180] (0,0);
\draw[thick] (-1,-0.3) to[out=0,in=180] (0,-0.3);
\draw[thick] (-1,0.3) to[out=0,in=180] (-0.5,-0.6) -- (-1,-0.6);
\draw[densely dotted,fill=black,opacity=0.5] (-0.77,0) circle (0.1) (-0.73,-0.3) circle (0.1);
\draw[red] (0,-1) -- (0,1);
\end{scope}
\end{tikzpicture}\quad
\quad
\cT_{v,\ruling}^R=
\begin{tikzpicture}[baseline=-.5ex,scale=1]
\draw[thick] (-1,0.3) to[out=0,in=180] (0,0.6);
\draw[thick] (-1,0) to[out=0,in=180] (0,0);
\draw[thick] (-1,-0.3) to[out=0,in=180] (0,-0.3);
\draw[thick] (0,0.3) to[out=180,in=0] (-0.5,-0.6) -- (0,-0.6);
\draw[densely dotted,fill=black,opacity=0.5] (-0.23,0) circle (0.1) (-0.27,-0.3) circle (0.1);
\draw[red] (0,-1) -- (0,1);
\draw[red] (-1,-1) -- (-1,1);
\end{tikzpicture}
$}}
\caption{Two marked bordered Legendrian links $(\cT_{v,\ruling}^L,M_{v,\ruling}^L)$ and $(\cT_{v,\ruling}^R,M_{v,\ruling}^R)$}
\end{figure}

From the pairing $B_v(\ruling)$, we assign a braid $\beta=\beta(\ruling)$ with $t$-stands having minimal crossings. 
We regard $\beta$ as a bordered Legendrian graph of type $(t,t)$ 
consisting of $t$ Legendrian arcs without self-intersection, cusps, and markings. 
Then the minimality means that any pair of strands make at most 1 crossing in its front projection.

Let $\beta^c$ be a \emph{right complement} of $\beta$ satisfying $\beta\cdot \beta^c=\Delta_b$, where $\Delta_b$ is the $b$-braid of {half-twist}, and $\overline{\beta^c}$ is the mirror of $\beta^c$. 
We assume that all crossings in $\beta^c$ and $\bar\beta^c$ are marked.
Then the middle part of the resolution $(\cT_\ruling^B, M_\ruling^B)$ will be defined to be the concatenation of three marked bordered Legendrian graphs $\beta, \beta^c$ and $\bar\beta^c$.
\[
(\cT_{v,\ruling}^B, M_{v,\ruling}^B) \coloneqq \beta\cdot\beta^c\cdot\bar\beta^c
\]
See Figure~\ref{figure:complement and mirror}.

\begin{figure}[ht]
\subfigure[A right complement $\beta^c$ and its mirror $\bar\beta^c$\label{figure:complement and mirror}]{
$
\begin{aligned}
\beta&=
\begin{tikzpicture}[baseline=-.5ex]
\draw[thick] (-1,0.3) to[out=0,in=180] (0,0);
\draw[thick] (-1,0) to[out=0,in=180] (0,0.3);
\draw[thick] (-1,-0.3) to[out=0,in=180] (0,-0.3);
\draw[red] (-1,-0.5)--(-1,0.5);
\draw[red] (0,-0.5)--(0,0.5);
\end{tikzpicture},&
\Delta&=
\begin{tikzpicture}[baseline=-.5ex]
\draw[thick] (-1,0.3) to[out=0,in=180] (0,-0.3);
\draw[thick] (-1,0) to[out=0,in=180] (-0.5,0.3) to[out=0,in=180] (0,0);
\draw[thick] (-1,-0.3) to[out=0,in=180] (0,0.3);
\draw[red] (-1,-0.5)--(-1,0.5);
\draw[red] (0,-0.5)--(0,0.5);
\end{tikzpicture}=
\begin{tikzpicture}[baseline=-.5ex]
\draw[thick] (-1,0.3) to[out=0,in=180] (0,0);
\draw[thick] (-1,0) to[out=0,in=180] (0,0.3);
\draw[thick] (-1,-0.3) to[out=0,in=180] (0,-0.3);
\draw[red] (-1,-0.5)--(-1,0.5);
\draw[red] (0,-0.5)--(0,0.5);
\end{tikzpicture}
\cdot
\begin{tikzpicture}[baseline=-.5ex]
\draw[thick] (-1,0.3) to[out=0,in=180] (0,0);
\draw[thick] (-1,0) to[out=0,in=180] (0,-0.3);
\draw[thick] (-1,-0.3) to[out=0,in=180] (0,0.3);
\draw[red] (-1,-0.5)--(-1,0.5);
\draw[red] (0,-0.5)--(0,0.5);
\end{tikzpicture},&
\beta^c&=
\begin{tikzpicture}[baseline=-.5ex]
\draw[thick] (-1,0.3) to[out=0,in=180] (0,0);
\draw[thick] (-1,0) to[out=0,in=180] (0,-0.3);
\draw[thick] (-1,-0.3) to[out=0,in=180] (0,0.3);
\draw[densely dotted, fill, opacity=0.5] (-0.6, -0.1) circle (0.1) (-0.4, 0.1) circle (0.1);
\draw[red] (-1,-0.5)--(-1,0.5);
\draw[red] (0,-0.5)--(0,0.5);
\end{tikzpicture},&
\bar\beta^c&=
\begin{tikzpicture}[baseline=-.5ex]
\draw[thick] (-1,0.3) to[out=0,in=180] (0,-0.3);
\draw[thick] (-1,0) to[out=0,in=180] (0,0.3);
\draw[thick] (-1,-0.3) to[out=0,in=180] (0,0);
\draw[densely dotted, fill, opacity=0.5] (-0.6, 0.1) circle (0.1) (-0.4, -0.1) circle (0.1);
\draw[red] (-1,-0.5)--(-1,0.5);
\draw[red] (0,-0.5)--(0,0.5);
\end{tikzpicture},&
\end{aligned}
$}

\subfigure[The middle part of the $\ruling$-resolution]{$
(\cT_{v,\ruling}^B,M_{v,\ruling}^B)=\beta\cdot\beta^c\cdot\bar\beta^c=
\begin{tikzpicture}[baseline=-.5ex]
\draw[thick] (-1,0.3) to[out=0,in=180] (0,0);
\draw[thick] (-1,0) to[out=0,in=180] (0,0.3);
\draw[thick] (-1,-0.3) to[out=0,in=180] (0,-0.3);
\draw[red] (-1,-0.5)--(-1,0.5);
\draw[red] (0,-0.5)--(0,0.5);
\end{tikzpicture}\cdot
\begin{tikzpicture}[baseline=-.5ex]
\draw[thick] (-1,0.3) to[out=0,in=180] (0,0);
\draw[thick] (-1,0) to[out=0,in=180] (0,-0.3);
\draw[thick] (-1,-0.3) to[out=0,in=180] (0,0.3);
\draw[densely dotted, fill, opacity=0.5] (-0.6, -0.1) circle (0.1) (-0.4, 0.1) circle (0.1);
\draw[red] (-1,-0.5)--(-1,0.5);
\draw[red] (0,-0.5)--(0,0.5);
\end{tikzpicture}\cdot
\begin{tikzpicture}[baseline=-.5ex]
\draw[thick] (-1,0.3) to[out=0,in=180] (0,-0.3);
\draw[thick] (-1,0) to[out=0,in=180] (0,0.3);
\draw[thick] (-1,-0.3) to[out=0,in=180] (0,0);
\draw[densely dotted, fill, opacity=0.5] (-0.6, 0.1) circle (0.1) (-0.4, -0.1) circle (0.1);
\draw[red] (-1,-0.5)--(-1,0.5);
\draw[red] (0,-0.5)--(0,0.5);
\end{tikzpicture}
=
\begin{tikzpicture}[baseline=-.5ex]
\draw[thick] (-1,0.3) to[out=0,in=180] (-0.5,0);
\draw[thick] (-1,0) to[out=0,in=180] (-0.5,0.3);
\draw[thick] (-1,-0.3) to[out=0,in=180] (-0.5,-0.3);
\draw[thick] (-0.5,0.3) to[out=0,in=180] (0,0);
\draw[thick] (-0.5,0) to[out=0,in=180] (0,-0.3);
\draw[thick] (-0.5,-0.3) to[out=0,in=180] (0,0.3);
\draw[thick] (0,0.3) to[out=0,in=180] (0.5,-0.3);
\draw[thick] (0,0) to[out=0,in=180] (0.5,0.3);
\draw[thick] (0,-0.3) to[out=0,in=180] (0.5,0);
\draw[densely dotted, fill, opacity=0.5] (-0.2, 0.1) circle (0.1) (-0.3, -0.1) circle (0.1);
\draw[densely dotted, fill, opacity=0.5] (0.3, -0.1) circle (0.1) (0.2, 0.1) circle (0.1);
\draw[red] (-1,-0.5)--(-1,0.5);
\draw[red] (0.5,-0.5)--(0.5,0.5);
\end{tikzpicture}
$}
\end{figure}

Finally, the resolution of the vertex $v$ with respect to an involution $\ruling\in \NR(v)$ is a bordered Legendrian graph with markings $(\cT_{v,\ruling},M_{v,\ruling})$ is defined by the concatenation
\[
(\cT_{v,\ruling},M_{v,\ruling}) \coloneqq (\cT_{v,\ruling}^L,M_{v,\ruling}^L)\cdot  (\cT_{v,\ruling}^B, M_{v,\ruling}^B)\cdot (\cT_{v,\ruling}^R,M_{v,\ruling}^R).
\]

\begin{example}[cont.]
The marked bordered Legendrian graph $(\ruling,M)$ is given by
\[
(\cT_{v,\ruling},M_{v,\ruling})=
(\cT_{v,\ruling}^L,M_{v,\ruling}^L)\cdot  (\cT_{v,\ruling}^B, M_{v,\ruling}^B)\cdot (\cT_{v,\ruling}^R,M_{v,\ruling}^R)=
\begin{tikzpicture}[baseline=-.5ex]
\begin{scope}
\draw[red] (-1,-1) -- (-1,1);
\draw[thick] (-1,0.9) node[left] {$1$} to[out=0,in=180] (0,0.6);
\draw[thick] (-1,0.3) to[out=0,in=180] (0,0.3);
\draw[thick] (-1,0) node[left] {$\vdots$} to[out=0,in=180] (0,0);
\draw[thick] (-1,-0.6) to[out=0,in=180] (0,-0.3);
\draw[thick] (-1,-0.9) node[left] {$7$} to[out=0,in=180] (0,-0.6);
\draw[thick] (-1,0.6) to[out=0,in=180] (-0.5,-0.3) -- (-1,-0.3);
\draw[densely dotted,fill=black,opacity=0.5] (-0.77,0.3) circle (0.1) (-0.73,0) circle (0.1);
\end{scope}
\begin{scope}[xshift=1cm]
\draw[thick] (-1,0.6) to[out=0,in=180] (0,0.3);
\draw[thick] (-1,0) to[out=0,in=180] (0,0);
\draw[thick] (-1,-0.3) to[out=0,in=180] (0,-0.3);
\draw[thick] (-1,0.3) to[out=0,in=180] (-0.5,-0.6) -- (-1,-0.6);
\draw[densely dotted,fill=black,opacity=0.5] (-0.77,0) circle (0.1) (-0.73,-0.3) circle (0.1);
\end{scope}
\begin{scope}[xshift=2cm]
\draw[thick] (-1,0.3) to[out=0,in=180] (-0.5,0);
\draw[thick] (-1,0) to[out=0,in=180] (-0.5,0.3);
\draw[thick] (-1,-0.3) to[out=0,in=180] (-0.5,-0.3);
\draw[thick] (-0.5,0.3) to[out=0,in=180] (0,0);
\draw[thick] (-0.5,0) to[out=0,in=180] (0,-0.3);
\draw[thick] (-0.5,-0.3) to[out=0,in=180] (0,0.3);
\draw[thick] (0,0.3) to[out=0,in=180] (0.5,-0.3);
\draw[thick] (0,0) to[out=0,in=180] (0.5,0.3);
\draw[thick] (0,-0.3) to[out=0,in=180] (0.5,0);
\draw[densely dotted, fill, opacity=0.5] (-0.2, 0.1) circle (0.1) (-0.3, -0.1) circle (0.1);
\draw[densely dotted, fill, opacity=0.5] (0.3, -0.1) circle (0.1) (0.2, 0.1) circle (0.1);
\end{scope}
\begin{scope}[xshift=3.5cm]
\draw[thick] (-1,0.3) to[out=0,in=180] (0,0.6) node[right] {$8$};
\draw[thick] (-1,0) to[out=0,in=180] (0,0) node[right] {$\vdots$};
\draw[thick] (-1,-0.3) to[out=0,in=180] (0,-0.3);
\draw[thick] (0,0.3) to[out=180,in=0] (-0.5,-0.6) -- (0,-0.6) node[right] {$12$};
\draw[densely dotted,fill=black,opacity=0.5] (-0.23,0) circle (0.1) (-0.27,-0.3) circle (0.1);
\draw[red] (0,-1) -- (0,1);
\end{scope}
\end{tikzpicture}
\]
\end{example}

\begin{example}\label{ex:resolution of (3,3)}
Let $\ell+r=6$, then there are 15 possible marked bordered Legendrian links for the vertex of type $(\ell, r)$. Especially when $(\ell,r)=(3,3)$ we have the following list of resolutions with markings without considering a Maslov potential:
\[
\vcenter{\hbox{
\begingroup%
  \makeatletter%
  \providecommand\color[2][]{%
    \errmessage{(Inkscape) Color is used for the text in Inkscape, but the package 'color.sty' is not loaded}%
    \renewcommand\color[2][]{}%
  }%
  \providecommand\transparent[1]{%
    \errmessage{(Inkscape) Transparency is used (non-zero) for the text in Inkscape, but the package 'transparent.sty' is not loaded}%
    \renewcommand\transparent[1]{}%
  }%
  \providecommand\rotatebox[2]{#2}%
  \ifx\svgwidth\undefined%
    \setlength{\unitlength}{288.428624bp}%
    \ifx\svgscale\undefined%
      \relax%
    \else%
      \setlength{\unitlength}{\unitlength * \real{\svgscale}}%
    \fi%
  \else%
    \setlength{\unitlength}{\svgwidth}%
  \fi%
  \global\let\svgwidth\undefined%
  \global\let\svgscale\undefined%
  \makeatother%
  \begin{picture}(1,0.40081215)%
    \put(0,0){\includegraphics[width=\unitlength,page=1]{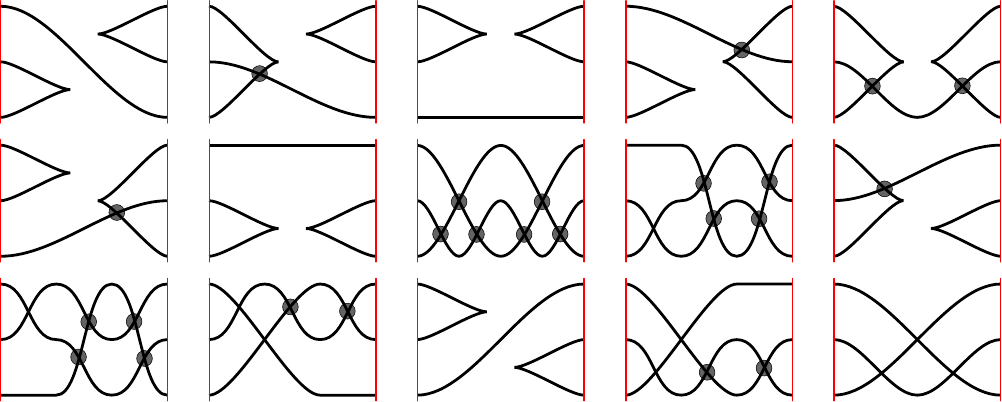}}%
  \end{picture}%
\endgroup%
}}
\]
\end{example}

\begin{definition}[$\ruling$-resolutions]\label{def:resolution at a vertex}
Let $(\cT,\bfmu)\in\BLT_\front^\mu$ and $v$ be a vertex of $T$ possibly with a set $M$ of markings.
For each $\ruling\in\NR(v)$, a $\ruling$-resolution $((\cT_\ruling,\mu_\ruling),M_\ruling)$ of $((\cT,\bfmu),M)$ at $v$ is defined to be the tangle replacement of a small neighborhood of $v$ with $(\cT_{v,\ruling},M_{v,\ruling})$.
That is, $M_\ruling=M\amalg M_{v,\ruling}$ and $\cT_\ruling=(T_\Left\to T_\ruling \leftarrow T_\Right)$, where
\[
T_\ruling\coloneqq (T\setminus U_v)\amalg \cT_{v,\ruling}.
\]
\end{definition}

Note that the Maslov potential $\bfmu$ on $\cT$ inherits to the result $\cT_\ruling$ of the tangle replacement because the construction of $\cT_{v,\ruling}$ obeys the condition of a Maslov potential.
Therefore the Maslov potential $\bfmu_\ruling$ on $\cT_\ruling$ is well-defined.

\begin{example}[A Legendrian graph with 6-valent vertex]\label{example:six valent}
Let us consider the Legendrian graph $\Lambda$ with a unique vertex $v$ of valency 6 which consists of three Legendrian unknots intersecting at one vertex $v$ as depicted in Figure~\ref{fig:six_valent_example}.

\begin{figure}[ht]
{\scriptsize\input{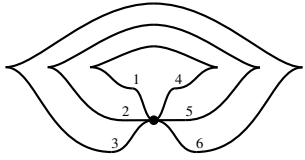}}
\caption{A Legendrian graph with six-valent vertex}
\label{fig:six_valent_example}
\end{figure}

The Maslov potential $\mu$ assigns 0 and 1 to the lower and the upper arcs, respectively.
Note that all the half edges $h_{v,i}$ near the vertex $v$ has the Maslov potential $\mu(h_{v,i})=0$.
As above, there are fifteen different involutions on $[6]$, see Example~\ref{ex:resolution of (3,3)}, but only six of them are possible due to Definition~\ref{definition:GNR and NR}, as listed below:
\begin{align*}
\ruling_v^1&=\{(1,6),(2,5),(3,4)\},&
\ruling_v^2&=\{(1,6),(2,4),(3,5)\},&
\ruling_v^3&=\{(1,5),(2,6),(3,4)\},\\
\ruling_v^4&=\{(1,4),(2,5),(3,6)\},&
\ruling_v^5&=\{(1,5),(2,4),(3,6)\},&
\ruling_v^6&=\{(1,4),(2,6),(3,5)\}.
\end{align*}

Then the corresponding resolutions are as depicted in Figure~\ref{figure:resolution_six_valent}.
\end{example}

\begin{figure}[ht]
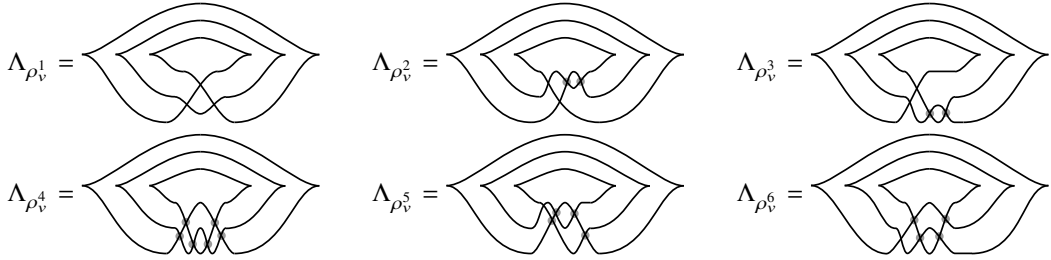

\begin{align*}
\Lambda_{\ruling_v^1}=\vcenter{\hbox{\scriptsize\def\svgscale{0.8}\input{six_res_front_1_input.tex}}}&\qquad
\Lambda_{\ruling_v^2}=\vcenter{\hbox{\scriptsize\def\svgscale{0.8}\input{six_res_front_2_input.tex}}}\qquad
\Lambda_{\ruling_v^3}=\vcenter{\hbox{\scriptsize\def\svgscale{0.8}\input{six_res_front_3_input.tex}}}\\
\Lambda_{\ruling_v^4}=\vcenter{\hbox{\scriptsize\def\svgscale{0.8}\input{six_res_front_4_input.tex}}}&\qquad
\Lambda_{\ruling_v^5}=\vcenter{\hbox{\scriptsize\def\svgscale{0.8}\input{six_res_front_5_input.tex}}}\qquad
\Lambda_{\ruling_v^6}=\vcenter{\hbox{\scriptsize\def\svgscale{0.8}\input{six_res_front_6_input.tex}}}
\end{align*}
\caption{Resolutions for $\Lambda$ in the front projection}
\label{figure:resolution_six_valent}
\end{figure}

\begin{definition}\label{def:full_resolution_marking}
Let $(\cT,\bfmu)\in\BLT_\front^\mu$ be a Legendrian graph with Maslov potential and possibly with marking $M$.
We define the set $\cP(\cT,\bfmu)$ of all resolutions of $\cT$ as
\begin{align*}
\cP(\cT,\bfmu) \coloneqq \prod_{v\in V} \NR(v)
= \{ \Phi=(\ruling_v) \mid \ruling_v\in \NR(v)\}.
\end{align*}

For each $\Phi=(\ruling_{v_i})\in\cP(\cT,\bfmu)$ and $M\subset C(T)$, we define the \emph{full resolution} of $((\cT,\bfmu),M)$ with respect to $\Phi$ by the pair $((\cT_\Phi,\bfmu_\Phi),M_\Phi)$ of
\[
((\cT_{\Phi},\mu_\Phi),M_\Phi) \coloneqq (\cdots ((((\cT,\mu),M)_{\ruling_{v_1}})_{\ruling_{v_2}})\cdots)_{\ruling_{v_k}}
\]
where $V=\{v_1,\dots, v_k\}$. 
\end{definition}

\subsection{Rulings for bordered Legendrian graphs}

Let $(\cT,\bfmu)\in\BLT_\front^\mu$ be a bordered Legendrian graph.
For any subset $N\subset C(T)$ of crossings, the \emph{$0$-resolution} of $\cT$ along $N$ is the bordered Legendrian graph $\cT_N=(T_\Left \to T_N\leftarrow T_\Right)$, where $T_N$ is obtained by replacing each crossing in $N$ with the $0$-resolution as follows:
\[
\crossing \mapsto \crossinghorizontal.
\]
Notice that when the crossings in $N$ are of degree $0$, then the Maslov potential $\bfmu$ induces the Maslov potential $\bfmu_N$ on $\cT_N$.

\begin{definition}[Graded normal rulings of bordered Legendrian links with markings]
Let 
\[
((\cT,\bfmu),M)=\left(((T_\Left,\mu_\Left),\emptyset)\stackrel{i_\Left}\longrightarrow ((T,\mu),M)\stackrel{i_\Right}\longleftarrow ((T_\Right,\mu_\Right),\emptyset)\right)
\] 
be a bordered Legendrian \emph{link} with the set $M$ of markings.
For $\ruling_\Left\in \NR(T_\Left,{\mu_\Left})$ and $\ruling_\Right\in\NR(T_\Right,{\mu_\Right})$, a \emph{($\ZZ$-graded) normal ruling} $\ruling$ of $((\cT,\bfmu),M)$ with respect to a boundary condition $(\ruling_\Left,\ruling_\Right)$ is a pair $(N,S_N)$ consisting of 
\begin{itemize}
\item a subset $N\subset C(T)\setminus M$ of crossings except \emph{markings}, and 
\item a \emph{decomposition (or ruling surface)} $S_N$ of the $0$-resolution $\cT_N$
\end{itemize}
satisfying the following:
\begin{enumerate}
\item Each $c\in N$ is of degree $0$.
\item The decomposition $S_N$ of the $0$-resolution $\cT_N$ consists of \emph{eyes, left half-eyes, right half-eyes,} and \emph{parallels} as follows:
\begin{align*}
\vcenter{\hbox{\includegraphics{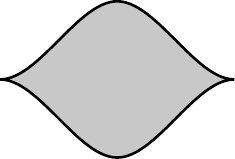}}}\qquad
\vcenter{\hbox{\includegraphics{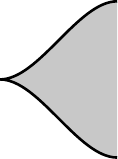}}}\qquad
\vcenter{\hbox{\includegraphics{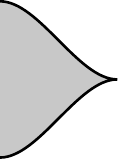}}}\qquad
\vcenter{\hbox{\includegraphics{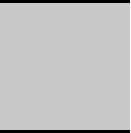}}}
\end{align*}
\item At each $c\in N$, the decomposition $S_N$ satisfies a \emph{non-interlacing} condition, i.e.,
the only following decompositions are allowed near $c\in N$:
\begin{align*}
\begin{tikzpicture}[baseline=-.5ex]
\begin{scope}
\fill[red,opacity=0.3] (-1,1)--(1,1) -- (1,0.5) -- (0,0) -- (-1,0.5)--cycle;
\fill[blue,opacity=0.3] (-1,-1)--(1,-1) -- (1,-0.5) -- (0,0) -- (-1,-0.5)--cycle;
\draw[thick,red] (-1,1)--(1,1);
\draw[thick,red] (-1,0.5)--(0,0)--(1,0.5);
\draw[thick,blue] (-1,-0.5)--(0,0) -- (1,-0.5);
\draw[thick,blue] (-1,-1)--(1,-1);
\end{scope}
\begin{scope}[xshift=3cm]
\fill[red,opacity=0.3] (-1,-1)--(1,-1) -- (1,1) -- (0,0.5) -- (-1,1)--cycle;
\fill[blue,opacity=0.3] (-1,-0.5)--(1,-0.5) -- (1,0) -- (0,0.5) -- (-1,0)--cycle;
\draw[thick,red] (-1,-1)--(1,-1);
\draw[thick,red] (-1,1)--(0,0.5)--(1,1);
\draw[thick,blue] (-1,0)--(0,0.5) -- (1,0);
\draw[thick,blue] (-1,-0.5)--(1,-0.5);
\end{scope}
\begin{scope}[xshift=6cm]
\fill[red,opacity=0.3] (-1,1)--(1,1) -- (1,-1) -- (0,-0.5) -- (-1,-1)--cycle;
\fill[blue,opacity=0.3] (-1,0.5)--(1,0.5) -- (1,0) -- (0,-0.5) -- (-1,0)--cycle;
\draw[thick,red] (-1,1)--(1,1);
\draw[thick,red] (-1,-1)--(0,-0.5)--(1,-1);
\draw[thick,blue] (-1,0)--(0,-0.5) -- (1,0);
\draw[thick,blue] (-1,0.5)--(1,0.5);
\end{scope}
\end{tikzpicture}
\end{align*}

\item The induced involutions $i^*_\Left(S_N)$ and $i^*_\Right(S_N)$ at left and right borders from $S_N$ will coincide with $\ruling_\Left$ and $\ruling_\Right$, respectively.
\begin{align*}
i^*_\Left(S_N)&=\ruling_\Left, &
i^*_\Right(S_N)&=\ruling_\Right.
\end{align*}
\end{enumerate}

We denote the set of normal ruling of $((\cT,\bfmu),M)$ with respect to $(\ruling_\Left,\ruling_\Right)$ by $\bfR((\cT,\bfmu),M;\ruling_\Left,\ruling_\Right)$.
\end{definition}

\begin{definition}[Graded normal rulings of bordered Legendrian graphs]\label{def:normal ruling LG}
Let $(\cT,\bfmu)\in\BLT_\front^\mu$ be a bordered Legendrian graph and $\cP(\cT,\bfmu)$ be the set of full resolutions.
For $\ruling_\Left\in\NR(T_\Left,{\mu_\Left})$ and $\ruling_\Right\in\NR(T_\Right,{\mu_\Right})$, we define the set of $\grading$-graded normal ruling of $((\cT,\bfmu),\emptyset)$ with a boundary condition $(\ruling_\Left,\ruling_\Right)$ by
\begin{align*}
\bfR(\cT,\bfmu;\ruling_\Left,\ruling_\Right) \coloneqq \coprod_{\Phi\in \cP(\cT,\bfmu)}\bfR((\cT_\Phi,\bfmu_\Phi),M_\Phi;\ruling_\Left,\ruling_\Right),
\end{align*}
where $((\cT_\Phi,\bfmu_\Phi),M_\Phi)=((\cT,\bfmu),\emptyset)_\Phi$ is the result of the full resolution defined in Definition~\ref{def:full_resolution_marking}.
\end{definition}

\begin{proposition}[Gluing property of normal rulings]\label{prop:sheaf of rulings}
Let $(\cT^1,{\bfmu^1})$ and $(\cT^2,{\bfmu^2})$ be bordered Legendrian graphs of type$\left(n^i_\Left,n^i_\Right\right)$ and $i=1,2$.
Suppose that $n\coloneqq n^1_\Right=n^2_\Left$ and $\mu \coloneqq \mu^1_\Right=\mu^2_\Left$ on $[n]$, then we have the following pull-back diagram: for $(\cT,\bfmu)=(\cT^1,\bfmu^1)\cdot (\cT^2,\bfmu^2)$,
\[
\begin{tikzcd}[column sep=3pc, row sep=2pc]
\bfR \left(\cT,\bfmu;\ruling^1_{\Left},\ruling^2_{\Right}\right) \arrow[r,dashed] \arrow[d,dashed] & \bfR \left(\cT^2,\bfmu^2;\ruling^2_{\Left},\ruling^2_{\Right}\right) \ar[d,"i_\Left^*"] \\
\bfR \left(\cT^1,\bfmu^1;\ruling^1_{\Left},\ruling^1_{\Right}\right) \ar[r,"i_\Right^*"] & \NR(T_n,{\mu})
\end{tikzcd}
\]
\end{proposition}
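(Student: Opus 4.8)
\textbf{Proof strategy for Proposition~\ref{prop:sheaf of rulings}.}

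The plan is to exhibit the square as a pullback by constructing mutually inverse restriction and gluing maps, after first reducing the statement for graphs to the statement for bordered Legendrian \emph{links with markings}. The first step is bookkeeping: by the concatenation construction in Theorem~\ref{theorem:concatenation} (and the Remark that the concatenation points of a front are neither vertices nor crossings), the front $T$ of $\cT=\cT^1\cdot\cT^2$ satisfies $V(T)=V(T^1)\amalg V(T^2)$ and $C(T)=C(T^1)\amalg C(T^2)$ canonically, and a small neighborhood of any $v\in V(T)$ lies entirely in $\cT^1$ or in $\cT^2$. Hence the vertex resolution $\cT_{v,\ruling}$ and its markings $M_{v,\ruling}$ stay on one side, so $\cP(\cT,\bfmu)=\cP(\cT^1,\bfmu^1)\times\cP(\cT^2,\bfmu^2)$ and, for $\Phi=(\Phi_1,\Phi_2)$, the full resolution decomposes as $((\cT_\Phi,\bfmu_\Phi),M_\Phi)=((\cT^1_{\Phi_1},\bfmu^1_{\Phi_1}),M^1_{\Phi_1})\cdot((\cT^2_{\Phi_2},\bfmu^2_{\Phi_2}),M^2_{\Phi_2})$, with the Maslov potentials matching along $T_n$ by hypothesis $\mu^1_\Right=\mu^2_\Left=\mu$. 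Since by Definition~\ref{def:normal ruling LG} one has $\bfR(\cT,\bfmu;\ruling_\Left,\ruling_\Right)=\coprod_{\Phi}\bfR((\cT_\Phi,\bfmu_\Phi),M_\Phi;\ruling_\Left,\ruling_\Right)$ and the index set is a product, it suffices to prove: for a concatenation $((\cT,\bfmu),M)=((\cT^1,\bfmu^1),M^1)\cdot((\cT^2,\bfmu^2),M^2)$ of bordered Legendrian links with markings,
\[
\bfR((\cT,\bfmu),M;\ruling_\Left,\ruling_\Right)\;\isomorphic\;\coprod_{\ruling\in\NR(T_n,\mu)}\bfR((\cT^1,\bfmu^1),M^1;\ruling_\Left,\ruling)\times\bfR((\cT^2,\bfmu^2),M^2;\ruling,\ruling_\Right).
\]

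Given a normal ruling $(N,S_N)$ of the link concatenation, the subset $N\subset C(T)\setminus M$ splits as $N=N^1\amalg N^2$ with $N^i\subset C(T^i)\setminus M^i$, and the $0$-resolution satisfies $\cT_N=\cT^1_{N^1}\cdot\cT^2_{N^2}$. All the defining conditions for a decomposition --- being built from eyes, left/right half-eyes, and parallels; the non-interlacing condition at each $c\in N$; and the induced involution at a border --- are \emph{local}, and the seam $T_n$ contains no crossing of $N$. Therefore $S_N$ restricts to decompositions $S^1$ of $\cT^1_{N^1}$ and $S^2$ of $\cT^2_{N^2}$, each still made only of the allowed pieces (a full eye straddling the seam becomes a right half-eye of $\cT^1$ and a left half-eye of $\cT^2$, a parallel restricts to parallels, etc.), with $i^*_\Right(S^1)=i^*_\Left(S^2)=:\ruling\in\NR(T_n,\mu)$ and outer boundary conditions $\ruling_\Left,\ruling_\Right$. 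This gives the restriction map into the fiber product. Conversely, given $\ruling$ and $(N^i,S^i)$ on the two sides with matching middle involution, set $N:=N^1\amalg N^2$ and glue $S^1,S^2$ along $T_n$: since the strand pairings agree there, the pieces concatenate consistently (a right half-eye meeting a left half-eye on the same strand pair closes up into a full eye, half-eyes meeting parallels extend, etc.), producing a decomposition $S_N$ of $\cT_N$; non-interlacing is automatic because every $c\in N$ lies strictly inside $\cT^1$ or $\cT^2$, and the outer boundary conditions are unchanged. The two constructions are visibly inverse to each other, because a decomposition of $\cT_N$ is determined by its restrictions to the two halves together with the common involution along $T_n$; this proves the displayed bijection and hence that the square is cartesian.

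The main obstacle is the one routine-looking but genuinely fiddly point in the middle of the argument: making precise how the four local piece-types (eye, left half-eye, right half-eye, parallel) transform when a vertical slice of the front is cut along, or glued at, the concatenation line $T_n$. The clean way to handle this --- and the way I would actually write it --- is to encode a decomposition not by its piece decomposition but by the equivalent data of the \emph{companion pairing} it induces on the strands over each generic vertical slice (a fixed-point-free involution that changes by conjugation with a transposition exactly across switches in $N$), together with the switch/no-switch labeling at crossings; the non-interlacing condition becomes a local constraint at each switch, and both restriction and gluing reduce to the trivial statement that this slicewise data is glued along $T_n$ precisely when the involutions there coincide, i.e.\ when the two rulings map to the same point of $\NR(T_n,\mu)$. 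Once phrased this way, everything else is immediate from Theorem~\ref{theorem:concatenation} and Definition~\ref{def:full_resolution_marking}.
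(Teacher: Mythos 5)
Your proof is correct and follows essentially the same route as the paper's, which simply asserts that two rulings glue canonically if and only if the induced involutions on the seam $T_n$ agree and that the restriction maps give the universal property; your version just spells out the reduction to the link-with-markings case (via $\cP(\cT,\bfmu)=\cP(\cT^1,\bfmu^1)\times\cP(\cT^2,\bfmu^2)$) and the cut-and-glue behaviour of the four piece types. The only quibble is the parenthetical about an eye straddling the seam: the half containing the left cusp lies in $\cT^1$, so the labels of the two resulting half-eyes appear to be swapped, but this does not affect the argument.
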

\begin{proof}
For each $S_{N^1} \in \bfR(\cT^1,{\bfmu^1};\ruling^1_{\Left},\ruling^1_{\Right})$ and $S_{N^2} \in \bfR(\cT^2,{\bfmu^2};\ruling^2_{\Left},\ruling^2_{\Right})$, we can glue them in a canonical way if and only if $\ruling^1_\Right=\ruling^2_\Left$ in $\NR(T_n,\mu)$.
The maps from $\bfR(\cT,\bfmu;\ruling^1_{\Left},\ruling^2_{\Right})$ are defined by restrictions and the square satisfies the universal property.
\end{proof}

\begin{definition}[Ruling polynomial]\label{definition:ruling polynomial}
Let $(\cT,\bfmu)\in\BLT_\front^\mu$ and $\ruling_\Left$ and $\ruling_\Right$ be as before.
For each $\Phi=(\ruling_v)_{v\in V}\in\cP(\cT,\bfmu)$ and normal ruling $\ruling \in\bfR((\cT_\Phi,\bfmu_\Phi),M_\Phi;\ruling_\Left,\ruling_\Right)$, let us define 
\[
A(\ruling) \coloneqq \sum_{v\in V}A_v(\ruling_v)\quad\text{ and }\quad
\chi(\ruling) \coloneqq \chi(S_N)-\chi(\iota_\Right^* S_N)
\]
and $A_v(\ruling_v)$ is defined in Lemma~\ref{lem:stratification of aug var at a vertex}.

The \emph{weight} $w(\ruling)$ of $\ruling=(N,S_N)$ is defined as
\begin{align}\label{eqn:weight for ruling}
w(\ruling)\coloneqq q^{\frac{A(\ruling)}2}z^{-\chi(\ruling)}
\end{align}
and the \emph{($\ZZ$-graded) ruling polynomial} $\langle \ruling_\Left|R(\cT,\bfmu;q,z)|\ruling_\Right\rangle$ and $R(\cT,\bfmu;q,z)$ are defined by the weight sum as follows:
\begin{align*}
\langle \ruling_\Left|R(\cT,\bfmu;q,z)|\ruling_\Right\rangle &\coloneqq \sum_{\ruling} w(\ruling),& 
R(\cT,\bfmu;q,z)&\coloneqq \sum_{(\ruling_\Left,\ruling_\Right)}\langle \ruling_\Left|R(\cT,\bfmu;q,z)|\ruling_\Right\rangle
\end{align*}
where the sums run over all $\ruling\in\bfR(\cT,\bfmu;\ruling_\Left,\ruling_\Right)$ and all $(\ruling_\Left,\ruling_\Right)\in \NR(T_\Left,{\mu_\Left})\times \NR(T_\Right,{\mu_\Right})$, respectively.
\end{definition}

\begin{example}\label{ex:ruling polynomial}
Let us compute ruling polynomial for the Legendrian graph in Figure~\ref{fig:six_valent_example}.
For each $\ruling_v^i\in \NR(v)$, by direct computation from the definition in Lemma~\ref{lem:stratification of aug var at a vertex}, we have $A_v({\ruling_v^i})$ as follows:
\begin{align*}
A_v({\ruling_v^1})&=6,& A_v({\ruling_v^2})&=5,& A_v({\ruling_v^3})&=5,&
A_v({\ruling_v^4})&=3,& A_v({\ruling_v^5})&=4,& A_v({\ruling_v^6})&=4.
\end{align*}
The corresponding ruling polynomial for each resolutions are
\begin{align*}
R(\Lambda_{\ruling_v^1};q,z)&=q^{3}(1+z^{-2}),&  R(\Lambda_{\ruling_v^2};q,z)&=q^{5/2} z^{-1},& R(\Lambda_{\ruling_v^3};q,z)&=q^{5/2} z^{-1};\\
R(\Lambda_{\ruling_v^4};q,z)&=q^{3/2}z^{-3},& R(\Lambda_{\ruling_v^5};q,z)&=q^{2}z^{-2},& R(\Lambda_{\ruling_v^6};q,z)&=q^{2}z^{-2}.
\end{align*}
Hence the ruling polynomial for the Legendrian graph $\Lambda$ becomes
\[
R(\Lambda;q,z)=q^{3}(1+z^{-2})+2q^{5/2} z^{-1} + 2q^{2} z^{-2} + q^{3/2}z^{-3}.
\]
\end{example}

\begin{corollary}\label{cor:gluing property of ruling polynomials}
Let $(\cT,\bfmu)=(\cT^1,\bfmu^1) \cdot (\cT^2,\bfmu^2)$ for $(\cT^i,\bfmu^i)$ of type $\left(n^i_\Left,n^i_\Right\right)$ and $i=1,2$.
Then
\begin{align}\label{eqn:ruling formula for concatenation}
\langle \ruling_\Left|R(\cT,\bfmu;q,z)|\ruling_\Right\rangle=\sum_{\ruling \in \NR(T_n,\mu_n)} \langle \ruling_\Left|R(\cT^1,\bfmu^1;q,z)|\ruling\rangle \cdot \langle \ruling|R(\cT^2,\bfmu^2;q,z)|\ruling_\Right\rangle,
\end{align}
where $n=n^1_\Right=n^2_\Left$ and $\mu=\mu^1_\Right=\mu^2_\Left$.
\end{corollary}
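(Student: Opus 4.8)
The plan is to read the identity off directly from the gluing property of normal rulings, Proposition~\ref{prop:sheaf of rulings}, once one checks that the weight $w$ is multiplicative under concatenation.

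\textbf{Step 1 (splitting the rulings).} By convention the concatenation points of $(\cT,\bfmu)=(\cT^1,\bfmu^1)\cdot(\cT^2,\bfmu^2)$ are not vertices, so $V(\cT)=V(\cT^1)\amalg V(\cT^2)$ and hence $\cP(\cT,\bfmu)=\cP(\cT^1,\bfmu^1)\times\cP(\cT^2,\bfmu^2)$, with $(\cT_\Phi,\bfmu_\Phi)=(\cT^1_{\Phi^1},\bfmu^1_{\Phi^1})\cdot(\cT^2_{\Phi^2},\bfmu^2_{\Phi^2})$ for $\Phi=(\Phi^1,\Phi^2)$. Feeding each full resolution into the pull-back square of Proposition~\ref{prop:sheaf of rulings} and taking the disjoint union over $\Phi$ produces a bijection
\[
\bfR(\cT,\bfmu;\ruling_\Left,\ruling_\Right)\;\cong\;\coprod_{\ruling\in\NR(T_n,\mu_n)}\bfR\bigl(\cT^1,\bfmu^1;\ruling_\Left,\ruling\bigr)\times\bfR\bigl(\cT^2,\bfmu^2;\ruling,\ruling_\Right\bigr),
\]
under which a normal ruling $\ruling=(N,S_N)$ of $\cT$ goes to the pair $(\ruling^1,\ruling^2)=((N^1,S_{N^1}),(N^2,S_{N^2}))$ of its restrictions to the two halves, with common restriction $\ruling=\iota_\Right^*S_{N^1}=\iota_\Left^*S_{N^2}$ at the shared border $T_n$.

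\textbf{Step 2 (multiplicativity of the weight).} It suffices to prove $w(\ruling)=w(\ruling^1)\,w(\ruling^2)$, i.e. additivity of the two exponents in $w(\ruling)=q^{A(\ruling)/2}z^{-\chi(\ruling)}$. The identity $A(\ruling)=A(\ruling^1)+A(\ruling^2)$ is immediate from $A(\ruling)=\sum_{v\in V}A_v(\ruling_v)$ and $V=V(\cT^1)\amalg V(\cT^2)$. For the Euler characteristic I would observe that $S_N$ is glued from $S_{N^1}$ and $S_{N^2}$ along the ruling surface of the trivial tangle $(\cT_n,\ruling)$ sitting over the common border, so that inclusion--exclusion gives $\chi(S_N)=\chi(S_{N^1})+\chi(S_{N^2})-\chi(\iota_\Right^*S_{N^1})$; combining this with $\iota_\Right^*S_N=\iota_\Right^*S_{N^2}$ and rearranging yields
\[
\chi(\ruling)=\chi(S_N)-\chi(\iota_\Right^*S_N)=\bigl(\chi(S_{N^1})-\chi(\iota_\Right^*S_{N^1})\bigr)+\bigl(\chi(S_{N^2})-\chi(\iota_\Right^*S_{N^2})\bigr)=\chi(\ruling^1)+\chi(\ruling^2).
\]

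\textbf{Step 3 (assembling).} Summing $w$ over $\bfR(\cT,\bfmu;\ruling_\Left,\ruling_\Right)$, breaking the sum up according to Step~1 and factoring the summand by Step~2, gives
\[
\langle\ruling_\Left|R(\cT,\bfmu;q,z)|\ruling_\Right\rangle=\sum_{\ruling\in\NR(T_n,\mu_n)}\Bigl(\sum_{\ruling^1}w(\ruling^1)\Bigr)\Bigl(\sum_{\ruling^2}w(\ruling^2)\Bigr)=\sum_{\ruling\in\NR(T_n,\mu_n)}\langle\ruling_\Left|R(\cT^1,\bfmu^1;q,z)|\ruling\rangle\,\langle\ruling|R(\cT^2,\bfmu^2;q,z)|\ruling_\Right\rangle,
\]
which is the claim. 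I expect the only delicate point to be the bookkeeping in Step~2, namely checking that $\iota_\Right^*S_{N^1}$ and $\iota_\Left^*S_{N^2}$ are genuinely the same surface (the standard ruling surface of $(\cT_n,\ruling)$), so that the inclusion--exclusion correction term is exactly the term subtracted in the definition of $\chi(\ruling)$; everything else is formal and, as already noted in the introduction, the identity essentially follows from the definitions together with Proposition~\ref{prop:sheaf of rulings}.
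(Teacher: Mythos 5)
Your proposal is correct and follows the same route as the paper: the paper's proof likewise cites Proposition~\ref{prop:sheaf of rulings} for the decomposition of rulings and simply asserts that the weight data $A$ and $\chi$ are additive under gluing along the border. Your Step~2 merely spells out, via inclusion--exclusion on $\chi(S_N)$, the additivity that the paper leaves as a one-line remark, and that computation is exactly right.
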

\begin{proof}
This is a direct consequence from the gluing property of the ruling in Proposition~\ref{prop:sheaf of rulings} and the definition of the ruling polynomial in Definition~\ref{definition:ruling polynomial}.
Note that the weight data $A(S_N)$ and $\bar\chi(S_N)$ is additive under the gluing of ruling surfaces along the borders.
\end{proof}

\begin{example}\label{ex:ruling_vertex}
Recall the Legendrian graph $\bfzero_n$ with $n$ even and let $\bfmu$ be a Maslov potential of $\bfzero_n$.
Then for any $\ruling\in \NR(T_n,\bfmu)$, it defines a unique graded normal ruling $S_N=S_N(\ruling)$ consisting of $\frac n2$ half-eyes.
Therefore its Euler characteristic $\chi(S_N)$ is exactly $\frac n2$ and the weight $w(S_N)=z^{\frac n2-\chi(S_N)}=1$ for any $\ruling$.
Hence the total ruling polynomial $R(\bfzero_n,\bfmu)$ is the same as $\#\NR(T_n,\bfmu)$.
\end{example}

\begin{theorem}[Invariance]\cite{ABK2019}\label{thm:invariance_ruling}
Let $(\cT,\bfmu)\in\BLT_\front^\mu$ and $(\ruling_\Left, \ruling_\Right)\in \NR(T_\Left,\mu_\Left)\times \NR(T_\Right,\mu_\Right)$ be as before.
Then the set of normal rulings $\bfR(\cT,\bfmu;\ruling_\Left, \ruling_\Right)$ is a Legendrian isotopy invariant up to bijections preserving $\chi(\ruling)$ and $A_v(\ruling)$.

In particular, the polynomial $\langle \ruling_\Left|R(\cT,\bfmu;q,z)|\ruling_\Right\rangle$ is an invariant.
\end{theorem}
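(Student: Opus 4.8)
The plan is to verify the invariance move by move. Recall (Section~\ref{section:projections}) that two bordered Legendrian graphs are equivalent if and only if their regular front projections are related by a finite sequence of planar isotopies, translations of the interval $\bfU$, and the front Reidemeister moves $\RM{I}$--$\RM{VI}$ of Figure~\ref{fig:front_RM}; planar isotopies and translations manifestly act trivially on all the combinatorial data attached to $(\cT,\bfmu)$. Moreover, by Lemma~\ref{lemma:Reidemeister move preserves potential} each move lifts uniquely to a move $\RM{M}:(\cT',\bfmu')\to(\cT,\bfmu)$ of graphs with Maslov potential, and every move is supported in $J^1\mathring U$, so it does not touch the left and right borders and hence automatically respects the boundary conditions $(\ruling_\Left,\ruling_\Right)$. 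It therefore suffices to produce, for each $\RM{M}$, a bijection
\[
\Psi_{\RM{M}}:\bfR(\cT',\bfmu';\ruling_\Left,\ruling_\Right)\stackrel{\isomorphic}{\longrightarrow}\bfR(\cT,\bfmu;\ruling_\Left,\ruling_\Right)
\]
satisfying $\chi(\Psi_{\RM{M}}(\ruling))=\chi(\ruling)$ and $A_v(\Psi_{\RM{M}}(\ruling)_v)=A_v(\ruling_v)$ for every vertex $v$ (the vertices being canonically identified on the two sides away from the move region). Since the weight \eqref{eqn:weight for ruling} is a function of $\chi(\ruling)$ and $A(\ruling)=\sum_v A_v(\ruling_v)$ alone, existence of such a $\Psi_{\RM{M}}$ immediately gives $\langle\ruling_\Left|R(\cT',\bfmu';q,z)|\ruling_\Right\rangle=\langle\ruling_\Left|R(\cT,\bfmu;q,z)|\ruling_\Right\rangle$, and summing over all $(\ruling_\Left,\ruling_\Right)$ yields invariance of $R(\cT,\bfmu;q,z)$.

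For the moves among $\RM{I}$--$\RM{VI}$ that are supported away from every vertex, the construction of $\Psi_{\RM{M}}$ is the classical local argument for ruling invariance (Chekanov--Pushkar, Fuchs), adapted to the bordered/marked setting. A normal ruling $\ruling=(N,S_N)$ is determined by local data: outside the small disk where the move is supported both $N$ and $S_N$ are unchanged, and inside one enumerates the finitely many configurations in which the decomposition surface can meet the region (distinguishing switched from non-switched crossings, and recording which strands are companion-paired). In each case there is a unique compatible reconfiguration on the other side, giving a bijection; and in each case one checks by a direct local computation that the Euler characteristic of the decomposition surface changes only by the predictable amount forced by adding or deleting a trivial disk (for the cusp-type and $0$-type moves) or not at all (for the II- and III-type moves), so that the relative quantity $\chi(\ruling)=\chi(S_N)-\chi(\iota_\Right^*S_N)$ — whose boundary term is a constant once $\ruling_\Right$ is fixed — is preserved. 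Since no vertex is involved, $\cP(\cT',\bfmu')=\cP(\cT,\bfmu)$ canonically and every $A_v(\ruling_v)$ is literally unchanged; the markings coming from full resolutions lie outside the move region and play no role.

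The essential case is that of the moves supported in a neighbourhood of a vertex $v$ (the moves sliding a strand past $v$ and permuting the half-edges incident to $v$). Here one uses the description of $\bfR(\cT,\bfmu;\ruling_\Left,\ruling_\Right)$ in Definition~\ref{def:normal ruling LG} as a disjoint union over $\Phi=(\ruling_v)_{v\in V}\in\cP(\cT,\bfmu)$ of the ruling sets of the full resolutions, and analyzes the move one resolution stratum at a time. For a fixed choice of $\ruling_v\in\NR(v)$, substituting the tangle $\cT_{v,\ruling_v}$ of Section~\ref{sec:resolution of vertex} turns the vertex move on $\cT$ into an explicit finite sequence of ordinary front Reidemeister moves on the resolved bordered Legendrian \emph{link}, together with a bookkeeping change of the marking set and, for the half-edge permutation move, a passage from $\ruling_v$ to a new involution $\ruling_v'$ accompanied by the corresponding rearrangement of the blocks $L_v(\ruling_v),B_v(\ruling_v),R_v(\ruling_v)$ and of the braid $\beta(\ruling_v)$ with its complement $\beta^c,\bar\beta^c$. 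Applying the analysis of the previous paragraph to these ordinary moves yields a bijection on the associated ruling sets that preserves $\chi$. The remaining, and genuinely hard, task is to check that this bijection simultaneously matches the vertex statistic: that the newly marked crossings of $\beta^c$ and $\bar\beta^c$ contribute exactly the right padding, and that $A_v(\ruling_v)=A_v(\ruling_v')$ under the induced correspondence of involutions for the permutation move. This reduces to a combinatorial identity for the invariant $A_v$ of Definition~\ref{def:indices for an involution at a vertex}/Lemma~\ref{lem:stratification of aug var at a vertex}, and the weight $q^{A_v(\ruling_v)/2}z^{-\chi}$ has been designed precisely so that it holds. I expect this identity to be the main obstacle, since it is the only step that is not a routine localization; establishing it (which is carried out in \cite{ABK2019}) completes the construction of $\Psi_{\RM{M}}$ for all six moves and hence proves the theorem.
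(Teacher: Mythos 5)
Your proposal is correct and follows essentially the same route as the paper: reduce to a move-by-move check, invoke the classical Chekanov--Pushkar-type local bijections (already established for bordered Legendrian links in \cite{Su2017}) for the moves away from vertices, and defer the $\chi$-preserving bijection for the vertex moves to \cite{ABK2019}, observing that $A(\ruling)$ is then automatically controlled because it depends only on the involutions $\ruling_v$ at the vertices and not on the switches. The paper's own proof is in fact terser than yours --- it is little more than this citation plus that observation --- so your identification of the half-edge permutation move and the identity $A_v(\ruling_v)=A_v(\ruling_v')$ as the one genuinely nontrivial step is a faithful (and slightly more explicit) account of where the content of \cite{ABK2019} is actually used.
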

\begin{proof}
Note that for bordered Legendrian links, the invariance has been already given in \cite[Lemma~2.9]{Su2017}.

Let $(\cT,\bfmu)$ and $(\cT',\bfmu')$ be two bordered Legendrian graphs which differ by a Reidemeister move.
Take boundary conditions $(\ruling_\Left,\ruling_\Right) \in \NR(T_\Left,\mu_\Left)\times \NR(T_\Right,\mu_\Right)$. 
In \cite{ABK2019}, there is a bijection between sets of rulings $\bfR(\cT,\bfmu; \ruling_\Left,\ruling_\Right)$ and $\bfR(\cT,\bfmu;\ruling_\Left,\ruling_\Right)$ preserving the Euler characteristic $\chi(\ruling)$. So it remains to show that $A(\ruling)$ is preserved. It is direct from the bijection in \cite{ABK2019} and $A(\ruling)$ only depends on data $\ruling$ at vertices not switches. The weight $z^{\chi(\ruling)} q^{A(\ruling)/2}$ for each $\ruling$ is preserved under that bijection. This proves the invariance theorem.
\end{proof}

\subsection{Augmentation numbers are ruling polynomials}
Let $(\cT,\bfmu)\in \BLT^\mu_\front$ be a bordered Legendrian graph of type $(n_\Left,n_\Right)$ contained in $[x_\Left,x_\Right]\times\RR_z$ and let $\ruling_\Left\in\NR(T_\Left,\mu_\Left)$ and $\ruling_\Right\in\NR(T_\Right,\mu_\Right)$ be fixed.

\begin{definition}[Elementary bordered Legendrian graphs]\label{definition:elementary bordered Legendrian graphs}
An \emph{elementary} bordered Legendrian graph $(\cE=(E_\Left\to E\leftarrow E_\Right),\bftau)\in \BLT^\mu_\front$ contains several horizontal strands and either (i) only one left cusp, (ii) only one right cusp with a base point, (iii) only one crossing or (iv) only one vertex $v$.
\end{definition}

\begin{assumption}
For the simplicity of the argument, throughout this section, let us assume that
\begin{enumerate}
\item $n_\Left$, $n_\Right$, $\val(v)$ for $v\in V(T)$ are all even;
\item each right cusp has a base point;
\item each vertex $v\in V(T)$ is of type $(0,\val(v))$;
\item each cusp, crossing and vertex has a different $x$-coordinate.
\end{enumerate}
\end{assumption}

\begin{lemma}
Let $(\cE,\bftau)$ be an elementary bordered Legendrian graph satisfying the above assumption and $(\sigma_\Left,\sigma_\Right)\in\NR(E_\Left,\tau_\Left)\times \GNR(E_\Right,\tau_\Right)$.
Then the set $\bfR(\cE,\bftau;\sigma_\Left,\sigma_\Right)$ of normal rulings with boundary conditions is either empty or consisting of a unique normal ruling $\sigma$.

In particular, if $\sigma_\Right\in \GNR(E_\Right,\tau_\Right)\setminus \NR(E_\Right,\tau_\Right)$, then $\bfR(\cE,\bftau;\sigma_\Left,\sigma_\Right)$ is empty.
\end{lemma}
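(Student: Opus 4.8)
## Proof strategy for the elementary-graph ruling lemma

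The plan is to argue case-by-case through the four types of elementary bordered Legendrian graphs in Definition~\ref{definition:elementary bordered Legendrian graphs}, since in each case the diagram is so simple that a normal ruling, if it exists, is completely forced by its behavior at the left border. First I would fix the left boundary condition $\sigma_\Left \in \NR(E_\Left,\tau_\Left)$ and observe that away from the single non-trivial feature (cusp, crossing, or vertex) the graph $\cE$ consists of horizontal strands, along which any decomposition $S_N$ is just a disjoint union of parallels, eyes, and half-eyes whose combinatorial pattern is constant in the $x$-direction. Hence $S_N$ is determined over the "bulk" region by the involution it induces at any vertical slice, and the data of $\sigma_\Left$ propagates rightward uniquely until it meets the special feature.

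The core of the argument is then to check, at the single special feature, that there is at most one way to extend. For a left cusp (case (i)), the cusp forces two adjacent strands to be joined at their left end into a half-eye or the top of an eye, and the non-interlacing condition together with $\sigma_\Left$ leaves no choice; for a right cusp with a base point (case (ii)), two strands must close up on the right, and again $\sigma_\Left$ determines which. For a single crossing of degree $0$ (case (iii)), the crossing is either switched ($c \in N$) or not, and exactly one of these two options is compatible with a fixed pair $(\sigma_\Left,\sigma_\Right)$ because the induced involution at the right border differs in a controlled way between the two choices (a single transposition); if neither is compatible, the set is empty. For a single vertex $v$ of type $(0,\val(v))$ (case (iv)), the resolution $\cT_{v,\ruling_v}$ is determined by the choice of $\ruling_v\in\NR(v)$, and the induced left and right border involutions pin down $\ruling_v$ uniquely (this is where the structure of $\cT_{v,\ruling}^L,\cT_{v,\ruling}^B,\cT_{v,\ruling}^R$ from Section~\ref{sec:resolution of vertex} is used): the left involution already equals $\sigma_\Left$ since $v$ is of type $(0,\val(v))$ and all half-edges sit on the right, so in fact $\ruling_v$ must be a specific fixed-point-free involution, and compatibility with $\sigma_\Right$ is then a yes/no condition.

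For the final sentence, I would argue as follows: suppose $\sigma_\Right \in \GNR(E_\Right,\tau_\Right) \setminus \NR(E_\Right,\tau_\Right)$, i.e.\ $\sigma_\Right$ has at least one fixed point $c = \sigma_\Right(c)$. A decomposition $S_N$ of $\cE_N$ that restricts to $\sigma_\Right$ on the right border would have to contain a strand that is "unpaired" at its right end --- but every building block of a decomposition (eye, left half-eye, right half-eye, parallel) either pairs each of its strands with another or leaves it running off to a border on the correct side; tracing the strand through $c$ leftward, in an elementary graph there is no right cusp available to close it off and no vertex of type $(0,n)$ pairs strands on the right in a fixed-point way, so the strand through $c$ must eventually be forced into a pairing, contradicting $\sigma_\Right(c)=c$. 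Hence $\bfR(\cE,\bftau;\sigma_\Left,\sigma_\Right)=\emptyset$.

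I expect the main obstacle to be case (iv), the vertex case: one must verify cleanly that the border involutions induced by the explicit resolution $\cT_{v,\ruling}$ recover $\ruling$ and hence that distinct $\ruling \in \NR(v)$ give distinct (and in particular, never both $\sigma_\Left$-compatible and $\sigma_\Right$-compatible unless equal) boundary data. This requires carefully reading off the involutions from the concatenation $\cT_{v,\ruling}^L \cdot \cT_{v,\ruling}^B \cdot \cT_{v,\ruling}^R$ of Section~\ref{sec:resolution of vertex} and matching them with the partition $[n] = L_v(\ruling)\amalg B_v(\ruling)\amalg R_v(\ruling)$; once that bookkeeping is in place, uniqueness is immediate. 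The cusp and crossing cases are routine and essentially identical to the Legendrian link case treated in \cite[Lemma~2.9]{Su2017}.
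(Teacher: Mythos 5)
Your proposal is correct and follows essentially the same route as the paper's (very terse) proof: a case analysis over the four elementary types, with the ruling forced by propagation of the boundary involutions to the single cusp, crossing, or vertex, and the vertex case reduced to the fact that the resolution of a type-$(0,\val(v))$ vertex is pinned down by the right boundary data. Your observation that the induced involution at a border is always fixed-point-free (so $\sigma_\Right\in\GNR\setminus\NR$ forces emptiness) is exactly the intended justification of the last sentence.
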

\begin{proof}
Suppose that $\bfR(\cE,\bftau;\sigma_\Left,\sigma_\Right)$ is nonempty and contains $\sigma$.
Then it can be easily checked that $\sigma$ is uniquely determined by $\sigma_\Left$ and $\sigma_\Right$ for the cases (i), (ii) and (iii) in Definition~\ref{definition:elementary bordered Legendrian graphs}. Finally, when $\cE$ contains a vertex of type $(0,\val(v))$, $\sigma$ is again completely determined by boundary conditions since so is the resolution of the vertex $v$.
\end{proof}

\begin{remark}\label{remark:elementary with vertex of general types}
One can consider the bordered Legendrian graph $(\cE,\bftau)$ containing only one vertex of type $(\ell,r)$.
In this case, $\bfR(\cE,\bftau;\sigma_\Left,\sigma_\Right)$ may have several rulings.
\end{remark}

Due to the fourth assumption above, we may cut $(\cT,\bfmu)$ into elementary pieces
\begin{align*}
(\cE_i=(E_{i,\Left}\to E_i\leftarrow E_{i,\Right}),\bftau_i)&\coloneqq (\cT,\bfmu)|_{[x_{i-1},x_i]\times\RR_z},&
(\cT,\bfmu)&=(\cE_1,\bftau_1)\cdot\cdots\cdot(\cE_m,\bftau_m)
\end{align*}
along the vertical lines $\{x_1,\dots, x_{m-1}\}\times\RR_z$ for some $x_\Left=x_0<x_1<\dots<x_{m-1}<x_m=x_\Right$, where each vertical line contains no cusps, crossings, base points and vertices.

\begin{definition}\cite[Definition 2.4]{Su2017}\label{def:returns and departures}
Let $(\cT,\bfmu)\in \BLT^\mu_\front$ be a bordered Legendrian graph and $\ruling=(N,S_N)\in \bfR(\cT,\bfmu;\ruling_\Left, \ruling_\Right)$ be a normal ruling. A crossing $c$ is called a {\em return} (resp. {\em departure}) of $\ruling$ if the behavior of the ruling surface $S_N$ is one of (resp. the vertical reflection of) the following:
\begin{align*}
\begin{tikzpicture}[baseline=-.5ex]
\begin{scope}
\fill[red,opacity=0.3] (-1,1)--(1,1) -- (1,0.5) -- (0,0) -- (-1,-0.5)--cycle;
\fill[blue,opacity=0.3] (-1,-1)--(1,-1) -- (1,-0.5) -- (0,0) -- (-1,0.5)--cycle;
\draw[thick,red] (-1,1)--(1,1);
\draw[thick,red] (-1,-0.5)--(0,0)--(1,0.5);
\draw[thick,blue] (-1,0.5)--(0,0) -- (1,-0.5);
\draw[thick,blue] (-1,-1)--(1,-1);
\draw(0,0) node[below] {$c$};
\end{scope}
\begin{scope}[xshift=3cm]
\fill[red,opacity=0.3] (-1,-1)--(-1,0.5) -- (1,0.5) -- (1,0) --cycle;
\fill[blue,opacity=0.3] (-1,1)--(1,1) -- (1,-1) -- (-1,0)--cycle;
\draw[thick,red] (-1,0.5)--(1,0.5);
\draw[thick,red] (-1,-1)--(1,0);
\draw[thick,blue] (-1,1)--(1,1);
\draw[thick,blue] (-1,0)--(1,-1);
\draw(0,-0.5) node[below] {$c$};
\end{scope}
\begin{scope}[xshift=6cm]
\fill[red,opacity=0.3] (-1,1)--(1,0) -- (1,-0.5) -- (-1,-0.5)--cycle;
\fill[blue,opacity=0.3] (-1,0)--(1,1) -- (1,-1) -- (-1,-1)--cycle;
\draw[thick,blue] (-1,0) -- (1,1);
\draw[thick,blue] (-1,-1)--(1,-1);
\draw[thick,red] (-1,1)--(1,0);
\draw[thick,red] (-1,-0.5)--(1,-0.5);
\draw(0,0.5) node[below] {$c$};
\end{scope}
\end{tikzpicture}
\end{align*}
Moreover, returns (resp. departures) of degree $0$ are called {\em graded returns} (resp. {\em graded departures}) of the ruling $\ruling$. Let us denote the number of graded returns (resp. graded departures) by $r(\ruling)$ (resp. $d(\ruling)$).
\end{definition}

\begin{lemma}\label{lem:aug var elementary}
Let $(\cE,\bftau)\in \BLT^\mu_\front$ be an elementary bordered Legendrian graph and $(\sigma_\Left,\sigma_\Right)\in\NR(E_\Left,\tau_\Left)\times\GNR(E_\Right,\tau_\Right)$.
Then $\bfR(\cE,\bftau;\sigma_\Left,\sigma_\Right)$ is non-empty if and only if so is $\aug(\cE,\bftau;\epsilon_\Left, \sigma_\Right;\field)$ for any $\epsilon_\Left \in \aug^{\sigma_\Left}(E_\Left,\tau_\Left;\field)$. In this case, we have a unique ruling $\sigma\in\bfR(\cE,\bftau;\sigma_\Left,\sigma_\Right)$ and
\[
\aug(\cE,\bftau;\epsilon_\Left, \sigma_\Right;\field) \isomorphic (\field^\times)^{-\chi(\sigma)+\hat B}\times \field^{r(\sigma)+A(\sigma)},
\]
where 
\[
\hat B=\#B(E) + \sum_{v\in V(E)}\frac{\val(v)}{2}
\]
and $B(E)$ and $V(E)$ are the sets of base points and vertices of $E$.

In particular, it is independent of the choice of $\epsilon_\Left$.
\end{lemma}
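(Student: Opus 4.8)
The plan is to prove the lemma by a direct case analysis according to the four types of elementary bordered Legendrian graph in Definition~\ref{definition:elementary bordered Legendrian graphs} (a single left cusp; a single right cusp with a base point; a single crossing; a single vertex of type $(0,\val(v))$). In each case I will write down $A^\CE(E,\bftau)$ explicitly via Ng's resolution (Definition~\ref{definition:Ng's resolution}), compute the full augmentation variety, and then cut it down by the two boundary conditions; finally I compare the resulting variety with the weight data $\chi(\sigma)$, $r(\sigma)$, $A(\sigma)$ of the unique ruling $\sigma\in\bfR(\cE,\bftau;\sigma_\Left,\sigma_\Right)$ from Definition~\ref{def:returns and departures} and Section~\ref{section:rulings}. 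The ``independence of $\epsilon_\Left$'' clause will be disposed of first and uniformly: the group $B(E_\Left;\field)$ acts transitively on $\aug^{\sigma_\Left}(E_\Left,\tau_\Left;\field)$, and this action extends to $\aug(E,\bftau;\field)$ intertwining the restriction maps $\phi^*_\Left,\phi^*_\Right$ (it is induced by pre-composition with a tame automorphism of $A^\CE(E,\bftau)$ supported on the left border generators), so any two choices of $\epsilon_\Left$ in the same orbit give isomorphic varieties; hence I may and will take $\epsilon_\Left=\epsilon_{\sigma_\Left}$ canonical. Also, by the previous lemma $\bfR(\cE,\bftau;\sigma_\Left,\sigma_\Right)\neq\emptyset$ already forces $\sigma_\Right\in\NR(E_\Right,\tau_\Right)$, and I will check in parallel that $\aug(\cE,\bftau;\epsilon_\Left,\sigma_\Right;\field)=\emptyset$ whenever $\sigma_\Right\in\GNR(E_\Right,\tau_\Right)\setminus\NR(E_\Right,\tau_\Right)$, so both the ``iff'' and the ``unique $\sigma$'' part reduce to the explicit computation in the remaining compatible case.

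\textbf{The trivial backbone and the four cases.} The common ingredient is the trivial-tangle picture: by Example~\ref{example:DGAs for trivial graphs}, Lemma~\ref{lem:aug and Morse complex for trivial tangle} and Lemma~\ref{lem:ruling stratification for trivial tangle}, the augmentation variety of a trivial piece is $\coprod_{\ruling}\aug^{\ruling}$ with each orbit $\aug^{\ruling}\isomorphic(\field^\times)^{n/2}\times\field^{A_b(\ruling)}$, and fixing $\epsilon_\Left=\epsilon_{\sigma_\Left}$ pins down the leftmost trivial slab to a point. Now: \emph{(i) Left cusp.} Under Ng's resolution a left cusp becomes a cusp arc with no new generator, so $A^\CE(E,\bftau)$ is the trivial-tangle DGA and the only new constraint is that the two strands meeting at the cusp form an eye of $\sigma$; a short computation in $B$-orbit coordinates shows this constraint changes the orbit by exactly the factor dictated by the change in $\chi$ and by the number of graded returns it creates, with $A(\sigma)=0$ since $V(E)=\emptyset$. \emph{(ii) Right cusp with a base point.} Ng's resolution produces one self-crossing of the kink together with the base point $b$; the crossing equation is solvable, and the invertible variable $t_b\in\field^\times$ contributes precisely the extra $(\field^\times)^{\hat B}$ with $\hat B=\#B(E)=1$. \emph{(iii) Crossing.} One generator $c$ is added; if $|c|\neq 0$ then $c\in\ker\epsilon$ imposes nothing and $c$ is neither a graded return nor departure, while if $|c|=0$ the value $\epsilon(c)$ is an extra free $\field$-parameter exactly when $\sigma$ has a graded return (respectively graded departure) at $c$, giving the $\field^{r(\sigma)}$ factor; the non-interlacing analysis, as in \cite{Su2017}, identifies which pairs $(\sigma_\Left,\sigma_\Right)$ are realized. \emph{(iv) Vertex.} Here $A^\CE(E,\bftau)$ contains the internal sub-DGA $I_v\isomorphic I_{\val(v)}(\mu_v)$, and restriction gives a map $\aug(E,\bftau;\field)\to\aug(v;\field)$ (Lemma~\ref{lem:aug and Morse complex for a vertex}); imposing $\epsilon_\Left$ and $\epsilon_\Right\in\aug^{\sigma_\Right}$ pins the vertex augmentation to the orbit $\aug^{\ruling_v}(v;\field)$ for the $\ruling_v\in\NR(v)$ read off from $\sigma_\Right$, and Lemma~\ref{lem:stratification of aug var at a vertex} gives $\aug^{\ruling_v}(v;\field)\isomorphic(\field^\times)^{\val(v)/2}\times\field^{A_v(\ruling_v)}$, supplying the $(\field^\times)^{\hat B}$ with $\hat B=\val(v)/2$ and the $\field^{A(\sigma)}$ with $A(\sigma)=A_v(\ruling_v)$, while the Euler-characteristic bookkeeping of the resolution $\cT_{v,\ruling_v}$ of Section~\ref{sec:resolution of vertex} accounts for the remaining $(\field^\times)^{-\chi(\sigma)}$. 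Assembling the factors in each case yields $\aug(\cE,\bftau;\epsilon_\Left,\sigma_\Right;\field)\isomorphic(\field^\times)^{-\chi(\sigma)+\hat B}\times\field^{r(\sigma)+A(\sigma)}$.

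\textbf{Main obstacle.} The delicate point is case (iv): one must check that the orbit stratification of $\aug(v;\field)$ from Lemma~\ref{lem:stratification of aug var at a vertex} is genuinely compatible with the combinatorial resolution of the vertex in Section~\ref{sec:resolution of vertex} — i.e.\ that the restriction map $\aug(E,\bftau;\field)\to\aug(E_\Right,\tau_\Right;\field)$ carries $\aug^{\ruling_v}(v;\field)$ onto the appropriate slice of $\aug^{\sigma_\Right}(E_\Right,\tau_\Right;\field)$, and that the triple of integers $(-\chi(\sigma),\,\hat B,\,A_v(\ruling_v))$ read off from the marked bordered link $(\cT_{v,\ruling_v},M_{v,\ruling_v})$ matches the exponents $(\val(v)/2,\,A_v(\ruling_v))$ coming from the algebraic orbit, after the half-eye/base-point normalization. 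The cusp and crossing cases are the bordered analogues of the tangle computations of \cite{Su2017} and should go through routinely; it is the vertex comparison, together with keeping the signs and the filtration degrees straight, that carries the weight of the proof.
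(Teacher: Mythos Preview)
Your case analysis and the reduction of (i)--(iii) to \cite[Lemma~4.15]{Su2017} are exactly what the paper does. The paper's proof of case (iv), however, is substantially simpler than what you outline, and the simplification eliminates what you flag as the ``main obstacle''.

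The point you are missing is that, under the standing Assumption that the vertex is of type $(0,\val(v))$, the elementary piece $\cE$ consists of $n$ parallel strands \emph{disjoint} from the vertex: there are no crossings and no interaction whatsoever in Ng's resolution. Consequently the LCH DGA splits as a free coproduct
\[
A^\CE(E,\tau)\;\isomorphic\;A_\Left\;\textstyle\coprod\;I_v,
\]
where $A_\Left$ is generated by the left-border generators $k_{ab}$ and $I_v$ is the internal DGA. Hence the full augmentation variety is literally the product $\tilde\aug(E_\Left;\field)\times\aug(v;\field)$, and fixing $\epsilon_\Left$ collapses the first factor to a point. The right-border condition $\sigma_\Right$ then restricts to $\sigma_\Left$ on the parallel strands (automatically satisfied) and to an involution $\sigma_v$ on the vertex strands, so
\[
\aug(\cE,\bftau;\epsilon_\Left,\sigma_\Right;\field)\;\stackrel{\isomorphic}{\longrightarrow}\;\aug^{\sigma_v}(v;\field),
\]
which is nonempty iff $\sigma_v\in\NR(v)$, and in that case Lemma~\ref{lem:stratification of aug var at a vertex} gives $(\field^\times)^{\val(v)/2}\times\field^{A_v(\sigma_v)}$. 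Independence of $\epsilon_\Left$ is immediate from the product structure, with no need for the $B(E_\Left;\field)$-action argument.

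Once you have the coproduct, there is nothing to check about ``carrying orbits to slices'' or matching the exponents against the marked bordered link. Moreover, in this case $-\chi(\sigma)=0$ and $r(\sigma)=0$ (the elementary piece has no crossings in $E$ itself, and the vertex resolution for type $(0,\val(v))$ contributes only left half-eyes with markings, which are not counted in $r(\sigma)$; cf.\ Example~\ref{ex:ruling_vertex}), so your ``remaining $(\field^\times)^{-\chi(\sigma)}$'' factor is trivial. The formula then reads $(\field^\times)^{\hat B}\times\field^{A(\sigma)}$ with $\hat B=\val(v)/2$ and $A(\sigma)=A_v(\sigma_v)$, matching on the nose.
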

\begin{proof}
For elementary bordered Legendrian graphs containing left or right cusps, or a crossing, we refer to \cite[Lemma 4.15]{Su2017} and we will focus on the case when $\cE$ is of type $(n, n+\val(v))$ and contains a vertex of type $(0,\val(v))$ which looks as follows:
\[
(\cE,\bftau)=
\begin{tikzpicture}[baseline=-.5ex]
\draw[fill](-0.5,0) circle (0.05) node[below] {$v$};
\foreach \i in {0.75, 0.25, -0.25, -0.75} {
\draw[thick, rounded corners](-0.5,0) to[out=0,in=180] (1,\i);
}
\foreach \i in {1.25, 1.75, -1.25, -1.75} {
\draw[thick](-1,\i) -- (1,\i);
}
\draw[red](1,-2)--(1,2);
\draw[red](-1,-2)--(-1,2);
\draw (1,1.8) node[right] {$1$};
\draw (-1,1.8) node[left] {$1$};
\draw (1,1.3) node[right] {$\vdots$};
\draw (1,0.8) node[right] {$j$};
\draw (1,0.1) node[right] {$\vdots$};
\draw (1,-0.8) node[right] {$j+\val(v)$};
\draw (1,-1.3) node[right] {$\vdots$};
\draw (1,-1.8) node[right] {$n+\val(v)$};
\draw (-1,-1.8) node[left] {$n$};
\end{tikzpicture}
\]

Let $\sigma_v\in\GNR(v)\coloneqq\GNR(v_\Right)$ be the restriction of $\sigma_\Right$ on $[\val(v)]\isomorphic\{j,\dots, j+\val(v)\}$.
Since the LCH DGA $A^\CE(E,\tau)\isomorphic A_\Left \coprod A_v$ is decomposed into two parts
\[
A_\Left \coloneqq \ring \langle k_{ab} \mid 1\leq a <b \leq n \rangle\quad\text{ and }\quad
A_v \coloneqq \ring \langle v_{a,i} \mid a\in \Zmod{\val(v)}, i>0 \rangle
\]
and the augmentations for generators $k_{ab}$ are given by $\epsilon_\Left$, we have the induced isomorphism 
\[
\aug(E;\epsilon_\Left,\sigma_\Right;\field)\stackrel{\isomorphic}\longrightarrow \aug^{\sigma_v}(v;\field).
\]
Therefore $\aug(E;\epsilon_\Left,\sigma_\Right;\field)$ is non-empty if and only if so is $\aug^{\sigma_v}(v;\field)$, i.e., $\sigma_v\in\NR(v)$ and 
\begin{align*}
\aug^{\sigma_v}(v;\field)\isomorphic (\field^\times)^{\val(v)/2}\times \field^{A_v(\sigma_v)}. \tag{Lemma~\ref{lem:stratification of aug var at a vertex}}
\end{align*}
Moreover, the ruling $\sigma_\Left$ and involution $\sigma_v$ determine a ruling $\sigma$ for $(\cE,\bftau)$ and \textit{vice versa}, we are done.
\end{proof}

\begin{remark}\label{rem:ruling decomposition}
Though it is not trivial and need some more work, one can show a natural generalization of Lemma \ref{lem:aug var elementary} for the case when a vertex $v$ is of any type, as shown in Proposition \ref{prop:ruling decomposition}.

Let $(\cE,\bftau)$ be an elementary bordered Legendrian graph with a single vertex $v$ of type $(\ell,r)$, then for any $\epsilon_\Left\in\aug^{\ruling_\Left}(E_\Left,\tau_\Left;\field)$, there is a natural decomposition via locally closed sub-varieties
\begin{equation*}
\aug(\cE,\bftau;\epsilon_\Left,\ruling_\Right;\field)=\coprod_{\ruling\in\bfR(\cE,\bftau;\ruling_\Left,\ruling_\Right)}\aug^{\ruling}(\cE,\bftau;\epsilon_\Left,\ruling_\Right;\field).
\end{equation*} 
In addition, 
\begin{equation*}
\aug^{\ruling}(\cE,\bftau;\epsilon_\Left,\ruling_\Right;\field)\isomorphic (\field^\times)^{-\chi(\ruling)+\hat{B}}\times\field^{\hat{r}(\ruling)+A(\ruling)}
\end{equation*}
where $\hat{r}(\ruling)$ is defined in Proposition \ref{prop:ruling decomposition}.

The above statement comes up naturally by purely working with augmentations, hence provides the geometric intuition for defining normal rulings and ruling polynomials for (bordered) Legendrian graphs as seen before.
\end{remark}

\begin{remark}\label{rem:Hodge str}
By Theorem \ref{thm:inv of aug var}, the mixed Hodge structure of the compactly supported cohomology $H_c^*(\aug(\cT,\bfmu;\epsilon_\Left,\ruling_\Right;\mathbb{C}),\mathbb{Q})$ is also a Legendrian isotopy invariant up to a normalization and generalizes augmentation numbers.
By a spectral sequence argument as in \cite{Su2018}, one can also show that $\aug(\cT,\bfmu;\epsilon_\Left,\ruling_\Right;\mathbb{C})$ is of Hodge type. This is similar to the Betti moduli space in non-abelian Hodge theory, which is well-known to have this property.
\end{remark}

\begin{definition}\label{def:ruling strata}
We denote for each $1\le i\le m$ 
\begin{align*}
(\cT_i=(T_{i,\Left}\to T_i\leftarrow T_{i,\Right}),\bfmu_i)&\coloneqq (\cE_1,\bftau_1)\cdot(\cE_2,\bftau_2)\cdot\quad \cdots\quad \cdot(\cE_i,\bftau_i).
\end{align*}

For $\ruling\in\bfR(\cT,\bfmu;\ruling_\Left,\ruling_\Right)$, we define the varieties
\begin{align*}
\aug^\ruling(\cT,\bfmu;\epsilon_L;\field) &\coloneqq \aug_1(\epsilon_L,\ruling_1)\times_{\aug^{\ruling_1}_1}\cdots \times_{\aug^{\ruling_{m-1}}_{m-1}} \aug_m(\ruling_{m-1},\ruling_\Right);\\
\aug^\ruling(\cT,\bfmu;\ruling_L;\field) &\coloneqq \aug_1(\ruling_L,\ruling_1)\times_{\aug^{\ruling_1}_1}\cdots \times_{\aug^{\ruling_{m-1}}_{m-1}} \aug_m(\ruling_{m-1},\ruling_\Right),
\end{align*}
where $\ruling_i\coloneqq \ruling|_{T_{i,\Right}}$ and
\[
\aug_i(-,-)\coloneqq\aug(\cE_i,\bfmu_i;-,-;\field)\quad\text{ and }\quad
\aug^{(-)}_i\coloneqq\aug^{(-)}(T_{i,\Right},\mu_{i,\Right};\field).
\]
\end{definition}

As a consequence of Lemma~\ref{lem:aug var elementary}, we obtain a partition of the augmentation variety as follows:
\begin{equation}\label{equation:partition of augmentation variety}
\aug(\cT,\bfmu;\epsilon_\Left,\ruling_R;\field)=\coprod_{\ruling\in\bfR(\cT,\bfmu;\ruling_\Left,\ruling_\Right)} \aug^\ruling(\cT,\bfmu;\epsilon_\Left;\field).
\end{equation}

\begin{remark}\label{rem:acyclic}
For a vertex of type $(\ell,r)$, we will use the following fact shown in Corollary \ref{cor:acyclic}:
For any augmentation $\epsilon\in\aug(\cT,\bfmu;\field)$, if $\epsilon_\Left\coloneqq \phi_\Left^*(\epsilon)\in\aug(T_\Left,\mu_\Left;\field)$ is acyclic, then $\epsilon_\Right\coloneqq \phi_\Right^*(\epsilon)$ is acyclic as well.
\end{remark}

\begin{lemma}\label{lem:augnumber}
For any $\epsilon_\Left \in \aug^{\ruling_\Left}(T_\Left,\mu_\Left;\field)$, we have
\[
\dim_\field \aug(\cT,\bfmu;\epsilon_\Left,\ruling_\Right;\field)= \max_{\ruling} \{ -\chi(\ruling) + \hat B + r(\ruling) + A(\ruling)\}
\]
and therefore
\[
\augnumber(T;\ruling_\Left, \ruling_\Right;\FF_q)=q^{-\max_\ruling\{-\chi(\ruling) + \hat B + r(\ruling) + A(\ruling)\}}\sum_{\ruling}(q-1)^{-\chi(\ruling)+\hat B}q^{r(\ruling)+A(\ruling)}.
\]
Here $\ruling$ runs over the set $\bfR(\cT,\bfmu;\ruling_\Left,\ruling_\Right)$.
\end{lemma}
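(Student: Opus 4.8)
The plan is to determine, for each normal ruling $\ruling\in\bfR(\cT,\bfmu;\ruling_\Left,\ruling_\Right)$, the isomorphism type of the stratum $\aug^\ruling(\cT,\bfmu;\epsilon_\Left;\field)$ occurring in the partition \eqref{equation:partition of augmentation variety}, namely to prove
\[
\aug^\ruling(\cT,\bfmu;\epsilon_\Left;\field)\isomorphic (\field^\times)^{-\chi(\ruling)+\hat B}\times\field^{r(\ruling)+A(\ruling)}.
\]
Granting this, the two displayed equalities follow at once: a finite disjoint union of locally closed affine varieties has dimension the maximum of the dimensions of its strata and $\FF_q$-point count the sum of theirs, so $\dim_\field\aug(\cT,\bfmu;\epsilon_\Left,\ruling_\Right;\field)=\max_\ruling\{-\chi(\ruling)+\hat B+r(\ruling)+A(\ruling)\}$ and $\#\aug(\cT,\bfmu;\epsilon_\Left,\ruling_\Right;\FF_q)=\sum_\ruling(q-1)^{-\chi(\ruling)+\hat B}q^{r(\ruling)+A(\ruling)}$, and substituting these into the definition of $\augnumber$ gives the stated formula.

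To establish the displayed isomorphism I would induct on the number $m$ of elementary pieces in $(\cT,\bfmu)=(\cE_1,\bftau_1)\cdot\cdots\cdot(\cE_m,\bftau_m)$ via the iterated fiber product description of $\aug^\ruling(\cT,\bfmu;\epsilon_\Left;\field)$ from Definition~\ref{def:ruling strata}. Writing $\ruling_i\coloneqq\ruling|_{T_{i,\Right}}$, one first observes that $\ruling_i\in\NR(T_{i,\Right},\mu_{i,\Right})$: the ruling surface $S_N$ restricted to an internal vertical line --- which carries no cusp, crossing, base point or vertex by the running assumption --- is a fixed-point-free pairing of the strands; moreover, by Remark~\ref{rem:acyclic} the intermediate augmentations appearing in the fiber product are acyclic, so $\aug^{\ruling_i}_i$ is precisely the $B(T_{i,\Right},\mu_{i,\Right};\field)$-orbit of $\epsilon_{\ruling_i}$. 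The structural input that makes the induction go through is that the orbit-orbit variety of an elementary piece splits off its left boundary orbit: by Remark~\ref{rem:aug-orbit and orbit-orbit} and Proposition~\ref{prop:aug-orbit vs orbit-orbit}, $\aug_{i+1}(\ruling_i,\ruling_{i+1})\isomorphic \aug^{\ruling_i}(E_{i+1,\Left},\tau_{i+1,\Left};\field)\times\aug_{i+1}(\epsilon_{\ruling_i},\ruling_{i+1})$, with the restriction map to $\aug^{\ruling_i}_i$ equal to the projection onto the first factor; hence the fiber product over $\aug^{\ruling_i}_i$ collapses to an honest direct product. Telescoping through all $m$ pieces yields $\aug^\ruling(\cT,\bfmu;\epsilon_\Left;\field)\isomorphic \prod_{i=1}^{m}\aug_i(\epsilon_{\ruling_{i-1}},\ruling_i;\field)$, where $\ruling_0\coloneqq\ruling_\Left$ and $\epsilon_{\ruling_0}\coloneqq\epsilon_\Left$, and by Lemma~\ref{lem:aug var elementary} the $i$-th factor is $(\field^\times)^{-\chi(\sigma_i)+\hat B_i}\times\field^{r(\sigma_i)+A(\sigma_i)}$ for the unique ruling $\sigma_i$ of $\cE_i$ with boundary conditions $(\ruling_{i-1},\ruling_i)$, with $\hat B_i=\#B(E_i)+\sum_{v\in V(E_i)}\frac{\val(v)}{2}$; in particular the answer is independent of the choice of $\epsilon_\Left$.

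It then remains to check that the four exponents are additive over the elementary pieces. One has $\hat B=\sum_i\hat B_i$ since base points and vertices are partitioned among the pieces; $r(\ruling)=\sum_i r(\sigma_i)$ since each crossing lies in exactly one piece; $A(\ruling)=\sum_{v\in V}A_v(\ruling_v)=\sum_i A(\sigma_i)$ since each vertex lies in exactly one piece; and $\chi(\ruling)=\sum_i\chi(\sigma_i)$ because the reduced Euler characteristic $\chi(S_N)-\chi(\iota_\Right^*S_N)$ is additive under gluing of ruling surfaces along vertical lines --- the internal border contributions cancel by inclusion--exclusion, exactly as in the proof of Corollary~\ref{cor:gluing property of ruling polynomials}. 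Combining these identities with the product decomposition of the previous paragraph proves the displayed isomorphism, hence the lemma. The hard part will be the middle step: justifying that the iterated fiber product genuinely trivializes, i.e. that the restriction maps of the elementary strata to the intermediate orbit varieties are compatibly split projections. This rests on the canonical sections $\varphi_\ruling$ of the principal bundles $\pi_\ruling$ supplied by Lemma~\ref{lem:ruling stratification for trivial tangle}(2) and Lemma~\ref{lem:stratification of aug var at a vertex}(2), together with the identification, via Corollary~\ref{corollary:homotopy means the same orbit} and Remark~\ref{rem:acyclic}, of which orbit a given boundary restriction of an augmentation lands in.
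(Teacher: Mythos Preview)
Your proof is correct and follows the same inductive scheme as the paper---peeling off one elementary piece at a time and appealing to Lemma~\ref{lem:aug var elementary} for each piece, then using additivity of the indices $\chi,\hat B,r,A$ under concatenation. The paper is slightly more economical: rather than trivializing the iterated fiber product to a global product isomorphism, it simply notes that the projection $P_m\colon\aug^\ruling(\cT_m,\bfmu_m;\epsilon_\Left;\field)\to\aug^{\ruling|_{T_{m-1}}}(\cT_{m-1},\bfmu_{m-1};\epsilon_\Left;\field)$ has fiber $\aug(\cE_m,\bftau_m;\epsilon_{m-1},\ruling_m)$, whose isomorphism type is independent of $\epsilon_{m-1}$ by Lemma~\ref{lem:aug var elementary}; constant fibers already give multiplicativity of $\FF_q$-point counts and additivity of dimensions, which is all the lemma asserts. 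Your route---trivializing each fiber product step via the splitting $\aug_{i+1}(\ruling_i,\ruling_{i+1})\isomorphic\aug^{\ruling_i}_i\times\aug_{i+1}(\epsilon_{\ruling_i},\ruling_{i+1})$ from Remark~\ref{rem:aug-orbit and orbit-orbit} and Proposition~\ref{prop:aug-orbit vs orbit-orbit}---buys the stronger conclusion that each stratum is literally isomorphic to $(\field^\times)^{-\chi(\ruling)+\hat B}\times\field^{r(\ruling)+A(\ruling)}$, at the cost of invoking the $B^\Left$-action and canonical sections explicitly.
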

\begin{proof}
First consider the natural projection
\begin{align*}
P_m: \aug^\ruling(\cT_m,\bfmu_m;\epsilon_L;\field) \to \aug^{\ruling|_{T_{m-1}}}(\cT_{m-1},\bfmu_{m-1};\epsilon_\Left;\field)
\end{align*}
whose fiber is $\aug(\cE_m,\bftau_m;\epsilon_{m-1},\ruling_m)$ for each $\epsilon_{m-1}\in \aug^{\ruling_{m-1}}(T_{m-1,\Right},\mu_{m-1,\Right};\field)$.

Lemma~\ref{lem:aug var elementary} implies that the fibers are independent of the choice of $\epsilon_{m-1}$ and the following formulas:
\begin{align*}
\dim_\field \aug^\ruling(\cT_m,\bfmu_m;\epsilon_\Left;\field) &= \dim_\field \aug^{\ruling|_{T_{m-1}}}(\cT_{m-1},\bfmu_{m-1};\epsilon_\Left;\field)\\
&\mathrel{\hphantom{=}}- \chi(\ruling|_{E_n}) + \hat B(E_n) + r(\ruling|_{E_n})+ A(\ruling|_{E_n});\\
\#\aug^\ruling(\cT_m,\bfmu_m;\epsilon_\Left;\FF_q) &= (q-1)^{- \chi(\ruling|_{E_m})+ \hat B(E_m)} q^{ r(\ruling|_{E_m})+ A(\ruling|_{E_m})}\#\aug^{\ruling|_{T_{m-1}}}(\cT_{m-1},\bfmu_{m-1};\epsilon_\Left;\FF_q).
\end{align*}

By the inductive process, we obtain
\begin{align*}
\dim_\field \aug^\ruling(\cT,\bfmu;\epsilon_\Left;\field)&=-\chi(\ruling) +\hat B + r(\ruling) + A(\ruling)\quad\text{ and }\quad\\
\#\aug^\ruling(\cT,\bfmu;\epsilon_\Left;\field)&= (q-1)^{-\chi(\ruling)+\hat B} q^{r(\ruling)+A(\ruling)}
\end{align*}
since all indices $\chi, \hat B, r$ and $A$ are additive under the concatenation.
Then the conclusion directly follows from the partition of the augmentation variety given in \eqref{equation:partition of augmentation variety}.
\end{proof}

\begin{corollary}\label{corollary:dimension of augmentation variety}
The following holds:
\[
\dim_\field\aug(\cT,\bfmu;\epsilon_\Left,\ruling_\Right)=\max_{\ruling\in\bfR(\cT,\bfmu;\ruling_\Left,\ruling_\Right)}
\{-\chi(\ruling) +\hat B + r(\ruling) + A(\ruling)\}.
\]
\end{corollary}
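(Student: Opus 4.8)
The plan is to recognize that this corollary is nothing more than the dimension half of Lemma~\ref{lem:augnumber}, so the task is to isolate that portion of its proof and present it cleanly. First I would invoke the partition \eqref{equation:partition of augmentation variety}, which expresses $\aug(\cT,\bfmu;\epsilon_\Left,\ruling_R;\field)$ as a disjoint union of the locally closed strata $\aug^\ruling(\cT,\bfmu;\epsilon_\Left;\field)$ indexed by the \emph{finite} set $\bfR(\cT,\bfmu;\ruling_\Left,\ruling_\Right)$. Since the dimension of a variety that decomposes into finitely many locally closed pieces equals the maximum of the dimensions of those pieces, it suffices to compute $\dim_\field\aug^\ruling(\cT,\bfmu;\epsilon_\Left;\field)$ for each normal ruling $\ruling$.

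For the latter I would reproduce the inductive scheme from the proof of Lemma~\ref{lem:augnumber}: cut $(\cT,\bfmu)$ into the elementary pieces $(\cE_i,\bftau_i)$ along the vertical lines $\{x_i\}\times\RR_z$, and use the fiber-product description of $\aug^\ruling$ from Definition~\ref{def:ruling strata}. By Lemma~\ref{lem:aug var elementary}, the fiber of the projection $P_i$ onto the $(i-1)$-st partial stratum over any point is isomorphic to $(\field^\times)^{-\chi(\ruling|_{E_i})+\hat B(E_i)}\times\field^{r(\ruling|_{E_i})+A(\ruling|_{E_i})}$, and, crucially, this fiber is independent of the chosen boundary augmentation, so the dimensions add. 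Because $\chi$, $\hat B$, $r$ and $A$ are additive under concatenation, induction on $i$ gives $\dim_\field\aug^\ruling(\cT,\bfmu;\epsilon_\Left;\field)=-\chi(\ruling)+\hat B+r(\ruling)+A(\ruling)$, and taking the maximum over the finitely many $\ruling\in\bfR(\cT,\bfmu;\ruling_\Left,\ruling_\Right)$ yields the asserted formula.

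The step needing the most care is not the dimension bookkeeping but the legitimacy of the setup: one must know that the strata $\aug^\ruling(\cT,\bfmu;\epsilon_\Left;\field)$ genuinely tile the variety and that Lemma~\ref{lem:aug var elementary} is applicable at every internal slice. Under the standing assumptions of this subsection (all vertices of type $(0,\val(v))$, each right cusp carrying a base point, distinct $x$-coordinates) this is exactly Lemma~\ref{lem:aug var elementary} together with Remark~\ref{rem:acyclic}, the latter ensuring that the intermediate augmentations at the internal borders stay acyclic, so that only genuine normal rulings — not generalized ones — contribute there. The only genuine extra work, namely extending Lemma~\ref{lem:aug var elementary} to a vertex of arbitrary type $(\ell,r)$, is carried out separately in Proposition~\ref{prop:ruling decomposition} (see Remark~\ref{rem:ruling decomposition}); invoking it, the same argument applies verbatim and produces the general statement.
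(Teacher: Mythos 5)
Your proposal is correct and matches the paper's argument: the paper proves this corollary by citing exactly the partition \eqref{equation:partition of augmentation variety} together with Lemma~\ref{lem:augnumber}, whose proof is the same elementary-piece induction you reproduce. You have simply inlined the proof of that lemma rather than citing it, which is fine.
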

\begin{proof}
This is a direct consequence of the decomposition given in \eqref{equation:partition of augmentation variety} and Lemma~\ref{lem:augnumber}.
\end{proof}

Generalizing the result in \cite{HR2015, Su2017}, we need the following lemma to prove the main theorem.

\begin{lemma}\label{lem:inv quantity}
Suppose that each vertex $v$ is at the bottom of the vertical line that contains $v$.
For any $\ruling\in\bfR(\cT,\bfmu;\ruling_\Left,\ruling_\Right)$, the value
\begin{align*}
-\chi(\ruling)+2 r(\ruling)+A(\ruling)
\end{align*} 
is a constant and will be denoted by $D=D(\cT,\bfmu;\ruling_\Left,\ruling_\Right)$.
\end{lemma}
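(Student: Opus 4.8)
The plan is to exploit the elementary decomposition $(\cT,\bfmu)=(\cE_1,\bftau_1)\cdots(\cE_m,\bftau_m)$ together with the fact that each of $\chi$, $r$ and $A$ is additive under concatenation, and then to reduce the assertion to a local identity at each elementary piece that telescopes. First I would fix $\ruling=(N,S_N)\in\bfR(\cT,\bfmu;\ruling_\Left,\ruling_\Right)$ and set $\sigma_i\coloneqq\ruling|_{T_{i,\Right}}$ for $0\le i\le m$, so that $\sigma_0=\ruling_\Left$, $\sigma_m=\ruling_\Right$; by Remark~\ref{rem:acyclic} (i.e.\ Corollary~\ref{cor:acyclic}) each $\sigma_i$ is again fixed-point-free, hence a genuine normal ruling $\sigma_i\in\NR(T_{n_i},\mu_{n_i})$. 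Since $\chi(\ruling)=\chi(S_N)-\chi(\iota_\Right^*S_N)$ and $A(\ruling)=\sum_v A_v(\ruling_v)$ are additive under gluing of ruling surfaces along vertical slices (as already used for Corollary~\ref{cor:gluing property of ruling polynomials}), and since every graded return of $\ruling$ lies in a unique piece, we obtain
\[
-\chi(\ruling)+2r(\ruling)+A(\ruling)=\sum_{i=1}^m\Big(-\chi(\ruling|_{\cE_i})+2r(\ruling|_{\cE_i})+A(\ruling|_{\cE_i})\Big).
\]
Moreover, by the lemma following Definition~\ref{definition:elementary bordered Legendrian graphs} (for cusp and crossing pieces) and the proof of Lemma~\ref{lem:aug var elementary} (for a vertex of type $(0,\val(v))$, whose resolution involution $\ruling_v\in\NR(v)$ is the restriction of $\sigma_i$ to the $\val(v)$ new strands), the ruling $\ruling|_{\cE_i}$ is uniquely determined by the boundary data $(\sigma_{i-1},\sigma_i)$; thus $\Phi_{\cE_i}(\sigma_{i-1},\sigma_i)\coloneqq-\chi(\ruling|_{\cE_i})+2r(\ruling|_{\cE_i})+A(\ruling|_{\cE_i})$ is a well-defined function of $(\sigma_{i-1},\sigma_i)$ and the piece alone.

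The core of the argument is then to construct a single $\ZZ$-valued function $h$ on $\coprod_k\NR(T_k,\mu_k)$ — defined purely combinatorially from the involution and its Maslov potential (roughly, a weighted count of crossings and nestings of the chord diagram of the involution, corrected by the indices $A_b$ of Definition~\ref{def:indices for an involution at a trivial tangle}) — such that for every elementary piece $\cE$
\[
\Phi_{\cE}(\sigma_\Left,\sigma_\Right)=h(\sigma_\Right)-h(\sigma_\Left)+c(\cE),
\]
with $c(\cE)$ depending only on $\cE$. Granting this, the sum above telescopes to $h(\ruling_\Right)-h(\ruling_\Left)+\sum_{i}c(\cE_i)$, which is independent of $\ruling$; this is the claimed constancy, with $D(\cT,\bfmu;\ruling_\Left,\ruling_\Right)=h(\ruling_\Right)-h(\ruling_\Left)+\sum_i c(\cE_i)$ (and, comparing with Corollary~\ref{corollary:dimension of augmentation variety} a posteriori, this $D$ is the shift appearing in the ruling-count normalization).

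Verifying the displayed identity is a case check over the four piece types. Cusp pieces carry no crossings and no vertices, so $r=A=0$ on them, $\chi(\ruling|_\cE)$ is constant, and one simply sets $c(\cE)$ to that constant and asks $h$ to be unchanged across a cusp. For a crossing piece one checks that $-\chi(\ruling|_\cE)=s(\ruling|_\cE)$ (the ruling surface acquires a saddle precisely at a switch), so $\Phi_\cE\in\{0,1,2\}$ according as the crossing is a graded departure, a switch, or a graded return, and one arranges $h$ to increase by $1$ across a graded return and decrease by $1$ across a graded departure; this step is exactly the input of \cite{HR2015,Su2017} and reproduces the tangle case \cite[Lem.~4.15]{Su2017}. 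The genuinely new, and hardest, case is the vertex piece: under the standing assumption that $v$ sits at the bottom of its vertical line, $\cT_{v,\ruling_v}$ is the nested configuration of $\tfrac{\val(v)}2$ left cusps with the prescribed marking pattern of Section~\ref{sec:resolution of vertex}, and one must compute $-\chi(\ruling|_\cE)$, the number $r(\ruling|_\cE)$ of graded returns appearing inside this configuration, and prove that
$-\chi(\ruling|_\cE)+2r(\ruling|_\cE)+A_v(\ruling_v)$ equals $h(\sigma_\Right)-h(\sigma_\Left)+c_v$. I expect this matching to be the main obstacle: it requires showing that the bookkeeping of which crossings of the resolution are marked, switches, or graded returns interlocks with the index $A_v(\ruling_v)$ of Definition~\ref{def:indices for an involution at a vertex} in precisely the right way, for which the degree additivity $|\vg_{a,\ell}|=|\vg_{a,\ell_1}|+|\vg_{a+\ell_1,\ell_2}|+1$ and the interpretation of $n(v,a,i)$ as a spiral-curve intersection number are the natural tools. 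Finally one notes, again via Remark~\ref{rem:acyclic}, that the sweep never exits $\NR$, so $h$ is only ever evaluated where it is defined.
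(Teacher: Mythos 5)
Your overall strategy is the same as the paper's: the telescoping ``potential'' $h$ you posit is exactly the quantity $A_b(\ruling_x)$ of Definition~\ref{def:indices for an involution at a trivial tangle} evaluated on the slice involutions $\ruling_x=\ruling|_{\{x\}\times\RR_z}$, and the paper likewise shows that this quantity jumps by $+1$ at a graded return, by $-1$ at a graded departure, by a $\ruling$-independent constant $C_c$ at a cusp, and is unchanged at switches; summing over $x\in[x_0,x_1]$ then forces $r(\ruling)-d(\ruling)+A(\ruling)$ to be constant, which combined with $\chi(\ruling)=c_\Right-s(\ruling)$ and the fact that $s+r+d$ equals the total number of crossings gives the claim. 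So the skeleton of your argument is correct and matches the paper.

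The genuine gap is the vertex step, which you explicitly defer (``I expect this matching to be the main obstacle\dots it requires showing\dots''). That step is not a routine verification one can wave at: it is the entire new content of the lemma beyond the tangle case of \cite{HR2015,Su2017}, and the paper spends most of its proof on it. Concretely, what has to be shown is that
\[
A_b(\ruling_{q+\delta})-A_b(\ruling_{q-\delta})=A_v(\ruling_v)+C_v(\mu),
\]
where $q$ is the $x$-coordinate of the vertex and $C_v(\mu)$ depends only on the Maslov potential and the embedding $\iota_v$ of the vertex strands, not on $\ruling$. The paper proves this by comparing the index sets $I_{q+\delta}(i)$ with $I_{q-\delta}(i)$ and $I_v(i)$ (using that $\iota_v(I(v))$ is a consecutive block at the bottom of the slice, which is where the standing assumption of the lemma enters), and separately comparing the nesting counts $|A_{q+\delta}(i)|$ with $|A_{q-\delta}(i)|$ and $|A_v(i)|$; the first comparison produces the constant $C_v$ and the second produces exactly $\sum_{i\in U_v}|A_v(i)|$. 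Your proposal instead wants to compute $-\chi+2r+A_v$ by enumerating markings, switches and returns inside the resolved configuration $\cT_{v,\ruling_v}$, but you neither carry out that enumeration nor show it reproduces the required jump; note also that in the paper's bookkeeping the vertex piece contributes $A_v(\ruling_v)$ to $A(\ruling)$ and \emph{nothing} to $r(\ruling)$ (cf.\ Lemma~\ref{lem:aug var elementary}), so the crossings internal to the resolution must not be double-counted as returns. As written, the proposal reduces the lemma to an unproved combinatorial identity at each vertex and therefore does not constitute a proof. (A minor additional point: the fixed-point-freeness of the slice involutions $\sigma_i$ follows from the definition of the ruling surface, not from the acyclicity statement of Remark~\ref{rem:acyclic}.)
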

\begin{proof}
Let us denote 
\[
c_\Right \coloneqq \#(\text{right cusps of }T)\quad\text{ and }\quad
s(\ruling) \coloneqq \#(\text{switches of }\ruling).
\]
Recall that $d(\ruling)$ is the number of graded departures of $\ruling$ from Definition~\ref{def:returns and departures}.
Then $\chi(\ruling)=c_\Right - s(\ruling)$ and hence
\begin{align*}
-\chi(\ruling)+2 r(\ruling)+A(\ruling)=\big(s(\ruling)+ r(\ruling) + d(\ruling)- c_\Right \big) + r(\ruling) + A(\ruling) - d(\ruling).
\end{align*}
Note that $s(\ruling)+ r(\ruling) + d(\ruling)$ is the number of crossings of the front diagram $T$ and $c_\Right$ also depends only on the front diagram. So it remains to show that $r(\ruling) + A(\ruling) - d(\ruling)$ is independent of the choice of ruling.

Assume that $T\subset [x_0,x_1]\times \RR_z$, and take a normal ruling $\ruling\in \bfR(\cT,\bfmu;\ruling_\Left,\ruling_\Right)$.
For each $x\in [x_0,x_1]$ missing the crossings, cusps, and vertices, consider the restriction 
\[
\ruling_x \coloneqq \ruling|_{\{x\}\times \RR_z}\in \NR(T|_{\{x\}\times \RR_z},\mu|_{\{x\}\times \RR_z}),
\]
and $ A(x)\coloneqq A_b(\ruling_x)$ as in Definition \ref{def:indices for an involution at a trivial tangle}.
Note that $ A(x_0)=A_b(\ruling_\Left)$ and $A(x_1)=A_b(\ruling_\Right)$.

Let us look at the behavior of $A(x)$ by increasing $x\in[x_0,x_1]$.
It is direct from the definition of $A(x)$ that it increases (resp. decreases) by 1 when $x$ passes through a graded return (resp. a graded depature).
There is no change when $x$ passes switches and other crossings.

Now focus the change of $A(x)$ when it passes through a vertex. Denote $q\in[x_0, x_1]$ be a position of a vertex $v$ of type $(0,\val(v))$. 
Recall the corresponding definition $A_v(\ruling)$ from Definition~\ref{def:indices for an involution at a vertex} as follows: 

Let $\mu_v$ be the restriction of $\mu$ to a neighborhood of $v$, then for any $i\in I(v)=[\val(v)]$, we introduce
\begin{equation*}
I_v(i)\coloneqq \{j>0 \mid |e_i|=|Z^{n(v_{i,j})}e_{i+j}|, \text{ i.e.,} \mu_v(i)-\mu_v(i+j)+n(v_{i,j})=0\}.
\end{equation*}
For the induced restriction $\ruling_v \in \NR(v)$, we obtain a partition $I(v)=U_v\amalg L_v$ with a bijection $\ruling_v:U_v\stackrel{\isomorphic}\longrightarrow L_v$ as in Definition \ref{def:canonical augmentation at a vertex}. For any $i\in U_v$, let us define
\begin{align*}
A_{v,\ruling}(i)\coloneqq \{j\in U_v \mid \ruling_v(j)<\ruling_v(i), \textrm{ and } j-i\in I_v(i)\}.
\end{align*}
Now, define $A_v(\ruling)\in\NN$ by 
\begin{align*}
A_v(\ruling)\coloneqq \sum_{i\in U_v}|A_{v,\ruling}(i)|+\sum_{i\in L_v}|I_v(i)|.
\end{align*}

Now compare $(I_v,\ruling_v,A_v)$, $(I_{q\pm\delta},\ruling_{q\pm\delta}, A_{q\pm\delta})$ for sufficiently small $\delta>0$.
Let $T\cap \{{x=q-\delta}\}\isomorphic [n_{q-\delta}]$, then there is a canonical identification 
$\iota_v:I(v)\isomorphic [\val (v)]\hookrightarrow [n_{q-\delta}+\val(v)] \isomorphic [n_{q+\delta}].$
Moreover, $\iota_v$ induces the following commutative diagram:
\[
\begin{tikzcd}[column sep=3pc, row sep=2pc]
U_v \arrow[r,"\iota_v|_{U_v}",hook] \arrow[d,"\ruling_v","\relation"'] & U_{q+\delta} \ar[d,"\ruling_{q+\delta}","\relation"'] \\
L_v \ar[r,"\iota_v|_{L_v}",hook] & L_{q+\delta}
\end{tikzcd}
\]
Note that the image $\iota_v(I(v))$ is consecutive.
For the notational convenience, let us denote
\begin{align*}
I_{q-\delta}&=\{1,\dots,n_{q-\delta}\};\\
I_{q+\delta}&=\{1,\dots,n_{q-\delta}\}\amalg \iota_v(I(v))\isomorphic [n_{q+\delta}].
\end{align*}
Let $I_*(i)$ be defined as in Definition \ref{def:indices for an involution at a trivial tangle}, for $*\in\{q\pm\delta\}$. Then for $i\in[n_*]$ and $*\in\{q\pm\delta ,v\}$, we have
\begin{align*}
I_{q+\delta}(i)=
\begin{cases}
I_{q-\delta}(i)\amalg \iota_v(\{j\in I(v) \mid \mu(j)=\mu(i)\}) &\text{ if }i\leq n_{q-\delta};\\
\left(i+I_v(i-n_{q-\delta})\right)\cap [\val(v)] &\text{ if }i\in\iota_v(I(v)),
\end{cases}
\end{align*}
where $i+S$ means $\{i+s \mid s\in S\}$.
Note that the construction of  $I_{q+\delta}$ from $I_{q-\delta}$ and $I_v$ does not depend on the ruling $\ruling$, depend only on $\mu$ and $\iota_v$. This implies that
\begin{align}\label{eqn:I sum}
\sum_{i\in L_{q+\delta}}|I_{q+\delta}(i)|-\sum_{i\in L_{q-\delta}}|I_{q-\delta}(i)|=\sum_{i\in L_v}|I_v(i)|+C_v
\end{align}
and $C_v=C_v(\mu)$ does not depend on the ruling $\ruling$. 

Note that $\iota_v(I(v))\subset [n]$ has a perfect matching with respect to a given ruling $\ruling_v$.
Then the definition of $A_*(i)$ for $i\in U_*$, $*\in\{q\pm\delta ,v\}$ conclude the following:
For $i\in U_{q+\delta}$,
\begin{align*}
|A_{q+\delta}(i)|=
\begin{dcases}
|A_{q-\delta}(i)| &\text{ if }i\leq n_{q-\delta};\\
|A_v(i-n_{q-\delta})| &\text{ if }i\geq n_{q-\delta},
\end{dcases}
\end{align*}
and we obtain
\begin{align}\label{eqn:A sum}
\sum_{i\in U_{q_\delta}}|A_{q+\delta}(i)|-\sum_{i\in U_{q-\delta}}|A_{q-\delta}(i)|=\sum_{i\in U_v}|A_v(i)|.
\end{align}
By combining (\ref{eqn:I sum}) and (\ref{eqn:A sum}), we have
\[
A_{q+\delta}(\ruling)-A_{q-\delta}(\ruling)=A_v(\ruling)+C_v
\]
Basically we apply same strategy for left and right cusps, let $c$ be a cusp with the $x$-coordinate $p$, then
\[
A_{p+\delta}(\ruling)-A_{p-\delta}(\ruling)=C_c,
\]
where the constant $C_c$ depends only on $(\cT,\bfmu)$, see \cite[Proposition 4.22]{Su2017}.

Now summing up all the equations which come from the crossings, cusps, and vertices. Then we have
\[
A_b(\ruling_\Right)-A_b(\ruling_\Left)=A(x_1)-A(x_0)=r(\ruling)-d(\ruling)+\sum_{v\in V(T)}A_v(\ruling)+\sum_{v\in V(T)}C_v +\sum_{\text{cusps}}C_c.
\]
Since $A_b(\ruling_\Right)-A_b(\ruling_\Left)$ is fixed and $\sum_{v\in V(T)}C_v +\sum_{\text{cusps}}C_c$ independent of $\ruling$, we conclude that $r(\ruling)-d(\ruling)+A(\ruling)$ is independent of the choice of the ruling $\ruling$.
This proves the lemma.
\end{proof}

\begin{remark}\label{rem:inv quantity}
With some more work, one can also show that, for any bordered Legendrian graph $(\cT,\bfmu)$, the index $-\chi(\ruling)+2\hat{r}(\ruling)+A(\ruling)$ is a constant in $\ruling\in\bfR(\cT,\mu;\ruling_\Left,\ruling_\Right)$.
\end{remark}

Now we consider the operations manipulating base points.
There are essentially two operations on base points as depicted in Figure~\ref{figure:base points}.
\begin{remark}
Note that the operation that moves a base point through a crossing below or above can be realized as a sequence of front Reidemeister moves and induces an isomorphism between LCH DGAs.
Therefore their augmentation varieties and augmentation numbers coincide.
See \cite[Lemma 4.21]{Su2017} for detail.
\end{remark}

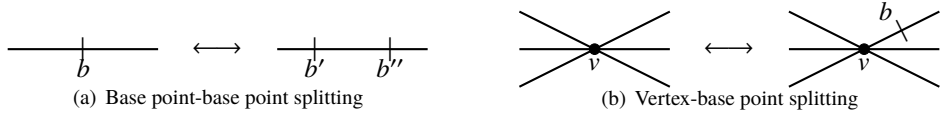
\begin{figure}[ht]
\subfigure[Base point-base point splitting]{\makebox[0.45\textwidth]{$
\begin{tikzpicture}[baseline=-.5ex]
\useasboundingbox(-1,-0.5)--(1,0.5);
\draw[thick] (-1,0)-- (0,0) node{$|$} node[below] {$b$} -- (1,0);
\end{tikzpicture}\quad\longleftrightarrow\quad
\begin{tikzpicture}[baseline=-.5ex]
\draw[thick] (-1,0)-- node[near start]{$|$}node[near start, below]{$b'$} node[near end]{$|$} node[near end,below]{$b''$} (1,0);
\end{tikzpicture}
$}}
\subfigure[Vertex-base point splitting]{\makebox[0.45\textwidth]{$
\begin{tikzpicture}[baseline=-.5ex]
\useasboundingbox(-1,-0.5)--(1,0.5);
\draw[thick] (-1,-0.5) -- (1,0.5);
\draw[thick] (-1,0)-- (1,0);
\draw[thick] (-1,0.5) -- (1,-0.5);
\draw[fill] (0,0) circle (2pt) node[below] {$v$};
\end{tikzpicture}\quad\longleftrightarrow\quad
\begin{tikzpicture}[baseline=-.5ex]
\draw[thick] (-1,-0.5) -- (0,0) -- node[midway,sloped] {$|$} node[midway,above left] {$b$} (1,0.5);
\draw[thick] (-1,0)-- (1,0);
\draw[thick] (-1,0.5) -- (1,-0.5);
\draw[fill] (0,0) circle (2pt) node[below] {$v$};
\end{tikzpicture}
$}}
\caption{Operations on base points}
\label{figure:base points}
\end{figure}

\begin{lemma}\label{lem:base point}
Let $d\coloneqq \max \deg_z \langle\ruling_\Left|R(\cT,\bfmu; z^2,z)| \ruling_\Right\rangle$ and
\begin{align*}
\hat B
&\coloneqq \#(\text{base points in $T$}) +\sum_{v\in V(T)}\frac{val(v)}{2}.
\end{align*}
Then the normalized augmentation number
\[
q^{\frac{d+\hat B}{2}}z^{-\hat B}\augnumber(\cT,\bfmu;\ruling_\Left, \ruling_\Right;\FF_q)\big{|}_{z=(q^{1/2}-q^{-1/2})}
\]
is independent of the number and position of the base points on $T$.

In other words, it is invariant under the operations on base points described in Figure~\ref{figure:base points}.
\end{lemma}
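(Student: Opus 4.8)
The plan is to verify the invariance one base-point operation at a time. \emph{Moving a base point through a crossing} (above or below): as noted in the excerpt, this is realized by a sequence of front Reidemeister moves and induces an isomorphism of LCH DGAs, so $\augnumber(\cT,\bfmu;\ruling_\Left,\ruling_\Right;\FF_q)$ is unchanged by Theorem~\ref{thm:inv of aug var} and Proposition~\ref{prop:augnumber invarince} (see \cite[Lemma~4.21]{Su2017}); and since the underlying front diagram, the number of base points, and the numbers $\val(v)$ are all untouched, both $\widehat{B}$ and $d$ are unchanged and the normalized quantity is trivially preserved. Thus it remains to handle the two splitting operations of Figure~\ref{figure:base points}, i.e.\ to compare $(\cT,\bfmu)$ with the diagram $(\cT',\bfmu')$ obtained by introducing one additional base point (splitting $b\rightsquigarrow b',b''$, or creating a base point $b$ on a half-edge at a vertex).

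First I record that $d$ and $\widehat{B}$ behave predictably under such a splitting. A base point is neither a crossing, a cusp, nor a vertex, so the normal-ruling data is insensitive to it: the resolution set $\cP(\cT,\bfmu)$, the sets $\bfR((\cT_\Phi,\bfmu_\Phi),M_\Phi;\ruling_\Left,\ruling_\Right)$, and the weights $w(\ruling)=q^{A(\ruling)/2}z^{-\chi(\ruling)}$ coincide for $\cT$ and $\cT'$. Hence $\langle\ruling_\Left|R(\cT,\bfmu;q,z)|\ruling_\Right\rangle$, and therefore $d=\max\deg_z\langle\ruling_\Left|R(\cT,\bfmu;z^2,z)|\ruling_\Right\rangle$, are unchanged, whereas $\widehat{B}$ increases by exactly $1$, since the base-point count goes up by $1$ and $\sum_{v}\val(v)/2$ is unaffected by both operations.

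Next I analyze the effect of the extra base point on the augmentation variety. In the construction of $A^\CE$, splitting $b$ into $b',b''$ replaces the coefficient ring $\ring[t_b^{\pm1},\dots]$ by $\ring[t_{b'}^{\pm1},t_{b''}^{\pm1},\dots]$ and, in every differential, replaces the monomial $t_b^{\pm1}$ contributed by a disk through a neighborhood of that point with $t_{b'}^{\pm1}t_{b''}^{\pm1}$; creating a base point $b$ on a half-edge adjoins $\ring[t_b^{\pm1}]$ and inserts a $t_b^{\pm1}$ on the disks crossing it. In either case the new invertible generator has degree $0$, and the defining equations of an augmentation of $A^\CE(T',\mu')$ are exactly those of $A^\CE(T,\mu)$ with $\epsilon(t_b)$ replaced by $\epsilon'(t_{b'})\epsilon'(t_{b''})$ (resp.\ with a single unit $\epsilon'(t_b)$ appearing). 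Consequently, recording the free unit $\alpha:=\epsilon'(t_{b'})\in\field^\times$ (resp.\ $\alpha:=\epsilon'(t_b)$) and noting that the restrictions to the left and right border DGAs involve no interior base point, one obtains an algebraic isomorphism
\[
\aug(\cT',\bfmu';\epsilon_\Left,\ruling_\Right;\field)\isomorphic\field^\times\times\aug(\cT,\bfmu;\epsilon_\Left,\ruling_\Right;\field),
\]
compatible with the boundary conditions $\ruling_\Left,\ruling_\Right$. Therefore $\dim_\field$ grows by $1$ while $\#(\,\cdot\,)(\FF_q)$ is multiplied by $q-1$, so
\[
\augnumber(\cT',\bfmu';\ruling_\Left,\ruling_\Right;\FF_q)=\tfrac{q-1}{q}\,\augnumber(\cT,\bfmu;\ruling_\Left,\ruling_\Right;\FF_q).
\]

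Finally I combine the bookkeeping: $d$ fixed, $\widehat{B}\mapsto\widehat{B}+1$, $\augnumber\mapsto\tfrac{q-1}{q}\augnumber$. With $z=q^{1/2}-q^{-1/2}=(q-1)q^{-1/2}$, the extra prefactor contributed by the splitting equals $q^{1/2}\,z^{-1}\cdot\tfrac{q-1}{q}=q^{1/2}\cdot\tfrac{q^{1/2}}{q-1}\cdot\tfrac{q-1}{q}=1$, so $q^{\frac{d+\widehat{B}}{2}}z^{-\widehat{B}}\augnumber(\cT,\bfmu;\ruling_\Left,\ruling_\Right;\FF_q)\big|_{z=q^{1/2}-q^{-1/2}}$ is unchanged by each move, hence independent of the number and position of the base points. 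The only step requiring genuine care is the third one: precisely tracking how the $t$-variables enter $A^\CE$ and its augmentation equations so that the new generator contributes only a transverse $\field^\times$-factor invisible to the fixed boundary conditions; the rest is elementary manipulation of the normalization.
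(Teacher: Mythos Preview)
Your argument follows the same strategy as the paper's: show that $d$ is unchanged, $\hat B$ increases by $1$, the augmentation variety picks up a $\field^\times$ factor under each splitting, and then check that the normalization cancels. The base-point/base-point splitting and the final bookkeeping are handled correctly and match the paper.

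Your treatment of the vertex--base-point splitting, however, is incomplete. You assert that the augmentation equations of the enlarged DGA are those of the smaller one ``with a single unit $\epsilon'(t_b)$ appearing,'' and that merely \emph{recording} this unit produces the $\field^\times$ factor. But $t_b$ enters every differential coming from a disk that crosses the base-pointed half-edge, so if you set $\epsilon'(t_b)=\alpha$ and keep all other augmentation values unchanged, the crossing equations will in general fail for $\alpha\neq 1$. The paper closes this gap by observing that the DGA morphism from the smaller to the larger DGA sends each vertex generator $v_{a,i}$ with $h_{v,a}$ or $h_{v,a+i}$ equal to the base-pointed half-edge to $v_{a,i}t_b$ or $t_b^{-1}v_{a,i}$, and that every occurrence of $t_b$ in a differential is paired with such a vertex generator. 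Hence, given an augmentation of the smaller DGA and any $x_b\in\field^\times$, one obtains an augmentation of the larger DGA by \emph{rescaling} the values on those vertex generators by $x_b^{\mp 1}$; this rescaling is precisely what makes the $\field^\times$ factor transverse. Your closing remark that step three ``requires genuine care'' is on point---the missing ingredient is exactly this absorption of $t_b$ into the vertex-generator values.
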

\begin{proof}
Notice that the numbers $d$ and $\sum_v \val(v)/2$ are preserved under the operations on base points.
Therefore we only need to concern $q^{\frac{\#B}2}z^{-\#B}$ for the normalization, where $B$ is the set of base points.
Since both operations increase the number of base points by $1$, the contribution of the additional base point in the normalization is precisely 
\[
q^{\frac 12}z^{-1}=\frac{q}{q-1}.
\]
Therefore it suffices to show that for each operation $(\cT',\bfmu')\to(\cT,\bfmu)$ on base points, 
\[
\aug(\cT,\bfmu;\ruling_\Left,\ruling_\Right;\field)\isomorphic
\aug(\cT',\bfmu';\ruling_\Left,\ruling_\Right;\field)\times\field^\times
\]
which induces the desired equality
\[
\augnumber(\cT,\bfmu;\ruling_\Left,\ruling_\Right;\field)=\frac{q-1}q \aug(\cT',\bfmu';\ruling_\Left,\ruling_\Right;\field).
\]

The base point-base point splitting operation $(\cT',\bfmu')\to(\cT,\bfmu)$ induces a bordered DGA morphism $\bff:\dga^\CE(\cT',\bfmu')\to\dga^\CE(\cT,\bfmu)$ since it is a special case of the tangle replacement defined in \cite{AB2018}.
Notice that two DGAs $A^\CE(T',\mu')$ and $A^\CE(T,\mu)$ have the same generating sets but over the different rings 
\[
\ring[t_b^{\pm1}\mid b\in B']\quad\text{ and }\quad\ring[t_b^{\pm1}\mid b\in B],
\]
where $B=(B'\setminus\{b\})\amalg\{b',b''\}$. 
Indeed, the DGA morphism $f:A^\CE(T',\mu')\to A^\CE(T,\mu)$ maps $t_b$ to $t_{b'}t_{b''}$ and each occurrence of $t_{b'}$ in any differential of $A^\CE(T,\mu)$ comes together with $t_{b''}$ and \textit{vice versa}.
Therefore, their augmentation categories are as follows:
\begin{align*}
\aug(\cT,\bfmu;\ruling_\Left,\ruling_\Right;\field)&\isomorphic
\aug(\cT',\bfmu';\ruling_\Left,\ruling_\Right;\field)\times\{(x_{b'}, x_{b''})\in (\field^\times)^2\mid x_{b'}x_{b''}=x_b\}\\
&\isomorphic \aug(\cT',\bfmu';\ruling_\Left,\ruling_\Right;\field)\times \field^\times,
\end{align*}
where $x_b, x_{b'}$ and $x_{b''}$ are regarded as variables corresponding to the values of augmentations of $t_b, t_{b'}$ and $t_{b''}$, respectively.

Now let $(\cT',\bfmu')\to(\cT,\bfmu)$ be the vertex-base point splitting operation.
Then as before, we have a bordered morphism $\bff:\dga^\CE(\cT',\bfmu')\to\dga^\CE(\cT,\bfmu)$ since it is another tangle replacement again, and moreover, two DGAs $A^\CE(T',\mu')$ and $A^\CE(T,\mu)$ only differ by the base rings 
\[
\ring[t_b^{\pm1}\mid b\in B']\quad\text{ and }\quad\ring[t_b^{\pm1}\mid b\in B],
\]
where $B=B'\amalg\{b\}$.
The DGA morphism $f:A^\CE(T',\mu')\to A^\CE(T,\mu)$ sends all generators to the very corresponding generators except for the following vertex generators $v_{a,i}$:
\begin{itemize}
\item the vertex $v$ is the vertex where the splitting happens;
\item the half-edge $h_{v,a}$ or $h_{v,a+i}$ is the half-edge on where the additional base point $b$ lies.
\end{itemize}
In this case, $f$ maps $v_{a,i}$ to either $v_{a,i}t_b$ or $t_b^{-1}v_{a,i}$, and as before each occurrence of $t_b$ in any differential of $A^\CE(T,\mu)$ comes together with such a vertex generator and \textit{vice versa}.

In other words, whatever we have an augmentation $\epsilon'$ for $A^\CE(T',\mu')$ and we assign a nonzero value $x_b$ to $t_b$, by multiplying $x_b^{-1}$ to each augmentation value for those vertex generators, we obtain an augmentation $\epsilon$ for $A^\CE(T,\mu)$.
Conversely, for each augmentation $\epsilon$ for $A^\CE(T,\mu)$, by assigning the value $\epsilon(v_{a,i}t_b)$ to $v_{a,i}$, we have an augmentation for $A^\CE(T',\mu')$.
It is obvious that these two constructions are inverses to each other up to the value of $t_b$.
Therefore we have
\[
\aug(\cT,\bfmu;\ruling_\Left,\ruling_\Right;\field)\isomorphic
\aug(\cT',\bfmu';\ruling_\Left,\ruling_\Right;\field)\times\{x_b\in\field^\times\}
\isomorphic\aug(\cT',\bfmu';\ruling_\Left,\ruling_\Right;\field)\times \field^\times.
\]
This completes the proof.
\end{proof}

\begin{theorem}\label{theorem:augmentation number is a ruling polynomial}
The augmentation number and the ruling polynomial are related as follows:
\begin{align*}
\augnumber(\cT,\bfmu;\ruling_\Left, \ruling_\Right;\FF_q)= q^{-\frac{d+\widehat B}{2}}z^{\widehat B}\langle\ruling_\Left| R(\cT,\bfmu;q,z)|\ruling_\Right\rangle,
\end{align*}
where $z=q^{\frac{1}{2}}-q^{-\frac{1}{2}}$, $d\coloneqq \max \deg_z \langle\ruling_\Left| R(\cT,\bfmu;z^2,z)|\ruling_\Right\rangle$ and
\begin{align*}
\hat B
&\coloneqq \#(\text{base points in $T$}) +\sum_{v\in V(T)}\frac{val(v)}{2}.
\end{align*}
\end{theorem}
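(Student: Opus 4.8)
The plan is to assemble the three technical lemmas of this subsection --- the stratified point count of Lemma~\ref{lem:augnumber}, the ruling-independence of the index in Lemma~\ref{lem:inv quantity}, and the base-point normalization of Lemma~\ref{lem:base point} --- into the stated identity, after reducing to a convenient model for $(\cT,\bfmu)$. First I would arrange that $(\cT,\bfmu)$ satisfies the standing Assumption of this section together with the extra hypothesis of Lemma~\ref{lem:inv quantity}. If $n_\Left$, $n_\Right$ or some $\val(v)$ is odd there is no admissible $\ruling_\Left$, $\ruling_\Right$ or $\ruling\in\bfR(\cT,\bfmu;\ruling_\Left,\ruling_\Right)$ and no augmentation, so both sides vanish and the statement is vacuous; hence assume these are all even. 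Using Reidemeister moves --- in particular $\RM{V}$ to sweep strands past a vertex so that it becomes of type $(0,\val(v))$ --- together with a Legendrian isotopy pushing each vertex to the bottom of the vertical strip through it and making all $x$-coordinates of cusps, crossings and vertices distinct, and using the base-point operations of Figure~\ref{figure:base points} to put a base point on every right cusp, I reduce to the model case. This reduction does not change either side of the claimed formula: $\augnumber$ is a Legendrian isotopy invariant (Proposition~\ref{prop:augnumber invarince}), $\langle\ruling_\Left|R(\cT,\bfmu;q,z)|\ruling_\Right\rangle$ is one (Theorem~\ref{thm:invariance_ruling}, which also preserves $d$), and Lemma~\ref{lem:base point} shows precisely that $q^{(d+\hat B)/2}z^{-\hat B}\augnumber$ and the correspondingly normalized ruling polynomial are unchanged under the base-point operations, so the two normalized quantities move together.

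Next, cut $(\cT,\bfmu)=(\cE_1,\bftau_1)\cdots(\cE_m,\bftau_m)$ into elementary pieces along vertical lines. Lemma~\ref{lem:aug var elementary}, applied piece by piece and glued through the fiber products of Definition~\ref{def:ruling strata}, produces the partition \eqref{equation:partition of augmentation variety} together with
\[
\aug^\ruling(\cT,\bfmu;\epsilon_\Left;\field)\isomorphic(\field^\times)^{-\chi(\ruling)+\hat B}\times\field^{\,r(\ruling)+A(\ruling)}
\]
for $\epsilon_\Left=\epsilon_{\ruling_\Left}$, by additivity of $\chi$, $\hat B$, $r$, $A$ under concatenation; this is exactly Lemma~\ref{lem:augnumber}, which yields
\[
\augnumber(\cT,\bfmu;\ruling_\Left,\ruling_\Right;\FF_q)=q^{-D_0}\sum_{\ruling}(q-1)^{-\chi(\ruling)+\hat B}q^{\,r(\ruling)+A(\ruling)},\qquad D_0=\max_{\ruling}\bigl\{-\chi(\ruling)+\hat B+r(\ruling)+A(\ruling)\bigr\},
\]
the sum over $\ruling\in\bfR(\cT,\bfmu;\ruling_\Left,\ruling_\Right)$. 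At this point I must check carefully that the statistics $\chi(\ruling)$, $A(\ruling)=\sum_v A_v(\ruling_v)$ and $r(\ruling)$ produced on the augmentation side by Lemma~\ref{lem:aug var elementary} are literally the ones entering Definition~\ref{definition:ruling polynomial}.

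Now set $z=q^{1/2}-q^{-1/2}$, so $q-1=q^{1/2}z$ and $(q-1)^{-\chi(\ruling)+\hat B}=q^{(-\chi(\ruling)+\hat B)/2}z^{-\chi(\ruling)+\hat B}$. By Lemma~\ref{lem:inv quantity} the integer $D:=-\chi(\ruling)+2r(\ruling)+A(\ruling)$ is independent of $\ruling$, so $r(\ruling)=\tfrac12\bigl(D+\chi(\ruling)-A(\ruling)\bigr)$; substituting, the total $q$-exponent of the $\ruling$-summand collapses to $\tfrac12\bigl(\hat B+D+A(\ruling)\bigr)$, and therefore
\[
\augnumber(\cT,\bfmu;\ruling_\Left,\ruling_\Right;\FF_q)=q^{-D_0+(\hat B+D)/2}z^{\hat B}\sum_{\ruling}q^{A(\ruling)/2}z^{-\chi(\ruling)}=q^{-D_0+(\hat B+D)/2}z^{\hat B}\,\langle\ruling_\Left|R(\cT,\bfmu;q,z)|\ruling_\Right\rangle.
\]
It remains to identify the prefactor. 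Putting $q=z^2$ in Definition~\ref{definition:ruling polynomial} gives $\langle\ruling_\Left|R(\cT,\bfmu;z^2,z)|\ruling_\Right\rangle=\sum_{\ruling}z^{A(\ruling)-\chi(\ruling)}$ with nonnegative coefficients, so $d=\max_{\ruling}\bigl(A(\ruling)-\chi(\ruling)\bigr)$; and the same substitution $r(\ruling)=\tfrac12(D+\chi(\ruling)-A(\ruling))$ gives $-\chi(\ruling)+\hat B+r(\ruling)+A(\ruling)=\hat B+\tfrac D2+\tfrac12\bigl(A(\ruling)-\chi(\ruling)\bigr)$, hence $D_0=\hat B+\tfrac D2+\tfrac d2$. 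Thus $-D_0+(\hat B+D)/2=-(d+\hat B)/2$, and the displayed formula becomes exactly the asserted one.

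The genuine content is already in hand in Lemmas~\ref{lem:augnumber} and \ref{lem:inv quantity}; within the present argument the main obstacles are of a bookkeeping nature. The fiddliest point is the reduction of the first step: one must verify that putting a base point on every right cusp and sweeping strands past every vertex leaves both normalized sides genuinely invariant, which is where the interplay between $\hat B$, the base-point operations, and Lemma~\ref{lem:base point} has to be tracked with care. The second delicate point is the identification in the second step that the three ruling statistics $\chi$, $A$, $r$ extracted from the elementary augmentation varieties coincide on the nose with those in Definition~\ref{definition:ruling polynomial}, since any sign discrepancy there would corrupt the $q=z^2$ specialization and hence the exponent $d$.
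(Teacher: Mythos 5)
Your proposal is correct and follows essentially the same route as the paper's proof: reduce to the model diagram via the invariance results and Lemma~\ref{lem:base point}, apply the stratified count of Lemma~\ref{lem:augnumber}, and use the constancy of $-\chi(\ruling)+2r(\ruling)+A(\ruling)$ from Lemma~\ref{lem:inv quantity} to collapse the exponents. Your bookkeeping via $D_0=\hat B+\tfrac{D}{2}+\tfrac{d}{2}$ is just a mild reorganization of the paper's computation with a maximizing ruling $\ruling_0$ and the identity $r(\ruling)-r(\ruling_0)=\tfrac12(d+\chi(\ruling)-A(\ruling))$, and the algebra checks out.
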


\begin{proof}
Let us first show the statement for the special front diagram as described in Lemma~\ref{lem:inv quantity}. That is, we assume that each vertex $v$ is at the bottom of the vertical line containing $v$.

Let us choose a normal ruling $\ruling_0\in\bfR(\cT,\bfmu;\ruling_\Left,\ruling_\Right)$ such that
\[
\dim_\field \aug(\cT,\bfmu;\epsilon_\Left,\ruling_\Right;\field) = -\chi(\ruling_0) + \hat B + r(\ruling_0) + A(\ruling_0).
\]
Then we claim that $d=-\chi(\ruling_0)+A(\ruling_0)$.
By Corollary~\ref{corollary:dimension of augmentation variety} and Lemma~\ref{lem:inv quantity} imply that
\begin{align*}
D+\hat B-r(\ruling_0)&= -\chi(\ruling_0) +\hat B+ r(\ruling_0) + A(\ruling_0)\\
&=\max_\ruling\{ -\chi(\ruling)+\hat B+ r(\ruling) + A(\ruling)\}\\
&= \max_\ruling\{D+\hat B-r(\ruling)\}.
\end{align*}
In other words, $-r(\ruling_0)=\min_\ruling\{-r(\ruling)\}$ since $D$ and $\hat B$ are constant for any ruling.
Hence by Definition~\ref{definition:ruling polynomial},
\begin{equation}\label{eqn:d=-chi+A}
d\coloneqq\max_\ruling\{-\chi(\ruling)+A(\ruling)\}=\max_\ruling\{D-2r(\ruling)\}=D-2r(\ruling_0)=-\chi(\ruling_0)+A(\ruling_0).
\end{equation}

On the other hand, by Lemma~\ref{lem:inv quantity} again, we have 
\begin{align}\label{eqn:r-r_0}
r(\ruling)-r(\ruling_0)=\frac{1}{2}(-\chi(\ruling_0)+A(\ruling_0)+\chi(\ruling)-A(\ruling))=\frac{1}{2}(d+\chi(\ruling)-A(\ruling))
\end{align}
and therefore we obtain
\begin{align*}
\augnumber(\cT,\bfmu;\ruling_\Left, \ruling_\Right;\FF_q)
&=q^{\chi(\ruling_0) - \hat B - r(\ruling_0) - A(\ruling_0)}\sum_{\ruling}(q-1)^{-\chi(\ruling)+\hat B}q^{r(\ruling)+A(\ruling)} \tag{by Lemma~\ref{lem:augnumber}}\\
&=q^{-d - \hat B}(q-1)^{\hat B}\sum_{\ruling}(q-1)^{-\chi(\ruling)}q^{A(\ruling)}q^{r(\ruling)-r(\ruling_0)} \tag{by (\ref{eqn:d=-chi+A})}\\
&=q^{-d - \hat B}(q-1)^{\hat B}\sum_{\ruling}(q-1)^{-\chi(\ruling)}q^{A(\ruling)}q^{\frac{1}{2}(d+\chi(\ruling)-A(\ruling))} \tag{by (\ref{eqn:r-r_0})}\\
&=q^{-\frac{d+\hat B}{2}}z^{\hat B}\sum_{\ruling}z^{-\chi(\ruling)}q^{\frac{A(\ruling)}{2}} \tag{$z=q^{\frac{1}{2}}-q^{-\frac{1}{2}}$}\\
&=q^{-\frac{d+\hat B}{2}}z^{\hat B} \langle\ruling_\Left| R(\cT,\bfmu;q,z)|\ruling_\Right\rangle.
\end{align*}

So we have the statement for the special front diagram of a bordered Legendrian graph.
Note that the invariance of augmentation number and the ruling polynomial under Legendrian isotopy are already discussed in Proposition~\ref{prop:augnumber invarince} and Theorem~\ref{thm:invariance_ruling}.
This shows that the statement holds for general front diagrams.

The only remaining issue is about base points. The ruling polynomial $\langle\ruling_\Left| R(\cT,\bfmu;q,z)|\ruling_\Right\rangle$ has nothing to do with base points, and $\augnumber(\cT,\bfmu;\ruling_\Left,\ruling_\Right;\FF_q)$ depends on (the number of) base points. 
However, the normalized augmentation number in Lemma~\ref{lem:base point} is independent of the number and position of the base points on $T$. This prove the theorem.
\end{proof}

\begin{example}[Higher valency vertices]
Recall Example~\ref{example:six valent}.
For the Legendrian graph $\Lambda$ consisting of three Legendrian unknots intersecting at one vertex $v$ of valency 6, which has six possible resolutions $\ruling_v^1,\dots,\ruling_v^6$.

\begin{figure}[ht]
\[
\Res(\Lambda)=
\begin{tikzpicture}[baseline=-.5ex,scale=0.7]
\begin{scope}[xshift=2cm]
\draw[thick,rounded corners](7.5,2)-- (0.75,2) -- (-0.5, -0.5)--(0,-1.5) -- (0.5,-1.5) -- (1.5,.5) -- (2.5,0.5) -- (3,-0.5);
\draw[line width=8pt,white,rounded corners](0.25, 0) -- (1,-1.5) -- (1.5,-1.5) -- (2, -0.5) -- (2.5,-0.5);
\draw[thick,rounded corners](5,1.5)--(1,1.5) --(0.25, 0) -- (1,-1.5) -- (1.5,-1.5) -- (2, -0.5) -- (2.5,-0.5) -- (3,-0.5);
\draw[line width=8pt,white,rounded corners](1,0.5) -- (2,-1.5) -- (2.5,-1.5);
\draw[thick,rounded corners](2.5,1)--(1.25,1) -- (1,0.5) -- (2,-1.5) -- (2.5,-1.5) -- (3,-0.5);
\draw[thick,rounded corners] (3,-0.5) -- (3.75,1) -- (5,1) -- (5.25, 0.5) -- (5, 0) -- (4.5,0);
\draw[line width=8pt,white,rounded corners](3.75,0) -- (3.25,1);
\draw[thick,rounded corners] (4.5,0) -- (3.75,0) -- (3.25,1) -- (2.5,1);
\draw[thick,rounded corners] (3,-0.5)-- (5.25,-0.5)--(5.5,0);
\draw[thick,rounded corners] (3,-0.5)-- (3.5,-1.5)--(7,-1.5)--(8,0.5);
\fill[gray,opacity=0.4](1.65,-1.2)--(1.85,-.8) arc (63.43:-116.57:0.224);
\fill[gray,opacity=0.4](1.15,-.2)--(1.35,.2) arc (63.43:-116.57:0.224);
\fill[gray,opacity=0.4](0.65,-1.2)--(0.85,-0.8) arc (63.43:-116.57:0.224);
\draw[fill] (3,-0.5) circle (2pt) node[below] {\scriptsize$v$};
\draw (1.75,-1) node[right] {\scriptsize$a_{1,2}$};
\draw (1.25,0) node[right] {\scriptsize$a_{1,3}$};
\draw (0.75,-1) node[right] {\scriptsize$a_{2,3}$};
\end{scope}
\begin{scope}[xshift=4.5cm,yshift=0.5cm]
\draw[thick,rounded corners] (3,-0.5) -- (3.75,1) -- (5,1) -- (5.25, 0.5) -- (5, 0) -- (4.5,0);
\draw[line width=8pt,white,rounded corners](3.75,0) -- (3.25,1);
\draw[thick,rounded corners] (4.5,0) -- (3.75,0) -- (3.25,1) -- (2.5,1);
\end{scope}
\begin{scope}[xshift=7cm,yshift=1cm]
\draw[thick,rounded corners] (3,-0.5) -- (3.75,1) -- (5,1) -- (5.25, 0.5) -- (5, 0) -- (4.5,0);
\draw[line width=8pt,white,rounded corners](3.75,0) -- (3.25,1);
\draw[thick,rounded corners] (4.5,0) -- (3.75,0) -- (3.25,1) -- (2.5,1);
\end{scope}
\draw (5.5,0.5) node[right] {\scriptsize$a_{1}$};
\draw (8,1) node[right] {\scriptsize$a_{2}$};
\draw (10.5,1.5) node[right] {\scriptsize$a_{3}$};
\draw (7.2,0.5) node {\scriptsize$-$} node[left] {\scriptsize$b_1$};
\draw (9.7,1) node {\scriptsize$-$} node[left] {\scriptsize$b_2$};
\draw (12.2,1.5) node {\scriptsize$-$} node[left] {\scriptsize$b_3$};
\end{tikzpicture}
\]
\caption{Ng's resolution of $\Lambda$ in Example~\ref{example:six valent}}
\label{figure:Ng resolution of example}
\end{figure}

Let us compute the DGA $A(\Lambda,\mu)=(\alg,\differential)\coloneqq A^\CE(\Lambda,\mu)$ for $\Res(\Lambda)$ equipped with Maslov potential $\mu$. The algebra is generated by the crossings and the vertex generators:
\begin{align*}
G &=\{a_1,a_2,a_3,a_{1,2},a_{2,3},a_{1,3} \}\amalg \tilde{V},&
\alg&=\ring[t_1^{\pm1}, t_2^{\pm1}, t_3^{\pm1}]\langle G \rangle
\end{align*}
Note that $|a_i|=1$ for $i=1,2,3$ and let us list all generators of degree $0$:
\begin{align*}
a_{1,2}, a_{2,3}, a_{1,3};\\
v_{1,3}, v_{1,4}, v_{1,5}, v_{2,2}, v_{2,3}, v_{2,4}, v_{3,1}, v_{3,2}, v_{3,3};\\
v_{4,3}, v_{4,4}, v_{4,5}, v_{5,2}, v_{5,3}, v_{5,4}, v_{6,1}, v_{6,2}, v_{6,3}.
\end{align*}
The differential for the vertex generators are the same as before and for the crossings are as follows:
\begin{align}\label{eqn:exmaple differential}
\differential a_1 &= t_1 + a_{1,3} v_{3,1} + a_{1,2} v_{2,2} + v_{1,3};\nonumber\\
\differential a_2 &= t_2 + a_{2,3} v_{3,2} + v_{2,3} + (a_{2,3} v_{3,1} + v_{2,2})t_1^{-1}(a_1v_{4,1}+a_{1,3} v_{3,2} + a_{1,2} v_{2,3} + v_{1,4});\nonumber\\
\differential a_3 &= t_3 + v_{3,3} + v_{3,1}t_1^{-1}(a_1v_{4,2}+a_{1,3} v_{3,3}+a_{1,2} v_{2,4} + v_{1,5})
+v_{3,2} t_2^{-1}(a_2v_{5,1}+a_{2,3} v_{3,3} + v_{2,4}) \nonumber\\
&\mathrel{\hphantom{=}}+ v_{3,2} t_2^{-1} (a_{2,3} v_{3,1} + v_{2,2}) t_1^{-1} (a_1 v_{4,2} + a_{1,3}v_{3,3} + a_{1,2} v_{2,4} + v_{1,5})\nonumber\\
&\mathrel{\hphantom{=}}+ v_{3,1} t_1^{-1}(a_1v_{4,1}+a_{1,3}v_{3,2}+ a_{1,2}v_{2,3} + v_{1,4})t_2^{-1}(a_2 v_{5,1}+a_{2,3}v_{3,3}+v_{2,4})\nonumber\\
&\mathrel{\hphantom{=}}+v_{3,1}t_1^{-1}(a_1 v_{4,1}+a_{1,3}v_{3,2}+a_{1,2}v_{2,3}+v_{1,4})t_2^{-1}(a_{2,3}v_{3,1}+v_{2,2})t_1^{-1}(a_1 v_{4,2}+a_{1,3}v_{3,3}+a_{1,2}v_{2,4}+v_{1,5}).
\end{align}

Now recall the rulings $\ruling_v^i$, $i=1,\dots, 6$ at the vertex $v$ from Example~\ref{example:six valent} which induce fixed-point-free involutions on six-edges $h_{v,i}$, $i=1,\dots, 6$ near the vertex $v$.
Since there is a natural map 
\[
\pi_v:\aug(\Lambda,\mu;\field)\to \aug(v;\field)
\]
induced by the inclusion $i_v:A(v)\to A(\Lambda,\mu)$, let us investigate $\aug(\Lambda,\mu;\field)$ by the induced augmentation in $\aug(v;\field)$ with respect to the decomposition 
\[
\aug(v;\field)= \coprod_{i=1}^{6}\aug^{\ruling_v^i}(v;\field),
\]
and let us denote by $\aug(\Lambda,\mu;\ruling_v^i;\field)\coloneqq \pi_v^{-1}(\aug^{\ruling_v^i}(v;\field))$ and $\aug(\Lambda,\mu;\epsilon^i;\field)\coloneqq \pi_v^{-1}(\epsilon^i)$.

For $\ruling_v^i=\{\{1,a_i\},\{2,b_i\},\{3,c_i\}\}$ and $1\le i\le 6$, let us consider an (canonical) augmentation $\epsilon_v^i\in \aug^{\ruling_v^1}(v;\field)$ satisfying
\begin{align*}
\epsilon_v^i(v_{k,\ell}) \coloneqq 
\begin{cases}
1 &\text{ if }(k,\ell)=(1,a_i-1), (a_i,7-a_i), (2,b_i-2),(b_i,8-b_i), (3,c_i-3), (c_i,9-c_i);\\
0 &\text{ otherwise. }
\end{cases}
\end{align*}
For example, when $\ruling_v^1=\{\{1,6\},\{2,5\},\{3,4\}\}$, the (canonical) augmentation $\epsilon_v^1\in \aug^{\ruling_v^1}(v;\field)$ becomes
\begin{align}\label{eqn:example aug_v1}
\epsilon_v^1(g) \coloneqq 
\begin{cases}
1 &\text{ if }g=v_{1,5}, v_{2,3}, v_{3,1}, v_{4,5}, v_{5,3}, v_{6,1};\\
0 &\text{ otherwise. }
\end{cases}
\end{align}
Now consider an augmentation $\epsilon^1\in \aug(\Lambda,\mu;\field)$ extending $\epsilon_v^1$.
Then $\epsilon^1$ is determined by the value on $a_{1,2}$, $a_{2,3}$, $a_{1,3}$, and $t_i$, $i=1,2,3$ which should satisfy the followings:
\begin{align}\label{eqn:example aug_1}
\bar {t_1} +\bar{a_{1,3}}=0;\nonumber\\
\bar {t_2} +1 + \bar{a_{2,3} t_1^{-1} a_{1,2}}=0;\nonumber\\
\bar {t_3} +\bar{t_1^{-1}} + \bar{t_1^{-1}a_{1,2}t_2^{-1}a_{2,3}t_1^{-1}} =0,
\end{align}
where $\bar g$ means $\epsilon^1(g)$ and the above induced from (\ref{eqn:exmaple differential}) by applying $\epsilon^1$ and (\ref{eqn:example aug_v1}).
Let us count the $\FF_q$-points satisfying (\ref{eqn:example aug_1}).
There are $(q-1)(2q-1)$ and $(q-1)^2(q-2)$-many $\FF_q$-points when $\bar {t_2}=-1$ and $\bar {t_2}\neq -1$, respectively. In total, we have
\[
(q-1)(2q-1)+(q-1)^2(q-2)=(q-1)^3+q(q-1).
\]

Denote by $(\tilde{\Lambda},\mu)$ the Legendrian graph obtained from $(\Lambda,\mu)$ by adding an additional base point at each left half-edge of $v$. Then the map $\pi_v:\aug(\tilde{\Lambda},\mu;\ruling_v^i;\field)\to \aug^{\ruling_v^i}(v;\field)$ is a fiber bundle. For example, this follows from Corollary \ref{cor:B^v action for V}. By a similar argument, one can also show the same holds for $\Lambda$. In particular, for any $\epsilon^i\in\aug^{\ruling_v^i}(\Lambda,\mu;\field)$ with $\pi_v(\epsilon^i)\in \aug^{\ruling_v^i}(v;\field)$, we have
\[
\#\left(\aug(\Lambda,\mu;\ruling_v^i;\FF_q)\right)=\#\left(\aug(\Lambda,\mu;\epsilon^i;\FF_q)\right)\times \#\left( \aug^{\ruling_v^i}(v;\FF_q) \right).
\]

Especially for $\epsilon^1$,
\begin{align*}
\#\left(\aug(\Lambda,\mu;\ruling_v^1;\FF_q)\right)&=\#\left(\aug(\Lambda,\mu;\epsilon^1;\FF_q)\right)\times \#\left( \aug^{\ruling_v^1}(v;\FF_q) \right)\\
&=\left((q-1)^3+q(q-1)\right)\times (q-1)^{\frac{\val(v)}{2}}q^{A(\ruling_v^i)}\\
&=\left((q-1)^3+q(q-1)\right)\times (q-1)^3q^6.
\end{align*}
By a similar computation for $\epsilon^i$, $i=2,\dots,6$,
\begin{align*}
\#\left(\aug(\Lambda,\mu;\ruling_v^2;\FF_q)\right)&= (q-1)^5 q^6,&
\#\left(\aug(\Lambda,\mu;\ruling_v^3;\FF_q)\right)&= (q-1)^5 q^6;\\
\#\left(\aug(\Lambda,\mu;\ruling_v^4;\FF_q)\right)&= (q-1)^3 q^6,&
\#\left(\aug(\Lambda,\mu;\ruling_v^5;\FF_q)\right)&= (q-1)^4 q^6;\\
\#\left(\aug(\Lambda,\mu;\ruling_v^6;\FF_q)\right)&= (q-1)^4 q^6.&
\end{align*}
Then we have
\begin{align*}
\#\left( \aug(\Lambda;\FF_q) \right)&=\left((q-1)^3+q(q-1)\right)\times (q-1)^3q^6 + 2(q-1)^5 q^6 +2 (q-1)^4 q^6 +(q-1)^3 q^6 ;\\
\augnumber(\Lambda;\FF_q)&=\frac{\left((q-1)^3+q(q-1)\right)\times (q-1)^3q^6 + 2 (q-1)^5 q^6 +2 (q-1)^4 q^6 +(q-1)^3 q^6}{q^{12}} .
\end{align*}

On the other hand, recall from Example~\ref{ex:ruling polynomial} that
\[
R(\Lambda;q,z)=q^{3}(1+z^{-2})+2q^{5/2} z^{-1} + 2q^{2} z^{-2} + q^{3/2}z^{-3}.
\]
Note that $d(\Lambda)=\max\deg_z R(\Lambda;z^2,z)=6$, $\hat B(\Lambda)=\frac{\val(v)}{2}+\#\{\text{base points}\}=6$. This implies
\[
q^{-\frac{d+\hat B}{2}}z^{\hat B}R(\Lambda;q,z)=q^{-6}z^{6}\left(q^{3}(1+z^{-2})+2q^{5/2} z^{-1} + 2q^{2} z^{-2} + q^{3/2}z^{-3}\right).
\]
Evaluate $z=(q^{1/2}-q^{-1/2})$, then it recovers $\augnumber(\Lambda;\FF_q)$.
This verifies Theorem~\ref{theorem:augmentation number is a ruling polynomial}.

\end{example}

\appendix
\section{Generalized stabilizations and Ekholm-Ng's (de)stabilizations}\label{appendix:proof of EN's stabilization}
Let $A=(\alg=\ring\langle G\rangle,\differential)$ be a DGA and $\varphi:I_{(\ell,m)}(\mu)\to A$ be a DGA inclusion for some $m\ge 1$ and $\mu:[m]\to\grading$.
We denote the image $\varphi(\ruling_{a,i})$ by $v_{a,i}$.

Then we define a DGA $\tilde SA^\pm=(\tilde S\alg,\bar\differential^\pm)$ as follows:
the underlying graded algebra $\tilde S\alg^\pm$ is generated by
\[
\tilde S\alg\coloneqq\ring\left\langle G\amalg\{\hat e^i, e^i ~\middle|~i\ge 1\}\right\rangle.
\]
The gradings and differentials for newly added generators are defined as
\begin{align*}
|e^i|&\coloneqq\begin{cases}
\fd & i=1;\\
\fd+|v_{1,i-1}|+1& i>1,
\end{cases}&
|\hat e^i|&\coloneqq |e^i|+1,&
\bar\differential^\pm(\hat e^i)&\coloneqq e^i,&
\bar\differential^\pm(e^i)&\coloneqq 0.
\end{align*}
%
Then it is obvious that $\tilde SA^\pm$ is a stabilization of $A$ by cancelling pairs $(\hat e^i, e^i)$.

Let us focus on the positive stabilization case first.
Then we define a map $\Phi^+:\tilde S\alg\to \tilde S\alg$ between graded algebras defined as
\begin{align*}
\Phi^+(\hat e^b)&\coloneqq \hat e^b,&
\Phi^+(e^b)&\coloneqq e^b+\sum_{a<b}(-1)^{|e^a|}\hat e^a v_{a,b-a}.
\end{align*}
\begin{remark}
The map $\Phi^+$ is indeed a tame isomorphism, which is a composition
\[
\Phi^+\coloneqq\cdots \circ f^+_{[2]}\circ f^+_{[1]}:\tilde S\alg\to \tilde S\alg
\]
of infinite elementary isomorphisms $\{f^+_{[b]}:\tilde S\alg\to \tilde S\alg\}_{b\ge 1}$ such that $f^+_{[b]}$ fixes all generators but $e^b$ and
\[
f^+_{[b]}:e^b\mapsto e^b+\sum_{a<b}(-1)^{|e^a|}\hat e^a v_{a,b-a}.
\]

Note that the map $\Phi^+$ does not depend on the order of compositions.
\end{remark}

We denote the twisted differential of $\bar\differential^+$ by $\Phi^+$ by $\bar\differential'^+$ and the DGA $(\tilde S\alg,\bar\differential'^+)$ by $\tilde SA'^+$.
\begin{align*}
\bar\differential'^+&\coloneqq \Phi^+\circ\bar\differential^+\circ (\Phi^+)^{-1},&
\tilde SA'^+&\coloneqq(\tilde S\alg^+,\bar\differential'^+).
\end{align*}
Then by definition, we have a tame isomorphism between DGAs
\[
\Phi^+:\tilde SA^+\to \tilde SA'^+.
\]

\begin{lemma}
The following holds: for each $b\ge1$,
\begin{align*}
\bar\differential'^+(e^b)&=
\begin{cases}
\displaystyle\sum_{a<b} (-1)^{|e^a|-1} e^a v_{a,b-a} & b\le m;\\
\displaystyle\hat e^{b-m}+\sum_{a<b} (-1)^{|e^a|-1} e^a v_{a,b-a}& b>m,
\end{cases}\\
\bar\differential'^+(\hat e^b)&=\Phi^+(e^b).
\end{align*}
\end{lemma}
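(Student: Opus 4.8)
The strategy is to avoid any induction on $b$ by using only the defining relation $\bar\differential'^+\circ\Phi^+=\Phi^+\circ\bar\differential^+$ together with the fact that $\Phi^+$ is the identity on $\ring\langle G\rangle$ and on every $\hat e^i$. Everything then reduces to a single derivation computation plus one parity check.

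First I would record two elementary observations. Since $\Phi^+$ fixes $\ring\langle G\rangle$ pointwise and $\bar\differential^+$ restricts to $\differential$ there, the twisted differential $\bar\differential'^+=\Phi^+\circ\bar\differential^+\circ(\Phi^+)^{-1}$ also restricts to $\differential$ on $\ring\langle G\rangle$; in particular, because $\varphi$ is a DGA morphism,
\[
\bar\differential'^+(v_{a,i})=\differential(v_{a,i})=\delta_{i,m}+\sum_{i_1+i_2=i}(-1)^{|v_{a,i_1}|-1}v_{a,i_1}v_{a+i_1,i_2}.
\]
Second, $\Phi^+(\hat e^b)=\hat e^b$ forces $(\Phi^+)^{-1}(\hat e^b)=\hat e^b$, so $\bar\differential'^+(\hat e^b)=\Phi^+(\bar\differential^+(\hat e^b))=\Phi^+(e^b)$, which is already the second asserted formula.

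For the first formula, I would start from $\bar\differential'^+(\Phi^+(e^b))=\Phi^+(\bar\differential^+(e^b))=0$, expand the left side using that $\bar\differential'^+$ is a derivation and that $\bar\differential'^+(\hat e^a)=\Phi^+(e^a)$ by the previous step, and solve for $\bar\differential'^+(e^b)$:
\[
\bar\differential'^+(e^b)=-\sum_{a<b}(-1)^{|e^a|}\Bigl(\Phi^+(e^a)\,v_{a,b-a}+(-1)^{|\hat e^a|}\hat e^a\,\differential(v_{a,b-a})\Bigr).
\]
Substituting $\Phi^+(e^a)=e^a+\sum_{a'<a}(-1)^{|e^{a'}|}\hat e^{a'}v_{a',a-a'}$ and the formula for $\differential(v_{a,b-a})$, and simplifying signs via $|\hat e^a|=|e^a|+1$, the right side breaks into four families of monomials: the linear term $\sum_{a<b}(-1)^{|e^a|-1}e^a v_{a,b-a}$ (the main term of the statement); the constant term coming from $\delta_{b-a,m}$, which collapses to the single monomial $\hat e^{\,b-m}$ with coefficient $+1$ when $b>m$ and vanishes when $b\le m$; and two families of quadratic monomials of shape $\hat e^{a'}v_{a',a-a'}v_{a,b-a}$ with $a'<a<b$, one arising from the $\hat e^{a'}v_{a',a-a'}$ part of $\Phi^+(e^a)$ and one from the Leibniz expansion of $\differential(v_{a,b-a})$. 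It then remains to check these two quadratic families cancel term by term.

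The main obstacle is exactly this cancellation, which reduces to the parity identity
\[
|e^a|+|e^{a'}|\equiv|v_{a',a-a'}|-1 \pmod 2\qquad(a'<a).
\]
I would derive it from the additivity facts already available: $|e^b|=\fd+\sum_{c<b}(|v_{c,1}|+1)$ (equivalently $|e^b|=\fd+|v_{1,b-1}|+1$, using $|v_{1,b-1}|=\sum_{c<b}|v_{c,1}|+(b-2)$) and $|v_{a',a-a'}|=\sum_{c=a'}^{a-1}|v_{c,1}|+(a-a'-1)$; both sides of the identity then reduce mod $2$ to $\sum_{c=a'}^{a-1}(|v_{c,1}|+1)$. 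After that the matching of the two quadratic families is just the relabeling $a\leftrightarrow a'$ and is routine; the one point to state carefully is the periodicity convention for the first index of $v_{a,i}$ when $a$ exceeds $m$, which only enters in the range $b>m$. Finally, the negative-stabilization formula is obtained by the identical argument with $\Phi^+$ replaced by its left-multiplication analogue $\Phi^-$.
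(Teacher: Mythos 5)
Your proof is correct and follows essentially the same route as the paper: both arguments reduce the computation of $\bar\differential'^+(e^b)$ to expanding $\Phi^+(e^a)$ and $\differential(v_{a,b-a})$, extracting the linear term and the $\delta_{b-a,m}$-term (which yields $\hat e^{b-m}$ for $b>m$), and cancelling the two quadratic families of monomials $\hat e^{a'}v_{a',a-a'}v_{a,b-a}$ via exactly the parity identity $|e^a|+|e^{a'}|-1\equiv|v_{a',a-a'}|\pmod 2$ that the paper also invokes. The only difference is bookkeeping: you solve the relation $\bar\differential'^+(\Phi^+(e^b))=0$ using the derivation property and the already-established $\bar\differential'^+(\hat e^a)=\Phi^+(e^a)$, whereas the paper writes out $(\Phi^+)^{-1}(e^b)$ explicitly and conjugates directly, and these two manipulations produce the identical expression to simplify.
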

\begin{proof}
This follows from the direct computation: for each $b\ge 1$, it is obvious that 
\begin{align*}
\bar\differential'^+(\hat e^b) &= (\Phi^+\circ\bar\differential^+\circ(\Phi^+)^{-1})(\hat e^b)=(\Phi^+\circ\bar\differential^+)(\hat e^b)=\Phi^+(e^b)
\end{align*}
since $\Phi^+$ fixes $\hat e^b$.

On the other hand, 
\begin{align*}
\bar\differential'^+(e^b)&=(\Phi^+\circ \bar\differential^+\circ(\Phi^+)^{-1})(e^b)\\
&=(\Phi^+\circ\bar\differential^+)\left(e^b-\sum_{a<b}(-1)^{|e^a|}\hat e^a v_{a,b-a}\right)\\
&=\Phi^+\left(-\sum_{a<b}(-1)^{|e^a|} e^a v_{a,b-a}+\sum_{a<b} \hat e^a \bar\differential^+(v_{a,b-a})\right)\\
&=\sum_{a<b}(-1)^{|e^a|-1} \Phi^+(e^a) v_{a,b-a}+\sum_{a<b}\hat e^a\left(\delta_{b-a,m}+\sum_{a<c<b}(-1)^{|v_{a,c-a}|-1}v_{a,c-a,b-a}\right)\\
&=\sum_{a<b}(-1)^{|e^a|-1} e^a v_{a,b-a} +\sum_{c<a<b}(-1)^{|e^a|-1+|e^c|}\hat e^c v_{c,a-c,b-a}\\
&\mathrel{\hphantom{=}}+\sum_{a<c<b} (-1)^{|v_{a,c-a}|-1}\hat e^a v_{a,c-a,b-a}+\sum_{a<b}\delta_{b-a,m}\hat e^a.
\end{align*}

Since $|e^a|+|e^c|-1\equiv|v_{a,c-a}|\mod{2}$, the above expression is equivalent to
\begin{align*}
&\mathrel{\hphantom{=}}\sum_{a<b}(-1)^{|e^a|-1} e^a v_{a,b-a}+\cancelto{}{\sum_{c<a<b}(-1)^{|v_{a,c-a}|}\hat e^c v_{c,a-c,b-a}}\\
&\mathrel{\hphantom{=}}+\cancelto{}{\sum_{a<c<b} (-1)^{|v_{a,c-a}|-1}\hat e^a v_{a,c-a,b-a}}+\sum_{a<b}\delta_{b-a,m}\hat e^a.
\end{align*}

Since $\sum_{a<b} \delta_{b-a,m}\hat e^a$ is either $0$ if $b\le m$ and $\hat e^{b-m}$ if $b>m$, we are done.
\end{proof}

Now we twist again the differential to obtain $\bar\differential''^+$ via the tame isomorphism $\Psi^+:\tilde S\alg\to \tilde S\alg$ between graded algebras as follows:
\begin{align*}
\Psi^+(\hat e^b)&\coloneqq \hat e^b + \sum_{a<m+b} (-1)^{|e^a|} e^a v_{a,m+b-a},&
\Psi^+(e^b)&\coloneqq e^b,&
\bar\differential''^+&\coloneqq \Psi^+\circ \bar\differential'^+\circ(\Psi^+)^{-1}.
\end{align*}

\begin{remark}
As before, $\Psi^+$ is a tame isomorphism which is a composition
\[
\Psi^+=\cdots\circ g^+_{[2]}\circ g^+_{[1]}
\]
of infinite elementary isomorphisms $g^+_{[b]}:\tilde S\alg\to \tilde S\alg$ which fix all generators but $\hat e^b$ such that
\[
g^+_{[b]}(\hat e^b)=\hat e^b+\sum_{a<m+b}(-1)^{|e^a|}e^a v_{a,m+b-a}.
\]

As before, this does not depend on the order of compositions.
\end{remark}

Then obviously, we have a tame isomorphism between DGAs
\[
\Psi^+:\tilde SA'^+\to\tilde SA''^+.
\]

\begin{lemma}
The following holds: for each $b\ge 1$,
\begin{align*}
\bar\differential''^+(e^b)&=
\begin{cases}
\bar\differential'^+(e^b) & b\le m;\\
\hat e^{b-m}& b>m,
\end{cases}&
\bar\differential''^+(\hat e^b)&=0.
\end{align*}
\end{lemma}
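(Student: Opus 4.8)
The plan is a direct computation, exploiting that $\Psi^+$ is a tame automorphism of the graded algebra $\tilde S\alg$ which fixes $\alg$ and every $e^b$ and sends $\hat e^b$ to $\hat e^b+w_b$, where $w_b:=\sum_{a<m+b}(-1)^{|e^a|}e^a v_{a,m+b-a}$ is a word not involving any $\hat e^c$; hence $(\Psi^+)^{-1}$ also fixes $\alg$ and the $e^b$, and $(\Psi^+)^{-1}(\hat e^b)=\hat e^b-w_b$. Throughout I will use the previous lemma's formulas for $\bar\differential'^+$, the fact that $\Phi^+$ (hence $\bar\differential'^+$) restricts to $\differential$ on $\alg$, so that $\bar\differential'^+(v_{a,i})=\differential(v_{a,i})=\delta_{i,m}+\sum_{i_1+i_2=i}(-1)^{|v_{a,i_1}|-1}v_{a,i_1}v_{a+i_1,i_2}$, and two elementary facts about the internal DGA: the first index of $v_{a,i}$ lives in $\ZZ/m\ZZ$, so $v_{a+m,i}=v_{a,i}$ and $|e^{a+m}|\equiv|e^a|\pmod 2$; and the grading additivity, which gives $|\vg_{\alpha,\beta}|+1\equiv|e^{\alpha+\beta}|+|e^\alpha|\pmod 2$ for all $\alpha,\beta\ge 1$ (equivalently $|\vg_{1,\alpha+\beta-1}|\equiv|\vg_{1,\alpha-1}|+|\vg_{\alpha,\beta}|+1$).

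First I will compute $\bar\differential''^+(e^b)=(\Psi^+\circ\bar\differential'^+)(e^b)$, using $(\Psi^+)^{-1}(e^b)=e^b$. When $b\le m$ the previous lemma gives $\bar\differential'^+(e^b)=\sum_{a<b}(-1)^{|e^a|-1}e^a v_{a,b-a}$, a word in $\alg$ and the $e^a$ alone, which $\Psi^+$ fixes; hence $\bar\differential''^+(e^b)=\bar\differential'^+(e^b)$. When $b>m$, write $\bar\differential'^+(e^b)=\hat e^{b-m}+\sum_{a<b}(-1)^{|e^a|-1}e^a v_{a,b-a}$; applying $\Psi^+$ and using $\Psi^+(\hat e^{b-m})=\hat e^{b-m}+\sum_{a<b}(-1)^{|e^a|}e^a v_{a,b-a}$ (substitute $m+(b-m)=b$ in the definition of $\Psi^+$), the two $v$-sums cancel because their signs differ by $-1$, leaving $\bar\differential''^+(e^b)=\hat e^{b-m}$, as claimed.

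For the generators $\hat e^b$, since $\Psi^+$ is an isomorphism it suffices to prove $\bar\differential'^+\big((\Psi^+)^{-1}(\hat e^b)\big)=\bar\differential'^+(\hat e^b-w_b)=0$. I will expand $\bar\differential'^+(\hat e^b)=\Phi^+(e^b)=e^b+\sum_{a<b}(-1)^{|e^a|}\hat e^a v_{a,b-a}$ and, via the Leibniz rule, $\bar\differential'^+(w_b)$ into four groups of terms: (A) the $\hat e^{a-m}v_{a,m+b-a}$ terms coming from the $\hat e^{a-m}$ summand of $\bar\differential'^+(e^a)$ for $a>m$; (B) the $e^b$ coming from the $\delta_{m+b-a,m}=\delta_{a,b}$ part of $\bar\differential'^+(v_{a,m+b-a})$; (C) the $e^c v_{c,a-c}v_{a,m+b-a}$ terms coming from the $\sum_{c<a}(-1)^{|e^c|-1}e^c v_{c,a-c}$ part of $\bar\differential'^+(e^a)$; and (D) the $e^a v_{a,i_1}v_{a+i_1,i_2}$ terms coming from the quadratic part of $\bar\differential'^+(v_{a,m+b-a})$. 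Term (B) cancels the $e^b$ in $\bar\differential'^+(\hat e^b)$; after the reindexing $a\mapsto a+m$ together with $v_{a+m,\cdot}=v_{a,\cdot}$ and $|e^{a+m}|\equiv|e^a|$, Term (A) cancels $\sum_{a<b}(-1)^{|e^a|}\hat e^a v_{a,b-a}$; and matching $(c,a-c)$ with $(a,i_1)$ and invoking the parity identity, Terms (C) and (D) are negatives of one another. Hence $\bar\differential'^+(\hat e^b-w_b)=0$, so $\bar\differential''^+(\hat e^b)=0$.

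The main obstacle is the sign bookkeeping in the last step: one must track the Leibniz sign $(-1)^{|e^a|}$ attached to $w_b$ against the internal signs $(-1)^{|e^c|-1}$ inside $\bar\differential'^+(e^a)$ and $(-1)^{|v_{a,i_1}|-1}$ inside $\bar\differential'^+(v_{a,m+b-a})$, and verify that the index ranges match after the modular reindexing, in particular that the constraint $a>m$ in Term (A) corresponds exactly to $1\le a-m<b$. The negative-stabilization case is entirely parallel: one replaces $\Phi^+,\Psi^+$ by their mirror counterparts $\Phi^-,\Psi^-$ and left multiplications by right multiplications, and the same computation yields $\bar\differential''^-(e^b)=\bar\differential'^-(e^b)$ for $b\le m$, $\bar\differential''^-(e^b)=\hat e^{b-m}$ for $b>m$, and $\bar\differential''^-(\hat e^b)=0$; I would only remark on this at the end rather than repeat the argument.
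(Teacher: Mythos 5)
Your proposal is correct and follows essentially the same route as the paper: compute $(\Psi^+\circ\bar\differential'^+)$ on $e^b$ and on $(\Psi^+)^{-1}(\hat e^b)=\hat e^b-w_b$, and organize the Leibniz expansion into exactly the same cancelling groups (the $\delta_{a,b}$ term against $e^b$, the $\hat e^{a-m}$ terms against $\sum_{a<b}(-1)^{|e^a|}\hat e^a v_{a,b-a}$ after the shift $a\mapsto a+m$, and the two quadratic-in-$v$ families against each other via the parity identity $|v_{a,c-a}|\equiv|e^a|+|e^c|-1 \pmod 2$). The only thing the paper adds is the one-line observation that, once $\bar\differential''^+(e^{b+m})=\hat e^{b}$ is known, $\bar\differential''^+(\hat e^b)=(\bar\differential''^+)^2(e^{b+m})=0$ follows from $(\bar\differential''^+)^2=0$, before carrying out the same explicit verification you give.
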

\begin{proof}
This also follows from the direct computation: since $\Psi^+$ fixes $e^b$ for each $b\ge 1$, we have
\begin{align*}
\bar\differential''^+(e^b)&=(\Psi^+\circ \bar\differential'^+\circ(\Psi^+)^{-1})(e^b)=(\Psi^+\circ\bar\differential'^+)(e^b).
\end{align*}

If $b\le m$, then 
\begin{align*}
\bar\differential''^+(e^b)&=\Psi^+\left(\sum_{a<b}(-1)^{|e^a|-1}e^a v_{a,b-a}\right)=\sum_{a<b}(-1)^{|e^a|-1}e^a v_{a,b-a},
\end{align*}
and if $b>m$, then
\begin{align*}
\bar\differential''^+(e^b)&=\Psi^+\left(\hat e^{b-m}+\sum_{a<b}(-1)^{|e^a|-1}e^a v_{a,b-a}\right)=(\Psi^+\circ(\Psi^+)^{-1})(\hat e^{b-m})=\hat e^{b-m}
\end{align*}
as desired.

This also implies the existence of anti-derivative of $\hat e^b$ for each $b\ge1$, which is precisely, $e^{b+m}$.
Therefore, we have $\bar\differential''^+(\hat e^{b})=0$ for all $b\ge 1$. Indeed, 
\begin{align*}
\bar\differential''^+(\hat e^b)&=(\Psi^+\circ\bar\differential'^+\circ(\Psi^+)^{-1})(\hat e^b)\\
&=(\Psi^+\circ\bar\differential'^+)\left(\hat e^b-\sum_{a<m+b}(-1)^{|e^a|}e^a v_{a,m+b-a}\right)\\
&=\Psi^+\left(\Phi^+(e^b)+\sum_{a<m+b}(-1)^{|e^a|-1} \bar\differential'^+(e^a v_{a,m+b-a})\right).
\end{align*}

Let us consider the inside first.
Then we have
\begin{align*}
&\mathrel{\hphantom{=}}\Phi^+(e^b)+\sum_{a<m+b}(-1)^{|e^a|-1} \bar\differential'^+(e^a v_{a,m+b-a})\\
&=e^b+\sum_{a<b}(-1)^{|e^a|}\hat e^av_{a,b-a}+\sum_{a<m+b}(-1)^{|e^a|-1}\bar\differential'^+(e^a) v_{a,m+b-a}
-\sum_{a<m+b} e^a\bar\differential^+(v_{a,m+b-a})\\
&=e^b+\sum_{a<b}(-1)^{|e^a|}\hat e^av_{a,b-a}\\
&\mathrel{\hphantom{=}}+\sum_{c<a<m+b}(-1)^{|e^a|-1}\left(\delta_{a-c,m}\hat e^c+\cancelto{}{(-1)^{|e^c|-1}e^c v_{c,a-c}}\right) v_{a,m+b-a}\\
&\mathrel{\hphantom{=}}-\sum_{a<m+b}e^a\delta_{m+b-a,m}-\cancelto{}{\sum_{a<c<m+b} e^a(-1)^{|v_{a,c-a}|-1}v_{a,c-a,m+b-c}}.
\end{align*}

Since we have 
\[
\sum_{a<m+b}e^a\delta_{m+b-a,m}=e^b\quad\text{ and }\quad
\sum_{c<a<m+b}(-1)^{|e^a|}\delta_{a-c,m}\hat e^c=\sum_{m+1\le a<m+b}(-1)^{|e^a|}\hat e^{a-m},
\]
we have
\begin{align*}
&\mathrel{\hphantom{=}}\cancelto{}{e^b}+\sum_{a<b}(-1)^{|e^a|}\hat e^av_{a,b-a}+\sum_{m+1\le a<m+b}(-1)^{|e^a|-1}\hat e^{a-m} v_{a,m+b-a}-\cancelto{}{e^b}\\
&=\cancelto{}{\sum_{a<b}(-1)^{|e^a|}\hat e^av_{a,b-a}}+\cancelto{}{\sum_{a'<b}(-1)^{|e^{a'}|-1}\hat e^{a'} v_{a',b-a'}},
\end{align*}
where the last equality comes from that $|e^{a+m}|=|e^a|+2$ and $v_{a,m+b-a}=v_{a',b-a'}$ for $a=m+a'$. We are done.
\end{proof}

\begin{proof}[Proof of Proposition~\ref{proposition:generalized stabilizations}]
As mentioned earlier, the DGA $\tilde SA^+=(\tilde S\alg,\bar\differential^+)$ is a stabilization of $A=(\alg,\differential)$, where the cancelling pairs are $(\hat e^b, e^b)$ for $b\ge 1$.

On the other hand, the DGA $\tilde SA^+$ is tame isomorphic to the twisted DGA $\tilde SA''^+=(\tilde S\alg, \bar\differential''^+)$ defined as above via 
\[
\Psi^+\circ\Phi^+:\tilde SA\to\tilde SA'',
\]
which has infinite cancelling pairs $(e^{m+b},\hat e^b)$ for $b\ge 1$. By cencelling out these pairs, what we have is isomorphic to the generalized stabilization $S^{\fd+}_\varphi A$ by identifying $e^b$ with $e_b^+$ for $b\in[m]$.

For the generalized stabilization $S^{\fd-}_\varphi A$, we define tame isomorphisms $\Phi^-$ and $\Psi^-$ by using elementary isomorphisms $f^-_{[b]}$ and $g^-_{[b]}$ fixing all generators but $e^b$ and $\hat e^b$ so that
\begin{align*}
f^-_{[b]}(e^b)&\coloneqq e^b+\sum_{a<b} (-1)^{|v_{m+1-b,b-a}|-1}v_{m+1-b,b-a} \hat e^a,\\
g^-_{[b]}(\hat e^a)&\coloneqq \hat e^b+\sum_{a<b}v_{m+1-b,b-a} e^a.
\end{align*}
The rest of the proof is essentially the same as before and we omit the detail.
\end{proof}

\section{Complementary results on augmentations}\label{appendix:technical results for augmentation varieties}

\subsection{Orbits of augmentation varieties for internal DGAs}\label{sec:Orbits of augmentation varieties for internal DGAs}

Let $v$ be a vertex of type $(\ell,r)$ with $\valency=\ell+r$ as in Section \ref{sec:aug_variety_vertex}, and $\mu:I(v)=[\valency]\rightarrow\ZZ$ be the Maslov potential. The main purpose of this subsection is to show Lemma \ref{lem:stratification of aug var at a vertex}: 
(1) follows immediately from Proposition/Definition \ref{def/prop:structure of Morse complexes at a vertex}.(4) below;
(2) and (3) are contained Lemma \ref{lem:properties of orbits at a vertex}.

Let us denote $n(v_{i,j})\coloneqq n(v,i,j)$.
\begin{proposition/definition}\label{def/prop:structure of Morse complexes at a vertex}
Let $v$ be a $\valency$-valency vertex as above, with $\valency$ even. Let $d\in \MC(v;\field)$. 
\begin{enumerate}
\item
For each $i\in \Zmod{\valency}$ and $t\geq 0$, an element $x\in C_v$ is called \emph{$(i,t)$-admissible}, if 
\[
x=Z^t \left(c_0e_i+\sum_{k>0}c_kZ^{n(v_{i,k})}e_{i+k} \right),
\]
for some $c_k\in\field$ with $c_0\in\field^*$, and $x$ is homogeneous. In particular, $c_k\neq 0$ implies 
\[
|e_i|=|Z^{n(v_{i,k})}e_{i+k}| \quad \text{ and hence } \quad\mu(i)-\mu(i+k)+n(v_{i,k})=0.
\] 
When $t=0$, we simply say that $x$ is \emph{$i$-admissible}.\newline
Notice that $x$ is $(i,t)$-admissible if and only if $x=Z^ty$, with $y$ $i$-admissible.

\item
For any $i$-admissible $x\in C_v$, \emph{define}
$\ruling_d(x)\coloneqq k$ if $dx$ is $(i+k,n(v_{i,k}))$-admissible.

\item
For any $i\in\ZZ/\valency$, \emph{define}
\begin{equation*}
\ruling_{[d]}(i)\coloneqq\max\{\ruling_d(x) \mid \textrm{$x\in C_v$ is $i$-admissible.}\}
\end{equation*}
\end{enumerate}
Then, we have:
\begin{enumerate}
\item
For any $i$-admissible $x$, $\ruling_d(x)$ is well-defined and $0<\ruling_d(x)<\valency$. In particular, $0<\ruling_{[d]}(i)<\valency$.
\item 
The set of $(i,t)$-admissible elements is preserved by the action of $B(v;\field)$. Moreover, 
for any $g\in B(v;\field)$ and any $i$-admissible $x$, have 
\[
\ruling_d(x)=\ruling_{g\cdot d}(g(x)).
\] 
In particular, $\ruling_{[d]}=\ruling_{[g\cdot d]}$. That is,
$\ruling_{[d]}$ depends only on the isomorphism class of $d$, i.e. the orbit $B(v;\field)\cdot d$.
\item
If $\ruling_{[d]}(i)=k$, then $\ruling_{[d]}(i+k)=\valency-k$. In particular,
each isomorphism class $B(v;\field)\cdot d$ in $\MC(v;\field)$ induces an involution $\ruling=\ruling([d])\in \NR(v)$ such that 
\[\ruling(i)=i+\ruling_{[d]}(i)\mod{\valency}.\]
\item
Each isomorphism class $B(v;\field)\cdot d$ contains a unique canonical differential $d_{\ruling}$ with $\ruling=\ruling([d])$.
\end{enumerate}
\end{proposition/definition}

\begin{proof}
Let $x\in C_v$ be any $i$-admissible element. Say, 
\[
x=\sum_{k\geq 0}x_kZ^{n(v_{i,k})}e_{i+k}
\] 
for some $x_k\in\field$ with $x_0\neq 0$. Here, we use $n(v_{i,0})\coloneqq0$. For any $f\in\End(C_v,F^{\bullet})$, i.e. a homogeneous $\field[Z]$-superlinear endomorphism, by Remark \ref{rem:morphisms of Morse complexes at a vertex}, we have:
\begin{align*}
f(x)&=\sum_{k\geq 0}x_k\sum_{j\geq 0}(-1)^{|f|\cdot n(v_{i,k})}Z^{n(v_{i,k})}\langle f(e_{i+k}), Z^{n(v_{i+k,j})}e_{i+k+j}\rangle Z^{n(v_{i+k,j})}e_{i+k+j}\nonumber\\
&=\sum_{l\geq 0}\left(\sum_{0\leq k\leq l}(-1)^{|f|\cdot n(v_{i,k})}x_k\langle f(e_{i+k}), Z^{n(v_{i+k,l-k})}e_{i+l}\rangle \right) Z^{n(v_{i,l})}e_{i+l}\nonumber\\
&=:\sum_{l\geq 0}c_l(f) Z^{n(v_{i,l})}e_{i+l}
\end{align*}
where $\langle f(e_a),Z^{n(v_{a,b})}e_{a+b}\rangle$ denotes the $\field$-coefficient of $Z^{n(v_{a,b})}e_{a+b}$ in $f(e_a)$, and we have used the fact that $f$ super-commutes with $Z$, and $n(v_{i,k})+n(v_{i+k,j})=n(v_{i,k+j})$.

\begin{proof}[Proof of {\rm (1)}]
Take $f\coloneqq d$, then $\langle d e_a,e_a\rangle=0$ and $dx=\sum_{l>0}c_l(d) Z^{n(v_{i,l})}e_{i+l}$.
By a similar calculation, we obtain:
\begin{equation*}
d^2x=\sum_{j>0}\left(\sum_{0<l<j}(-1)^{n(v_{i,l})}c_l(d)\langle de_{i+l},Z^{n(v_{i+l,j-l})}e_{i+j}\rangle\right)Z^{n(v_{i,j})}e_{i+j}
\end{equation*} 
In particular, 
\[
\langle d^2x, Z^2e_i\rangle=\sum_{0<l<\valency}(-1)^{n(v_{i,l})}c_l(d)\langle de_{i+l},Z^{n(v_{i+l,j-l})}e_{i+j}\rangle=-x_0
\] 
is non-zero, as $d^2+Z^2=0$. 
Let $l_0\coloneqq \min\{l| c_l(d)\neq 0\}$, then it follows that $0<l_0<\valency$. 
Moreover, by definition of $l_0$, we can rewrite 
\[
dx=Z^{n(v_{i,l_0})}\sum_{k\geq 0}c_{l_0+k}(d)Z^{n(v_{i+l_0,k})}e_{i+l_0+k},
\] 
with $c_{l_0}(d)\neq 0$. In other words, $dx$ is $(i+l_0,n(v_{i,l_0}))$-admissible. 
Hence, $\ruling_d(x)=l_0$ is well-defined, and $0<\ruling_d(x)<\valency$. This shows (1).
\end{proof}

\begin{proof}[Proof of {\rm (2)}]
For any $g\in B(v;\field)$ and any $t\geq 0$, we have 
\[
g(Z^tx)=Z^tg(x)=Z^t\sum_{l\geq 0}c_l(g)Z^{n(v_{i,l})}e_{i+l},
\] 
with $c_0(g)=x_k\langle g(e_i),e_i\rangle\neq 0$. That is, $g(Z^tx)$ is $(i,t)$-admissible. Thus, $g$ preserves the set of $(i,t)$-admissible elements.
Moreover, $dx$ is $(i+k,n(v_{i,k}))$-admissible if and only if $(g\cdot d)(g(x))=g\circ d(x)$ is. It follows that $\ruling_d(x)=\ruling_{g\cdot d}(g(x))$ and hence $\ruling_{[d]}$ is well-defined. This shows (2).
\end{proof}

\begin{proof}[Proof of {\rm (3)}] 
If $\ruling_{[d]}(i)=k$, then $0<k<\valency$. By definition, $k=\ruling_{[d]}(i)=\ruling_d(x)$ for some $i$-admissible $x$. In addition, $dx=Z^{n(v_{i,k})}y$ for some $(i+k)$-admissible $y$. 
We then define $g\in B(v;\field)$ by
\begin{align*}
\begin{cases}
g(e_p)=e_p, \qquad \text{ if } p\neq i, i+k;\\
g(x)=e_i; \\
g(y)=e_{i+k}.
\end{cases}
\end{align*}
 Replacing $d$ by $g\cdot d$ and $x$ by $e_i$, we obtain 
 \[x=e_i,\qquad de_i=Z^{n(v_{i,k})}e_{i+k}.\] 
It then follows from $d^2+Z^2=0$ that $0\leq n(v_{i,k})\leq 2$, and
\begin{align*}
de_{i+k}&=-(-1)^{n(v_{i,k})}Z^{2-n(v_{i,k})}e_i\\
&=-(-1)^{n(v_{i+k,\valency-k})}Z^{n(v_{i+k,\valency-k})}e_{i+k+(\valency-k)},
\end{align*} 
which implies that $\ruling_d(e_{i+k})=\valency-k$. This shows that $\ruling_{[d]}(i+k)\geq \valency-k$.

Suppose $\ruling_{[d]}(i+k)=j>\valency-k$. Then there exists a $(i+k)$-admissible element $e_{i+k}'$, such that $de_{i+k}'=Z^{n(v_{i+k,j})}e_{i+k+j}'$ and $e_{i+k+j}'$ is $(i+k+j)$-admissible. Again, by $d^2+Z^2=0$, we must have 
\[de_{i+k+j}'=-(-1)^{n(v_{i+k+j,\valency-j})}Z^{n(v_{i+k+j,\valency-j})}e_{i+k}'.\] 
It follows that 
\begin{align*}
&\mathrel{\hphantom{=}}d \left(e_i+cZ^{n(v_{i,k+j-\valency})}e_{i+k+j-\valency}'\right)\\
&=Z^{n(v_{i,k})}e_{i+k}-c(-1)^{n(v_{i,k+j-\valency})+n(v_{i+k+j,\valency-j})}Z^{n(v_{i,k+j-\valency})+n(v_{i+k+j,\valency-j})}e_{i+k}'\\
&=Z^{n(v_{i,k})} \left(e_{i+k}-(-1)^{n(v_{i,k})}ce_{i+k}'\right)
\end{align*}
Since $e_{i+k}'$ is $(i+k)$-admissible, can choose $c\in\field^*$ so that 
\[
w\coloneqq e_{i+k}-(-1)^{n(v_{i,k})}ce_{i+k}'=\sum_{l>k}*_lZ^{n(v_{i+k,l-k})}e_{i+l},
\]
i.e., $w$ is {\em not} $(i+k)$-admissible.
By assumption, 
\[z\coloneqq e_i+cZ^{n(v_{i,k+j-\valency})}e_{i+k+j-\valency}'\] 
is $i$-admissible. Then by the choice of $c$, $dz=Z^{n(v_{i,k})}w$ is $(i+l_0,n(v_{i,l_0}))$-admissible for some $l_0>k$. Hence, $\ruling_d(z)=l_0$, which implies that $\ruling_{[d]}(i)\geq l_0>k$, a contradiction! Therefore, we must have $\ruling_{[d]}(i+k)=\valency-k$, as desired. The remaining part of (3) now follows by a direct check of Definition \ref{def:canonical augmentation at a vertex}.
\end{proof}

\begin{proof}[Proof of {\rm (4)}] 
Let $\ruling\coloneqq \ruling([d])\in \NR(v)$ be the involution induced by $\ruling_{[d]}$. Notice that $\ruling$ is equivalent to a partition $I(v)=\{1,2,\ldots,\valency\}=U\amalg L$ together with a bijection $\ruling:U\xrightarrow[]{\sim}L$ satisfying
\[
\mu(i)-\mu(\ruling(i))-1+n(v_{i,\ruling(i)-i})=0, \qquad \text{ for all } i\in U \text{ with } i<\ruling(i).
\]
For each $i\in U$, denote $j\coloneqq \ruling(i)$. By definition of $\ruling_{[d]}$, there exists an $i$-admissible element $e_i'\in C_v$ such that $(-1)^{\mu(i)}de_i'=Z^{n(v_{i,\ruling(i)-i})}e_{\ruling(i)}'$, where $e_{\ruling(i)}'\in C_v$ is $\ruling(i)$-admissible. Then $\{e_p':1\leq p\leq \valency\}$ defines an element 
$g\in B(v;\field)$ by 
\[
g(e_p)\coloneqq e_p', \qquad \text{ for all } 1\leq p\leq \valency.
\] 
It follows that, for all $i\in U$, have 
\[
(g^{-1}\cdot d)(e_i)=g^{-1}\circ d\circ g(g^{-1}(e_i'))=g^{-1}((-1)^{\mu(i)}Z^{n(v_{i,\ruling(i)-i})}e_{\ruling(i)}')=(-1)^{\mu(i)}Z^{n(v_{i,\ruling(i)-i})}e_{\ruling(i)}.
\] 
Then by the condition $(g^{-1}\cdot d)^2+Z^2=0$, we see that $g^{-1}\cdot d=d_{\ruling}$, the canonical differential associated to $\ruling$ as in Definition \ref{def:canonical augmentation at a vertex}. In other words, $d=g\cdot d_{\ruling}$ and $B(v;\field)\cdot d=B(v;\field)\cdot d_{\ruling}$. By construction, $d_{\ruling}$ is uniquely determined by $\ruling([d])$, hence by $B(v;\field)\cdot d$. This shows (4).
\end{proof}
This proves the proposition.
\end{proof}

Similar to \cite[Lem.5.7,Cor.5.8]{Su2017}, we have 

\begin{lemma}\label{lem:properties of orbits at a vertex}
Let $v$ be a vertex as before and $\ruling\in\NR(v)$.
\begin{enumerate}
\item
For any $d\in B(v;\field)\cdot d_{\ruling}$, and any $i\in U$, there exists a unique $i$-admissible element in $C_v$ of the form 
\[
e_i'=e_i+\sum_{j\in A_{\ruling}(i)}a_{j-i}Z^{n(v_{i,j-i})}e_j,\qquad a_{j-i}\in\field,
\] 
such that $de_i'$ is $(\ruling(i),n(v_{i,\ruling(i)-i}))$-admissible. 
Moreover, $e_i'=e_i'(d)$ depends algebraically on $d\in B(v;\field)\cdot d_{\ruling}$. 
\item
The $\Stab^\ruling(v;\field)$-principal bundle 
\begin{equation*}
\pi_{\ruling}:B(v;\field)\to B(v;\field)\cdot d_{\ruling}
\end{equation*} 
admits a natural algebraic section $\varphi_{\ruling}$, i.e. $\varphi_{\ruling}(d)\cdot d_{\ruling}=d$ for all $d\in B(v;\field)\cdot d_{\ruling}$. In other words, we have a trivialization of $\pi_{\ruling}$: 
\begin{equation*}
B(v;\field)\isomorphic  \Stab^\ruling(v;\field)\times B(v;\field)\cdot d_{\ruling}
\end{equation*}
\item
In addition, we have:
\begin{equation}
B(v;\field)\cdot d_{\ruling}\isomorphic (\field^*)^{\frac{\valency}{2}}\times\field^{A_v(\ruling)}
\end{equation}
with $A_v(\ruling)$ given in Definition \ref{def:indices for an involution at a vertex}.
\end{enumerate}

\end{lemma}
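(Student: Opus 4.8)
The plan is to adapt the argument of \cite[Lem.~5.7 and Cor.~5.8]{Su2017} to the $\field[Z]$-graded situation at a vertex, taking the admissibility analysis in the proof of Proposition/Definition~\ref{def/prop:structure of Morse complexes at a vertex} as the main technical input. Throughout, fix $\ruling\in\NR(v)$ with the associated partition $I(v)=U\amalg L$ and bijection $\ruling\colon U\xrightarrow[]{\sim}L$, and abbreviate $n(v,a,k)$ to $n(v_{a,k})$.

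For part (1), I would fix $d=g\cdot d_{\ruling}\in B(v;\field)\cdot d_{\ruling}$ and $i\in U$. By Proposition/Definition~\ref{def/prop:structure of Morse complexes at a vertex}.(2) we have $\ruling_{[d]}=\ruling_{[d_{\ruling}]}$, so $\ruling_{[d]}(i)=\ruling(i)-i$; by Remark~\ref{rem:morphisms of Morse complexes at a vertex} a general $i$-admissible element is $e_i+\sum_{k\in I_v(i)}a_kZ^{n(v_{i,k})}e_{i+k}$, homogeneity forcing the support into $I_v(i)$. Requiring that $d$ applied to it be $(\ruling(i),n(v_{i,\ruling(i)-i}))$-admissible amounts, in the notation of the proof of Proposition/Definition~\ref{def/prop:structure of Morse complexes at a vertex}, to the vanishing of the coefficients $c_m$ for $0<m<\ruling(i)-i$. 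I would then run an induction on $m$: each such equation is $\field$-linear in the $a_k$, its $a_m$-coefficient is, up to a unit, the leading coefficient of $d$ along the pair containing $i+m$ — nonzero precisely when $i+m\in U$ and $\ruling(i+m)<\ruling(i)$, i.e.\ when $m\in A_{\ruling}(i)$ — and it is triangular in the remaining unknowns, so it solves for $a_m$ when $m\in A_{\ruling}(i)$ and either forces $a_m=0$ or is vacuous otherwise. Hence the coefficients $a_k$ with $k\in A_{\ruling}(i)$ are uniquely determined and the others vanish, giving the claimed $e'_i=e'_i(d)$; since one is inverting a triangular linear system whose entries are polynomial in those of $d$, Cramer's rule shows $e'_i(d)$ is algebraic in $d\in B(v;\field)\cdot d_{\ruling}$.

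For part (2), given $d$ in the orbit I would take $e'_p(d)$ for $p\in U$ from part (1) and set $e'_p(d):=(-1)^{\mu(i)}Z^{-n(v_{i,\ruling(i)-i})}d(e'_i(d))$ for $p=\ruling(i)\in L$; this is $p$-admissible because $d(e'_i(d))$ is $(\ruling(i),n(v_{i,\ruling(i)-i}))$-admissible. The assignment $e_p\mapsto e'_p(d)$ defines $\varphi_{\ruling}(d)\in B(v;\field)$ (it is $\field[Z]$-superlinear, preserves $F^{\bullet}C_v$, and is invertible), depends algebraically on $d$ by part (1), and satisfies $\varphi_{\ruling}(d)^{-1}\cdot d=d_{\ruling}$ — its values on $L$ being pinned down by $d^2+Z^2=0$ exactly as in the proof of Proposition/Definition~\ref{def/prop:structure of Morse complexes at a vertex}.(4). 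Thus $\varphi_{\ruling}$ is the desired section, and $g\mapsto\bigl(g\,\varphi_{\ruling}(g\cdot d_{\ruling})^{-1},\,g\cdot d_{\ruling}\bigr)$, with inverse $(h,d)\mapsto h\,\varphi_{\ruling}(d)$, gives the trivialization $B(v;\field)\isomorphic\Stab^{\ruling}(v;\field)\times B(v;\field)\cdot d_{\ruling}$.

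For part (3), I would put explicit coordinates on $B(v;\field)\cdot d_{\ruling}$: writing $d(e_a)=\sum_{k\in I_v(a)}d_{a,k}Z^{n(v_{a,k})}e_{a+k}$, the conditions $\ruling([d])=\ruling$ and $d^2+Z^2=0$ leave exactly one nonzero ``leading'' coefficient for each eye $\{i,\ruling(i)\}$ with $i\in U$ — contributing the factor $(\field^\times)^{\valency/2}$ — together with one free parameter in $\field$ for each element counted by $A_v(\ruling)=\sum_{i\in U}|A_{\ruling}(i)|+\sum_{i\in L}|I_v(i)|$, all other entries being determined as regular functions of these by means of the identities $n(v_{a,k})+n(v_{a+k,l})=n(v_{a,k+l})$ and $0\le n(v_{i,\ruling(i)-i})\le 2$ from the proof of Proposition/Definition~\ref{def/prop:structure of Morse complexes at a vertex}. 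This yields $B(v;\field)\cdot d_{\ruling}\isomorphic(\field^\times)^{\valency/2}\times\field^{A_v(\ruling)}$, which together with part (2) and Lemma~\ref{lem:aug and Morse complex for a vertex} also establishes Lemma~\ref{lem:stratification of aug var at a vertex}.(2)--(3). I expect the main obstacle to be the combinatorial and sign bookkeeping in parts (1) and (3): verifying that the linear system in (1) is genuinely triangular with the asserted pivot pattern, and that in (3) the free parameters are exactly those indexed by $A_v(\ruling)$ and no others. Both points hinge on the non-interlacing structure of $\ruling$ and on carefully tracking the powers of $Z$ forced by the filtration $F^{\bullet}C_v$; this is precisely where the vertex case genuinely differs from the trivial-tangle computation of \cite{Su2017}, and it is handled by the refined admissibility notions of Proposition/Definition~\ref{def/prop:structure of Morse complexes at a vertex}.
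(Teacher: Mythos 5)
Your overall architecture matches the paper's: part (1) via admissible elements and the invariant $\ruling_{[d]}$ from Proposition/Definition~\ref{def/prop:structure of Morse complexes at a vertex}, part (2) by assembling the $e_i'$ into a section, part (3) by parametrizing the orbit. Parts (2) and (3) are essentially correct (your normalization of $\varphi_\ruling$ absorbs the scalars $c_j$ into $e_{\ruling(i)}'$ rather than factoring them out as a diagonal matrix, and in (3) the paper reads off the parameter count from the section $\varphi_\ruling$ built in (2) rather than from direct coordinates on $d$ itself, which is the safer route since the constraints $d^2+Z^2=0$ and $\ruling([d])=\ruling$ intertwine the matrix entries of $d$ in a way that is painful to disentangle directly).

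The gap is in part (1). You reduce everything to one linear system in the unknowns $a_k$, $k\in I_v(i)$, and claim it is triangular with pivots exactly at $k\in A_{\ruling}(i)-i$, so that "the others vanish." This is false as stated: already for $d=d_{\ruling}$, if $k\in I_v(i)$ with $i+k\in U$ and $\ruling(i+k)>\ruling(i)$, then $e_i+a_kZ^{n(v_{i,k})}e_{i+k}$ has $(\ruling(i),n(v_{i,\ruling(i)-i}))$-admissible image for \emph{every} $a_k$, because the extra term lands at $e_{\ruling(i+k)}$ with $\ruling(i+k)>\ruling(i)$, which admissibility permits. So the solution set of your system is an affine space of positive dimension in general; uniqueness holds only after restricting the support to $A_{\ruling}(i)$, and then the system becomes overdetermined (equations indexed by all relevant $m<\ruling(i)-i$, unknowns only by $A_{\ruling}(i)-i$), so its consistency is not a consequence of triangularity. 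These are exactly the two issues the paper's proof treats separately: \emph{existence} of a solution supported on $A_{\ruling}(i)$ is obtained by an inductive sweeping procedure that takes an arbitrary admissible solution and kills the components at $j\in I_v(i)\setminus(A_{\ruling}(i)-i)$ by subtracting suitable multiples of already-constructed admissible elements $y_{i+j}$ (using that $i+j>\valency$, or $i+j\in L$, or $i+j\in U$ with $\ruling(i+j)>\ruling(i)$); and \emph{uniqueness} within that form comes from a genuinely invertible lower-triangular system (the matrix $(\epsilon_{p,q})$ in the paper), which also yields algebraicity by Cramer's rule as you say. Your proof needs the existence step supplied; without it, parts (2) and (3), which both consume the $e_i'(d)$, are not grounded.
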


The proof is essentially the same as that in \cite[Lem.5.7,Cor.5.8]{Su2017}. For completeness, we give the details.

\begin{proof}[Proof of {\rm (1)}]
\emph{Existence}. By Proposition/Definition \ref{def/prop:structure of Morse complexes at a vertex}, for any $i\in U$, we have:
\[
\ruling(i)-i=\max\{\ruling_d(x) \mid x \text{ is $i$-admissible}\}.
\]
Hence, we can take an $i$-admissible element of the form 
\[x=e_i+\sum_{j\in I_v(i)}a_jZ^{n(v_{i,j})}e_{i+j},\] 
such that $\ruling_d(x)=\ruling(i)-i$, i.e. $dx$ is $(\ruling(i),n(v_{i,\ruling(i)-i}))$-admissible.

Let us apply induction argument on
\[
j=j(x_0) \coloneqq \min\{j \mid j\in I_v(i)\setminus (A_{\ruling}(i)-i) \text{ and $a_j=\langle x_0,Z^{n(v_{i,j})}e_{i+j} \rangle \neq 0$} \}.
\] 
\noindent{}\emph{Inductive step 0:} If $j=+\infty$, i.e., $a_j=0$ for all $j\in I(i)\setminus (A_{\ruling}(i)-i)$, then $e_i'=x$ is a desired element for $(1)$.\footnote{Here, we use the convention $\min~\emptyset:=\infty$.} 

\noindent{}\emph{Inductive step 1:} Otherwise, set $x_0=x$ and construct $x_1$ by considering the following cases:
\begin{enumerate}
\item[(i)] If $i+j>n$, in particular, $j>\ruling(i)-i$. Take \[x_1:=x_0-a_jZ^{n(v_{i,j})}e_{i+j}.\] Then, $x_1$ is $i$-admissible, and $dx$ is still $(\ruling(i), n(v_{i,\ruling(i)-i}))$-admissible.
 
\item[(ii)] If $i+j\leq n$, i.e. $i+j\in I(v)$, then by definition of $A_{\ruling(i)}$, either $i+j\in L$, or $i+j\in U$ and $\ruling(i+j)>\ruling(i)$. 

\begin{enumerate}
\item[(ii-1)] If $i+j\in L$, denote $k:=\ruling^{-1}(i+j)\in U$, then $k<i+j$ and $\ruling_d(i+j)=n-(i+j-k),$ by Proposition/Definition \ref{def/prop:structure of Morse complexes at a vertex}.
Hence, there exists an $(i+j)$-admissible element of the form 
\[y_{i+j}=e_{i+j}+\sum_{l>j}*_lZ^{n(v_{i+j,l-j})}e_{i+l}\] 
such that $dy_{i+j}=(k,n(v_{i+j,n-(i+j-k)}))$-admissible. 
It follows that $x_1=x_0-a_jZ^{n(v_{i,j})}y_{i+j}$ is $i$-admissible and $dx_1$ is still $(\ruling(i),n(v_{i,\ruling(i)-i})$-admissible, but $j(x_1)>j=j(x_0)$. 

\item[(ii-2)] If $i+j\in U$ and $\ruling(i+j)>\ruling(i)$, then there exists a $(i+j)$-admissible element of the form 
\[
y_{i+j}=e_{i+j}+\sum_{l>j}*_lZ^{n(v_{i+j,l-j})}e_{i+l}
\] 
such that $dy_{i+j}$ is $(\ruling(i+j),n(v_{i+j,\ruling(i+j)-(i+j)})$-admissible. It follows again that $x_1=x_0-a_jZ^{n(v_{i,j})}y_{i+j}$ is $i$-admissible and $dx_1$ is still $(\ruling(i),n(v_{i,\ruling(i)-i}))$-admissible, but $j(x_1)>j(x_0)$.
\end{enumerate}
\end{enumerate}

\noindent{}\emph{Inductive step 2}: If $j(x_1)=\infty$, then $e_i'=x_1$ is a desired element for $(1)$. Otherwise, replace $x_0$ by $x_1$ and repeat the procedure above. Inductively, for some sufficiently large $N$, we obtain in the end an $i$-admissible element of the form $x_N=e_i+\sum_{j\in I_v(i)}a_jZ^{n(v_{i,j})}e_{i+j}$ such that $dx_N$ is $(\ruling(i),n(v_{i,\ruling(i)-i}))$-admissible and $j(x_N)=\infty$. That is, $e_i':=x_N$ is a desired element for $(1)$. This shows the \emph{existence}.\\

\noindent{}\emph{Uniqueness}. We show the uniqueness by induction on $|A_{\ruling}(i)|$. If $A_{\ruling}(i)=\emptyset$, then $e_i'=e_i$, which is clearly unique. For the inductive procedure, assume the uniqueness holds when $|A_{\ruling}(i)|<k$, and consider the case when $|A_{\ruling}(i)|=k$. Let 
\[e_i'=e_i+\sum_{j\in A_{\ruling}(i)}a_{j-i}Z^{n(v_{i,j-i})}e_j\] 
be any element satisfying $(1)$. Since $A_{\ruling}(j)\subsetneq A_{\ruling}(i)$ for all $j\in A_{\ruling}(i)$, by induction we can rewrite 
\[e_i'=e_i+\sum_{j\in A_{\ruling}(i)}b_{j-i}Z^{n(v_{i,j-i})}e_j'.\] 
Here, by the inductive hypothesis, for all $j\in A_{\ruling}(i)$, $e_j'$ is the element in $(1)$ for $j$, uniquely determined by $d$. We want to show the uniqueness of $b_{j-i}$'s.

Assume $A_{\ruling}(i)=\{i_1<i_2<\dots<i_k\}$ and $\ruling(A_{\ruling}(i))=\{j_1<j_2<\dots<j_k\}\subset L$. By definition of $A_{\ruling}(i)$, we know $\ruling(i_l)<\ruling(i)$ for all $1\leq l\leq k$. By the conditions of $(1)$, $de_i'$ is $(\ruling(i),n(v_{i,\ruling(i)-i}))$-admissible, hence $\langle de_i',Z^{n(v_{i,\ruling(i_p)-i})}e_{\ruling(i_p)}\rangle=0$ for all $1\leq p\leq k$. That is, the following system of linear equations for $\{b_{i_j-i}\}_{1\leq j\leq k}$ holds:
\begin{equation}
\left(\epsilon_{p,q}\right)_{p,q}\left(b_{i_q-i}\right)_{q}=\left(-\langle Z^{n(v_{i,\ruling(i_p)-i})}e_{\ruling(i_p)},de_i\rangle\right)_p,
\end{equation}
where 
\begin{equation}
\epsilon_{p,q}=
\begin{cases}
(-1)^{n(v_{i,i_q-i})}\langle Z^{n(v_{i_q,\ruling(i_p)-i_q})}e_{\ruling(i_p)},de_{i_q}'\rangle & \textrm{if $i_q<\ruling(i_p)$};\\
0 & \textrm{otherwise}.
\end{cases}
\end{equation}
\emph{Denote} $i_q':=\ruling^{-1}(j_q)\in\{i_1,i_2,\ldots,i_k\}$. \emph{Define}
\[
\epsilon_{p,q}'=
\begin{cases}
(-1)^{n(v_{i,i_q'-i})}\langle Z^{n(v_{i_q',j_p-i_q'})}e_{j_p},de_{i_q'}'\rangle & \textrm{if $i_q'<j_p$};\\
0 & \textrm{otherwise}.
\end{cases}
\]
Then the following two coefficient matrices are similar:
\[
\left(\epsilon_{p,q}\right)_{p,q}\sim\left(\epsilon_{p,q}'\right)_{p,q}
\]
And by definition, $de_{i_q'}'$ is $(j_q,n(v_{i_q',j_q-i_q'}))$-admissible, hence $\langle Z^{n(i_q',j_p-i_q')}e_{j_p},de_{i_q'}'\rangle=0$ if $p<q$ and $\langle Z^{n(v_{i_q',j_q-i_q'})}e_{j_q},de_{i_q'}'\rangle\neq 0$. Therefore, the square matrix $\left(\epsilon_{p,q}'\right)_{p,q}$ is lower triangular and invertible, then so is $\left(\epsilon_{p,q}\right)_{p,q}$. It follows that
\begin{equation}
\left(b_{i_q-i}\right)_{q}=\left(\epsilon_{p,q}\right)_{p,q}^{-1}\left(-\langle Z^{n(v_{i,\ruling(i_p)-i})}e_{\ruling(i_p)},de_i\rangle\right)_p
\end{equation}
By induction, $e_{i_q}'$'s are uniquely determined by $d$, hence so is the right hand side. The uniqueness in $(1)$ then follows. The previous equation also shows by induction that $e_i'=e_i'(d)$ depends algebraically on $d$. This shows $(1)$.
\end{proof}

\begin{proof}[Proof of {\rm (2)}]
Take any $j\in L$, then there is $i\in U$ such that  $j=\ruling(i)$. By (1), $(-1)^{\mu(i)}de_i'=c_jZ^{n(v_{i,j-i})}e_j'$ for some $c_j\in\field^*$ and some $j$-admissible element of the form 
\[e_j'=e_j+\sum_{l>0}*_lZ^{n(v_{j,l})}e_{j+l}.\] 
Then $(1)$ shows that both $c_j$ and $e_j'$ are uniquely determined, and depend algebraically on $d$. 

Now, define an unipotent isomorphism $\varphi_0(d)\in U(v;\field)\subset B(v;\field)$ by 
\[\varphi_0(d)(e_i):=e_i',\qquad \textrm{ for all } 1\leq i\leq n.\] 
It follows that $\varphi_0:B(v;\field)\cdot d_{\ruling}\rightarrow U(v;\field)$ defines a canonical algebraic map. Moreover, for any $d\in B(v;\field)\cdot d_{\ruling}$ and any $i\in U$, have
\begin{align*}
(\varphi_0(d)^{-1}\cdot d)(e_i)&=\varphi_0(d)^{-1}\circ d\circ \varphi_0(d)(e_i)
=\varphi_0(d)^{-1}\circ d(e_i')\\
&=c_{\ruling(i)}\varphi_0(d)^{-1}((-1)^{\mu(i)}Z^{n(v_{i,\ruling(i)-i})}e_{\ruling(i)}')
=c_{\ruling(i)}(-1)^{\mu(i)}Z^{n(v_{i,\ruling(i)-i})}e_{\ruling(i)}.
\end{align*}
Now, for the canonical section of $\pi_{\ruling}:B(v;\field)\rightarrow B(v;\field)\cdot d_{\ruling}$, we simply take 
\begin{align*}
\varphi_{\ruling}(d):=D(d)\circ \varphi_0(d),
\end{align*}
where
\begin{align}\label{eqn:D(d)}
\begin{cases}
D(d)(e_i')\coloneqq e_i' &\text{ for }i\in U;\\
D(d)(e_j')\coloneqq c_je_j' &\text{ for }j\in L.
\end{cases}
\end{align}
It follows that $(\varphi_{\ruling}(d)^{-1}\cdot d)(e_i)=(-1)^{\mu(i)}Z^{n(v_{i,\ruling(i)-i})}e_{\ruling(i)}=d_{\ruling}(e_i)$ for all $i\in U$. By the condition \[((\varphi_{\ruling}(d)^{-1})\cdot d)^2+Z^2=0,\] we then see that $(\varphi_{\ruling}(d)^{-1})\cdot d=d_{\ruling}$, i.e. $\varphi_{\ruling}(d)\cdot d_{\ruling}=d$, as desired. This shows (2).
\end{proof}

\begin{proof}[Proof of {\rm (3)}]
By (2), there is an identification between $d\in B(v;\field)\cdot d_{\ruling}$ and $\varphi_{\ruling}(d)$, where $\varphi_{\ruling}$ is the canonical section of $\pi_{\ruling}:B(v;\field)\rightarrow B(v;\field)\cdot d_{\ruling}$. Use the notations in the proof of (2) above, the general form of $\varphi_{\ruling}(d)$ is $\varphi_{\ruling}(d)=D(d)\circ \varphi_0(d)$, where 
\begin{align*}
\begin{dcases}
\varphi_0(d)(e_i)=e_i'=e_i+\sum_{j\in A_{\ruling}(i)}*_{ij}Z^{n(v_{i,j-i})}e_j, &\text{ for } i\in U;\\
\varphi_0(d)(e_i)=e_i+\sum_{j\in I_v(i)}\star_{ij}Z^{n(v_{i,j})}e_{i+j}, &\text{ for }i\in L.
\end{dcases}
\end{align*}
Together with \eqref{eqn:D(d)}, it follows that
\begin{align*}
B(v;\field)\cdot d_{\ruling}
&\isomorphic \{(*_{ij})_{i,j} \mid i\in U, j\in A_{\ruling}(i), *_{ij}\in \field \}\\
&\mathrel{\hphantom{\isomorphic}}\times \{ (\star_{ij})_{i,j} \mid i\in L, j\in I_v(i), \star_{ij}\in \field \} \\
&\mathrel{\hphantom{\isomorphic}}\times \{ (c_i)_i \mid i\in L, c_i \in \field^*\} \\
&\isomorphic (\field^*)^{\frac{\valency}{2}}\times \field^{A_v(\ruling)}.
\end{align*}
This shows (3).
\end{proof}

\subsection{Augmentations for elementary bordered Legendrian graphs with a vertex}

In this and the next subsection, we study the structure of the augmentation variety associated to any elementary bordered Legendrian graph with a vertex of general type, i.e. whose only singularity is a vertex of any type.
In the end, generalizing \cite[Thm.5.10]{Su2017}, we'll see this leads naturally to a ruling decomposition for the augmentation variety associated to any bordered Legendrian graph, at least when we impose a base point at each right cusp, and at each left half-edge of any vertex. 

Let $(\cV,\bfmu)\in \BLT^\mu_\front$ be an elementary bordered Legendrian graph in Definition~\ref{definition:elementary bordered Legendrian graphs} of type $(n_\Left,n_\Right)$ containing a vertex $v$ of type $(\ell,r)$.
Let us impose a base point on each left-half edges of $v$.
The borders and the additional base points are labelled from top to bottom, see Figure \ref{fig:vertex} (left).
Consider Ng's resolution $\Res(\cV)\in\BLT^\mu_\lag$ of $\cV$. Note that there are $\binom{\ell}2$-many additional crossings induced from Ng's resolution. Let us label them as in Figure \ref{fig:vertex} (right).

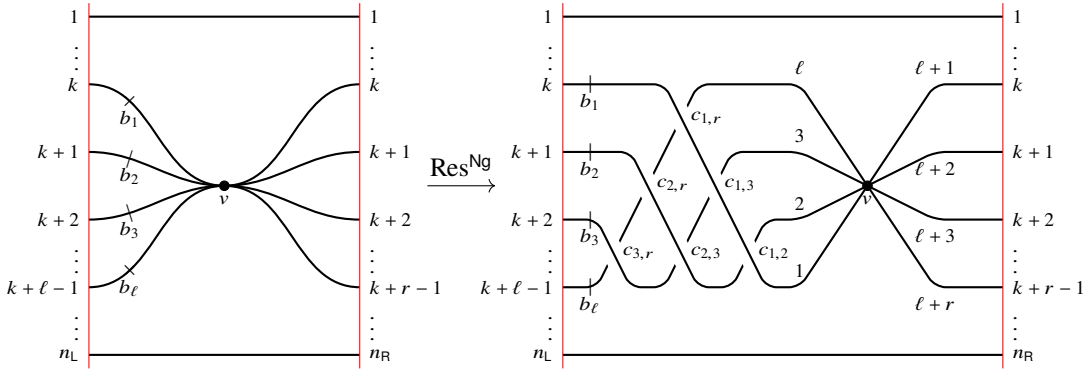
\begin{figure}[ht]
\[
\begin{tikzpicture}[scale=0.9,baseline=-.5ex]
\begin{scope}[xshift=-2cm]
\draw[thick](-1,1.5) node[left] {\scriptsize$k$} to[out=0,in=180] (1,0);
\draw(-0.4,1.25) node{\scriptsize$\times$} node[below] {\scriptsize$b_1$};
\draw[thick](1,0)  to[out=0,in=180] (3,1.5) node[right] {\scriptsize$k$};
\draw[thick](-1,.5) node[left] {\scriptsize$k+1$} to[out=0,in=180] (1,0);
\draw(-0.4,.4) node{\scriptsize$/$} node[below] {\scriptsize$b_2$};
\draw[thick](1,0)  to[out=0,in=180] (3,0.5) node[right] {\scriptsize$k+1$};
\draw[thick](-1,-.5) node[left] {\scriptsize$k+2$} to[out=0,in=180] (1,0);
\draw(-0.4,-.4) node{\scriptsize$\textbackslash$} node[below] {\scriptsize$b_3$};
\draw[thick](1,0)  to[out=0,in=180] (3,-0.5) node[right] {\scriptsize$k+2$};
\draw[thick](-1,-1.5) node[left] {\scriptsize$k+\ell-1$} to[out=0,in=180] (1,0);
\draw(-0.4,-1.25) node{\scriptsize$\times$} node[below] {\scriptsize$b_\ell$};
\draw[thick](1,0)  to[out=0,in=180] (3,-1.5) node[right] {\scriptsize$k+r-1$};
\draw[thick](-1, 2.5) node[left]{\scriptsize$1$} -- (3,2.5) node[right]{\scriptsize$1$};
\draw[thick](-1, -2.5) node[left]{\scriptsize$n_\Left$} -- (3,-2.5) node[right]{\scriptsize$n_\Right$};
\draw[fill] (1,0) circle (2pt) node[below] {\scriptsize$v$};
\draw[red] (-1,2.7)--(-1,-2.7);
\draw[red] (3,2.7)--(3,-2.7);
\draw (-1,2) node[left]{\scriptsize$\vdots$};
\draw (3,2) node[right]{\scriptsize$\vdots$};
\draw (-1,-1) node[left]{\scriptsize$\vdots$};
\draw (3,-1) node[right]{\scriptsize$\vdots$};
\draw (-1,-2) node[left]{\scriptsize$\vdots$};
\draw (3,-2) node[right]{\scriptsize$\vdots$};
\end{scope}
\draw[->](2,0)--(3,0);
\draw(2.5,0) node[above] {$\Res$};
\begin{scope}[xshift=5cm]
\draw[thick,rounded corners](-1,-1.5) node[left] {\scriptsize$k+\ell-1$} -- (-0.5,-1.5) -- (1,1.5) -- (2.5,1.5) node[above]{\scriptsize$\ell$} -- (3.5,0);
\draw(-.6,-1.5) node{\scriptsize$|$} node[below] {\scriptsize$b_\ell$};
\draw[line width=8pt,white,rounded corners](-1,-.5) -- (-0.5,-.5) -- (0,-1.5) -- (0.5,-1.5) -- (1.5,.5) -- (2.5,0.5);
\draw[thick,rounded corners](-1,-.5) node[left] {\scriptsize$k+2$}-- (-0.5,-.5) -- (0,-1.5) -- (0.5,-1.5) -- (1.5,.5) -- (2.5,0.5)node[above]{\scriptsize$3$} -- (3.5,0);
\draw(-.6,-.5)  node{\scriptsize$|$} node[below] {\scriptsize$b_3$};
\draw[line width=8pt,white,rounded corners](-1,.5) -- (0,.5) -- (1,-1.5) -- (1.5,-1.5) -- (2, -0.5) -- (2.5,-0.5);
\draw[thick,rounded corners](-1,.5)node[left] {\scriptsize$k+1$} -- (0,.5) -- (1,-1.5) -- (1.5,-1.5) -- (2, -0.5) -- (2.5,-0.5) node[above]{\scriptsize$2$} -- (3.5,0);
\draw(-.6,.5)  node{\scriptsize$|$} node[below] {\scriptsize$b_2$};
\draw[line width=8pt,white,rounded corners](-1,1.5) -- (0.5,1.5) -- (2,-1.5) -- (2.5,-1.5);
\draw[thick,rounded corners](-1,1.5) node[left] {\scriptsize$k$} -- (0.5,1.5) -- (2,-1.5) -- (2.5,-1.5) node[above]{\scriptsize$1$} -- (3.5,0);
\draw(-.6,1.5) node{\scriptsize$|$} node[below] {\scriptsize$b_1$};
\draw[thick,rounded corners] (3.5,0) -- (4.5,1.5) node[above]{\scriptsize$\ell+1$} -- (5.5,1.5) node[right]{\scriptsize$k$};
\draw[thick,rounded corners] (3.5,0) -- (4.5,0.5)node[below]{\scriptsize$\ell+2$} -- (5.5,0.5) node[right]{\scriptsize$k+1$};
\draw[thick,rounded corners] (3.5,0) -- (4.5,-0.5)node[below]{\scriptsize$\ell+3$} -- (5.5,-0.5)node[right]{\scriptsize$k+2$};
\draw[thick,rounded corners] (3.5,0) -- (4.5,-1.5) node[below]{\scriptsize$\ell+r$}-- (5.5,-1.5) node[right]{\scriptsize$k+r-1$};
\draw[thick](-1, 2.5) node[left]{\scriptsize$1$} -- (5.5,2.5) node[right]{\scriptsize$1$};
\draw[thick](-1, -2.5) node[left]{\scriptsize$n_\Left$} -- (5.5,-2.5) node[right]{\scriptsize$n_\Right$};
\draw[red] (-1,2.7)--(-1,-2.7);
\draw[red] (5.5,2.7)--(5.5,-2.7);
\draw (-1,2) node[left]{\scriptsize$\vdots$};
\draw (5.5,2) node[right]{\scriptsize$\vdots$};
\draw (-1,-1) node[left]{\scriptsize$\vdots$};
\draw (5.5,-1) node[right]{\scriptsize$\vdots$};
\draw (-1,-2) node[left]{\scriptsize$\vdots$};
\draw (5.5,-2) node[right]{\scriptsize$\vdots$};
\draw[fill] (3.5,0) circle (2pt) node[below] {\scriptsize$v$};
\draw (1.75,-1) node[right] {\scriptsize$c_{1,2}$};
\draw (1.25,0) node[right] {\scriptsize$c_{1,3}$};
\draw (0.75,1) node[right] {\scriptsize$c_{1,r}$};
\draw (0.75,-1) node[right] {\scriptsize$c_{2,3}$};
\draw (0.25,0) node[right] {\scriptsize$c_{2,r}$};
\draw (-0.25,-1) node[right] {\scriptsize$c_{3,r}$};
\end{scope}
\end{tikzpicture}
\]
\caption{Bordered Legendrian graph $V$ and $\Res(V)$.}
\label{fig:vertex}
\end{figure}

\subsubsection{Augmentations for $\cV$}
Recall the construction of DGA $A=(\alg,\differential)\coloneqq A^\CE(V,\mu)$.
The algebra $\alg$ is a unital associative algebra over $\ZZ[ t_i^{\pm 1}\mid 1\leq i\leq \ell]$ freely generated by the left border generators, crossings, and vertex generators:
\begin{align*}
\sfG&=\{ a_{i,j}\mid 1\leq i<j\leq n_\Left\} \amalg \{ c_{i,j} \mid 1\leq i<j\leq \ell \}\amalg \{v_{i,j} \mid i\in\Zmod{\valency}, j>0 \};\\
\alg&=\ZZ[ t_i^{\pm 1}\mid 1\leq i\leq \ell]\langle \sfG \rangle,
\end{align*} 
where $t_i^{\pm 1}$ corresponds to the base point $b_i$. The grading is given by: 
\begin{align*}
|t_i^{\pm 1}|&=0,& |a_{i,j}|&=\mu_\Left(i)-\mu_\Left(j)-1,& |c_{i,j}|&=\mu(i)-\mu(j),& |v_{i,j}|&=\mu(i)-\mu(j)-1+n(v_{i,j})
\end{align*} 
as in Section \ref{sec:aug_variety_vertex}. 
By definition, the differential is given by $\differential t_i^{\pm 1}=0$ and 
\begin{align*}
\differential a_{i,j}&=\sum_{i<k<j}(-1)^{|a_{i,k}|+1}a_{i,k}a_{k,j},\nonumber\\
\differential v_{i,j}&=\delta_{j,\valency}+\sum_{0<k<j}(-1)^{|v_{i,k}|+1}v_{i,k}v_{i+k,j-k},\nonumber\\
\differential c_{p,q}&=t_p^{-\sigma_p}a_{k+p-1,k+q-1}t_q^{\sigma_q}+\sum_{p<o<q}t_p^{-\sigma_p}a_{k+p-1,k+o-1}t_o^{\sigma_o}c_{o,q}\nonumber\\
&\mathrel{\hphantom{=}}+(-1)^{|c_{p,q}|+1}\left(v_{p,q-p}+\sum_{p<o<q}c_{p,o}v_{o,q-o}\right)
\end{align*} 
where $\sigma_i\coloneqq (-1)^{\mu(i)}$.

Then by Theorem/Definition~\ref{theorem:well-definedness of a bordered DGA}, we have a diagram of DGAs 
\begin{equation*}
A(\cV)\coloneqq(A_\Left\stackrel{\phi_\Left}{\longrightarrow} A\stackrel{\phi_\Right}{\longleftarrow} A_\Right),
\end{equation*}
where $\phi_\Left$ is the natural inclusion of the DG-subalgebra $A_\Left$ generated by $a_{i,j}$'s, and 
\[
A_\Right=\left(\ZZ\langle b_{i,j}:1\leq i<j\leq n_R\rangle, \partial_\Right \right)
\] 
is the DGA generated by the right border generators as in Example/Definition~\ref{example:border DGA}. 
See, (\ref{equation:sign corrected differential}) for the construction of $\phi_\Right$.

Let $A\coloneqq(a_{i,j})_{1\leq i,j\leq n_\Left}$ and $B\coloneqq(b_{i,j})_{1\leq i,j\leq n_\Right}$ be the strictly upper-triangular matrix with entries $a_{i,j}$'s and $b_{i,j}$'s, with $a_{i,j}=0$ and $b_{i,j}=0$ for $i\geq j$, respectively. We write them in the following block matrices
\begin{align*}
A&=\begin{pmatrix}
A_{1,1} & A_{1,2} & A_{1,3}\\
0 & A_{2,2} & A_{2,3}\\
0 & 0 & A_{3,3}
\end{pmatrix},&
B&=\begin{pmatrix}
B_{1,1} & B_{1,2} & B_{1,3}\\
0 & B_{2,2} & B_{2,3}\\
0 & 0 & B_{3,3}
\end{pmatrix},
\end{align*}
where $A_{2,2}=(a_{k+p-1,k+q-1})_{1\leq p,q\leq \ell}$ and $B_{2,2}=(b_{k+p-1,k+q-1})_{1\leq p,q\leq r}$. The other $A_{\bullet,\bullet}$'s and $B_{\bullet,\bullet}$'s are defined in the same way.
 
Let $\tilde{c}_{i,j}\coloneqq(-1)^{|c_{i,j}|+1}c_{i,j}$ for the crossing generators $\{ c_{i,j} \mid 1\leq i<j\leq \ell \}$, and define 
\begin{align*}
S&\coloneqq \diag(t_1^{\sigma_1},\ldots,t_{\ell}^{\sigma_{\ell}}),& 
C&\coloneqq(c_{p,q})_{1\leq p,q\leq \ell},&
\tilde{C}&\coloneqq(\tilde{c}_{p,q})_{1\leq p,q\leq \ell}.
\end{align*}
Similarly define matrices $V_{i,j}$, $i\in\ZZ/2$, $j\geq 0$ for the vertex generators $\{v_{i,j} \mid i\in\ZZ/\valency, j>0 \}$  by
\begin{equation*}
\begin{pmatrix}
V_{1,2j} & V_{1,2j+1}\\
V_{2,2j+1} & V_{2,2j}
\end{pmatrix}\coloneqq
\begin{pmatrix}
(v_{p,j\valency+q-p})_{1\leq p,q\leq \ell} & (v_{p,j\valency+\ell+q-p})_{1\leq p\leq \ell, 1\leq q\leq r}\\
(v_{\ell+p,j\valency+r+q-p})_{1\leq p\leq r, 1\leq q\leq \ell} & (v_{\ell+p,j\valency+q-p})_{1\leq p,q\leq r}
\end{pmatrix}.
\end{equation*}
Then we have:
\begin{equation}\label{eqn:iota_R(B)}
\phi_\Right(B)=\begin{pmatrix}
A_{1,1} & A_{1,2}S(I+C)V_{1,1}& A_{1,3}+A_{1,2}S(I+C)V_{1,2}\left(\displaystyle\sum_{i\geq 0}\tilde{C}^i\right)S^{-1}A_{2,3}\\
0 & V_{2,0} & V_{2,1}\left(\displaystyle\sum_{i\geq 0}\tilde{C}^i\right)S^{-1}A_{2,3}\\
0 & 0 & A_{3,3}
\end{pmatrix}
\end{equation}

Dualizing $A(\cV)$, we then obtain a diagram of augmentation varieties:
\begin{equation*}
\aug(\cV;\field)\coloneqq \left(\aug(V_\Left;\field)\xleftarrow[]{r_\Left}\aug(V;\field)\xrightarrow[]{r_\Right}\aug(V_\Right;\field)\right)
\end{equation*}
Also, the inclusion $i:I_v\hookrightarrow A$ of the DG-subalgebra $I_v$ as in Section \ref{sec:aug_variety_vertex} induces a map 
\begin{equation*}
i^*:\aug(V;\field)\rightarrow\aug(v;\field).
\end{equation*}

\begin{lemma}
As in the above setup, any augmentation $\epsilon\in\aug(V;\field)$ is equivalent to the following:
\begin{align*}
\epsilon_\Left\coloneqq r_\Left(\epsilon)&\in\aug(V_\Left;\field),&
\epsilon_v \coloneqq i^*(\epsilon)&\in\aug(v;\field);\\
\epsilon(c_{p,q})&\in\field , 1\leq p\leq \ell,&
\epsilon(t_i)&\in\field^\times, 1\leq i\leq \ell,
\end{align*}
satisfying $\epsilon(c_{p,q})=0$ if $|c_{p,q}|\neq 0$, and for $|c_{p,q}|=1$ we have:
\begin{align*}
0&=\epsilon(t_p^{-\sigma_p})\epsilon_\Left(a_{k+p-1,k+q-1})\epsilon(t_q^{\sigma_q})+\sum_{p<o<q}\epsilon(t_p^{-\sigma_p})\epsilon_\Left(a_{k+p-1,k+o-1})\epsilon(t_o^{\sigma_o})\epsilon(c_{o,q})\\
&\mathrel{\hphantom{=}} +\epsilon_v(v_{p,q-p})+\sum_{p<o<q}\epsilon(c_{p,o})\epsilon_v(v_{o,q-o}).
\end{align*}
Equivalently,
\begin{equation}\label{eqn:u_C}
\epsilon_\Left(A_{2,2})\epsilon(S)(I+\epsilon(C))=-\epsilon(S)(I+\epsilon(C))\epsilon_v(V_{1,0}).
\end{equation}
\end{lemma}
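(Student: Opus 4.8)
The statement is essentially a bookkeeping lemma: it unpacks what it means for a map $\epsilon: A^\CE(V,\mu) \to \field$ to be a DGA morphism, in terms of the data $(\epsilon_\Left, \epsilon_v, \epsilon(c_{p,q}), \epsilon(t_i))$. My plan is to proceed by checking the augmentation condition $\epsilon \circ \differential \equiv 0$ generator by generator against the explicit differential formulas for $A^\CE(V,\mu)$ recorded just above in the excerpt. First I would observe that the generating set $\sfG$ splits into three families --- the left border generators $a_{i,j}$, the Ng's-resolution crossings $c_{p,q}$, and the vertex generators $v_{i,j}$ --- together with the invertible parameters $t_i$. Since the $t_i$ have zero differential they are free to take any value in $\field^\times$, giving the datum $\epsilon(t_i) \in \field^\times$. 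Restricting $\epsilon$ to the DG-subalgebra $A_\Left$ generated by the $a_{i,j}$'s (which is an honest DGA inclusion by Corollary~\ref{corollary:subDGA}) produces exactly an augmentation $\epsilon_\Left = r_\Left(\epsilon) \in \aug(V_\Left;\field)$; similarly restricting to the internal DG-subalgebra $I_v$ (which is a DG-subalgebra by \eqref{equation:internal DG-subalgebra}) produces $\epsilon_v = i^*(\epsilon) \in \aug(v;\field)$, and the condition $\epsilon_v\circ\differential_v \equiv 0$ is automatically satisfied. So the only constraints that remain to be extracted are those coming from $\epsilon(\differential c_{p,q}) = 0$.

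Next I would apply $\epsilon$ to the differential formula
\[
\differential c_{p,q}=t_p^{-\sigma_p}a_{k+p-1,k+q-1}t_q^{\sigma_q}+\sum_{p<o<q}t_p^{-\sigma_p}a_{k+p-1,k+o-1}t_o^{\sigma_o}c_{o,q}+(-1)^{|c_{p,q}|+1}\Bigl(v_{p,q-p}+\sum_{p<o<q}c_{p,o}v_{o,q-o}\Bigr).
\]
For degree reasons $\epsilon(c_{p,q}) = 0$ unless $|c_{p,q}| = 0$, and the above equation contributes a nontrivial relation precisely when $|c_{p,q}| = 1$ (so that $\differential c_{p,q}$ lies in degree $0$, where $\epsilon$ can be nonzero). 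Reading off that relation termwise gives exactly the displayed scalar equation
\[
0=\epsilon(t_p^{-\sigma_p})\epsilon_\Left(a_{k+p-1,k+q-1})\epsilon(t_q^{\sigma_q})+\sum_{p<o<q}\epsilon(t_p^{-\sigma_p})\epsilon_\Left(a_{k+p-1,k+o-1})\epsilon(t_o^{\sigma_o})\epsilon(c_{o,q})+\epsilon_v(v_{p,q-p})+\sum_{p<o<q}\epsilon(c_{p,o})\epsilon_v(v_{o,q-o}),
\]
using that $\epsilon$ is an algebra map so it is multiplicative, and that the sign $(-1)^{|c_{p,q}|+1}$ is $+1$ in the relevant degree. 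Conversely, any choice of the four pieces of data satisfying this relation (and the degree vanishing) assembles into a well-defined algebra map killing all differentials, since the $a_{i,j}$ and $v_{i,j}$ relations are handled by $\epsilon_\Left$ and $\epsilon_v$ respectively and the $c_{p,q}$ relations are precisely the displayed ones.

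Finally I would repackage the family of scalar identities, one for each pair $(p,q)$ with $|c_{p,q}|=1$, into the single matrix identity \eqref{eqn:u_C}. This is a direct transcription: with $S = \diag(t_1^{\sigma_1},\dots,t_\ell^{\sigma_\ell})$, $C = (c_{p,q})$, and recalling that $A_{2,2} = (a_{k+p-1,k+q-1})$ and $V_{1,0} = (v_{p,q-p})$, the first two terms of the scalar relation are the $(p,q)$-entry of $\epsilon_\Left(A_{2,2})\,\epsilon(S)\,(I+\epsilon(C))$ --- here $S^{-1} = \diag(t_p^{-\sigma_p})$ multiplies on the left and the sum over $o$ together with the bare $a\cdot t_q^{\sigma_q}$ term assemble $(I + C)$ on the right --- while the last two terms are the $(p,q)$-entry of $\epsilon(S)(I+\epsilon(C))\,\epsilon_v(V_{1,0})$, after moving the diagonal $\epsilon(S)$ through. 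Setting the sum to zero and transposing one side yields exactly \eqref{eqn:u_C}.

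\textbf{Expected main obstacle.} The conceptual content is routine; the only place demanding care is the matrix bookkeeping in the last step --- getting the placement of $S$ and $S^{-1}$, the indexing shift by $k-1$ between $A_{2,2}$ and the $a$'s, and the direction of multiplication of $(I+C)$ all consistent with the differential formula for $c_{p,q}$. I would double-check this by verifying the $(p,q)$-entry of both sides of \eqref{eqn:u_C} against the scalar relation for a few small cases (say $q = p+1$ and $q = p+2$) before asserting the general identity.
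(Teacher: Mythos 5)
Your proposal is correct and follows the only natural route: the paper states this lemma without proof precisely because it is the generator-by-generator unpacking of $\epsilon\circ\differential\equiv 0$ that you carry out, with the matrix identity \eqref{eqn:u_C} obtained by assembling the scalar relations and clearing the left factor $\epsilon(S)^{-1}$. The one point worth stating explicitly when you write it up is that the $(p,q)$-entries of \eqref{eqn:u_C} with $|c_{p,q}|\neq 1$ hold trivially (every term has nonzero degree, hence is killed by $\epsilon$), so the matrix identity is genuinely equivalent to the finite family of nontrivial scalar constraints.
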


\subsubsection{An identification via Morse complexes}
We'll see immediately that the condition for $\epsilon(t_p^{\sigma_p})$ and $\epsilon(c_{p,q})$ has a simple interpretation in terms of Morse complexes.

\begin{definition}\label{def:matrix notations at V}
Let $(V,\mu)$ be a bordered Legendrian graph involving a single vertex $v$, and let $\epsilon$ be any augmentation of $A(V)$. We make the following definitions:
\begin{enumerate}
\item
For the trivial tangle $V_\Left$, define
\begin{align*}
C_1(V_\Left)&\coloneqq\bigoplus_{1\leq i<k}\field\cdot e_i^\Left,&
C_2(V_\Left)&\coloneqq\bigoplus_{1\leq p\leq \ell}\field\cdot e_{k+p-1}^\Left,&
C_3(V_\Left)&\coloneqq\bigoplus_{k+\ell\leq j\leq n_\Left}\field\cdot e_j^\Left,
\end{align*} 
where the grading is given by $|e_i^\Left|\coloneqq-\mu_\Left(i)$ as usual. Recall that, by Definition \ref{def:Morse complex}, we have 
\[
C(V_\Left)\isomorphic C_1(V_\Left)\oplus C_2(V_\Left)\oplus C_3(V_\Left).
\] 
Similarly define $C_a(V_\Right)$ similarly for $1\leq a\leq 3$, and we have $C(V_\Right)\isomorphic\bigoplus_{a=1}^3C_a(V_\Right)$.

Let us denote column vectors and diagonal matrices
\begin{align*}
\vec{e}^\Left&\coloneqq(e_1^\Left,\ldots,e_{n_\Left}^\Left)^t,&
\vec{e}^{\Left,1}&\coloneqq(e_1^\Left,\ldots,e_{k-1}^\Left)^t;\\
\vec{e}^{\Left,2}&\coloneqq(e_k^\Left,\ldots,e_{k+\ell-1}^\Left)^t,&
\vec{e}^{\Left,3}&\coloneqq(e_{k+\ell}^\Left,\ldots,e_{n_\Left}^\Left)^t;\\
(-1)^{\mu_\Left}&\coloneqq
\diag\left((-1)^{\mu_\Left(1)},\ldots,(-1)^{\mu_\Left(n_\Left)}\right),&
(-1)^{\mu_{\Left,1}}&\coloneqq
\diag\left((-1)^{\mu_\Left(1)},\ldots,(-1)^{\mu_\Left(k-1)}\right);\\ 
(-1)^{\mu_{\Left,2}}&\coloneqq
\diag\left((-1)^{\mu_\Left(k)},\ldots,(-1)^{\mu_\Left(k+\ell-1)}\right),&
(-1)^{\mu_{\Left,3}}&\coloneqq
\diag\left((-1)^{\mu_{\Left}(k+\ell)},\ldots,(-1)^{\mu_{\Left}(n_\Left)}\right).
\end{align*}
Define $\vec{e}^{\Right,i}, \vec{e}^\Right$ and  $(-1)^{\mu_\Left,i}, (-1)^{\mu_\Right}$ similarly, with $\ell, n_\Left$ replaced by $r,n_\Right$ respectively.

\item
Let us define 
\begin{align*}
d^\Left&\coloneqq d(\epsilon_\Left)=\sum_{1\leq i<j\leq n_\Left}(-1)^{\mu_\Left(i)}\epsilon_\Left(a_{i,j})e_j^\Left\otimes (e_i^\Left)^*;\\
d^\Right&\coloneqq d(\epsilon_\Right)=\sum_{1\leq i<j\leq n_\Right}(-1)^{\mu_\Right(i)}\epsilon_\Right(a_{i,j})e_j^\Right\otimes (e_i^\Right)^*.
\end{align*}
Equivalently, we have 
\begin{align*}
d^\Left(\vec{e}^\Left)&=(-1)^{\mu_\Left}\epsilon_\Left(A)\cdot\vec{e}^\Left,&
d^\Right(\vec{e}^\Right)&=(-1)^{\mu_\Right}\epsilon_\Right(A)\cdot\vec{e}^\Right,
\end{align*} 
where the right hand side is matrix multiplication.
We can then denote:
\begin{align*}
d^\Left&=\begin{pmatrix}
d_{1,1}^\Left & 0 & 0\\
d_{2,1}^\Left & d_{2,2}^\Left & 0\\
d_{3,1}^\Left & d_{3,2}^\Left & d_{3,1}^\Left
\end{pmatrix},&
d^\Right&=\begin{pmatrix}
d_{1,1}^\Right & 0 & 0\\
d_{2,1}^\Right & d_{2,2}^\Right & 0\\
d_{3,1}^\Right & d_{3,2}^\Right & d_{3,1}^\Right
\end{pmatrix}
\end{align*}
according to the decomposition $C(V_\Left)\isomorphic C_1(V_\Left)\oplus C_2(V_\Left)\oplus C_3(V_\Left)$, and that of $C(V_\Right)$. Equivalently, we have:
\begin{align*}
d_{j,i}^\Left(\vec{e}^{\Left,i})&=(-1)^{\mu_{\Left,i}}\epsilon_\Left(A_{i,j})\cdot\vec{e}^{\Left,j}\\
d_{j,i}^\Right(\vec{e}^{\Right,i})&=(-1)^{\mu_{\Right,i}}\epsilon_\Right(B_{i,j})\cdot\vec{e}^{\Right,j}
\end{align*}

\item
For the vertex $v$, define 
\begin{align*}
C(v_\Left)&\coloneqq\bigoplus_{1\leq i\leq \ell}\field\cdot e_i^v,& 
C(v_\Right)&\coloneqq\bigoplus_{\ell+1\leq i\leq \ell+r}\field\cdot e_i^v,
\end{align*}
where $|e_i^v|\coloneqq-\mu(i)$. Recall that, by Definition \ref{def:Morse complex at a vertex}, we have 
\[
\bar C_v\coloneqq C_v/(Z=0)\isomorphic C(v_\Left)\oplus C(v_\Right).
\]

\noindent{Denote} for $i\in\ZZ/2$
\[
\left\{\begin{array}{lll}
\vec{e}^{v,i}\coloneqq (e_1^v,\ldots,e_\ell^v)^t,\quad & (-1)^{\mu_i}\coloneqq \diag\left((-1)^{\mu(1)},\ldots,(-1)^{\mu(\ell)}\right) & \textrm{ if $i$ is odd};\\
\vec{e}^{v,i}\coloneqq (e_{\ell+1}^v,\ldots,e_{\ell+r}^v)^t,\quad & (-1)^{\mu_i}\coloneqq \diag\left((-1)^{\mu(\ell+1)},\ldots,(-1)^{\mu(\ell+r)}\right) & \textrm{ if $i$ is even}.
\end{array}\right.
\]

\item
Let us define $d^v\coloneqq d(\epsilon_v)$ as in Lemma \ref{lem:aug and Morse complex for a vertex}, 
\[
d^v=\sum_{i\in\Zmod{\valency}, j>0}(-1)^{\mu(i)}Z^{n(v_{i,j})}e_{i+j}^v\otimes (e_i^v)^*.
\]
We can rewrite $d^v$ as 
\[
d^v= \sum_{i\in\Zmod{2},j\geq 0}Z^jd_{i,j}^v
\] 
where $d_{i,j}^v\in\hom^{1-j}(C(v_i),C(v_{i+j}))$ is given by
\begin{align*}
d_{i,j}^v(\vec{e}^{v,i})=(-1)^{\mu_i}\epsilon_v(V_{i,j})\cdot\vec{e}^{v,i+j}.
\end{align*}

\item
We will always use the following \emph{identifications}:
\begin{align*}
C(v_\Right)&\isomorphic C_2(V_\Right) 
\quad\textrm{ via $\vec{e}^{v,2}=\vec{e}^{\Right,2}$};\\
C_i(V_\Left)&\isomorphic C_i(V_\Right)\quad \textrm{ via $\vec{e}^{\Left,i}=\vec{e}^{\Right,i}$, for $i=1,3$}.
\end{align*}
Notice that the gradings are also preserved.

\item
Define $g_C=g(\epsilon(S),\epsilon(C)): C_2(V_\Left)\xrightarrow[]{\sim}C(v_\Left)$ by 
\[
(-1)^{\mu_\Left(k+i-1)}g_C(e_{k+i-1}^\Left)\coloneqq \epsilon(t_i^{\sigma_i})e_i^v+\sum_{i<j}\epsilon(t_i^{\sigma_i}c_{i,j})e_j^v.
\]
Equivalently, we have
\begin{align*}
g_C(\vec{e}^{\Left,2})=(-1)^{\mu_{\Left,2}}\epsilon(S)(I+\epsilon(C))\vec{e}^{v,1}. 
\end{align*}
Clearly, $g_C\in B(C_2(V_\Left);\field)$, under the identification $C_2(V_\Left)\isomorphic C(v_\Left)$ via $e_{k+p-1}^\Left=e_p^v$.
\end{enumerate}

\end{definition}

\begin{remark}\label{rem:identification for aug var of V}

Now, observe that 
\begin{align*}
(-1)^{\mu_{\Left,2}}\epsilon(S)(I+\epsilon(C))&=\epsilon(S)(I+\epsilon(C))(-1)^{\mu_1};\\(-1)^{\mu_{\Left,2}}\epsilon_\Left(A_{2,2})&=-\epsilon_\Left(A_{2,2})(-1)^{\mu_{\Left,2}}, 
\end{align*}
then the equation (\ref{eqn:u_C}) can be re-written as:
\begin{equation}\label{eqn:twistor u_C}
g_C\circ d_{2,2}^\Left = d_{1,0}^v\circ g_C
\end{equation}

As a consequence, we obtain an identification
\begin{equation*}
\aug(V;\field)\isomorphic \MC(V;\field)
\end{equation*}
where $\MC(V;\field)$ is the variety of triples $(d^\Left,g_C,d^v)$ such that, $d^\Left$ defines a Morse complex for $V_\Left$, $d^v$ defines a Morse complex for $v$, and Equation (\ref{eqn:twistor u_C}) holds with $g_C\in B(C_2(V_\Left);\field)$. 
\end{remark}

\subsubsection{Induced morphisms between Morse complexes}
By definition, $(d^\Left)^2=0$ is equivalent to 
\begin{equation}\label{eqn:d^L}
\begin{cases}
(d_{i,i}^\Left)^2=0, &1\leq i\leq 3\\
d_{2,2}^\Left\circ d_{2,1}^\Left+d_{2,1}^\Left\circ d_{1,1}^\Left=0,\\
d_{3,3}^\Left\circ d_{3,2}^\Left+d_{3,2}^\Left\circ d_{2,2}^\Left=0,\\
d_{3,2}^\Left\circ d_{2,1}^\Left+d_{3,1}^\Left\circ d_{1,1}^\Left +d_{3,3}^\Left\circ d_{3,1}^\Left=0.
\end{cases}
\end{equation}
Similarly for $d^\Right$.
In addition, by a direct calculation, $(d^v)^2+Z^2=0$ is equivalent to 
\begin{equation}\label{eqn:d^2+T^2=0}
\sum_{j+k=m}(-1)^{k}d_{i+k,j}^v\circ d_{i,k}^v+\delta_{m,2}\identity=0
\end{equation}
for all $m\geq 0$. In particular, it implies that:
\begin{equation}\label{eqn:d_v^2+T^2=0}
\begin{cases}
(d_{i,0}^v)^2=0, & i=1,2\\
d_{1,1}^v\circ d_{1,0}^v-d_{2,0}^v\circ d_{1,1}^v=0,\\
d_{2,1}^v\circ d_{2,0}^v-d_{1,0}^v\circ d_{2,1}^v=0,\\
d_{1,2}^v\circ d_{1,0}^v+d_{1,0}^v\circ d_{1,2}^v-d_{2,1}^v\circ d_{1,1}^v+\identity=0,\\
d_{2,2}^v\circ d_{2,0}^v+d_{2,0}^v\circ d_{2,2}^v-d_{1,1}^v\circ d_{2,1}^v+\identity=0.
\end{cases}
\end{equation}

On the other hand, by the formula (\ref{eqn:iota_R(B)}) for $\iota_\Right(B)$, we obtain
\[
\epsilon_\Right(B)=
\begin{pmatrix}
\epsilon(A_{1,1}) & \epsilon(A_{1,2})\epsilon(S+SC)\epsilon(V_{1,1}) & \epsilon(A_{1,3})+\epsilon(A_{1,2})\epsilon(S+SC)\epsilon(V_{1,2})(\epsilon(S+SC))^{-1}\epsilon(A_{2,3})\\
0 & \epsilon(V_{2,0}) & \epsilon(V_{2,1})(\epsilon(S+SC))^{-1}\epsilon(A_{2,3})\\
0 & 0 & \epsilon(A_{3,3})
\end{pmatrix}
\]
By the identifications in Definition \ref{def:matrix notations at V}.(5), it follows that
\begin{equation}\label{eqn:d^R}
\begin{pmatrix}
d_{1,1}^\Right & 0 & 0\\
d_{2,1}^\Right & d_{2,2}^\Right & 0\\
d_{3,1}^\Right & d_{3,2}^\Right & d_{3,3}^\Right
\end{pmatrix}=
\begin{pmatrix}
d_{1,1}^\Left& 0 & 0\\
\tilde{d}_{1,1}^v\circ d_{2,1}^\Left& d_{2,0}^v& 0\\
d_{3,1}^\Left+d_{3,2}^\Left\circ\tilde{d}_{1,2}^v\circ d_{2,1}^\Left& d_{3,2}^\Left\circ\tilde{d}_{2,1}^v& d_{3,3}^\Left
\end{pmatrix}
\end{equation}
where for all $j\geq 0$, we \emph{define} 
\begin{equation}
\begin{pmatrix}
\tilde{d}_{1,2j}^v & \tilde{d}_{2,2j+1}^v\\
\tilde{d}_{1,2j+1} & \tilde{d}_{2,2j}
\end{pmatrix}\coloneqq
\begin{pmatrix}
g_C^{-1} & 0\\
0 & \identity
\end{pmatrix}\circ
\begin{pmatrix}
d_{1,2j}^v & d_{2,2j+1}^v\\
d_{1,2j+1} & d_{2,2j}
\end{pmatrix}\circ
\begin{pmatrix}
g_C & 0\\
0 & \identity
\end{pmatrix}
\end{equation}

\begin{proposition/definition}
\emph{Define} $C(V)\coloneqq C(V_\Left)\oplus C(v_\Right)$. Let $\epsilon$ be any augmentation of $\cA(V)$. 
\begin{enumerate}
\item
The equation \eqref{eqn:d_v^2+T^2=0} still holds with $d_{i,j}^v$'s replaced by $\tilde{d}_{i,j}^v$'s, and
\begin{equation}
d_{2,2}^\Left=\tilde{d}_{1,0}^v
\end{equation}
It follows that 
\[
\begin{cases}
\tilde{d}_{1,1}^v=d_{1,1}^v\circ g_C:(C_2(V_\Left),d_{2,2}^\Left)\rightarrow(C(v_\Right),d_{2,0}^v)\\
\tilde{d}_{2,1}^v=g_C^{-1}\circ d_{2,1}^v:(C(v_\Right),d_{2,0}^v)\rightarrow (C_2(V_\Left),d_{2,2}^\Left)
\end{cases}
\]
are \emph{co-chain maps}, and the last two equalities in \eqref{eqn:d_v^2+T^2=0} show that they are \emph{homotopy inverse} to each other. 

\item
\emph{Define} $d^V=d^V(\epsilon)\in\End(C(V))$ by 
\begin{equation}
\begin{cases}
d^V|_{C(V_\Left)}\coloneqq d^\Left;\\
d^V|_{C(v_\Right)}\coloneqq d_{2,0}^v+d_{3,2}^\Left\circ\tilde{d}_{2,1}^v.
\end{cases}
\end{equation}
Then $d^V$ defines a differential of degree $1$, and we obtain a short exact sequence of complexes:
\begin{equation*}
0\rightarrow (C(V_\Left),d^\Left)\xrightarrow[]{\varphi_\Left} (C(V),d^V)\xrightarrow[]{q_2} (C(v_\Right),d_{2,0}^v)\rightarrow 0,
\end{equation*}
where $\varphi_\Left$ is a canonical inclusion and $q_2$ a canonical quotient.

\item
\emph{Define} $i_2=i_2(\epsilon):C_2(V_\Left)\hookrightarrow C(V)$ by
\begin{equation}
i_2\coloneqq \identity-(\tilde{d}_{1,1}^v+d_{3,2}^\Left\circ \tilde{d}_{1,2}^v).
\end{equation}
Use the identifications in Definition \ref{def:matrix notations at V}.(5), \emph{define} a $\field$-linear map $\varphi_\Right=\varphi_\Right(\epsilon):C(V)\rightarrow C(V_\Right)$ by 
\begin{equation}
\begin{cases}
\varphi_\Right|_{C_1(V_\Left)\oplus C(v_\Right)\oplus C_3(V_\Left)}\coloneqq \identity;\\
\varphi_\Right|_{C_2(V_\Left)}\coloneqq \tilde{d}_{1,1}^v+ d_{3,2}^\Left\circ\tilde{d}_{1,2}^v.
\end{cases}
\end{equation}
Then $i_2:(C_2(V_\Left),d_{2,2}^\Left)\hookrightarrow(C(V),d^V)$ and $\varphi_\Right:(C(V),d^V)\twoheadrightarrow (C(V_\Right),d^\Right)$ are co-chain maps, and we obtain a short exact sequence of complexes:
\begin{equation*}
0\rightarrow (C_2(V_\Left),d_{2,2}^\Left)\xrightarrow[]{i_2}(C(V),d^V)\xrightarrow[]{\varphi_\Right} (C(V_\Right),d^\Right)\rightarrow 0
\end{equation*}
Moreover, the composition $q_2\circ i_2:(C_2(V_\Left),d_{2,2}^\Left)\rightarrow (C(v_\Right),d_{2,0}^v)$ is a co-chain homotopy equivalence with homotopy inverse $-\tilde{d}_{2,1}^v$.

\item
\emph{Define} a $\field$-linear map $\psi_\Left:C(V)\rightarrow C(V_\Left)$ by
\begin{equation}
\begin{cases}
\psi_\Left|_{C(V_\Left)}\coloneqq \identity;\\
\psi_\Left|_{C(v_\Right)}\coloneqq\tilde{d}_{2,1}^v.
\end{cases}
\end{equation}
Then $\psi_\Left$ is a co-chain map, and $\psi_\Left\circ\varphi_\Left=\identity$.

\noindent{}\emph{Define} a $\field$-linear map $\psi_\Right:C(V_\Right)\rightarrow C(V)$ by
\begin{equation}
\begin{cases}
\psi_\Right|_{C_3(V_\Left)}\coloneqq id;\\
\psi_\Right|_{C(v_\Right)}\coloneqq\tilde{d}_{2,1}^v;\\
\psi_\Right|_{C_1(V_\Left)}\coloneqq id+\tilde{d}_{1,2}^v\circ d_{2,1}^\Left.
\end{cases}
\end{equation}
Then $\psi_\Right$ is a co-chain map.

\item
\emph{Define} $\varphi\coloneqq\varphi_\Right\circ\varphi_\Left:(C(V_\Left),d^\Left)\rightarrow(C(V_\Right),d^\Right)$ and $\psi\coloneqq \psi_\Left\circ\psi_\Right:(C(V_\Right),d^\Right)\rightarrow(C(V_\Left),d^\Left)$. \emph{Define} a $\field$-linear map $h_\Left:C(V_\Left)\rightarrow C(V_\Left)$ of degree $-1$ by
\begin{equation}
\begin{cases}
h_\Left|_{C_1(V_\Left)\oplus C_3(V_\Left)}\coloneqq 0;\\
h_\Left|_{C_2(V_\Left)}\coloneqq\tilde{d}_{1,2}^v.
\end{cases}
\end{equation}
\emph{Define} a $\field$-linear map $h_\Right:C(V_\Right)\rightarrow C(V_\Right)$ of degree $-1$ by
\begin{equation}
\begin{cases}
h_\Right|_{C_3(V_\Left)}\coloneqq 0;\\
h_\Right|_{C(v_\Right)}\coloneqq d_{2,2}^v+d_{3,2}^\Left\circ\tilde{d}_{2,3}^v;\\
h_\Right|_{C_1(V_\Left)}\coloneqq (\tilde{d}_{1,3}^v+d_{3,2}^\Left\circ\tilde{d}_{1,4}^v)\circ d_{2,1}^\Left.
\end{cases}
\end{equation}
Then we have
\begin{equation}
\begin{cases}
\psi\circ\varphi-\identity=d^\Left\circ h_\Left+h_\Left\circ d^\Left;\\
\varphi\circ\psi-\identity=d^\Right\circ h_\Right+h_\Right\circ d^\Right.
\end{cases}
\end{equation}
In particular, $\varphi$ and $\psi$ are co-chain homotopy equivalences and are homotopy inverse to each other. 
\end{enumerate}
\end{proposition/definition}

\begin{proof}
\noindent{}$(1)$. By a direct check, this follows immediately from the definition.

\noindent{}$(2)$.
The only nontrivial part is to show $(d^V)^2=0$.
By definition of $d^\Left$, we have $(d^\Left)^2=0$. It suffices to show that, for all $x\in C(v_\Right)$, we have $d^2(x)=0$. By definition, we have:
\begin{align*}
(d^V)^2(x)&=d^V(d_{2,0}^v(x))+d^V(d_{3,2}^\Left\circ\tilde{d}_{2,1}^v(x))\\
&=(d_{2,0}^v)^2(x)+d_{3,2}^\Left\circ \tilde{d}_{2,1}^v\circ d_{2,0}^v(x)+d_{3,3}^\Left\circ d_{3,2}^\Left\circ \tilde{d}_{2,1}^v(x)\\
&=d_{3,2}^\Left\circ d_{2,2}^\Left(\tilde{d}_{2,1}^v(x))+d_{3,3}^\Left\circ d_{3,2}^\Left(\tilde{d}_{2,1}^v(x))\\
&=0
\end{align*}
Here, in the second equality, we used $(1)$, and in the third equality, we used Equation (\ref{eqn:d^L}).

\noindent{}In fact, the above computation also shows that $d_{3,2}^\Left\circ \tilde{d}_{2,1}^\Left: (C(v_\Right),d_{2,0}^v)[-1]\rightarrow (C(V_\Left),d^\Left)$ is a co-chain map, and $(C(V),d^V)=\cone(d_{3,2}^\Left\circ \tilde{d}_{2,1}^\Left)$.

\noindent{}$(3)$. Firstly, we show $i_2$ is a co-chain map. Clearly, $i_2$ is of degree $0$. In addition, for all $x\in C_2(V_\Left)$, we have
\begin{align*}
d^V\circ i_2(x)&=d^V(x-\tilde{d}_{1,1}^v(x)-d_{3,2}^\Left\circ\tilde{d}_{1,2}^v(x))\\
&=d_{2,2}^\Left(x)+d_{3,2}^\Left(x)-\tilde{d}_{2,0}^v\circ \tilde{d}_{1,1}^v(x)-d_{3,2}^\Left\circ \tilde{d}_{2,1}^v\circ \tilde{d}_{1,1}^v(x)-d_{3,3}^\Left\circ d_{3,2}^\Left\circ\tilde{d}_{1,2}^v(x)\\
&=d_{2,2}^\Left(x)-\tilde{d}_{1,1}^v\circ d_{2,2}^\Left(x)+d_{3,2}^\Left\circ(id-\tilde{d}_{2,1}^v\circ \tilde{d}_{1,1}^v+\tilde{d}_{1,0}^v\circ\tilde{d}_{1,2}^v)(x)\\
&=d_{2,2}^\Left(x)-\tilde{d}_{1,1}^v\circ d_{2,2}^\Left(x)-d_{3,2}^\Left\circ\tilde{d}_{1,2}^v\circ\tilde{d}_{1,0}^v(x)\\
&=(\identity-\tilde{d}_{1,1}^v-d_{3,2}^\Left\circ\tilde{d}_{1,2}^v)\circ d_{2,2}^\Left(x)\\
&=i_2\circ d_{2,2}^\Left(x)
\end{align*}
as desired. Here, in the third equality, we used the equation (\ref{eqn:d^L}) and the fact that $\tilde{d}_{1,1}^v$ is a co-chain map. In the fourth and fifth equalities, we used the equation (\ref{eqn:d_v^2+T^2=0}) and $d_{2,2}^\Left=\tilde{d}_{1,0}^v$. 

Next, we show $\varphi_\Right$ is a co-chain map. Clearly, $\varphi_\Right$ is of degree $0$. It suffices to show 
\begin{equation}
\varphi|_{C(V_\Left)}=\varphi:(C(V_\Left),d^\Left)\rightarrow(C(V_\Right),d^\Right)
\end{equation} 
is a co-chain map, and $\varphi_\Right\circ d^V(x)=d^\Right\circ\varphi_\Right(x)$ for all $x\in C(V_\Right)$. 

Use the decompositions $C(V_\Left)=C_1(V_\Left)\oplus C_2(V_\Left)\oplus C_3(V_\Left)$ and
$C(V_\Right)=C_1(V_\Left)\oplus C(v_\Right)\oplus C_3(V_\Left)$, we can write $\varphi$ in a matrix form:
\[
\varphi=\begin{pmatrix}
\identity& 0& 0\\
0& \tilde{d}_{1,1}^v& 0\\
0& d_{3,2}^\Left\circ\tilde{d}_{1,2}^v& id
\end{pmatrix}
\]
It follows from Equation (\ref{eqn:d^R}) that
\begin{align*}
d^\Right\circ \varphi&=
\begin{pmatrix}
d_{1,1}^\Left& 0 & 0\\
\tilde{d}_{1,1}^v\circ d_{2,1}^\Left& d_{2,0}^v& 0\\
d_{3,1}^\Left+d_{3,2}^\Left\circ\tilde{d}_{1,2}^v\circ d_{2,1}^\Left& d_{3,2}^\Left\circ\tilde{d}_{2,1}^v& d_{3,3}^\Left
\end{pmatrix}\circ
\begin{pmatrix}
\identity& 0& 0\\
0& \tilde{d}_{1,1}^v& 0\\
0& d_{3,2}^\Left\circ\tilde{d}_{1,2}^v& \identity
\end{pmatrix}\\
&=
\begin{pmatrix}
d_{1,1}^\Left& 0 & 0\\
\tilde{d}_{1,1}^v\circ d_{2,1}^\Left& d_{2,0}^v\circ\tilde{d}_{1,1}^v & 0\\
d_{3,1}^\Left+d_{3,2}^\Left\circ\tilde{d}_{1,2}^v\circ d_{2,1}^\Left& d_{3,2}^\Left\circ\tilde{d}_{2,1}^v\circ\tilde{d}_{1,1}^v+d_{3,3}^\Left\circ d_{3,2}^\Left\circ\tilde{d}_{1,2}^v & d_{3,3}^\Left
\end{pmatrix}\\
&=
\begin{pmatrix}
d_{1,1}^\Left& 0 & 0\\
\tilde{d}_{1,1}^v\circ d_{2,1}^\Left& \tilde{d}_{1,1}^v\circ d_{2,2}^\Left & 0\\
d_{3,1}^\Left+d_{3,2}^\Left\circ\tilde{d}_{1,2}^v\circ d_{2,1}^\Left& d_{3,2}^\Left\circ\tilde{d}_{2,1}^v\circ\tilde{d}_{1,1}^v-d_{3,2}^\Left\circ \tilde{d}_{1,0}^v\circ\tilde{d}_{1,2}^v & d_{3,3}^\Left
\end{pmatrix}\\
&=
\begin{pmatrix}
d_{1,1}^\Left& 0 & 0\\
\tilde{d}_{1,1}^v\circ d_{2,1}^\Left& \tilde{d}_{1,1}^v\circ d_{2,2}^\Left & 0\\
d_{3,1}^\Left+d_{3,2}^\Left\circ\tilde{d}_{1,2}^v\circ d_{2,1}^\Left& d_{3,2}^\Left+d_{3,2}^\Left\circ\tilde{d}_{1,2}^v\circ \tilde{d}_{1,0}^v & d_{3,3}^\Left
\end{pmatrix}\\
&=
\begin{pmatrix}
\identity& 0& 0\\
0& \tilde{d}_{1,1}^v& 0\\
0& d_{3,2}^\Left\circ\tilde{d}_{1,2}^v& \identity
\end{pmatrix}\circ
\begin{pmatrix}
d_{1,1}^\Left & 0 & 0\\
d_{2,1}^\Left & d_{2,2}^\Left & 0\\
d_{3,1}^\Left & d_{3,2}^\Left & d_{3,1}^\Left
\end{pmatrix}\\
&=\varphi\circ d^{\Left}.
\end{align*}
Here we've used Equation (\ref{eqn:d^L}), Equation (\ref{eqn:d_v^2+T^2=0}) for $\tilde{d}_{i,j}^v$, and $(1)$ above. Besides, for all $x\in C(V_\Right)$, by Equation (\ref{eqn:d^R}), we have:
\begin{align*}
\varphi_\Right\circ d^V(x)&=\varphi_\Right(d_{2,0}^v(x)+d_{3,2}^\Left\circ\tilde{d}_{2,1}^v(x))\\
&=d_{2,0}^v(x)+d_{3,2}^\Left\circ\tilde{d}_{1,1}^v(x)\\
&=d^\Right(x)\\
&=d^\Right\circ \varphi_\Right(x).
\end{align*}
This shows that $\varphi_\Right$ is indeed a co-chain map.

Then, we show $(\varphi_\Right,i_2)$ induces a short exact sequence of complexes. 
Clearly, $i_2$ is injective and $\varphi_\Right$ is surjective. In addition, by definition, for all $x\in C_2(V_\Left)$, we have:
\begin{align*}
\varphi_\Right\circ i_2(x)&=
\varphi_\Right(x-\tilde{d}_{1,1}^v(x)-d_{3,2}^\Left\circ\tilde{d}_{1,2}^v(x))\\
&=0,
\end{align*}
and clearly we have $\ker(\varphi_\Right)=\im(i_2)$.

Finally, by definition, we have $q_2\circ i_2(x)=-\tilde{d}_{1,1}^v(x)$ for all $x\in C_2(V_\Left)$. By $(1)$, it is a co-chain homotopy equivalence with homotopy inverse $-\tilde{d}_{2,1}^v$. 
This finishes the proof of $(3)$.

\noindent{}$(4)$. Clearly, $\psi_\Left\circ\varphi_\Left=\identity$. To show $\psi_\Left$ is a cochain map, it suffices to show that, for all $x\in C_2(v_\Right)$, we have $\psi_\Left\circ d^V(x)=d^\Left\circ\psi_\Left(x)$. Indeed, by definition, we have:
\begin{align*}
\psi_\Left\circ d^V(x)
&=\psi_\Left(d_{2,0}^v(x)+d_{3,2}^\Left\circ\tilde{d}_{2,1}^v(x))\\
&=\tilde{d}_{2,1}^v\circ d_{2,0}^v(x)+d_{3,2}^\Left\circ\tilde{d}_{2,1}^v(x)\\
&=d_{2,2}^\Left\circ\tilde{d}_{2,1}^v(x)+d_{3,2}^\Left\circ\tilde{d}_{2,1}^v(x)\\
&=d^\Left\circ\psi_\Left(x),
\end{align*}
as desired.

Next, we show $\psi_\Right$ is a co-chain map. The only nontrivial parts are to show that, for all $x\in C(v_\Right)$ or $x\in C_1(V_\Left)\isomorphic C_1(V_\Right)$, we have $\psi_\Right\circ d^\Right(x)=d^V\circ\psi_\Right(x)$. 

\noindent{}If $x\in C(v_\Right)$, we have 
\begin{align*}
\psi_\Right\circ d^\Right(x)
&=\psi_\Right(d_{2,0}^v(x)+d_{3,2}^\Left\circ\tilde{d}_{2,1}^v(x))\\
&=\tilde{d}_{2,1}^v\circ d_{2,0}^v(x)+d_{3,2}^\Left\circ\tilde{d}_{2,1}^v(x)\\
&=d_{2,2}^\Left\circ\tilde{d}_{2,1}^v(x)+d_{3,2}^\Left\circ\tilde{d}_{2,1}^v(x)\\
&=d^V\circ\psi_\Right(x).
\end{align*}
If $x\in C_1(V_\Left)$, we have
\begin{align*}
\psi_\Right\circ d^\Right(x)
&=\psi_\Right(d_{1,1}^\Left(x)+\tilde{d}_{1,1}^v\circ d_{2,1}^\Left(x)
+d_{3,1}^\Left(x)+d_{3,2}^\Left\circ\tilde{d}_{1,2}^v\circ d_{2,1}^\Left(x))\\
&=d_{1,1}^\Left(x)+\tilde{d}_{1,2}^v\circ d_{2,1}^\Left\circ d_{1,1}^\Left(x)
+\tilde{d}_{2,1}^v\circ\tilde{d}_{1,1}^v\circ d_{2,1}^\Left(x)
+d_{3,1}^\Left(x)+d_{3,2}^\Left\circ\tilde{d}_{1,2}^v\circ d_{2,1}^\Left(x)\\
&=d^Vx-d_{2,1}^\Left(x)-\tilde{d}_{1,2}^v\circ\tilde{d}_{1,0}^v\circ d_{2,1}^\Left(x)
+\tilde{d}_{2,1}^v\circ\tilde{d}_{1,1}^v\circ d_{2,1}^\Left(x)
+d_{3,2}^\Left\circ\tilde{d}_{1,2}^v\circ d_{2,1}^\Left(x)\\
&=d^Vx+\tilde{d}_{1,0}^v\circ\tilde{d}_{1,2}^v\circ d_{2,1}^\Left(x)
+d_{3,2}^\Left\circ\tilde{d}_{1,2}^v\circ d_{2,1}^\Left(x)\\
&=d^Vx+d^V\circ\tilde{d}_{1,2}^v\circ d_{2,1}^\Left(x)\\
&=d^V\circ\psi_\Right(x)
\end{align*}
as desired.

\noindent{}$(5)$. Firstly, we show $\psi\circ\varphi-\identity=d^\Left\circ h_\Left+h_\Left\circ d^\Left$.
For all $x\in C_3(V_\Left)$, clearly we have $\psi\circ\varphi(x)-x=0=d^\Left\circ h_\Left+h_\Left\circ d^\Left(x)$. For all $x\in C_2(V_\Left)$, we have
\begin{align*}
\psi\circ\varphi(x)
&=\psi(\tilde{d}_{1,1}^v(x)+d_{3,2}^\Left\circ\tilde{d}_{1,2}^v(x))\\
&=\tilde{d}_{2,1}^v\circ\tilde{d}_{1,1}^v(x)+d_{3,2}^\Left\circ\tilde{d}_{1,2}^v(x))\\
&=(id+\tilde{d}_{1,2}^v\circ\tilde{d}_{1,0}^v+\tilde{d}_{1,0}^v\circ\tilde{d}_{1,2}^v)(x)
+d_{3,2}^\Left\circ\tilde{d}_{1,2}^v(x))\\
&=x+h_\Left(d_{2,2}^\Left(x))+d^\Left(\tilde{d}_{1,2}^v(x))\\
&=x+h_\Left(d^\Left(x))+d^\Left\circ h_\Left(x)
\end{align*}
Finally, for all $x\in C_1(V_\Left)$, we have
\begin{align*}
\psi\circ\varphi(x)-x
&=\psi(x)-x=\tilde{d}_{1,2}^v\circ d_{2,1}^\Left(x)\\
&=h_\Left(d_{2,1}^\Left(x))+h_\Left(d_{1,1}^\Left(x)+d_{3,1}^\Left(x))+d^\Left\circ h_\Left(x)\\
&=h_\Left\circ d^\Left(x)+d^\Left\circ h_\Left(x)
\end{align*}
as desired.

Now, we show $\varphi\circ\psi-\identity=d^\Right\circ h_\Right+h_\Right\circ d^\Right$.
Clearly, for all $x\in C_3(V_\Right)=C_3(V_\Left)$, we have $\varphi\circ\psi(x)-x=0=d^\Right\circ h_\Right(x)+h_\Right\circ d^\Right(x)$. For all $x\in C(v_\Right)$, we have
\begin{align*}
\varphi\circ\psi(x)-x
&=\varphi(\tilde{d}_{2,1}^v(x))
=(\tilde{d}_{1,1}^v+d_{3,2}^\Left\circ\tilde{d}_{1,2}^v)\circ\tilde{d}_{2,1}^v(x)-x\\
&=(\tilde{d}_{2,0}^v\circ\tilde{d}_{2,2}^v+\tilde{d}_{2,2}^v\circ\tilde{d}_{2,0}^v)(x)
+d_{3,2}^\Left\circ\tilde{d}_{1,2}^v\circ\tilde{d}_{2,1}^v(x)\\
&=d^\Right\circ d_{2,2}^v(x)-d_{3,2}^\Left\circ\tilde{d}_{2,1}^v\circ d_{2,2}(x)
+d_{3,2}^\Left\circ\tilde{d}_{1,2}^v\circ\tilde{d}_{2,1}^v(x)+d_{2,2}^v\circ d_{2,0}^v(x)\\
&=d^\Right\circ d_{2,2}^v(x)+d_{3,2}^\Left\circ(\tilde{d}_{2,3}^v\circ d_{2,0}^v-\tilde{d}_{1,0}^v\circ\tilde{d}_{2,3}^v)(x)+d_{2,2}^v\circ d_{2,0}^v(x)\\
&=d^\Right \circ d_{2,2}^v(x)+d_{3,3}^\Left\circ d_{3,2}^\Left\circ \tilde{d}_{2,3}^v(x)
+(d_{2,2}^v+d_{3,2}^\Left\circ\tilde{d}_{2,3}^v)\circ d_{2,0}^v(x)\\
&=d^\Right\circ h_\Right(x)+h_\Right(d_{2,0}^v(x))\\
&=d^\Right\circ h_\Right(x)+h_\Right\circ d^\Right(x)
\end{align*}
Here, in the fourth equality we've used the identity 
\[
\tilde{d}_{2,3}^v\circ d_{2,0}^v-\tilde{d}_{1,2}^v\circ\tilde{d}_{2,1}^v+\tilde{d}_{2,1}^v\circ d_{2,2}^v-\tilde{d}_{1,0}^v\circ\tilde{d}_{2,3}^v=0
\]
from Equation (\ref{eqn:d^2+T^2=0}) for $\tilde{d}_{i,j}^v$'s with $m=3$.
Finally, for all $x\in C_1(V_\Right)=C_1(V_\Left)$, let us show $\varphi\circ\psi(x)-x=d^\Right\circ h_\Right(x)+h_\Right\circ d^\Right(x)$. By Equation (\ref{eqn:d^R}), observe that
\begin{align*}
h_\Right\circ d^\Right(x)
&=h_\Right(d_{1,1}^\Left+\tilde{d}_{1,1}^v\circ d_{2,1}^\Left
+d_{3,1}^\Left+d_{3,2}^\Left\circ\tilde{d}_{1,2}^v\circ d_{2,1}^\Left)(x)\\
&=h_\Right\circ d_{1,1}^\Left(x)+h_\Right\circ\tilde{d}_{1,1}^v\circ d_{2,1}^\Left(x)\\
&=h_\Right\circ d_{1,1}^\Left(x)+(d_{2,2}^v+d_{3,2}^\Left\circ\tilde{d}_{2,3}^v)\circ\tilde{d}_{1,1}^v\circ d_{2,1}^\Left(x)
\end{align*}
It suffices to show
\[
\varphi\circ\psi(x)-x-(d_{2,2}^v+d_{3,2}^\Left\circ\tilde{d}_{2,3}^v)\circ\tilde{d}_{1,1}^v\circ d_{2,1}^\Left(x)=d^\Right\circ h_\Right(x)+h_\Right\circ d_{1,1}^\Left(x).
\]
Indeed, we have
\begin{align*}
&\mathrel{\hphantom{=}}\varphi\circ\psi(x)-x-(d_{2,2}^v+d_{3,2}^\Left\circ\tilde{d}_{2,3}^v)\circ\tilde{d}_{1,1}^v\circ d_{2,1}^\Left(x)\\
&=\varphi(x+\tilde{d}_{1,2}^v\circ d_{2,1}^\Left(x))-x-(d_{2,2}^v+d_{3,2}^\Left\circ\tilde{d}_{2,3}^v)\circ\tilde{d}_{1,1}^v\circ d_{2,1}^\Left(x)\\
&=(\tilde{d}_{1,1}^v+d_{3,2}^\Left\circ \tilde{d}_{1,2}^v)\circ\tilde{d}_{1,2}^v\circ d_{2,1}^\Left(x)-(d_{2,2}^v+d_{3,2}^\Left\circ\tilde{d}_{2,3}^v)\circ\tilde{d}_{1,1}^v\circ d_{2,1}^\Left(x)\\
&=(\tilde{d}_{1,1}^v\circ\tilde{d}_{1,2}^v-d_{2,2}^v\circ\tilde{d}_{1,1}^v)\circ d_{2,1}^\Left(x)+d_{3,2}^\Left\circ(\tilde{d}_{1,2}^v\circ\tilde{d}_{1,2}^v-\tilde{d}_{2,3}^v\circ\tilde{d}_{1,1}^v)\circ d_{2,1}^\Left(x)\\
&=(d_{2,0}^v\circ\tilde{d}_{1,3}^v-\tilde{d}_{1,3}^v\circ\tilde{d}_{1,0}^v)\circ d_{2,1}^\Left(x)+d_{3,2}^\Left\circ(-\tilde{d}_{1,4}^v\circ\tilde{d}_{1,0}^v+\tilde{d}_{2,1}^v\circ\tilde{d}_{1,3}^v-\tilde{d}_{1,0}^v\circ\tilde{d}_{1,4}^v)\circ d_{2,1}^\Left(x)\\
&=(d_{2,0}^v+d_{3,2}^\Left\circ\tilde{d}_{2,1}^v)\circ\tilde{d}_{1,3}^v\circ d_{2,1}^\Left(x)+\tilde{d}_{1,3}^v\circ d_{2,1}^{\Left}\circ d_{1,1}^\Left(x)\\
&\mathrel{\hphantom{=}}+d_{3,2}^\Left\circ\tilde{d}_{1,4}^v\circ d_{2,1}^\Left\circ d_{1,1}^\Left(x)+d_{3,3}^\Left\circ d_{3,2}^\Left\circ\tilde{d}_{1,4}^v\circ d_{2,1}^\Left(x)\\
&=d^\Right\circ(\tilde{d}_{1,3}^v+d_{3,2}^\Left\circ\tilde{d}_{1,4}^v)\circ d_{2,1}^\Left(x)
+(\tilde{d}_{1,3}^v+d_{3,2}^\Left\circ\tilde{d}_{1,4}^v)\circ d_{2,1}^\Left(d_{1,1}^\Left(x))\\
&=d^\Right\circ h_\Right(x)+h_\Right\circ d_{1,1}^\Left(x)
\end{align*}f
as desired. Here, in the fourth equality, we've used the identities
\[
\begin{cases}
\tilde{d}_{1,3}^v\circ\tilde{d}_{1,0}^v-d_{2,2}^v\circ\tilde{d}_{1,1}^v+\tilde{d}_{1,1}^v\circ\tilde{d}_{1,2}^v-d_{2,0}^v\circ\tilde{d}_{1,3}^v=0;\\
\tilde{d}_{1,4}^v\circ\tilde{d}_{1,0}^v-\tilde{d}_{2,3}^v\circ\tilde{d}_{1,1}^v+\tilde{d}_{1,2}^v\circ\tilde{d}_{1,2}^v-\tilde{d}_{2,1}^v\circ\tilde{d}_{1,3}^v+\tilde{d}_{1,0}^v\circ\tilde{d}_{1,4}^v=0
\end{cases}
\]
from Equation \eqref{eqn:d^2+T^2=0} for $\tilde{d}_{i,j}$'s with $m=3$ and $4$ respectively.
This finishes the proof of $(5)$.

\end{proof}

\begin{corollary}\label{cor:acyclic}
Let $\cT=(T_\Left\rightarrow T\leftarrow T_\Right)$ be a bordered Legendrian graph, and $\epsilon\in\aug(T;\field)$ be an augmentation. In particular, $\epsilon_\Left\coloneqq \epsilon|_{T_\Left}$ is acyclic. Then $\epsilon_\Right\coloneqq \epsilon|_{T_\Right}$ is also acyclic.
\end{corollary}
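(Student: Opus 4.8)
The strategy I would follow is to sweep $\cT$ from left to right across a decomposition into elementary pieces, showing that acyclicity of the associated border Morse complex is carried along at each step. Concretely, cut $\cT=\cE_1\cdot\cE_2\cdots\cE_m$ into elementary bordered Legendrian graphs (Definition~\ref{definition:elementary bordered Legendrian graphs}, allowing vertices of arbitrary type as in Remark~\ref{remark:elementary with vertex of general types}). By Theorem~\ref{theorem:concatenation} the DGA $A^\CE(T,\mu)$ is an iterated push-out of the $A^\CE(E_i,\tau_i)$ along the intermediate border DGAs, so $\epsilon$ restricts to a compatible family of augmentations $\epsilon_i$ of $A^\CE(E_i,\tau_i)$ with $\epsilon_i|_{E_{i,\Right}}=\epsilon_{i+1}|_{E_{i+1,\Left}}$, $\epsilon_1|_{E_{1,\Left}}=\epsilon_\Left$, $\epsilon_m|_{E_{m,\Right}}=\epsilon_\Right$. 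Hence the Corollary reduces, by induction on $i$, to the elementary case: for a single elementary $\cE$ with border restrictions $\epsilon^\Left,\epsilon^\Right$, if $\epsilon^\Left$ is acyclic then $\epsilon^\Right$ is acyclic.

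By Lemma~\ref{lem:aug and Morse complex for trivial tangle} a border augmentation $\epsilon$ of $A_n(\mu)$ is the same datum as a Morse complex $d(\epsilon)$ on $C_n$, and $\epsilon$ is acyclic exactly when $(C_n,d(\epsilon))$ has vanishing cohomology; this is a bounded complex of finite-dimensional $\field$-vector spaces, so acyclicity is equivalent to chain contractibility and is preserved by cochain homotopy equivalences. Thus it suffices to produce, for each elementary $\cE$, a cochain homotopy equivalence (or at least a cohomology isomorphism) between $(C_p,d(\epsilon^\Left))$ and $(C_q,d(\epsilon^\Right))$, where $p,q$ are the numbers of strands at the two borders.

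When $\cE$ carries a crossing, a single left cusp, or a single right cusp with a base point, this comparison belongs to the Legendrian-link theory and I would take it from \cite[Lemma~4.15]{Su2017}: across a crossing the two Morse complexes are conjugate, hence isomorphic; across a left (resp.\ right) cusp the larger complex splits off, after a filtered change of basis, a two-dimensional acyclic direct summand (resp.\ the smaller complex is the quotient of the larger by such a subcomplex), so the cohomologies agree --- the base point at a right cusp being used precisely to put the differential of the cusp crossing into the required form. Equivalently, this follows from Lemma~\ref{lem:aug var elementary} together with the Lemma immediately preceding it, which prevent an augmentation of $\cE$ with acyclic left restriction from having a non-acyclic right restriction. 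When $\cE$ carries a single vertex $v$ of type $(\ell,r)$, the required comparison is exactly the content of the Proposition/Definition established immediately above: its part~(5) furnishes mutually homotopy-inverse cochain maps $\varphi\colon(C(V_\Left),d^\Left)\to(C(V_\Right),d^\Right)$ and $\psi$ in the opposite direction, valid for \emph{any} augmentation of $A^\CE(E,\tau)$; by Definition~\ref{def:matrix notations at V} one has $d^\Left=d(\epsilon^\Left)$ and $d^\Right=d(\epsilon^\Right)$ (the right border strand complex being identified with $C_1(V_\Left)\oplus C(v_\Right)\oplus C_3(V_\Left)$), so acyclicity of $d(\epsilon^\Left)$ forces acyclicity of $d(\epsilon^\Right)$.

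Running the induction over $\cE_1,\dots,\cE_m$ then gives $H^\ast(C_{n_\Right},d(\epsilon_\Right))\cong H^\ast(C_{n_\Left},d(\epsilon_\Left))=0$, i.e.\ $\epsilon_\Right$ is acyclic. The only genuinely nontrivial input is the vertex step, and that is already carried out in the preceding Proposition/Definition; the main remaining effort is the bookkeeping of identifications --- checking that the local equivalences of \cite{Su2017} and of that Proposition compose, across the whole of $\cT$, into a comparison of $d(\epsilon_\Left)$ with $d(\epsilon_\Right)$ --- together with the (entirely routine) verification that the cusp and crossing cases transport from the link setting to the present conventions. I do not anticipate a real obstacle; the subtlest point is the forced pairing at a right cusp, which is exactly where the standing assumption that every right cusp carries a base point is used.
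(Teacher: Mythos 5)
Your proposal is correct and follows essentially the same route as the paper: the paper reduces to the single-vertex case by citing \cite[Cor.~5.3]{Su2017} for the link-theoretic (cusp/crossing) pieces and then invokes part~(5) of the preceding Proposition/Definition, where $\varphi$ is shown to be a cochain homotopy equivalence, hence a quasi-isomorphism. Your version merely spells out the left-to-right sweep over elementary pieces that the citation to \cite{Su2017} compresses.
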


\begin{proof}
By \cite[Cor.5.3]{Su2017}, it suffices to proof the case when $T=V$ is an bordered Legendrian graph involving a single vertex. But this is a direct corollary of $(5)$ in the Proposition/Definition above, in which we showed $\varphi$ is a co-chain homotopy equivalence, in particular a quasi-isomorphism. 
\end{proof}

\subsubsection{A group action}
We use the notations in Definition \ref{def:Morse complex} and Definition \ref{def:Morse complex at a vertex}.

\begin{definition}
\emph{Let} $B^\Left\coloneqq B(V_\Left;\field)$ and $B^v\coloneqq B(v;\field)$ be the two automorphism groups associated to $V_\Left$ and $v$ respectively. 
For $1\leq i\leq 3$, \emph{define} $B_{i,i}^\Left\coloneqq B(C_i(V_\Left);\field)$. 
For $i\in\ZZ/2$, \emph{define} $B_{i,0}^v\coloneqq B(C(v_i);\field)$, with $v_i$'s as in Definition \ref{def:matrix notations at V}.(3). 
Notice that there is a \emph{canonical identification} $B_{2,2}^\Left\isomorphic B_{1,0}^v$ if we identify $C_2(V_\Left)$ with $C(v_\Left)$ via $e_{k+p-1}^\Left=e_p^v$. 



\noindent{}For any $g^\Left\in B^\Left$, similar to $d^\Left$, we can write $g^\Left$ in a matrix form:
\[
g^\Left\coloneqq
\begin{pmatrix}
g_{1,1}^\Left & 0 & 0\\
g_{2,1}^\Left & g_{2,2}^\Left & 0\\
g_{3,1}^\Left & g_{3,2}^\Left & g_{3,3}^\Left
\end{pmatrix}
\]
Similarly, one can \emph{define} $B^\Right\coloneqq B(V_\Right;\field)$ and $B_{i,i}^\Right$ for $1\leq i\leq 3$. And, there are \emph{canonical identifications} $B_{i,i}^\Right\isomorphic B_{i,i}^\Left$ for $i=1,3$, and $B_{2,2}^\Right\isomorphic B_{2,0}^v$.

\noindent{}For any $g^v\in B^v$, similar to $d^v$, we can write $g^v$ as:
\[
g^v\coloneqq \sum_{i\in\ZZ/2,j\geq 0}Z^j g_{i,j}^v
\]
for some $g_{i,j}^v\in\hom^{-j}(C(v_i),C(v_{i+j}))$.

\noindent{}\emph{Denote} $B\coloneqq B^\Left\times B^v$.
By the identification in Remark \ref{rem:identification for aug var of V}, we can define a natural algebraic group action of $B$ on $\aug(V;\field)\isomorphic\MC(V;\field)$ as follows: for any $g=(g^\Left,g^v)\in B$ and any $d=(d^\Left, g_C, d^v)\in\MC(V;\field)$, we take
\[
g\cdot d=(g\cdot d^\Left, g\cdot g_C, g\cdot d^v)\coloneqq(g^\Left\cdot d^\Left, g_{1,0}^v\circ g_C\circ (g_{2,2}^\Left)^{-1} , g^v\cdot d^v)
\]
Here, $g^\Left\cdot d^\Left$ and $g^v\cdot d^v$ are the group actions in Definition \ref{def:Morse complex} and Definition \ref{def:Morse complex at a vertex} respectively. It is direct to check that $g\cdot d$ satisfies the Equation (\ref{eqn:twistor u_C}). Hence, the group action is well-defined.
\end{definition}

\noindent{}\emph{Question}: Given any $g\in B$, when does this group action preserve the isomorphism class of $d^\Right$, i.e. $(g\cdot d)^\Right=g^\Right\cdot d^\Right$ for some $g^\Right=g^\Right(g,d)\in B^\Right$?
The answer is the key to counting augmentations for $V$.

\begin{lemma}\label{lem:isomorphism lifting for a vertex}
Let $g\in B$, and $\epsilon$ (or $d=(d^\Left,g_C,d^v)$) be any augmentation for $\cA(V)$. We have:
\begin{enumerate}
\item
If $g=(g^\Left,g^v)\in B$, with both of $g^\Left$ and $g^v$ \emph{block-diagonal}. That is, $g_{j,i}^\Left=0$ if $i<j$, and $g_{i,j}^v=0$ if $j>0$. Then $(g\cdot d)^\Right=g^\Right\cdot d^\Right$ with $g^\Right=g^\Right(g,d)\coloneqq\diag(g_{1,1}^\Left, g_{2,0}^v, g_{3,3}^\Left)$.

\item
If $g=(g^\Left,\identity)\in B$, with $g_{i,i}^\Left=\identity$, and $g_{j,i}^\Left=0$ for all $1\leq i<j\leq 3$ unless $(j,i)=(3,2)$. In particular, $(g^{-1})_{i,i}^\Left=\identity$, $(g^{-1})_{3,2}^\Left=-g_{3,2}^\Left$, and $g_{j,i}^\Left=0$ otherwise. Then $(g\cdot d)^\Right=g^\Right\cdot d^\Right$ with $g^\Right=g^\Right(g,d)$ given by
\[
g^\Right\coloneqq
\begin{pmatrix}
\identity & 0 & 0\\
0 & \identity & 0\\
g_{3,2}^\Left\circ\tilde{d}_{1,2}^v\circ d_{2,1}^\Left & g_{3,2}^\Left\circ\tilde{d}_{2,1}^v & \identity
\end{pmatrix}
\]

\item
If $g=(g^\Left,\identity)\in B$, with $g_{i,i}^\Left=\identity$, and $g_{j,i}^\Left=0$ for all $1\leq i<j\leq 3$ unless $(j,i)=(2,1)$. In particular, $(g^{-1})_{i,i}^\Left=\identity$, $(g^{-1})_{2,1}^\Left=-g_{2,1}^\Left$, and $g_{j,i}^\Left=0$ otherwise. Then $(g\cdot d)^\Right=g^\Right\cdot d^\Right$ with $g^\Right=g^\Right(g,d)$ given by
\[
g^\Right\coloneqq
\begin{pmatrix}
\identity & 0 & 0\\
\tilde{d}_{1,1}^v\circ g_{2,1}^\Left & \identity & 0\\
d_{3,2}^\Left\circ\tilde{d}_{1,2}^v\circ g_{2,1}^\Left & 0 & \identity
\end{pmatrix}
\]

\item
If $g=(g^\Left,\identity)\in B$, with $g_{i,i}^\Left=\identity$, and $g_{j,i}^\Left=0$ for all $1\leq i<j\leq 3$ unless $(j,i)=(3,1)$. In particular, $(g^{-1})_{i,i}^\Left=\identity$, $(g^{-1})_{3,1}^\Left=-g_{3,1}^\Left$, and $g_{j,i}^\Left=0$ otherwise. Then $(g\cdot d)^\Right=g^\Right\cdot d^\Right$ with $g^\Right=g^\Right(g,d)$ given by
\[
g^\Right\coloneqq
\begin{pmatrix}
\identity & 0 & 0\\
0 & \identity & 0\\
g_{3,1}^\Left & 0 & \identity
\end{pmatrix}
\]
\end{enumerate}
\end{lemma}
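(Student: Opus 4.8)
The plan is to prove each of the four cases by direct verification: substitute the $B$-action into the explicit formula \eqref{eqn:d^R} for $d^\Right$ and check that the result equals $g^\Right\cdot d^\Right=g^\Right d^\Right (g^\Right)^{-1}$ for the stated $g^\Right$. The bookkeeping device is that $d^\Right$ is assembled from the blocks $d^\Left_{j,i}$ of $d^\Left$ together with the conjugated vertex data $\tilde d^v_{i,j}$, and that by definition $\tilde d^v_{i,j}$ depends only on $d^v_{i,j}$ and on $g_C$. So it suffices to record how each of $d^\Left_{j,i}$, $d^v_{i,j}$ and $g_C$ transforms under $g\cdot(-)$ (using that the $B^\Left$- and $B^v$-actions are conjugation, and that $g\cdot g_C=g^v_{1,0}\circ g_C\circ (g^\Left_{2,2})^{-1}$), recompute $\tilde d^v_{i,j}$, and plug into \eqref{eqn:d^R}. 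Throughout, the only inputs needed to collapse the resulting expressions are the structural identities \eqref{eqn:d^L}, \eqref{eqn:d_v^2+T^2=0}, \eqref{eqn:d^2+T^2=0}, the relation $d^\Left_{2,2}=\tilde d^v_{1,0}$, and the identities among the $\tilde d^v_{i,j}$ already established in the preceding Proposition/Definition (in particular the $m=3$ instance of \eqref{eqn:d^2+T^2=0}).

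I would do cases (2)--(4) first, as they are the simplest. In each of these $g^v=\identity$ and $g^\Left_{2,2}=\identity$, so $g\cdot g_C=g^v_{1,0}\circ g_C\circ (g^\Left_{2,2})^{-1}=g_C$ is unchanged; since $g^v=\identity$ also fixes every $d^v_{i,j}$, all the $\tilde d^v_{i,j}$ are unchanged. Hence the sole effect of $g$ is to replace $d^\Left$ by the conjugate $g^\Left d^\Left (g^\Left)^{-1}$ by a single elementary unipotent $g^\Left=\identity+E$, and the expansion $(\identity+E)d^\Left(\identity-E)=d^\Left+E d^\Left-d^\Left E$ (the quadratic term vanishing in each case for block reasons) changes at most two blocks of $d^\Left$ by explicit formulas. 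Substituting these modified blocks into \eqref{eqn:d^R} and regrouping, using \eqref{eqn:d^L} and \eqref{eqn:d_v^2+T^2=0}, produces exactly $g^\Right d^\Right (g^\Right)^{-1}$ with the claimed $g^\Right$. A short separate check, immediate from the block shape, is that each $g^\Right$ lies in $B^\Right=B(V_\Right;\field)$, i.e. preserves the filtration $F^{\bullet}C(V_\Right)$.

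Case (1) requires a little more care because the action moves all of the data, but it still decouples block by block. Here $g^\Left\cdot d^\Left$ conjugates $d^\Left_{j,i}$ by $(g^\Left_{j,j},g^\Left_{i,i})$, $g^v\cdot d^v$ conjugates $d^v_{\bullet,j}$ blockwise by the $g^v_{\bullet,0}$'s, and $g\cdot g_C=g^v_{1,0}\,g_C\,(g^\Left_{2,2})^{-1}$; a one-line cancellation (the $g^v_{1,0}$'s coming from $g\cdot g_C$ cancelling those produced by $g^v\cdot d^v$) then shows that the new $\tilde d^v_{i,j}$ is the conjugate of the old one by $g^\Left_{2,2}$ or $g^v_{2,0}$ on each side according to the parities of $i$ and $i+j$. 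Feeding these into \eqref{eqn:d^R} and factoring, all the conjugating factors line up so that the new $d^\Right$ is $\diag(g^\Left_{1,1},g^v_{2,0},g^\Left_{3,3})\cdot d^\Right$, which is the claim; again filtration-preservation of $g^\Right$ is clear.

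The main obstacle is purely organizational: keeping straight which conjugating automorphism sits on which side of each composite appearing in \eqref{eqn:d^R}, and invoking the correct instance of \eqref{eqn:d^L}/\eqref{eqn:d_v^2+T^2=0}/\eqref{eqn:d^2+T^2=0} to kill the cross terms that arise when expanding $g^\Right d^\Right (g^\Right)^{-1}$ in cases (2)--(4). A more conceptual alternative would be to transport $g$ through the natural-in-$\epsilon$ maps $\varphi_\Left,\varphi_\Right,i_2,\psi_\Left,\psi_\Right$ of the preceding Proposition/Definition and read off $g^\Right$; this would shorten the exposition, but the same relations would have to be verified, so I would present the direct computation.
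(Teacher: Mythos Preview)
Your plan is correct and matches the paper's approach exactly: a direct, case-by-case substitution into \eqref{eqn:d^R}, tracking how $d^\Left_{j,i}$, $g_C$, and the $\tilde d^v_{i,j}$ change under $g$, then simplifying with \eqref{eqn:d^L}, \eqref{eqn:d_v^2+T^2=0}, and $d^\Left_{2,2}=\tilde d^v_{1,0}$. One small over-citation: for this lemma only the $m\le 2$ identities collected in \eqref{eqn:d_v^2+T^2=0} are actually needed; the $m=3$ instance of \eqref{eqn:d^2+T^2=0} enters in the preceding Proposition/Definition, not here.
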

\begin{proof}
\begin{enumerate}
\item
If $g=(g^\Left,g^v)\in B$, with both of $g^\Left$ and $g^v$ \emph{block-diagonal}. Then
\begin{align*}
&\mathrel{\hphantom{=}}\begin{pmatrix}
(g\cdot d)_{1,1}^\Right & 0 & 0\\
(g\cdot d)_{2,1}^\Right & (g\cdot d)_{2,2}^\Right & 0\\
(g\cdot d)_{3,1}^\Right & (g\cdot d)_{3,2}^\Right & (g\cdot d)_{3,3}^\Right
\end{pmatrix}\\
&=
\begin{pmatrix}
(g\cdot d)_{1,1}^\Left& 0 & 0\\
(\tilde{g\cdot d})_{1,1}^v\circ (g\cdot d)_{2,1}^\Left& (g\cdot d)_{2,0}^v& 0\\
(g\cdot d)_{3,1}^\Left+(g\cdot d)_{3,2}^\Left\circ(\tilde{g\cdot d})_{1,2}^v\circ (g\cdot d)_{2,1}^\Left& (g\cdot d)_{3,2}^\Left\circ(\tilde{g\cdot d})_{2,1}^v& (g\cdot d)_{3,3}^\Left
\end{pmatrix}\\
&=
\begin{pmatrix}
g_{1,1}^\Left\circ d_{1,1}^\Left\circ (g_{1,1}^\Left)^{-1} & 0 & 0\\
g_{2,0}^v\circ(\tilde{d}_{1,1}^v\circ d_{2,1}^\Left)\circ(g_{1,1}^\Left)^{-1} & g_{2,0}^v\circ d_{2,0}^v\circ (g_{2,0}^v)^{-1} & 0\\
g_{3,3}^\Left\circ (d_{3,1}^\Left+d_{3,2}^{\Left}\circ \tilde{d}_{1,2}^v\circ d_{2,1}^\Left)\circ (g_{1,1}^\Left)^{-1} & g_{3,3}^\Left\circ(d_{3,2}^\Left\circ \tilde{d}_{2,1}^v)\circ(g_{2,0}^v)^{-1} & g_{3,3}^\Left\circ d_{3,3}^\Left\circ(g_{3,3}^\Left)^{-1}
\end{pmatrix}\\
&=
\begin{pmatrix}
g_{1,1}^\Left & 0 & 0\\
0 & g_{2,0}^v & 0\\
0 & 0 & g_{3,3}^\Left
\end{pmatrix}\circ
\begin{pmatrix}
d_{1,1}^\Left& 0 & 0\\
\tilde{d}_{1,1}^v\circ d_{2,1}^\Left& d_{2,0}^v& 0\\
d_{3,1}^\Left+d_{3,2}^\Left\circ\tilde{d}_{1,2}^v\circ d_{2,1}^\Left& d_{3,2}^\Left\circ\tilde{d}_{2,1}^v& d_{3,3}^\Left
\end{pmatrix}\circ
\begin{pmatrix}
g_{1,1}^\Left & 0 & 0\\
0 & g_{2,0}^v & 0\\
0 & 0 & g_{3,3}^\Left
\end{pmatrix}^{-1}\\
&=
\begin{pmatrix}
g_{1,1}^\Left & 0 & 0\\
0 & g_{2,0}^v & 0\\
0 & 0 & g_{3,3}^\Left
\end{pmatrix}\circ
\begin{pmatrix}
d_{1,1}^\Right & 0 & 0\\
d_{2,1}^\Right & d_{2,2}^\Right & 0\\
d_{3,1}^\Right & d_{3,2}^\Right & d_{3,3}^\Right
\end{pmatrix}\circ
\begin{pmatrix}
g_{1,1}^\Left & 0 & 0\\
0 & g_{2,0}^v & 0\\
0 & 0 & g_{3,3}^\Left
\end{pmatrix}^{-1}\\
&=g^\Right\cdot d^\Right
\end{align*}
with $g^\Right=g^\Right(g)=\diag(g_{1,1}^\Left,g_{2,0}^v,g_{3,3}^\Left)\in B^\Right$, as desired.

\item
If $g=(g^\Left,\identity)\in B$, with $g_{i,i}^\Left=\identity$, and $g_{j,i}^\Left=0$ for all $1\leq i<j\leq 3$ unless $(j,i)=(3,2)$. The only two nontrivial calculations are
\begin{align*}
(g\cdot d)_{3,1}^\Right
&=(g\cdot d)_{3,1}^\Left + (g\cdot d)_{3,2}^\Left\circ(\tilde{g\cdot d})_{1,2}^v\circ (g\cdot d)_{2,1}^\Left\\
&=(d_{3,1}^\Left+g_{3,2}^\Left\circ d_{2,1}^\Left) + (d_{3,2}^\Left+g_{3,2}^\Left\circ d_{2,2}^\Left - d_{3,3}^\Left\circ g_{3,2}^\Left)\circ\tilde{d}_{1,2}^v\circ d_{2,1}^\Left\\
&=d_{3,1}^\Right+g_{3,2}^\Left\circ d_{2,1}^\Left + (g_{3,2}^\Left\circ\tilde{d}_{1,0}^v-d_{3,3}^\Left\circ g_{3,2}^\Left)\circ\tilde{d}_{1,2}^v\circ d_{2,1}^\Left\\
&=d_{3,1}^\Right + g_{3,2}^\Left\circ(\tilde{d}_{2,1}^v\circ\tilde{d}_{1,1}^v-\tilde{d}_{1,2}^v\circ\tilde{d}_{1,0}^v)\circ d_{2,1}^v - d_{3,3}^\Left\circ g_{3,2}^\Left\circ\tilde{d}_{1,2}^v\circ d_{2,1}^\Left\\
&=d_{3,1}^\Right + g_{3,2}^\Left\circ\tilde{d}_{2,1}^v\circ d_{2,1}^\Right + g_{3,2}^\Left\circ\tilde{d}_{1,2}^v\circ d_{2,1}^\Left\circ d_{1,1}^\Left - d_{3,3}^\Right\circ g_{3,2}^\Left\circ\tilde{d}_{1,2}^v\circ d_{2,1}^\Left\\
&=d_{3,1}^\Right + g_{3,2}^\Left\circ\tilde{d}_{2,1}^v\circ d_{2,1}^\Right + g_{3,2}^\Left\circ\tilde{d}_{1,2}^v\circ d_{2,1}^\Left\circ d_{1,1}^\Right - d_{3,3}^\Right\circ g_{3,2}^\Left\circ\tilde{d}_{1,2}^v\circ d_{2,1}^\Left\\
&=d_{3,1}^\Right + g_{3,2}^\Right\circ d_{2,1}^\Right + g_{3,1}^\Right\circ d_{1,1}^\Right -d_{3,3}^\Right\circ g_{3,1}^\Right,\\
(g\cdot d)_{3,2}^\Right
&=(g\cdot d)_{3,2}^\Left\circ(\tilde{g\cdot d})_{2,1}^v\\
&=(d_{3,2}^\Left+g_{3,2}^\Left\circ d_{2,2}^\Left - d_{3,3}^\Left\circ g_{3,2}^\Left)\circ\tilde{d}_{2,1}^v\\
&=d_{3,2}^\Right+ g_{3,2}^\Left\circ\tilde{d}_{2,1}^v\circ d_{2,0}^v - d_{3,3}^\Right\circ g_{3,2}^\Left\circ\tilde{d}_{2,1}^v\\
&=d_{3,2}^\Right + g_{3,2}^\Right\circ d_{2,2}^\Right- d_{3,3}^\Right\circ g_{3,2}^\Right
\end{align*}
Otherwise, we have $(g\cdot d)_{j,i}^\Right=d_{j,i}^\Right$. It follows that
\begin{align*}
&\mathrel{\hphantom{=}}\begin{pmatrix}
(g\cdot d)_{1,1}^\Right & 0 & 0\\
(g\cdot d)_{2,1}^\Right & (g\cdot d)_{2,2}^\Right & 0\\
(g\cdot d)_{3,1}^\Right & (g\cdot d)_{3,2}^\Right & (g\cdot d)_{3,3}^\Right
\end{pmatrix}\\
&=
\begin{pmatrix}
d_{1,1}^\Right & 0 & 0\\
d_{2,1}^\Right & d_{2,2}^\Right & 0\\
d_{3,1}^\Right + g_{3,2}^\Right\circ d_{2,1}^\Right + g_{3,1}^\Right\circ d_{1,1}^\Right -d_{3,3}^\Right\circ g_{3,1}^\Right & d_{3,2}^\Right + g_{3,2}^\Right\circ d_{2,2}^\Right- d_{3,3}^\Right\circ g_{3,2}^\Right & d_{3,3}^\Right
\end{pmatrix}\\
&=
\begin{pmatrix}
\identity & 0 & 0\\
0 & \identity & 0\\
g_{3,1}^\Right & g_{3,2}^\Right & \identity
\end{pmatrix}\circ
\begin{pmatrix}
d_{1,1}^\Right & 0 & 0\\
d_{2,1}^\Right & d_{2,2}^\Right & 0\\
d_{3,1}^\Right & d_{3,2}^\Right & d_{3,3}^\Right
\end{pmatrix}\circ
\begin{pmatrix}
\identity & 0 & 0\\
0 & \identity & 0\\
-g_{3,1}^\Right & -g_{3,2}^\Right & \identity
\end{pmatrix}\\
&=g^\Right\circ d^\Right\circ (g^\Right)^{-1} 
\end{align*}
as desired.

\item
If $g=(g^\Left,\identity)\in B$, with $g_{i,i}^\Left=\identity$, and $g_{j,i}^\Left=0$ for all $1\leq i<j\leq 3$ unless $(j,i)=(2,1)$. The only two nontrivial calculations are
\begin{align*}
(g\cdot d)_{3,1}^\Right
&=(g\cdot d)_{3,1}^\Left + (g\cdot d)_{3,2}^\Left\circ(\tilde{g\cdot d})_{1,2}^v\circ (g\cdot d)_{2,1}^\Left\\
&=(d_{3,1}^\Left - d_{3,2}^\Left\circ g_{2,1}^\Left) + d_{3,2}^\Left\circ\tilde{d}_{1,2}^v\circ(d_{2,1}^\Left+g_{2,1}^\Left\circ d_{1,1}^\Left - d_{2,2}^\Left\circ g_{2,1}^\Left)\\
&=d_{3,1}^\Right + (d_{3,2}^\Left\circ\tilde{d}_{1,2}^v\circ g_{2,1}^\Left)\circ d_{1,1}^\Left + d_{3,2}^\Left\circ(\tilde{d}_{1,0}^v\circ\tilde{d}_{1,2}^v-\tilde{d}_{2,1}^v\circ\tilde{d}_{1,1}^v)\circ g_{2,1}^\Left\\
&=d_{3,1}^\Right + (d_{3,2}^\Left\circ\tilde{d}_{1,2}^v\circ g_{2,1}^\Left)\circ d_{1,1}^\Left - d_{3,3}^\Left\circ (d_{3,2}^\Left\circ\tilde{d}_{1,2}^v\circ g_{2,1}^\Left) - (d_{3,2}^\Left\circ\tilde{d}_{2,1}^v)\circ(\tilde{d}_{1,1}^v\circ g_{2,1}^\Left)\\
&=d_{3,1}^\Right + g_{3,1}^\Right\circ d_{1,1}^\Right - d_{3,3}^\Right\circ g_{3,1}^\Right - d_{3,2}^\Right\circ g_{2,1}^\Right,\\
(g\cdot d)_{2,1}^\Right
&=(\tilde{g\cdot d})_{1,1}^v\circ (g\cdot d)_{2,1}^\Left\\
&=\tilde{d}_{1,1}^v\circ (d_{2,1}^\Left + g_{2,1}^\Left\circ d_{1,1}^\Left - d_{2,2}^\Left\circ g_{2,1}^\Left)\\
&=d_{2,1}^\Right + g_{2,1}^\Right\circ d_{1,1}^\Left - d_{2,0}^v\circ\tilde{d}_{1,1}^v\circ g_{2,1}^\Left\\
&=d_{2,1}^\Right + g_{2,1}^\Right\circ d_{1,1}^\Left - d_{2,0}^v\circ g_{2,1}^\Right.
\end{align*}
Otherwise, we have $(g\cdot d)_{j,i}^\Right=d_{j,i}^\Right$. It follows that
\begin{align*}
(g\cdot d)^\Right
&=
\begin{pmatrix}
d_{1,1}^\Right & 0 & 0\\
d_{2,1}^\Right + g_{2,1}^\Right\circ d_{1,1}^\Left - d_{2,0}^v\circ g_{2,1}^\Right & d_{2,2}^\Right & 0\\
d_{3,1}^\Right + g_{3,1}^\Right\circ d_{1,1}^\Right - d_{3,3}^\Right\circ g_{3,1}^\Right - d_{3,2}^\Right\circ g_{2,1}^\Right & 0 & d_{3,3}^\Right
\end{pmatrix}\\
&=
\begin{pmatrix}
\identity & 0 & 0\\
g_{2,1}^\Right & \identity & 0\\
0 & 0 & \identity
\end{pmatrix}\circ
\begin{pmatrix}
d_{1,1}^\Right & 0 & 0\\
d_{2,1}^\Right & d_{2,2}^\Right & 0\\
d_{3,1}^\Right & d_{3,2}^\Right & d_{3,3}^\Right
\end{pmatrix}\circ
\begin{pmatrix}
\identity & 0 & 0\\
-g_{2,1}^\Right & \identity & 0\\
0 & 0 & \identity
\end{pmatrix}\\
&=g^\Right\circ d^\Right\circ (g^\Right)^{-1}
\end{align*}
as desired. 

\item

If $g=(g^\Left,\identity)\in B$, with $g_{i,i}^\Left=\identity$, and $g_{j,i}^\Left=0$ for all $1\leq i<j\leq 3$ unless $(j,i)=(3,1)$. The only nontrivial calculation is
\begin{align*}
(g\cdot d)_{3,1}^\Right
&=(g\cdot d)_{3,1}^\Left + (g\cdot d)_{3,2}^\Left\circ(\tilde{g\cdot d})_{1,2}^v\circ(g\cdot d)_{2,1}^\Left\\
&=(d_{3,1}^\Left+g_{3,1}^\Left\circ d_{1,1}^\Left - d_{3,3}^\Left\circ g_{3,1}^\Left) + d_{3,2}^\Left\circ\tilde{d}_{1,2}^v\circ d_{2,1}^\Left\\
&=d_{3,1}^\Right + g_{3,1}^\Right d_{1,1}^\Right - d_{3,3}^\Right\circ g_{3,1}^\Right.
\end{align*}
Otherwise, we have $(g\cdot d)_{j,i}^\Right=d_{j,i}^\Right$. It follows that
\begin{align*}
(g\cdot d)^\Right
&=
\begin{pmatrix}
d_{1,1}^\Right & 0 & 0\\
d_{2,1}^\Right & d_{2,2}^\Right & 0\\
d_{3,1}^\Right + g_{3,1}^\Right\circ d_{1,1}^\Right - d_{3,3}^\Right\circ g_{3,1}^\Right & d_{3,2}^\Right & d_{3,3}^\Right
\end{pmatrix}\\
&=\begin{pmatrix}
\identity & 0 & 0\\
0 & \identity & 0\\
g_{3,1}^\Right & 0 & \identity
\end{pmatrix}\circ
\begin{pmatrix}
d_{1,1}^\Right & 0 & 0\\
d_{2,1}^\Right & d_{2,2}^\Right & 0\\
d_{3,1}^\Right & d_{3,2}^\Right & d_{3,3}^\Right
\end{pmatrix}\circ
\begin{pmatrix}
\identity & 0 & 0\\
0 & \identity & 0\\
-g_{3,1}^\Right & 0 & \identity
\end{pmatrix}\\
&=g^\Right\circ d^\Right\circ (g^\Right)^{-1}
\end{align*}
as desired.\qedhere
\end{enumerate}
\end{proof}

Regard $B^\Left$ as a subgroup of $B$ via the obvious inclusion. Recall Definition \ref{def:aug var with bdy conditions}, we then have
\begin{proposition}\label{prop:aug-orbit vs orbit-orbit}
Let $(\ruling_\Left,\ruling_\Right)$ be a pair of boundary conditions, with $\ruling_\Left\in\NR(V_\Left)$ and $\ruling_\Right\in\NR(V_\Right)$ respectively. Then the group action of $B^\Left$ on $\aug(V;\field)$ preserves the sub-variety $\aug(V,\ruling_\Left,\ruling_\Right;\field)$. 
Moreover, for any $\epsilon_\Left\in\aug^{\ruling_\Left}(V;\field)$, the augmentation variety with boundary conditions $\aug(V,\epsilon_\Left,\ruling_R;\field)$ is independent of the choice of $\epsilon_\Left$ in $\aug^{\ruling_\Left}(V;\field)$ up to canonical isomorphism.
And, we have a natural isomorphism
\begin{align*}
\aug(V,\ruling_\Left,\ruling_\Right;\field)
&\isomorphic\aug^{\ruling_\Left}(V_\Left;\field)\times\aug(V,\epsilon_\Left,\ruling_\Right;\field)\\
&\isomorphic\field^{n_\Left/2}\times(\field^\times)^{A_b(\ruling_\Left)}\times\aug(V,\epsilon_\Left,\ruling_\Right;\field)
\end{align*}
\end{proposition}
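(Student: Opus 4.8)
The plan is to deduce Proposition~\ref{prop:aug-orbit vs orbit-orbit} from the isomorphism-lifting result Lemma~\ref{lem:isomorphism lifting for a vertex} together with the orbit structure of $\aug^{\ruling_\Left}(V_\Left;\field)$ recorded in Lemma~\ref{lem:ruling stratification for trivial tangle}. Throughout I identify $\aug(V;\field)$ with $\MC(V;\field)$ as in Remark~\ref{rem:identification for aug var of V}, and regard $B^\Left$ as the subgroup $B^\Left\times\{\identity\}\subset B$.

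\emph{Step 1: the $B^\Left$-action preserves $\aug(V,\ruling_\Left,\ruling_\Right;\field)$.} Fix $g^\Left\in B^\Left$ and $d\in\aug(V,\ruling_\Left,\ruling_\Right;\field)$. Writing $g^\Left$ in block form relative to $C(V_\Left)=C_1(V_\Left)\oplus C_2(V_\Left)\oplus C_3(V_\Left)$, it is block upper-triangular with invertible diagonal blocks, hence a finite product of a block-diagonal element and the three elementary block-unipotent elements supported on the $(2,1)$, $(3,2)$ and $(3,1)$ positions (an elementary computation with $3\times3$ block matrices). Each such factor, viewed in $B$ with trivial vertex-component $\identity$, is exactly one of the four types treated in Lemma~\ref{lem:isomorphism lifting for a vertex}, so for each of them $(g\cdot d)^\Right=g^\Right\cdot d^\Right$ for some $g^\Right\in B^\Right$. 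Composing, $(g^\Left\cdot d)^\Right$ lies in the $B^\Right$-orbit of $d^\Right$; since $d^\Right$ represents an augmentation in $\aug^{\ruling_\Right}(V_\Right;\field)=B^\Right\cdot d_{\ruling_\Right}$, so does $(g^\Left\cdot d)^\Right$. On the left boundary, $r_\Left(g^\Left\cdot d)=g^\Left\cdot r_\Left(d)\in B^\Left\cdot d_{\ruling_\Left}=\aug^{\ruling_\Left}(V_\Left;\field)$. Hence $g^\Left\cdot d\in\aug(V,\ruling_\Left,\ruling_\Right;\field)$, as in Definition~\ref{def:aug var with bdy conditions}.

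\emph{Step 2: independence of $\epsilon_\Left$.} Since $\aug^{\ruling_\Left}(V_\Left;\field)$ is a single $B^\Left$-orbit, for any $\epsilon_\Left,\epsilon_\Left'$ there is $g^\Left\in B^\Left$ with $g^\Left\cdot\epsilon_\Left=\epsilon_\Left'$, and by Step 1 the action of $g^\Left$ carries $\aug(V,\epsilon_\Left,\ruling_\Right;\field)=r_\Left^{-1}(\epsilon_\Left)\cap r_\Right^{-1}(\aug^{\ruling_\Right}(V_\Right;\field))$ isomorphically onto $\aug(V,\epsilon_\Left',\ruling_\Right;\field)$, with inverse $(g^\Left)^{-1}$. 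To make this canonical I would use the algebraic section $\varphi_{\ruling_\Left}$ of Lemma~\ref{lem:ruling stratification for trivial tangle}.(2) (transported through $\Xi$): it produces, for each $\epsilon_\Left$, an element $\varphi_{\ruling_\Left}(\epsilon_\Left)\in B^\Left$ depending algebraically on $\epsilon_\Left$ with $\varphi_{\ruling_\Left}(\epsilon_\Left)\cdot\epsilon_{\ruling_\Left}=\epsilon_\Left$, and then $\varphi_{\ruling_\Left}(\epsilon_\Left')\circ\varphi_{\ruling_\Left}(\epsilon_\Left)^{-1}$ is a canonical isomorphism $\aug(V,\epsilon_\Left,\ruling_\Right;\field)\stackrel{\sim}{\longrightarrow}\aug(V,\epsilon_\Left',\ruling_\Right;\field)$.

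\emph{Step 3: the product decomposition.} With $\epsilon_{\ruling_\Left}$ the canonical augmentation of $V_\Left$ (Definition~\ref{def:canonical differential}), define
\[
\Psi:\aug^{\ruling_\Left}(V_\Left;\field)\times\aug(V,\epsilon_{\ruling_\Left},\ruling_\Right;\field)\longrightarrow\aug(V,\ruling_\Left,\ruling_\Right;\field),\qquad (\epsilon_\Left,\epsilon)\longmapsto \varphi_{\ruling_\Left}(\epsilon_\Left)\cdot\epsilon.
\]
By Step 1 this is a well-defined morphism of varieties, and $r_\Left(\Psi(\epsilon_\Left,\epsilon))=\varphi_{\ruling_\Left}(\epsilon_\Left)\cdot\epsilon_{\ruling_\Left}=\epsilon_\Left$; the map $\epsilon'\mapsto\bigl(r_\Left(\epsilon'),\,\varphi_{\ruling_\Left}(r_\Left(\epsilon'))^{-1}\cdot\epsilon'\bigr)$ is an algebraic two-sided inverse. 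This gives $\aug(V,\ruling_\Left,\ruling_\Right;\field)\isomorphic\aug^{\ruling_\Left}(V_\Left;\field)\times\aug(V,\epsilon_\Left,\ruling_\Right;\field)$; substituting the isomorphism $\aug^{\ruling_\Left}(V_\Left;\field)\isomorphic(\field^\times)^{n_\Left/2}\times\field^{A_b(\ruling_\Left)}$ of Lemma~\ref{lem:ruling stratification for trivial tangle}.(3) (here $V_\Left$ is the trivial tangle $T_{n_\Left}$ and $\ruling_\Left$ is fixed-point-free, so $|L_{\ruling_\Left}|=n_\Left/2$) yields the stated formula. The main obstacle is Step 1: the verification that the $B^\Left$-action respects the right boundary condition is precisely the content of the long four-case matrix computation of Lemma~\ref{lem:isomorphism lifting for a vertex}, already carried out; within the proof of the Proposition the only residual subtlety is the (elementary but necessary) factorization of an arbitrary $g^\Left\in B^\Left$ into those four types so the lemma applies iteratively, while transitivity of the $B^\Left$-action, the section $\varphi_{\ruling_\Left}$, and the dimension count are supplied directly by the cited results.
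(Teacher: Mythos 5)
Your proposal is correct and follows essentially the same route as the paper: the paper's (very terse) proof likewise factors an arbitrary $g^\Left\in B^\Left$ uniquely into the four elementary types handled by Lemma~\ref{lem:isomorphism lifting for a vertex}, applies that lemma factor by factor to see the right boundary orbit is preserved, and then invokes the orbit/section structure of the trivial-tangle augmentation variety to obtain the product decomposition — your Steps 2 and 3 just make explicit the "the rest follows immediately" part via the canonical section $\varphi_{\ruling_\Left}$. (Incidentally, your form $(\field^\times)^{n_\Left/2}\times\field^{A_b(\ruling_\Left)}$ agrees with Lemma~\ref{lem:ruling stratification for trivial tangle}.(3) and Remark~\ref{rem:aug-orbit and orbit-orbit}; the exponent placement in the printed statement of the Proposition appears to be a typo.)
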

\begin{proof}
Any element $g^\Left\in B^\Left\subset B$ can be written uniquely as a product $g^\Left=g_4^\Left\circ g_3^\Left\circ g_2^\Left\circ g_1^\Left$ with $g_i^\Left$ of the form as in Lemma~\ref{lem:isomorphism lifting for a vertex}$~(1)$. The rest follows  immediately from Lemma \ref{lem:isomorphism lifting for a vertex} and Lemma \ref{lem:properties of orbits at a vertex}.
\end{proof}

\subsection{Ruling decomposition for augmentation varieties of bordered Legendrian graphs}

\begin{definition}
Use the notations in Definition \ref{def:canonical augmentation at a vertex} and Lemma \ref{lem:stratification of aug var at a vertex}, for any $\ruling_v\in\NR(v)$, \emph{define} $\aug(V,\epsilon_\Left,\ruling_v,\ruling_\Right;\field)$ to be the sub-variety of $\aug(V,\epsilon_\Left,\ruling_\Right;\field)$ whose augmentations $d=(d^\Left,g_C,d^v)$ further satisfy $d^v\in B^v\cdot d_{\ruling_v}$. \emph{Define}  $\aug(V,\ruling_\Left,\ruling_v,\ruling_\Right;\field)$ similarly.
\end{definition}
Then we have the following decomposition 
\begin{equation}\label{eqn:decomposition 1 for V}
\aug(V,\epsilon_\Left,\ruling_\Right;\field)\isomorphic \coprod_{\ruling\in\NR(v)}\aug(V,\epsilon_\Left,\ruling_v,\ruling_\Right;\field)
\end{equation}
of $\aug(V,\epsilon_\Left,\ruling_\Right;\field)$ into the set-theoretic disjoint union of locally closed sub-varieties. 
There is a similar decomposition of $\aug(V,\ruling_\Left,\ruling_\Right;\field)$, and the previous isomorphism is compatible with the decompositions. That is, we have an induced natural isomorphism 
\[
\aug(V,\ruling_\Left,\ruling_v,\ruling_\Right;\field)\isomorphic \field^{n_\Left/2}\times(\field^*)^{A_b(\ruling_\Left)}\times\aug(V,\epsilon_\Left,\ruling_v,\ruling_\Right;\field).
\] 

By the result above, to count augmentations for $\aug(V,\epsilon_\Left,\ruling_\Right;\field)$ or $\aug(V,\epsilon_\Left,\ruling_v,\ruling_\Right;\field)$, we can now \emph{assume} $\epsilon_\Left=\epsilon_{\ruling_\Left}$ is a fixed \emph{standard} augmentation. Equivalently, the differential is $d^\Left=h^\Left\cdot d_{\ruling_\Left}$ for some diagonal element $h^\Left\in B^\Left$, and $d_{\ruling_\Left}$ is the canonical differential associated to $\ruling_\Left$ as in Definition \ref{def:canonical augmentation at a vertex}.

Fix any augmentation $\epsilon$ (or $d=(d^\Left,g_C,d^v)$) of $\cA(V)$, similar to $\tilde{d}^v$, \emph{define} $\tilde{g}^v\in B^v$ by 
\begin{equation}
\begin{pmatrix}
\tilde{g}_{1,2j}^v & \tilde{g}_{2,2j+1}^v\\
\tilde{g}_{1,2j+1} & \tilde{g}_{2,2j}
\end{pmatrix}\coloneqq
\begin{pmatrix}
g_C^{-1} & 0\\
0 & \identity
\end{pmatrix}\circ
\begin{pmatrix}
g_{1,2j}^v & g_{2,2j+1}^v\\
g_{1,2j+1} & g_{2,2j}
\end{pmatrix}\circ
\begin{pmatrix}
g_C & 0\\
0 & \identity
\end{pmatrix}
\end{equation}

\begin{lemma}\label{lem:group action for V}
Let $\epsilon$ (or $d=(d^\Left,g_C,d^v)$) be any augmentation of $\cA(V)$ such that $d^\Left$ is standard. Then for any $g=(\identity,g^v)\in B$, we have:
\begin{enumerate}
\item
If $g_{i,0}^v=\identity$, and $g_{i,j}^v=0$ for all $i\in\ZZ/2$ and $j>0$ unless $(i,j)=(1,1)$. In particular, $(g^{-1})_{i,0}^v=\identity$, $(g^{-1})_{1,1}^v= -g_{1,1}^v$, and $(g^{-1})_{i,j}^v=0$ otherwise. Then $(g\cdot d)^\Right=g^\Right\cdot d^\Right$ with $g^\Right=g^\Right(g,d)$ given by
\[
g^\Right\coloneqq
\begin{pmatrix}
\identity & 0 & 0\\
\tilde{g}_{1,1}^v\circ d_{2,1}^\Left & \identity & 0\\
0 & 0 & \identity
\end{pmatrix}
\]

\item
If $g_{i,0}^v=\identity$, and $g_{i,j}^v=0$ for all $i\in\ZZ/2$ and $j>0$ unless $(i,j)=(2,1)$. In particular, $(g^{-1})_{i,0}^v=\identity$, $(g^{-1})_{2,1}^v= -g_{2,1}^v$, and $(g^{-1})_{i,j}^v=0$ otherwise. Then $(g\cdot d)^\Right=g^\Right\cdot d^\Right$ with $g^\Right=g^\Right(g,d)$ given by
\[
g^\Right\coloneqq
\begin{pmatrix}
\identity & 0 & 0\\
0 & \identity & 0\\
0 & d_{3,2}^\Left\circ\tilde{g}_{2,1}^v & \identity
\end{pmatrix}
\]

\item
If $g_{i,0}^v=\identity$, and $g_{i,j}^v=0$ for all $i\in\ZZ/2$ and $j>0$ unless $(i,j)=(1,2)$. In particular, $(g^{-1})_{i,0}^v=\identity$, $(g^{-1})_{1,2}^v= -g_{1,2}^v$, and $(g^{-1})_{i,j}^v=0$ otherwise. Then $(g\cdot d)^\Right=g^\Right\cdot d^\Right$ with $g^\Right=g^\Right(g,d)$ given by
\[
g^\Right\coloneqq
\begin{pmatrix}
\identity & 0 & 0\\
0 & \identity & 0\\
-d_{3,2}^\Left\circ\tilde{g}_{1,2}^v\circ d_{2,1}^\Left & 0 & \identity
\end{pmatrix}
\]

\item
If $g_{i,0}^v=\identity$, $g_{i,1}^v=0$ for all $i\in\ZZ/2$, and $g_{1,2}^v=0$. In particular, $(g^{-1})_{i,0}^v=\identity$, $(g^{-1})_{i,1}^v= 0$, and $(g^{-1})_{1,2}^v=0$. Then $(g\cdot d)^\Right=g^\Right\cdot d^\Right$ with $g^\Right=\identity$.
\end{enumerate}
\end{lemma}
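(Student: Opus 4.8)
\textbf{Plan for the proof of Lemma~\ref{lem:group action for V}.}
The strategy is completely parallel to the proof of Lemma~\ref{lem:isomorphism lifting for a vertex}: each of the four cases specifies a particularly simple element $g=(\identity, g^v)\in B$ supported on one ``matrix entry'' of $g^v$, and we must verify that conjugating $d=(d^\Left,g_C,d^v)$ by $g$ changes the induced right-hand differential $d^\Right$ only by conjugation with the asserted $g^\Right\in B^\Right$. The key computational input is the formula \eqref{eqn:d^R} expressing $d^\Right$ in block form in terms of $d^\Left$ and the twisted vertex maps $\tilde{d}^v_{i,j}$, together with the relations \eqref{eqn:d^L}, \eqref{eqn:d_v^2+T^2=0} (valid also for the $\tilde{d}^v_{i,j}$), and the standing hypothesis that $d^\Left$ is standard, i.e. $d^\Left=h^\Left\cdot d_{\ruling_\Left}$ with $h^\Left$ diagonal. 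First I would record how $g=(\identity,g^v)$ acts: by definition $g\cdot d = (d^\Left,\, g_C,\, g^v\cdot d^v)$ since $g^\Left=\identity$ forces $g\cdot g_C = g^v_{1,0}\circ g_C = g_C$ in cases where $g^v_{1,0}=\identity$; hence $d^\Left$ is unchanged and $g_C$ is unchanged, so the only change is $d^v\mapsto g^v\cdot d^v$, and correspondingly $\tilde{d}^v\mapsto \widetilde{g^v\cdot d^v}=\tilde{g}^v\cdot\tilde{d}^v$ because the twisting operation $(-)\mapsto\operatorname{diag}(g_C^{-1},\identity)\circ(-)\circ\operatorname{diag}(g_C,\identity)$ commutes with the $B^v$-action once $g_C$ is fixed.

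Next, for each of the four cases I would expand $(g\cdot d)^\Right$ using \eqref{eqn:d^R} with $d^v$ replaced by $g^v\cdot d^v$ (equivalently $\tilde{d}^v$ replaced by $\tilde{g}^v\cdot\tilde{d}^v$), and simplify the resulting block matrix. In cases (1) and (3), where $g^v$ is supported on an ``upper'' entry $g^v_{1,1}$ or $g^v_{1,2}$, the only affected blocks of $d^\Right$ are the $(2,1)$ and $(3,1)$ entries, and one must show these change exactly by the prescribed conjugation; the manipulations are of the same flavour as the ``nontrivial calculations'' in the proof of Lemma~\ref{lem:isomorphism lifting for a vertex}, relying on $d^\Left$ being standard so that $d^\Left_{1,1}=d^\Right_{1,1}$ and the diagonal blocks of $d^\Left$ coincide with those of $d^\Right$. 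Case (2), with $g^v$ supported on $g^v_{2,1}$, affects only the $(3,2)$ block via $d^\Left_{3,2}\circ(\tilde{g}^v\cdot\tilde{d}^v)_{2,1}$, and one checks it equals $d^\Right_{3,2}$ conjugated by the stated $g^\Right$ having $d^\Left_{3,2}\circ\tilde{g}^v_{2,1}$ in its $(3,2)$ slot. Case (4) is immediate: when $g^v$ is diagonal ($g^v_{i,0}=\identity$, $g^v_{i,1}=0$, $g^v_{1,2}=0$) the hypotheses force $\tilde{g}^v=\identity$ up to the surviving entries, hence $\tilde{d}^v$ and all of \eqref{eqn:d^R} are unchanged, so $g^\Right=\identity$.

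The main obstacle is purely bookkeeping: keeping track of the signs and of which twisted identities from \eqref{eqn:d^2+T^2=0} and \eqref{eqn:d_v^2+T^2=0} (for $m=0,1,2,3$) are needed to collapse the cross-terms. For instance, in case (1) the term $d^\Left_{3,2}\circ\tilde{d}^v_{1,2}\circ d^\Left_{2,1}$ must be matched against contributions coming from $(g^v\cdot d^v)$ using the relation $\tilde{d}^v_{1,1}\circ\tilde{d}^v_{1,0}-\tilde{d}^v_{1,0}\circ\tilde{d}^v_{1,1}=0$ and the homotopy identities; this is exactly the kind of cancellation that appears in the proof of Lemma~\ref{lem:isomorphism lifting for a vertex}(2)--(3). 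Since each $g^v$ here is elementary (supported on a single block), the algebra is strictly simpler than in that lemma, and no new idea is required: I would simply carry out the four direct computations, invoking \eqref{eqn:d^R}, \eqref{eqn:d^L}, the twisted versions of \eqref{eqn:d_v^2+T^2=0}, and the standardness of $d^\Left$, and read off $g^\Right$ from the final block-matrix form.
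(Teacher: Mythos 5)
Your plan follows the paper's proof exactly: the same preliminary observation that $g\cdot g_C=g_C$ and $\widetilde{g\cdot d}^v=\tilde g^v\cdot\tilde d^v$ when $g^v_{1,0}=\identity$, followed by the same four direct block-matrix computations expanding $(g\cdot d)^\Right$ via \eqref{eqn:d^R} and collapsing cross-terms with \eqref{eqn:d^L} and the twisted versions of \eqref{eqn:d^2+T^2=0}. One small correction to your accounting: standardness of $d^\Left$ is not needed to identify the diagonal blocks of $d^\Left$ and $d^\Right$ (that holds in general from \eqref{eqn:d^R}); it is used to kill the specific terms $d_{2,1}^\Left\circ d_{1,1}^\Left$ in case (1) and $d_{3,3}^\Left\circ d_{3,2}^\Left$ in case (2), which is exactly what lets the remaining cross-terms assemble into conjugation by the stated $g^\Right$.
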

\begin{proof}
Notice that if $g=(\identity,g^v)$ with $g_{1,0}^v=\identity$, then $g\cdot g_C=g_C$, and we have $\tilde{g\cdot d}^v=\tilde{g}^v\cdot\tilde{d}^v$.

\noindent{}$(1)$. If $g=(\identity,g^v)$ with $g_{i,0}^v=\identity$, and $g_{i,j}^v=0$ for all $i\in\ZZ/2$ and $j>0$ unless $(i,j)=(1,1)$, then $\tilde{g}^v$ is of the same form as $g^v$.
The only two nontrivial calculations are
\begin{align*}
(g\cdot d)_{2,1}^\Right
&=(\tilde{g\cdot d})_{1,1}^v\circ (g\cdot d)_{2,1}^\Left\\
&=(\tilde{d}_{1,1}^v+\tilde{g}_{1,1}^v\circ \tilde{d}_{1,0}^v - d_{2,0}^v\circ\tilde{g}_{1,1}^v)\circ d_{2,1}^\Left\\
&=d_{2,1}^\Right - \tilde{g}_{1,1}^v\circ d_{2,1}^\Left\circ d_{1,1}^\Left - d_{2,0}^v\circ\tilde{g}_{1,1}^v\circ d_{2,1}^\Left\\
&=d_{2,1}^\Right + \tilde{g}_{1,1}^v\circ d_{2,1}^\Left\circ d_{1,1}^\Left - d_{2,0}^v\circ\tilde{g}_{1,1}^v\circ d_{2,1}^\Left\\
&=d_{2,1}^\Right + g_{2,1}^\Right\circ d_{1,1}^\Right - d_{2,2}^\Right\circ g_{2,1}^\Right\\
(g\cdot d)_{3,1}^\Right
&=(g\cdot d)_{3,1}^\Left +(g\cdot d)_{3,2}^\Left\circ(\tilde{g\cdot d})_{1,2}^v\circ (g\cdot d)_{2,1}^\Left\\
&=d_{3,1}^\Left + d_{3,2}^\Left\circ(\tilde{d}_{1,2}^v-\tilde{d}_{2,1}^v\circ \tilde{g}_{1,1}^v)\circ d_{2,1}^\Left\\
&=d_{3,1}^\Right - d_{3,2}^\Right\circ(\tilde{g}_{1,1}^v\circ d_{2,1}^\Left)\\
&=d_{3,1}^\Right - d_{3,2}^\Right\circ g_{2,1}^\Right
\end{align*}
Here, in the first equation we have used the fact that $d^\Left$ is standard. In particular, it implies that $d_{2,1}^\Left\circ d_{1,1}^\Left=0$.
In all the other cases, we have $(g\cdot d)_{j,i}^\Right=d_{j,i}^\Right$. It follows that
\begin{align*}
(g\cdot d)^\Right&=
\begin{pmatrix}
d_{1,1}^\Right & 0 & 0\\
d_{2,1}^\Right + g_{2,1}^\Right\circ d_{1,1}^\Right - d_{2,2}^\Right\circ g_{2,1}^\Right & d_{2,2}^\Right & 0\\
d_{3,1}^\Right - d_{3,2}^\Right\circ g_{2,1}^\Right & d_{3,2}^\Right & 0
\end{pmatrix}\\
&=
\begin{pmatrix}
\identity & 0 & 0\\
g_{2,1}^\Right & \identity & 0\\
0 & 0 & \identity
\end{pmatrix}\circ
\begin{pmatrix}
d_{1,1}^\Right & 0 & 0\\
d_{2,1}^\Right & d_{2,2}^\Right & 0\\
d_{3,1}^\Right & d_{3,2}^\Right & 0
\end{pmatrix}\circ
\begin{pmatrix}
\identity & 0 & 0\\
-g_{2,1}^\Right & \identity & 0\\
0 & 0 & \identity
\end{pmatrix}\\
&=g^\Right\circ d^\Right\circ (g^\Right)^{-1}
\end{align*}
as desired.

\noindent{}$(2)$. If $g_{i,0}^v=\identity$, and $g_{i,j}^v=0$ for all $i\in\ZZ/2$ and $j>0$ unless $(i,j)=(2,1)$, then $\tilde{g}^v$ is of the same form as $g^v$. The only two nontrivial calculations are
\begin{align*}
(g\cdot d)_{3,2}^\Right
&=(g\cdot d)_{3,2}^\Left\circ(\tilde{g\cdot d})_{2,1}^v\\
&=d_{3,2}^\Left\circ(d_{2,1}^v + \tilde{g}_{2,1}^v\circ d_{2,0}^v - \tilde{d}_{1,0}^v\circ\tilde{g}_{2,1}^v)\\
&=d_{3,2}^\Right +(d_{3,2}^\Left\circ\tilde{g}_{2,1}^v)\circ d_{2,0}^v + d_{3,3}^\Left\circ d_{3,2}^\Left\circ\tilde{g}_{2,1}^v\\
&=d_{3,2}^\Right +(d_{3,2}^\Left\circ\tilde{g}_{2,1}^v)\circ d_{2,0}^v - d_{3,3}^\Left\circ (d_{3,2}^\Left\circ\tilde{g}_{2,1}^v)\\
&=d_{3,2}^\Right + g_{3,2}^\Right\circ d_{2,2}^\Right - d_{3,3}^\Right\circ g_{3,2}^\Right\\
(g\cdot d)_{3,1}^\Right
&=(g\cdot d)_{3,1}^\Left + (g\cdot d)_{3,2}^\Left\circ(\tilde{g\cdot d})_{1,2}^v\circ (g\cdot d)_{2,1}^\Left\\
&=d_{3,1}^\Left + d_{3,2}^\Left\circ(\tilde{d}_{1,2}^v+\tilde{g}_{2,1}^v\circ\tilde{d}_{1,1}^v)\circ d_{2,1}^\Left\\
&=d_{3,1}^\Right + g_{3,2}^\Right\circ d_{2,1}^\Right
\end{align*}
Here, in the first equation, we have used the fact that $d^\Left$ is standard. In particular, it implies that $d_{3,3}^\Left\circ d_{3,2}^\Left=0$.
In all the other cases, we have $(g\cdot d)_{j,i}^\Right=d_{j,i}^\Right$. It follows that
\begin{align*}
(g\cdot d)^\Right&=
\begin{pmatrix}
d_{1,1}^\Right & 0 & 0\\
d_{2,1}^\Right & d_{2,2}^\Right & 0\\
d_{3,1}^\Right + g_{3,2}^\Right\circ d_{2,1}^\Right & d_{3,2}^\Right + g_{3,2}^\Right\circ d_{2,2}^\Right - d_{3,3}^\Right\circ g_{3,2}^\Right & 0
\end{pmatrix}\\
&=
\begin{pmatrix}
\identity & 0 & 0\\
0 & \identity & 0\\
0 & g_{3,2}^\Right & \identity
\end{pmatrix}\circ
\begin{pmatrix}
d_{1,1}^\Right & 0 & 0\\
d_{2,1}^\Right & d_{2,2}^\Right & 0\\
d_{3,1}^\Right & d_{3,2}^\Right & 0
\end{pmatrix}\circ
\begin{pmatrix}
\identity & 0 & 0\\
0 & \identity & 0\\
0 & -g_{3,2}^\Right & \identity
\end{pmatrix}\\
&=g^\Right\circ d^\Right\circ (g^\Right)^{-1}
\end{align*}
as desired.

\noindent{}$(3)$. If $g_{i,0}^v=\identity$, and $g_{i,j}^v=0$ for all $i\in\ZZ/2$ and $j>0$ unless $(i,j)=(1,2)$, then $\tilde{g}^v$ is of the same form as $g^v$.
The only nontrivial calculation is
\begin{align*}
(g\cdot d)_{3,1}^\Right
&=(g\cdot d)_{3,1}^\Left + (g\cdot d)_{3,2}^\Left\circ(\tilde{g\cdot d})_{1,2}^v\circ(g\cdot d)_{2,1}^\Left\\
&=d_{3,1}^\Left + d_{3,2}^\Left\circ(\tilde{d}_{1,2}^v+\tilde{g}_{1,2}^v\circ\tilde{d}_{1,0}^v-\tilde{d}_{1,0}^v\circ\tilde{g}_{1,2}^v)\circ d_{2,1}^\Left\\
&=d_{3,1}^\Right -d_{3,2}^\Left\circ\tilde{g}_{1,2}^v\circ d_{2,1}^\Left\circ d_{1,1}^\Left + d_{3,3}^\Left\circ d_{3,2}^\Left\circ\tilde{g}_{1,2}^v\circ d_{2,1}^\Left\\
&=d_{3,1}^\Right + g_{3,1}^\Right\circ d_{1,1}^\Right - d_{3,3}^\Right\circ g_{3,1}^\Right. 
\end{align*}
In all the other cases, we have $(g\cdot d)_{j,i}^\Right=d_{j,i}^\Right$. It follows that
\begin{align*}
(g\cdot d)^\Right&=
\begin{pmatrix}
d_{1,1}^\Right & 0 & 0\\
d_{2,1}^\Right & d_{2,2}^\Right & 0\\
d_{3,1}^\Right + g_{3,1}^\Right\circ d_{1,1}^\Right - d_{3,3}^\Right\circ g_{3,1}^\Right & d_{3,2}^\Right & 0
\end{pmatrix}\\
&=
\begin{pmatrix}
\identity & 0 & 0\\
0 & \identity & 0\\
g_{3,1}^\Right & 0 & \identity
\end{pmatrix}\circ
\begin{pmatrix}
d_{1,1}^\Right & 0 & 0\\
d_{2,1}^\Right & d_{2,2}^\Right & 0\\
d_{3,1}^\Right & d_{3,2}^\Right & 0
\end{pmatrix}\circ
\begin{pmatrix}
\identity & 0 & 0\\
0 & \identity & 0\\
-g_{3,1}^\Right & 0 & \identity
\end{pmatrix}\\
&=g^\Right\circ d^\Right\circ (g^\Right)^{-1}
\end{align*}
as desired.

\noindent{}$(4)$. If $g_{i,0}^v=\identity$, $g_{i,1}^v=0$ for all $i\in\ZZ/2$, and $g_{1,2}^v=0$, then $\tilde{g}^v$ is of the same form as $g^v$. By definition, it is direct to see that
\begin{align*}
(g\cdot d)^\Right&=
\begin{pmatrix}
(g\cdot d)_{1,1}^\Right & 0 & 0\\
(g\cdot d)_{2,1}^\Right & (g\cdot d)_{2,2}^\Right & 0\\
(g\cdot d)_{3,1}^\Right & (g\cdot d)_{3,2}^\Right & (g\cdot d)_{3,3}^\Right
\end{pmatrix}\\
&=
\begin{pmatrix}
(g\cdot d)_{1,1}^\Left& 0 & 0\\
(\tilde{g\cdot d})_{1,1}^v\circ (g\cdot d)_{2,1}^\Left& (g\cdot d)_{2,0}^v& 0\\
(g\cdot d)_{3,1}^\Left+(g\cdot d)_{3,2}^\Left\circ(\tilde{g\cdot d})_{1,2}^v\circ (g\cdot d)_{2,1}^\Left& (g\cdot d)_{3,2}^\Left\circ(\tilde{g\cdot d})_{2,1}^v& (g\cdot d)_{3,3}^\Left
\end{pmatrix}\\
&=
\begin{pmatrix}
d_{1,1}^\Left& 0 & 0\\
\tilde{d}_{1,1}^v\circ d_{2,1}^\Left& d_{2,0}^v& 0\\
d_{3,1}^\Left+d_{3,2}^\Left\circ\tilde{d}_{1,2}^v\circ d_{2,1}^\Left& d_{3,2}^\Left\circ\tilde{d}_{2,1}^v& d_{3,3}^\Left
\end{pmatrix}\\
&=d^\Right
\end{align*}
as desired.

This finishes the proof of the lemma.
\end{proof}

\begin{corollary}\label{cor:B^v action for V}
For any $\epsilon_\Left\in\aug^{\ruling_\Left}(V_\Left;\field)$ which is standard, the group action of $B^v$ on $\aug(V,\epsilon_\Left,\ruling_\Right;\field)$ is well-defined, and preserves the sub-varieties $\aug(V,\epsilon_\Left,\ruling_v,\ruling_\Right;\field)$.
\end{corollary}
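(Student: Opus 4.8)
The plan is to deduce everything from Lemma~\ref{lem:group action for V} (together with its companion Lemma~\ref{lem:isomorphism lifting for a vertex}). First I would identify $B^v$ with the subgroup $\{\identity\}\times B^v\subset B=B^\Left\times B^v$, so that via the identification $\aug(V;\field)\isomorphic\MC(V;\field)$ of Remark~\ref{rem:identification for aug var of V} the group $B^v$ already acts on $\aug(V;\field)$; what remains is to check that this action carries $\aug(V,\epsilon_\Left,\ruling_\Right;\field)$, and each stratum $\aug(V,\epsilon_\Left,\ruling_v,\ruling_\Right;\field)$, to itself. The key elementary observation is that for $g=(\identity,g^v)$ one has $g\cdot d=\bigl(d^\Left,\ g_{1,0}^v\circ g_C,\ g^v\cdot d^v\bigr)$, so the left component is literally unchanged, $(g\cdot d)^\Left=d^\Left$. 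Hence a standard $d^\Left=\epsilon_\Left$ stays standard along the orbit, the left boundary condition is automatically preserved, and only the right restriction $(g\cdot d)^\Right$ and the vertex component $(g\cdot d)^v=g^v\cdot d^v$ remain to be controlled.

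The second step is a reduction to generators. I would factor an arbitrary $g^v\in B^v$ as a finite product $g^v=g^v_m\circ\cdots\circ g^v_1$ in which each factor is either block-diagonal (so that Lemma~\ref{lem:isomorphism lifting for a vertex}(1), read with $g^\Left=\identity$, applies) or one of the four elementary unipotent types of Lemma~\ref{lem:group action for V}; this is just the usual decomposition of an automorphism of the filtered $\field[Z]$-module $C_v$ into its diagonal part and root-subgroup factors, with type $(4)$ absorbing all higher-order terms $g^v_{i,j}$, $j\ge 2$. Because each factor leaves the left component equal to $d^\Left$, hence standard, the hypotheses of Lemmas~\ref{lem:isomorphism lifting for a vertex} and~\ref{lem:group action for V} continue to hold at every intermediate augmentation $g^v_k\circ\cdots\circ g^v_1\cdot d$, which is what lets me iterate.

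Then the conclusion follows: each cited lemma asserts that the factor $g^v_k$ acts on the right restriction by a conjugation, $(g^v_k\cdot d')^\Right=g^\Right_k\cdot (d')^\Right$ for a suitable $g^\Right_k=g^\Right_k(g^v_k,d')\in B^\Right$, so composing along the factorization shows $(g\cdot d)^\Right$ lies in the $B^\Right$-orbit of $d^\Right$. Thus $d^\Right\in B^\Right\cdot d_{\ruling_\Right}=\aug^{\ruling_\Right}(V_\Right;\field)$ forces $(g\cdot d)^\Right$ into the same orbit, i.e.\ $B^v$ preserves $\aug(V,\epsilon_\Left,\ruling_\Right;\field)$. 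For the finer statement I would note that on the vertex component the action is precisely the conjugation action $d^v\mapsto g^v\cdot d^v$ of Definition~\ref{def:Morse complex at a vertex}, which takes each orbit $B^v\cdot d_{\ruling_v}$ to itself; combined with the previous sentence this gives that $B^v$ preserves every $\aug(V,\epsilon_\Left,\ruling_v,\ruling_\Right;\field)$.

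The main obstacle I anticipate is not conceptual but organizational: one must fix an ordering of the root-subgroup factors for which Lemma~\ref{lem:group action for V} applies verbatim to each updated augmentation --- in particular so that the standardness of $d^\Left$, which enters the computations there through the vanishing identities $d_{2,1}^\Left\circ d_{1,1}^\Left=0$ and $d_{3,3}^\Left\circ d_{3,2}^\Left=0$, remains available --- and then carry the signs and block indices through the iterated composition of the $d$-dependent lifts $g^\Right_k$. This is bookkeeping that the preceding lemmas have already made routine.
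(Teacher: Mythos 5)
Your proposal is correct and follows essentially the same route as the paper: factor $g^v$ into block-diagonal and elementary unipotent pieces covered by Lemma~\ref{lem:isomorphism lifting for a vertex}(1) and Lemma~\ref{lem:group action for V}(1)--(4), observe that each factor fixes $d^\Left$ (so standardness persists and the fiber over $\epsilon_\Left$ is preserved) and acts on $d^\Right$ by conjugation by some $g^\Right\in B^\Right$, hence preserves the orbit $\aug^{\ruling_\Right}(V_\Right;\field)$, while the vertex component transforms by the conjugation action and so stays in $B^v\cdot d_{\ruling_v}$. The only cosmetic difference is that the paper fixes a specific ordered factorization $g^v=g_4^v\circ g_3^v\circ g_2^v\circ g_1^v\circ g_0^v$, whereas you allow an arbitrary finite product and iterate; both are valid.
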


\begin{proof}
By linear algebra, any group element $g=g^v\in B^v\subset B$ can be written uniquely as a product $g^v=g_4^v\circ g_3^v\circ g_2^v\circ g_1^v\circ g_0^v$ with $g_i^v$ of the form as in Lemma \ref{lem:group action for V}.(i) for $1\leq i\leq 4$, and $g_0^v$ block-diagonal, i.e. of the form as in Lemma \ref{lem:isomorphism lifting for a vertex}.(1). Now, the result follows from Lemma \ref{lem:isomorphism lifting for a vertex}.(1) and Lemma \ref{lem:group action for V} above.
\end{proof}

Fix $\epsilon_\Left$ (or $d^\Left$) to be standard, \emph{define} $\aug^1(V,\epsilon_\Left,\ruling_v,\ruling_\Right;\field)$ to be the subvariety of $\aug(V,\epsilon_\Left,\ruling_v,\ruling_\Right;\field)$ consisting of $d=(d^\Left,g_C,d^v)$ with $g_C=\identity$, hence $d_{1,0}^v=g_C\circ d_{2,2}^\Left\circ g_C^{-1}=d_{2,2}^\Left$. 
Clearly, we have a natural isomorphism
\begin{align}\label{eqn:decomposition 2 for V}
\aug(V,\epsilon_\Left,\ruling_v,\ruling_\Right;\field)&\isomorphic\aug^1(V,\epsilon_\Left,\ruling_v,\ruling_\Right;\field)\times B_{1,0}^v\\
d=(d^\Left,g_C,d^v)&\mapsto ((d^\Left,id,\tilde{d}^v), g_C)\nonumber
\end{align}

For simplicity, we now \emph{assume} $d^\Left\isomorphic d_{\ruling_\Left}$ is the canonical augmentation. Notice that there are natural maps 
\begin{align*}
r_\Left&:\NR(v)\rightarrow \GNR(v_\Left),&
r_\Right&:\NR(v)\rightarrow \GNR(v_\Right)
\end{align*}
such that 
\begin{align*}
r_\Left(\ruling_v)(i)&\coloneqq
\begin{cases}
\ruling_v(i) &\text{ if }1\leq i<\ruling_v(i)\leq \ell;\\
i & \text{ otherwise},
\end{cases}&
r_\Right(\ruling_v)(i)\coloneqq
\begin{cases}
\ruling_v(i) &\text{ if }\ell+1\leq i<\ruling_v(i)\leq \ell+r;\\
i & \text{ otherwise}.
\end{cases}
\end{align*}
Then $d_{1,0}^v=d_{2,2}^\Left$ is the canonical differential associated to $\ruling_v|_{v_\Left}=r_\Left(\ruling_v)$.

\emph{Define} $B_v^1$ to be the subgroup of $B^v$ whose elements $g^v$ satisfy $g_{1,0}^v=id$. Again, by the previous lemma, there is an induced group action of $B_v^1$ on $\aug^1(V,\epsilon_\Left,\ruling_v,\ruling_\Right;\field)$.

For each fixed $\ruling_v\in\NR(v)$, recall that $\ruling_v$ determines a partition $I_v\coloneqq\{1,2,\ldots,\ell+r\}=U(\ruling_v)\amalg L(\ruling_v)$, together with a bijection $\ruling_v:U(\ruling_v)\xrightarrow[]{\sim}L(\ruling_v)$, such that $i<\ruling_v(i)$ and $|v_{i,\ruling_v(i)-i}|=\mu(i)-\mu(\ruling_v(i))-1+n(v_{i,\ruling_v(i)-j})=0$ for all $i\in U(\ruling_v)$. For each $i\in U(\ruling_v)$, \emph{define} 
\begin{align*}
A^1(\ruling_v,i)&\coloneqq\{j\in U(\ruling_v) \mid i<j<\ruling_v(j)<\ruling_v(i), \ruling_v(j)\leq \ell, \textrm{and $\mu(j)=\mu(i)$}\};\\
A^2(\ruling_v,i)&\coloneqq\{j\in U(\ruling_v) \mid i<j\leq \ell<\ruling_v(j)<\ruling_v(i), \textrm{and $\mu(j)=\mu(i)$}\};\\
A^3(\ruling_v,i)&\coloneqq\{j\in U(\ruling_v) \mid i<j<\ruling_v(j)<\ruling_v(i), j\geq \ell+1, \textrm{and $|v_{i,j-i}|+1=0$}\}.
\end{align*}
Then $A_{\ruling_v}(i)=A^1(\ruling_v,i)\amalg A^2(\ruling_v,i)\amalg A^3(\ruling_v,i)$ by Definition \ref{def:indices for an involution at a vertex}.
For each $i\in L(\ruling_v)$, \emph{define}
\[
I(\ruling_v,i)\coloneqq\{j>0~|~i+j>\ell, \textrm{and $|v_{i,j}|+1=0$, i.e. $|e_i|=|Z^{n(v_{i,j})}e_{i+j}|$}\}.
\]
\emph{Define} $\aug^2(V,\epsilon_\Left,\ruling_v,\ruling_\Right;\field)$ to be the sub-variety of $\aug^1(V,\epsilon_\Left,\ruling_v,\ruling_\Right;\field)$, consisting of (\emph{``partially canonical''}) augmentations $d=(d^\Left,id,d_c^v)$ such that: 
\begin{align}\label{eqn:d_c^v}
(-1)^{\mu(i)}d_c^v(e_i^v)=
\begin{dcases}
e_{\ruling_v(i)}^v &\text{ if } 1\leq i<\ruling_v(i)\leq \ell \text{ or }\ell+1\leq i<\ruling_v(i)\leq \ell+r;\\
Z\left(e_{\ruling_v(i)}^v+\sum_{j\in A^2(\ruling_v,i)}\epsilon_{ij}e_{\ruling_v(j)}^v\right) &\text{ if } 1\leq i\leq \ell< \ruling_v(i)\leq \ell+r,
\end{dcases}
\end{align}
for some $\epsilon_{ij}\in\field$.
Notice that the differential $d_c^v\in\MC(v;\field)$ is uniquely determined by the conditions above, as the other values $d_c^v(e_{\ruling_v(i)}^v)$ are uniquely determined via $(d_c^v)^2+Z^2=0$.

\emph{Define} $B(\ruling_v;\field)$ to be the subgroup of $B_v^1$ whose elements $g^v$ further satisfy:
\begin{enumerate} 
\item
for $i\in U(\ruling_v)$, we have
\begin{align*}
g^v(e_i^v)=
\begin{dcases}
e_i^v+Z\sum_{j\in A^3(\ruling_v,i)} *_{i,j-i}e_j^v &\text{ if } i\leq \ell;\\
e_i^v+\sum_{j\in A^3(\ruling_v,i)}*_{i,j-i}e_j^v &\text{ if } i\geq \ell+1,
\end{dcases}
\end{align*}
for some $*_{i,j-i}\in\field$;
\item 
for $i\in L(\ruling_v)$, we have 
\[
g^v(e_i^v)=*_{i,0}e_i^v+\sum_{j\in I(\ruling_v,i)}*_{i,j}Z^{n(v_{i,j})}e_{i+j}^v
\] 
for some $*_{i,0}\in\field^*$ and $*_{i,j}\in\field$. 
\end{enumerate}

One can check directly that this indeed defines a group. 

\begin{lemma}\label{lem:partially canonical differential at a vertex}
There is an natural isomorphism
\begin{align*}
B(\ruling_v;\field)\times\aug^2(V,\epsilon_\Left,\ruling_v,\ruling_\Right;\field)&\xrightarrow[]{\sim}\aug^1(V,\epsilon_\Left,\ruling_v,\ruling_\Right;\field)\\
(g^v,d_c=(d^\Left,\identity,d_c^v))&\mapsto d=(d^\Left,\identity,(g^v)^{-1}\cdot d_c^v).
\end{align*}
\end{lemma}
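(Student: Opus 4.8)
The plan is to recognize the map in question as the restriction of the $B^v$-action of Corollary~\ref{cor:B^v action for V} and to produce an explicit inverse, namely a ``partially canonical normal form'' assignment $d\mapsto(g^v(d),\,g^v(d)\cdot d)$ on $\aug^1(V,\epsilon_\Left,\ruling_v,\ruling_\Right;\field)$. First I would check that the stated map is well defined: for $g^v\in B(\ruling_v;\field)\subset B_v^1\subset B^v$ one has $g_{1,0}^v=\identity$, so the action of $(g^v)^{-1}$ fixes the condition $g_C=\identity$ (equivalently $d_{1,0}^v=d_{2,2}^\Left$) and, by Corollary~\ref{cor:B^v action for V}, preserves both the vertex ruling $\ruling_v$ and the right boundary condition $\ruling_\Right$; hence $(g^v)^{-1}\cdot d_c$ indeed lies in $\aug^1(V,\epsilon_\Left,\ruling_v,\ruling_\Right;\field)$. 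Since $d^\Left=d_{\ruling_\Left}$ is standard and $g_C=\identity$, the block $d_{1,0}^v=d_{2,2}^\Left$ is the canonical differential attached to $r_\Left(\ruling_v)$, so the $A^1$-type entries are automatically absent, and only the $A^2$- and $A^3$-type entries together with the $L(\ruling_v)$-rows of $d^v$ are at our disposal.

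Next I would establish existence of the normal form. Given $d=(d^\Left,\identity,d^v)$ in $\aug^1(V,\epsilon_\Left,\ruling_v,\ruling_\Right;\field)$, the induced vertex involution is $\ruling_v$, so the ``pivot'' entries of $d^v$ sit precisely at the positions $(i,\ruling_v(i))$. I would run a Barannikov-type reduction, exactly parallel to the existence part of Lemma~\ref{lem:properties of orbits at a vertex}(1) and to Remark~\ref{remark:indices for trivial}, but using only the generators of $B(\ruling_v;\field)$: processing the pairs $\{i,\ruling_v(i)\}$ in increasing order of $i$, the $A^3(\ruling_v,i)$-freedom (adding $Z^{n(v_{i,j-i})}e_j^v$ to $g^v(e_i^v)$ for $j\in A^3(\ruling_v,i)$) clears the corresponding coefficients of $(-1)^{\mu(i)}d^v(e_i^v)$, the scaling freedom on $L(\ruling_v)$ normalizes the pivot coefficients, and the $I(\ruling_v,i)$-freedom on the $L$-rows removes the remaining entries of the ``$L$-shape'' below each pivot. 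One then checks directly, using super-commutativity of $d^v$ with $Z$ and $(d^v)^2+Z^2=0$, that the resulting $(d^\Left,\identity,g^v(d)\cdot d^v)$ satisfies the defining relation~\eqref{eqn:d_c^v} --- the surviving coefficients being exactly the $\epsilon_{ij}$ indexed by $A^2(\ruling_v,i)$ --- and that it still induces $\ruling_v$ with right boundary $\ruling_\Right$, hence lies in $\aug^2(V,\epsilon_\Left,\ruling_v,\ruling_\Right;\field)$.

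Then I would prove uniqueness and algebraicity. Uniqueness reduces to showing that the stabilizer inside $B(\ruling_v;\field)$ of any partially canonical $d_c^v$ is trivial: if $g^v\in B(\ruling_v;\field)$ and $g^v\cdot d_c^v$ is again partially canonical, then matching the coefficients of $g^v\cdot d_c^v$ against~\eqref{eqn:d_c^v} yields a triangular linear system in the parameters $*_{i,j}$ of $g^v$ whose only solution is $g^v=\identity$, exactly as in the uniqueness part of Lemma~\ref{lem:properties of orbits at a vertex}(1). Algebraicity of $d\mapsto g^v(d)$ follows because at each stage of the reduction the new parameters are obtained by inverting triangular linear systems with polynomial entries whose determinants are products of the nonvanishing pivot coefficients; thus $d\mapsto(g^v(d),g^v(d)\cdot d)$ is a morphism of affine varieties, and by construction it is a two-sided inverse to the action map.

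The main obstacle is the combinatorial bookkeeping in the second and third steps: one must verify that the generators of $B(\ruling_v;\field)$ are precisely the right ones --- that they clear exactly the $A^3$- and $I(\ruling_v,\cdot)$-entries while leaving the $A^2$-entries untouched --- and that the pivots can be processed in an order for which no later operation reintroduces an entry already cleared (triangularity of the reduction). This is the same delicate sign-and-degree analysis, using $n(v_{i,j})$, the splitting $A_{\ruling_v}(i)=A^1\amalg A^2\amalg A^3$, and the $Z$-equivariance, that appears in the appendix proof of Lemma~\ref{lem:properties of orbits at a vertex} and in \cite{Su2017}; once it is in place, the isomorphism and its inverse are immediate.
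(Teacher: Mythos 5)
Your proposal matches the paper's proof in essentials: both perform an inductive Barannikov-type normal-form reduction by the group $B(\ruling_v;\field)$, invoking the unique admissible elements of Lemma~\ref{lem:properties of orbits at a vertex}(1), with uniqueness and algebraicity obtained from triangular linear systems exactly as you describe. The one caveat is that the induction must be organized by $|A^2(\ruling_v,i)|$ (the partners $j\in A^2(\ruling_v,i)$ satisfy $j>i$ and must be normalized first), rather than in increasing order of $i$; with that ordering your reduction is the one carried out in the paper.
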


\begin{proof}
The argument is similar to the proof of Lemma \ref{lem:stratification of aug var at a vertex}, and Lemma \ref{lem:properties of orbits at a vertex}. By the discussion above, the map is well-defined. For any $d=(d^\Left,\identity,d^v)\in\aug^1(V,\epsilon_\Left,\ruling_v,\ruling_\Right;\field)$, it suffices to show that there exists a unique pair $(g^v,d_c)$ with $g^v\in B(\ruling_v;\field)$ and $d_c=(d^\Left,\identity,d_c^v)\in\aug^2(V,\epsilon_\Left,\ruling_v,\ruling_\Right;\field)$, such that $(g^v)^{-1}\cdot d_c^v=d^v$. We will determine $g^v$ by defining $(e_i^v)''=g^v(e_i^v)$ inductively.

By Lemma \ref{lem:properties of orbits at a vertex}.(1), for each $i\in U(\ruling_v)$, there exists a unique $i$-admissible element in $C_v$ of the form 
\[
(e_i^v)'=e_i^v+\sum_{j\in A_{\ruling_v}(i)}a_{i,j-i}Z^{n(v_{i,j-i})}e_j^v,\qquad a_{i,j-i}\in\field,
\] 
such that $d^v(e_i^v)'$ is $(\ruling_v(i),n(v_{i,\ruling_v(i)-i}))$-admissible. \emph{Denote} $d^v(e_i^v)'= Z^{n(v_{i,j-i})}(e_j^v)'$ with $j\coloneqq\ruling_v(i)$, then 
\[(e_j^v)'=a_{j,0}e_j^v+\sum_{p\in I_v(j)}a_{j,p}Z^{n(v_{j,k})}e_{j+p}^v
\] 
for some $a_{j,0}\in\field^\times$ and $a_{j,p}\in\field$, uniquely determined by $d^v$. 
\begin{enumerate}
\item If $i\geq \ell+1$, then $A_{\ruling_v}(i)=A^3(\ruling_v,i)$ and $I_v(\ruling_v(i))=I(\ruling_v,\ruling_v(i))$. \emph{Define} $(e_i^v)''\coloneqq (e_i^v)'$ and $(e_{\ruling_v(i)}^v)''\coloneqq (e_{\ruling_v(i)}^v)'$, we have $d^v(e_i^v)''=(e_{\ruling_v(i)}^v)''$. 

\item If $i<\ruling_v(i)\leq \ell$, by the condition that $d_{1,0}^v$ is standard, we have $(e_i^v)'=e_i^v$ and 
\[
d^v(e_i^v)'=(e_{\ruling_v(i)}^v)'=a_{\ruling_v(i),0}e_{\ruling_v(i)}^v+\sum_{p\in I(\ruling_v,i)}a_{\ruling_v(i),p}Z^{n(v_{\ruling_v(i),p})}e_{\ruling_v(i)+p}^v.
\]
\emph{Define} $(e_i^v)''\coloneqq e_i^v$, and $(e_{\ruling_v(i)}^v)''\coloneqq (e_{\ruling_v(i)}^v)'$, we have $d^v(e_i^v)''=(e_{\ruling_v(i)}^v)''$.

\item If $i\leq \ell<\ruling_v(i)$, then $I_v(i)=I(\ruling_v,i)$. Again by the condition $d_{1,0}^v$ is standard, we have
\[
(e_i^v)'=e_i^v+\sum_{j\in A^2(\ruling_v,i)\amalg A^3(\ruling_v,i)}a_{i,j-i}Z^{n(v_{i,j-i})}e_j^v.
\] 
If $A^2(\ruling_v,i)=\emptyset$, \emph{define} 
\[
(e_i^v)''\coloneqq (e_i^v)'=e_i^v+\sum_{j\in A^3(\ruling_v,i)}a_{i,j-i}Z^{n(v_{i,j-i})}e_j^v
\] 
and $(e_{\ruling_v(i)}^v)''\coloneqq (e_{\ruling_v(i)}^v)'$, then we have $d^v(e_i^v)''=Z(e_{\ruling_v(i)}^v)''$.
Otherwise, by induction, we can \emph{define} $(e_j^v)''$ and $(e_{\ruling_v(j)}^v)''$ for all $j\in A^2(\ruling_v,i)$, such that 
\begin{align*}
(e_j^v)''&=e_j^v+Z\sum_{k\in A^3(\ruling_v,j)} *_{j,k-j}e_k^v;\\
(e_{\ruling_v(j)}^v)''&=*_{j,0}e_{\ruling_v(j)}^v+\sum_{k\in I(\ruling_v,j)}*_{\ruling_v(j),k}Z^{n(v_{\ruling_v(j),k})}e_{\ruling_v(j)+k}^v;\\
d^v\left((e_j^v)''\right)&=Z\left((e_{\ruling_v(j)}^v)''+\sum_{k\in A^2(\ruling_v,j)}\epsilon_{j,k}(e_{\ruling_v(k)}^v)''\right),
\end{align*}
for some uniquely determined $*_{j,0}\in\field^*$ and $*_{j,p},\epsilon_{j,k}\in\field$.
In addition, we can re-write uniquely
\[
(e_i^v)'=e_i^v+\sum_{j\in A^2(\ruling_v,i)\amalg A^3(\ruling_v,i)}b_{i,j-i}Z^{n(v_{i,j-i})}(e_j^v)'',\qquad b_{i,j-i}\in\field
\]
\emph{Define} $(e_i^v)''\coloneqq e_i^v+\displaystyle\sum_{j\in A^3(\ruling_v,i)b_{i,j-i}}Z(e_j^v)''$ and $(e_{\ruling_v(i)}^v)''\coloneqq (e_{\ruling_v(i)}^v)'$, then 
\begin{align*}
d^v(e_i^v)''&=d^v(e_i^v)'-d^v\left(\sum_{j\in A^2(\ruling_v,i)}b_{i,j-i}(e_j^v)''\right)\\
&=Z(e_{\ruling_v(i)}^v)'-\sum_{j\in A^2(\ruling_v,i)}b_{i,j-i}d^v(e_j^v)''\\
&=Z((e_{\ruling_v(i)}^v)''+\sum_{p\in A^2(\ruling_v,i)}\epsilon_{i,p}(e_p^v)''),
\end{align*}
for some uniquely determined $\epsilon_{i,p}\in\field$. Here, the last equality follows from the inductive hypothesis.
\end{enumerate}
This finishes the construction of $(e_i^v)''$ for all $1\leq i\leq \ell+r$.

Now, \emph{define} $g^v$ via $g^v(e_i^v)=(e_i^v)''$. By the discussion above, we have $g^v\in B(\ruling_v;\field)$, and $d_c^v\coloneqq g^v\cdot d^v$ defines $d_c\coloneqq (d^\Left,\identity,d_c^v)\in\aug^2(V,\epsilon_\Left,\ruling_v,\ruling_\Right;\field)$ uniquely. This finishes the proof of the lemma.
\end{proof}

\subsubsection{Resolutions and normal rulings}\label{subsubsec:resolutions and normal rulings}

For our purpose later on in this section, we give an alternative definition of resolutions of a vertex and normal rulings for bordered Legendrian graphs.

For each fixed $\ruling_v\in\NR(v)$, recall that $r_\Left(\ruling_v)\in\GNR(v_\Left)$ (resp. $r_\Right(\ruling_v)\in\GNR(v_\Right)$) is an involution (or generalized normal ruling) for $v_\Left$ (resp. $v_\Right$) with possible fixed points. By Definition \ref{def:indices for an involution at a trivial tangle}, $r_\Left(\ruling_v)$ determines a partition $I(v_\Left)=\{1,2,\ldots,\ell\}=U(r_\Left(\ruling_v))\amalg L(r_\Left(\ruling_v))\amalg H(r_\Left(\ruling_v))$, together with a bijection
$r_\Left(\ruling_v):U(r_\Left(\ruling_v))\xrightarrow[]{\sim}L(r_\Left(\ruling_v))$. Similar statement applies to $r_\Right(\ruling_v)$ with $I(v_\Left)$ replaced by $I(v_\Right)=\{\ell+1,\ldots,\ell+r\}$. 
\emph{Let} $\ell(\ruling_v)=r(\ruling_v)=|H(r_\Left(\ruling_v))|$ be the number of fixed points of $r_\Left(\ruling_v)$ (equivalently, $r_\Right(\ruling_v)$). Then $\ell-\ell(\ruling_v)$ and $r-r(\ruling_v)$ are both even. Also, $n_\Left-\ell=n_\Right-r$.

\begin{definition}\label{def:resolution of an involution at a vertex}
For each fixed $\ruling_v\in\NR(v)$, we define a Legendrian tangle (or bordered Legendrian link) $V(\ruling_v)$, called a \emph{resolution} of $V$ with respect to $\ruling_v$, as follows: 

\noindent{}\emph{Define} $V_1(\ruling_v)$ as a Legendrian tangle of type $(n_\Left, n_\Left+\ell(\ruling_v)-\ell)$ obtained from $V_\Left$ such that, for all $i\in U(r_\Left(\ruling_v))$, so $1\leq i<j\coloneqq\ruling_v(i)\leq \ell$, connect the strands $k+i-1$ and $k+j-1$ of $V_\Left$ by a right cusp. Then $V_1(\ruling_v)$ consists of $\hat{c}_\Right(\ruling_v)=(\ell-\ell(\ruling_v))$ right cusps and some additional crossings.  We term these crossings as \emph{markings}.


\noindent{}Say, $H(r_\Left(\ruling_v))=\{h_1^\Left<\ldots<h_{\ell(\ruling_v)}^\Left\}$ and $H(r_\Right(\ruling_v))=\{h_1^\Right<\ldots<h_{r(\ruling_v)}^\Right\}$.
Recall that $\ruling_v$ determines a bijection $\ruling_v:H(r_\Left(\ruling_v))\xrightarrow[]{\sim} H(r_\Right(\ruling_v))$, which can be represented by a positive braid $\beta(\ruling_v)$ with the minimal number of crossings. We can regard $\beta(\ruling_v)$ as a braid with $n_\Left+\ell(\ruling_v)-\ell$ strands, by adding $k$ and $n_\Left-\ell-k$ parallel strands from top and bottom respectively.\\ 
\emph{Define} $V_2(\ruling_v)$ to be the Legendrian tangle of type $(n_\Left+\ell(\ruling_v)-\ell,n_\Left+\ell(\ruling_v)-\ell=n_\Right+r(\ruling_v)-r)$ obtained from $(V_1(\ruling_v))_\Right$ by adding the braid $\beta(\ruling_v)$ from the right hand side.

\noindent{}\emph{Define} $V_3(\ruling_v)$ as a Legendrian tangle of type $(n_\Right+r(\ruling_v)-r,n_\Right)$ obtained from $V_\Right$ as follows: for all $\ell+i\in U(r_\Right(\ruling_v))$, so $\ell+1\leq \ell+i<\ell+j\coloneqq\ruling_v(\ell+i)\leq \ell+r$, connect the strands $k+i-1$ and $k+j-1$ of $V_\Right$ by a left cusp. Then $V_3(\ruling_v)$ consists of $(r-r(\ruling_v))/2$ left cusps and some additional crossings. Again, we term these crossings as \emph{markings}.

\noindent{}Finally, \emph{define} $V(\ruling_v)\coloneqq V_1(\ruling_v)\circ V_2(\ruling_v)\circ V_3(\ruling_v)$ as the concatenation of the Legendrian tangles above. The Maslove potential of $V(\ruling_v)$ is induced from that of $V$. We impose a \emph{base point} at each right cusp of $V(\ruling_v)$.

The construction of $V(\ruling_v)$ is not unique, but the ambiguity is not essential.
\end{definition}

\begin{definition}\label{def:normal ruling for V}
\emph{Define} $\NR'(V(\ruling_v),\ruling_\Left,\ruling_\Right)\subset\NR(V(\ruling_v),\ruling_\Left,\ruling_\Right)$ to be the set of $\ZZ$-graded normal rulings $\ruling'$ for $V(\ruling_v)$ such that
\begin{enumerate}
\item
$\ruling'|_{(V(\ruling_v))_\Left}=\ruling_\Left, \ruling'|_{(V(\ruling_v))_\Right}=\ruling_\Right$;
\item
Any marking of $V(\ruling_v)$ is not a switch, i.e. all the switches of $\ruling'$ are contained in $V_2(\ruling_v)$;
\item
Any two strands of $(V_2(\ruling_v))_\Left$ contained in $H(r_\Left(\ruling_v))$, are not paired by
$\ruling'|_{(V_2)_\Left}\in\NR((V_2)_\Left)$.
\end{enumerate}
\emph{Define} $\NR(V,\ruling_\Left,\ruling_\Right)$ to be the set of $\ruling=(\ruling_v,\ruling')$ such that $\ruling_v\in\NR(v)$, and $\ruling'\in\NR'(V(\ruling_v),\ruling_\Left,\ruling_\Right)$.
Any $\ruling\in\NR(V,\ruling_\Left,\ruling_\Right)$ is called a \emph{normal ruling} for $V$. 
\end{definition}

\begin{remark}
The resolution $V_{\ruling_v}$ described in Definition \ref{def:resolution at a vertex} is different from the $V(\ruling_v)$ defined above. However, from the definition, there is a canonical identification $\NR'(V(\ruling_v),\ruling_\Left,\ruling_\Right)\isomorphic\bfR(V_{\ruling_v},M_{\ruling_v};\ruling_\Left,\ruling_\Right)$, with the latter described in Definition \ref{def:normal ruling LG}. Hence, there is a canonical identification $\NR(V,\ruling_\Left,\ruling_\Right)\isomorphic\bfR(V,\ruling_\Left,\ruling_\Right)$ between the set of normal rulings for $V$ defined above, and that in Definition \ref{def:normal ruling LG}. 
\end{remark}

\subsubsection{Ruling decomposition}\label{subsubsec:ruling decomposition for V}

Now, we have seen that $\aug(V,\epsilon_\Left,\ruling_v,\ruling_\Right;\field)$ is non-empty if and only if $\aug^2(V,\epsilon_\Left,\ruling_v,\ruling_\Right;\field)$ is non-empty. It suffices to determine the structure of $\aug^2(V,\epsilon_\Left,\ruling_v,\ruling_\Right;\field)$.

Recall that there is a ruling decomposition \cite[Thm.5.10]{Su2017} of the augmentation variety for $V(\ruling_v)$:
\begin{equation*}
\aug(V(\ruling_v),\epsilon_\Left,\ruling_\Right;\field)=\coprod_{\ruling'\in\NR(V(\ruling_v);\ruling_\Left,\ruling_\Right)}\aug^{\ruling'}(V(\ruling_v),\epsilon_\Left,\ruling_\Right;\field)
\end{equation*}
with
\begin{equation*}
\aug^{\ruling'}(V(\ruling_v),\epsilon_\Left,\ruling_\Right;\field)\isomorphic (\field^\times)^{-\chi(\ruling')+B}\times \field^{r(\ruling')}
\end{equation*}
Observe that the crossings of degree 0 (excluding the markings) are in one-to-one correspondence with the pairs of $1\leq i<j\leq \ell$ in $U(\ruling_v)$ such that $\mu(i)=\mu(j)$ and $\ell+1\leq\ruling_v(j)<\ruling_v(i)\leq \ell+r$. \emph{Denote} by $z_{i,j}$ the corresponding generators of $\cA(V(\ruling_v))=\cA(\Res(V(\ruling_v)))$. 
\emph{Define} a sub-variety $\aug^2(V(\ruling_v),\epsilon_\Left,\ruling_\Right;\field)\subset\aug(V(\ruling_v),\epsilon_\Left,\ruling_\Right;\field)$, whose augmentations $\epsilon$ satisfy
\begin{enumerate}
\item
$\epsilon(a)=0$ if $a$ is a marking in $V(\ruling_v)$;
\item
The normal ruling determined by $\epsilon|_{(V_2(\ruling_v))_\Left}$ satisfies Definition \ref{def:normal ruling for V}.(3). Equivalently, for any Reeb chord $b$ connecting two strands of $(V_2(\ruling_v))_\Left$ contained in $H(r_\Left(\ruling_v))$, we have $\epsilon|_{(V_2(\ruling_v))_\Left}(b)=0$.
\end{enumerate} 
For each $\ruling'\in\NR'(V(\ruling_v);\ruling_\Left,\ruling_\Right)$, \emph{define} $\aug^{\ruling',2}(V(\ruling_v),\epsilon_\Left,\ruling_\Right)\subset\aug^{\ruling'}(V(\ruling_v),\epsilon_\Left,\ruling_\Right;\field)$ similarly, with $\aug^{\ruling'}(V(\ruling_v),\epsilon_\Left,\ruling_\Right;\field)$ given in Definition \ref{def:ruling strata}.

\begin{lemma}\label{lem:identification of aug var for resolution}
There is a natural identification
\begin{align*}
\aug^2(V,\epsilon_\Left,\ruling_v,\ruling_\Right;\field)&\isomorphic\aug^2(V(\ruling_v),\epsilon_\Left,\ruling_\Right;\field)\\
d_c=(d^\Left,\identity,d_c^v)&\mapsto (d^\Left,\epsilon(z_{i,j})=-\epsilon_{i,j}) 
\end{align*}
where $\epsilon_{i,j}$ is defined by $d_c^v$ as in Equation \eqref{eqn:d_c^v}.
And, we have a ruling decomposition:
\begin{equation*}
\aug^2(V(\ruling_v),\epsilon_\Left,\ruling_\Right;\field)\isomorphic\amalg_{\ruling'\in\NR'(V(\ruling_v);\ruling_\Left,\ruling_\Right)}\aug^{\ruling',2}(V(\ruling_v),\epsilon_\Left,\ruling_\Right;\field)
\end{equation*}
with
\begin{equation*}
\aug^{\ruling',2}(V(\ruling_v),\epsilon_\Left,\ruling_\Right;\field)\isomorphic (\field^\times)^{s(\ruling')}\times \field^{r(\ruling')}.
\end{equation*}
where $s(\ruling')$ is the number of switches, and $r(\ruling')$ is the number of $\ZZ$-graded returns (excluding the markings), of $\ruling'$ in $V(\ruling_v)$ as in Definition \ref{def:returns and departures}.
\end{lemma}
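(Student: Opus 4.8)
The plan is to split the argument into two parts: (a) prove the isomorphism of varieties $\aug^2(V,\epsilon_\Left,\ruling_v,\ruling_\Right;\field)\isomorphic\aug^2(V(\ruling_v),\epsilon_\Left,\ruling_\Right;\field)$, and (b) deduce the ruling decomposition of the right-hand side from the ruling decomposition of the bordered Legendrian link $V(\ruling_v)$ recalled just above, i.e. \cite[Thm.5.10]{Su2017}.

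For part (a), recall that after the reductions made above, an element of $\aug^2(V,\epsilon_\Left,\ruling_v,\ruling_\Right;\field)$ is a triple $d_c=(d^\Left,\identity,d_c^v)$ in which $d^\Left\isomorphic d_{\ruling_\Left}$ is canonical, $g_C=\identity$ forces $d_{1,0}^v=d_{2,2}^\Left$, and $d_c^v$ is the partially canonical differential of \eqref{eqn:d_c^v}, which by Lemma~\ref{lem:partially canonical differential at a vertex} is uniquely pinned down by the free parameters $\{\epsilon_{i,j}\}$, one for each pair $i\in U(\ruling_v)$ with $i\le\ell<\ruling_v(i)$ and $j\in A^2(\ruling_v,i)$. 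I would first extract, from the block-matrix formula \eqref{eqn:iota_R(B)} for $\phi_\Right(B)$ together with \eqref{eqn:d_c^v}, an explicit presentation of $\aug^2(V,\epsilon_\Left,\ruling_v,\ruling_\Right;\field)$ as a subvariety of the affine space with coordinates $\{\epsilon_{i,j}\}$, cut out by the requirement that the induced $d^\Right$ lie in $\aug^{\ruling_\Right}(V_\Right;\field)$. On the other side, the DGA $\cA(V(\ruling_v))=\cA(\Res(V(\ruling_v)))$ has its generators partitioned into the left-border generators, the markings produced by Ng's resolution of the right cusps of $V_1(\ruling_v)$ and the left cusps of $V_3(\ruling_v)$, the braid crossings of $V_2(\ruling_v)$, the base points at the right cusps, and the degree-$0$ crossings $z_{i,j}$; after imposing $\epsilon(\text{markings})=0$ and the $H$-non-pairing condition, the augmentation equations reduce to precisely the same system, under the dictionary $z_{i,j}\leftrightarrow-\epsilon_{i,j}$ and $d^\Left\leftrightarrow d^\Left$. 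Concretely one checks that the right cusps of $V_1(\ruling_v)$ realize the pairings of $\ruling_v$ internal to $\{1,\dots,\ell\}$, the left cusps of $V_3(\ruling_v)$ those internal to $\{\ell+1,\dots,\ell+r\}$, the braid $\beta(\ruling_v)$ the $H$-to-$H$ matching, and that the only remaining degree-$0$ freedom — the $z_{i,j}$ — is carried by the parameters $\epsilon_{i,j}$ in \eqref{eqn:d_c^v}. The resulting map is manifestly algebraic with algebraic inverse, giving the desired isomorphism, and it also matches the generators $z_{i,j}$ with the $\epsilon_{i,j}$ as in the displayed formula of the lemma.

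For part (b), apply the ruling decomposition for the bordered Legendrian link $V(\ruling_v)$ and intersect each stratum with $\aug^2(V(\ruling_v),\epsilon_\Left,\ruling_\Right;\field)$. I would show: (i) the stratum $\aug^{\ruling'}(V(\ruling_v),\epsilon_\Left,\ruling_\Right;\field)$ meets $\aug^2$ nontrivially only when $\ruling'\in\NR'(V(\ruling_v),\ruling_\Left,\ruling_\Right)$ — otherwise either some marking is a switch of $\ruling'$, whose augmentation value is a free $\field^\times$-coordinate that we are forcing to $0$, or two $H$-strands of $(V_2(\ruling_v))_\Left$ are paired, contradicting the non-pairing condition; and (ii) for $\ruling'\in\NR'$, the conditions $\epsilon(\text{markings})=0$ remove exactly the $\field$-coordinates attached to those markings that are graded returns of $\ruling'$, and are automatically satisfied otherwise, so that Su's $(\field^\times)^{-\chi(\ruling')+B}\times\field^{r(\ruling')}$ (with $r$ counting all graded returns) collapses to $(\field^\times)^{s(\ruling')}\times\field^{r(\ruling')}$ with $r$ now counting graded returns excluding markings; here one uses that $B$ is the number of right cusps of $V(\ruling_v)$ and that $-\chi(\ruling')+B$ reduces to $s(\ruling')$, exactly as in Su's bookkeeping for bordered Legendrian links. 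Combining (i), (ii) and part (a), and invoking the canonical identification $\NR'(V(\ruling_v),\ruling_\Left,\ruling_\Right)\isomorphic\bfR(V_{\ruling_v},M_{\ruling_v};\ruling_\Left,\ruling_\Right)$ noted in the remark above, yields the statement.

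The main obstacle is part (a): making the identification between the two augmentation varieties completely explicit. Conceptually it only says ``resolving the vertex according to $\ruling_v$ and recording the vertex parameters $\{\epsilon_{i,j}\}$ is the same as recording the degree-$0$ crossings $z_{i,j}$ of the resolution'', but verifying it requires unwinding Ng's resolution of the nested cusps in $V_1(\ruling_v)$, $V_3(\ruling_v)$ and of the braid $\beta(\ruling_v)$, and matching the resulting differentials termwise against the block-matrix formula \eqref{eqn:iota_R(B)} and the partially canonical normal form \eqref{eqn:d_c^v}; the sign conventions and the combinatorics of the markings are where the verification is delicate. Part (b) is then a routine application of \cite[Thm.5.10]{Su2017} combined with the elementary effect of the closed conditions defining $\aug^2$.
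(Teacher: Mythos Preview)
Your proposal is correct and follows essentially the same approach as the paper: part (a) is exactly the ``identification between augmentations and Morse complexes'' of \cite[Lem.~5.2]{Su2017} spelled out in this particular case, and part (b) is the intersection of the ruling decomposition \cite[Thm.~5.10]{Su2017} with the closed conditions defining $\aug^2$. The paper's own proof is a two-sentence citation of precisely these two results, so your elaboration of how the marking and $H$-non-pairing conditions restrict the index set to $\NR'$ and cut the stratum $(\field^\times)^{-\chi(\ruling')+B}\times\field^{r(\ruling')}$ down to $(\field^\times)^{s(\ruling')}\times\field^{r(\ruling')}$ simply unpacks what the paper leaves implicit.
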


\begin{proof}
The first result follows from the identification between augmentations and Morse complexes \cite[Lem.5.2]{Su2017}.
What remains follows from the ruling decomposition \cite[Thm5.10]{Su2017} for $\aug(V(\ruling_v),\epsilon_\Left,\ruling_\Right;\field)$.
\end{proof}

Now, generalizing Lemma \ref{lem:aug var elementary}~(4), we have

\begin{proposition}\label{prop:ruling decomposition}
For any $\epsilon_\Left\in\aug^{\ruling_\Left}(V_\Left;\field)$, there is a natural decomposition 

\begin{equation*}
\aug(V,\epsilon_\Left,\ruling_\Right;\field)=\coprod_{\ruling\in\NR(V,\ruling_\Left,\ruling_\Right)}\aug^{\ruling}(V,\epsilon_\Left,\ruling_\Right;\field)
\end{equation*}
over the finite set $\NR(V,\ruling_\Left,\ruling_\Right)$ ($=\bfR(V;\ruling_\Left,\ruling_\Right)$), and for all $\ruling=(\ruling_v,\ruling')\in\NR(V,\ruling_\Left,\ruling_\Right)$, we have
\begin{equation*}
\aug^{\ruling}(V,\epsilon_\Left,\ruling_\Right;\field)\isomorphic (\field^\times)^{-\chi(\ruling)+\hat{B}}\times\field^{\hat{r}(\ruling)+A(\ruling)}
\end{equation*}
where $\hat{r}(\ruling)$ is defined as follows:
\emph{Let} $u(v)$ be the number of degree 0 crossings in Ng's resolution $\Res(v)$ of $v$. \emph{Let} $c(\ruling')$ be the number of degree 0 crossings (excluding the markings) in $V(\ruling_v)$. \emph{Define} $A_b(r_\Left(\ruling_v))$ as in Definition \ref{def:indices for an involution at a trivial tangle}. \emph{Define} $r(\ruling')$ as in Lemma \ref{lem:identification of aug var for resolution}.  Then
\begin{equation*}
\hat{r}(\ruling)\coloneqq r(\ruling')+u(v)-A_b(r_\Left(\ruling_v))-c(\ruling')
\end{equation*}  
\end{proposition}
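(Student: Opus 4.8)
\textbf{Proof proposal for Proposition~\ref{prop:ruling decomposition}.}

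The plan is to assemble the decomposition from the chain of identifications and group-action results established earlier in this subsection, and then to do the bookkeeping that matches the exponents $(\field^\times)^{-\chi(\ruling)+\hat{B}}\times\field^{\hat{r}(\ruling)+A(\ruling)}$ against the known strata for the resolution $V(\ruling_v)$. First I would reduce to the case of an elementary bordered Legendrian graph $V$ with a single vertex $v$: by the concatenation formula (Theorem~\ref{theorem:concatenation}, Proposition~\ref{prop:restriction_property_aug_variety}) together with the additivity of $\chi$, $\hat{B}$, $\hat{r}$, and $A$ under gluing (as in Lemma~\ref{lem:augnumber}), the general statement follows by the same inductive process used there. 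For the elementary case, I would start from the decomposition \eqref{eqn:decomposition 1 for V} over $\ruling_v\in\NR(v)$, then apply Proposition~\ref{prop:aug-orbit vs orbit-orbit} to reduce to a fixed standard $\epsilon_\Left=\epsilon_{\ruling_\Left}$, and then apply \eqref{eqn:decomposition 2 for V}, the isomorphism of Lemma~\ref{lem:partially canonical differential at a vertex}, and finally the identification of Lemma~\ref{lem:identification of aug var for resolution}. Composing these gives, for each $\ruling_v\in\NR(v)$,
\begin{equation*}
\aug(V,\epsilon_\Left,\ruling_v,\ruling_\Right;\field)\isomorphic B_{1,0}^v\times B(\ruling_v;\field)\times\coprod_{\ruling'\in\NR'(V(\ruling_v);\ruling_\Left,\ruling_\Right)}\aug^{\ruling',2}(V(\ruling_v),\epsilon_\Left,\ruling_\Right;\field),
\end{equation*}
and $\aug^{\ruling',2}(V(\ruling_v),\epsilon_\Left,\ruling_\Right;\field)\isomorphic(\field^\times)^{s(\ruling')}\times\field^{r(\ruling')}$ by Lemma~\ref{lem:identification of aug var for resolution}. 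Under the canonical identification $\NR(V,\ruling_\Left,\ruling_\Right)\isomorphic\coprod_{\ruling_v}\NR'(V(\ruling_v);\ruling_\Left,\ruling_\Right)$ (Remark after Definition~\ref{def:normal ruling for V}), this already produces the claimed set-theoretic decomposition over $\NR(V,\ruling_\Left,\ruling_\Right)$, with each stratum a product of a torus and an affine space.

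The substance of the proof is then the exponent count. I would compute $\dim B_{1,0}^v$ and $\dim B(\ruling_v;\field)$ directly from their definitions — $B_{1,0}^v$ is an upper-triangular automorphism group on $C(v_\Left)$, so its dimension is governed by degree-$0$ entries, contributing a torus part of rank $\ell/2$ (half of the left valency after resolving, matching part of $\hat B$) and an affine part related to $A_b(r_\Left(\ruling_v))$; the group $B(\ruling_v;\field)$ contributes an affine factor counted by $\sum_{i\in U(\ruling_v)}|A^3(\ruling_v,i)|+\sum_{i\in L(\ruling_v)}|I(\ruling_v,i)|$ together with a torus factor $(\field^\times)^{|L(\ruling_v)|}$. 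Combining the torus ranks: $\ell/2$ from $B_{1,0}^v$ plus $|L(\ruling_v)|$ from $B(\ruling_v;\field)$ plus $s(\ruling')$ from the resolution should reorganize, via the relation $\chi(\ruling')=\hat c_\Right(\ruling_v)+(\#\text{left cusps of }V_3(\ruling_v))-s(\ruling')$ for the resolved link together with the base-point count $\hat B=\#B(V)+\val(v)/2$, into exactly $-\chi(\ruling)+\hat B$. Here I would use the identity $\chi(\ruling)=\chi(\ruling')$ (the Euler characteristic of the ruling surface is unchanged by resolving the vertex) from Theorem~\ref{thm:invariance_ruling}, and the fact that the right cusps introduced in $V_1(\ruling_v)$ and $V_3(\ruling_v)$ carry exactly the base points whose torus contributions are being absorbed. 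For the affine (i.e. $\field$) part, I would verify that
\begin{equation*}
\dim_\field B(\ruling_v;\field)_{\mathrm{aff}} + r(\ruling') = \hat r(\ruling) + A(\ruling)
\end{equation*}
by expanding $A_v(\ruling_v)=\sum_{i\in U(\ruling_v)}|A_{\ruling_v}(i)|+\sum_{i\in L(\ruling_v)}|I_v(i)|$ using the partition $A_{\ruling_v}(i)=A^1\amalg A^2\amalg A^3$ from just before Lemma~\ref{lem:partially canonical differential at a vertex}, recognizing that $B(\ruling_v;\field)$ accounts precisely for the $A^3$ and the $L(\ruling_v)$ terms, that the $A^1$ and $A^2$ terms together with $B_{1,0}^v$'s affine part reproduce $A_b(r_\Left(\ruling_v))$ plus a correction, and matching against the definition $\hat r(\ruling)=r(\ruling')+u(v)-A_b(r_\Left(\ruling_v))-c(\ruling')$. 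The quantities $u(v)$ (degree-$0$ crossings in $\Res(v)$) and $c(\ruling')$ (degree-$0$ crossings excluding markings in $V(\ruling_v)$) are precisely the two corrections needed to translate between "crossings in Ng's resolution of the abstract vertex" and "crossings in the concrete resolved tangle".

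The main obstacle is this last bookkeeping: there are four sources of torus/affine factors ($B_{1,0}^v$, $B(\ruling_v;\field)$, the switches $s(\ruling')$, and the graded returns $r(\ruling')$), and four combinatorial indices on the target side ($\chi(\ruling')$, $\hat B$, $A(\ruling)$, $\hat r(\ruling)$), and one must carefully track how the vertex-resolution operation redistributes crossings and base points between the "vertex side" and the "tangle side" — in particular keeping straight which crossings of $V(\ruling_v)$ count as markings (and hence contribute nothing) versus genuine returns, and how $A_b(r_\Left(\ruling_v))$ (an index attached to the generalized normal ruling on the left border of the resolved piece) relates to the vertex-level index $A_v(\ruling_v)$. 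I expect to organize this by first handling the purely-left-paired strands (governed by the standard elementary-cusp computation of \cite[Lem.~4.15, 4.22]{Su2017}), then the purely-right-paired strands symmetrically, and finally the $B_v(\ruling_v)$-paired strands passing through the braid $\beta(\ruling_v)$, where all the genuine switches and returns live. Once the three cases are separately matched, additivity closes the argument, and the $V$-version extends to all $\cT$ by concatenation exactly as in Lemma~\ref{lem:augnumber}.
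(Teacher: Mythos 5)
Your proposal is correct and follows essentially the same route as the paper: the paper's proof is precisely the composition of the decomposition \eqref{eqn:decomposition 1 for V}, the splitting \eqref{eqn:decomposition 2 for V}, Lemma~\ref{lem:partially canonical differential at a vertex}, and Lemma~\ref{lem:identification of aug var for resolution}, followed by the same "direct calculation" of exponents that you outline (the paper leaves that bookkeeping implicit, whereas you sketch how $B_{1,0}^v$, $B(\ruling_v;\field)$, $s(\ruling')$ and $r(\ruling')$ reassemble into $-\chi(\ruling)+\hat B$ and $\hat r(\ruling)+A(\ruling)$). The only superfluous step is your initial reduction to the elementary case: the proposition is already stated for an elementary $V$ with a single vertex, and the extension to general $\cT$ by concatenation is only the content of the remark that follows it.
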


\begin{proof}
By Equations (\ref{eqn:decomposition 1 for V}), (\ref{eqn:decomposition 2 for V}), Lemma \ref{lem:partially canonical differential at a vertex}, and Lemma \ref{lem:identification of aug var for resolution}, the proof is a direct calculation.
\end{proof}

\begin{remark}
By combining with \cite[Thm.5.10]{Su2017}, the gluing property of augmentation varieties implies that, the above ruling decomposition for $V$ generalizes directly to all bordered Legendrian graphs when we impose a base point at each right cusp and each left half-edge of a vertex.
\end{remark}

\bibliographystyle{abbrv}
\bibliography{references}

\end{document}